\definecolor{black}{rgb}{0.0, 0.0, 0.0}
\definecolor{red}{rgb}{1.0, 0.5, 0.5}
\title[   ]{Uniqueness and stability of entropy shocks to the isentropic Euler system in a class of inviscid limits from a large family of Navier-Stokes systems}
\author[Kang]{Moon-Jin Kang}
\address[Moon-Jin Kang]{
\newline Department of Mathematic \& Research Institute of Natural Sciences, \newline Sookmyung Women's University, Seoul 140-742, Korea}
\email{moonjinkang@sookmyung.ac.kr}
\author[Vasseur]{Alexis F. Vasseur}
\address[Alexis F. Vasseur]{\newline Department of Mathematics, \newline The University of Texas at Austin, Austin, TX 78712, USA}
\email{vasseur@math.utexas.edu}
\newtheorem{theorem}{Theorem}[section]
\newtheorem{lemma}{Lemma}[section]
\newtheorem{proposition}{Proposition}[section]
\newtheorem{remark}{Remark}[section]
\newcommand{\bbr}{\mathbb R}
\newcommand{\pt}{p(\tilde{v}_\eps)}
\newcommand{\deo}{\delta_0}
\numberwithin{figure}{section}
\newcommand{\beq}{\begin{equation}}
\newcommand{\eeq}{\end{equation}}
\newcommand{\bsp}{\begin{split}}
\newcommand{\esp}{\end{split}}
\def\eps{\varepsilon }
\newcommand\adots{\mathinner{\mkern2mu\raise1pt\hbox{.}
\mkern3mu\raise4pt\hbox{.}\mkern1mu\raise7pt\hbox{.}}}
\def\charf {\mbox{{\text 1}\kern-.30em {\text l}}}
\newcommand \deltat {\delta_2}
\newcommand{\vt}{\tilde{v}_\eps}
\begin{document}
\bibliographystyle{plain}

\date{\today}

\subjclass{76N15, 35B35,   35Q30} \keywords{Isentropic Euler system, Shock, Uniqueness, Stability, Compressible Navier-Stokes, Vanishing viscosity limit, Relative entropy, Conservation law.}

\thanks{\textbf{Acknowledgment.}  The first author was partially supported by the NRF-2017R1C1B5076510 and the NRF-2019R1C1C1009355.
The second author was partially supported by the NSF grant: DMS 1614918. 
}

\begin{abstract}
We prove the uniqueness and stability of entropy shocks to the isentropic Euler systems among all 
 vanishing viscosity limits of solutions to associated  Navier-Stokes systems.
To take into account the vanishing viscosity limit, we show a contraction property for any large perturbations of viscous shocks to the Navier-Stokes system.
The contraction estimate does not depend on the strength of the viscosity. This provides a good control on  the inviscid limit process. We prove that, for any initial value,  there exist a vanishing viscosity limit to  solutions of the  Navier-Stokes system. The convergence holds in a weak topology. However,   this limit satisfies some stability estimates measured by the relative entropy with respect to an entropy shock.  In particular, our result provides the uniqueness of entropy shocks to the shallow water equation  in a class of inviscid limits of solutions to the viscous shallow water equations.
  \end{abstract}
\maketitle \centerline{\date}


\tableofcontents

\section{Introduction}
\setcounter{equation}{0}

We consider the vanishing viscosity limit ($\nu\to0$) of the one-dimensional barotropic Navier-Stokes system in the Lagrangian coordinates:
\begin{align}
\begin{aligned}\label{inveq}
\left\{ \begin{array}{ll}
        v^{\nu}_t - u^{\nu}_x =0,\\
       u^{\nu}_t+p(v^{\nu})_x = \nu\Big(\frac{\mu(v^{\nu})}{v^{\nu}} u^{\nu}_x\Big)_x, \end{array} \right.
\end{aligned}
\end{align}
where $v$ denotes the specific volume, $u$ is the fluid velocity, and $p(v)$ is the pressure law. We consider the case of  a polytropic perfect gas where the pressure is given by
\beq\label{pressure}
p(v)= v^{-\gamma},\quad \gamma> 1,
\eeq
with  $\gamma$ the adiabatic constant. Here, $\mu$ denotes the viscosity coefficient given by 
\beq\label{mu-def}
\mu(v) = bv^{-\alpha}. 
\eeq
Notice that if $\alpha>0$, $\mu(v)$ degenerates near the vacuum, i.e., near $v=+\infty$.
Very often, the viscosity coefficient is assumed to be constant, i.e., $\alpha=0$. However, in the  physical context  the viscosity of a gas depends on the temperature (see Chapman and Cowling \cite{CC}).  In the barotropic case,  the temperature depends directly on the density ($\rho=1/v$). 
The viscosity  is expected to degenerate near the vacuum as a power of the density, which is translated into $\mu(v) = bv^{-\alpha}$ in terms of $v$ with $\alpha>0$.

At least formally, as $\nu\to0$, the limit system of \eqref{inveq} is given by the isentropic Euler system:
\begin{align}
\begin{aligned} \label{Euler}
\left\{ \begin{array}{ll}
       v_t - u_x =0,\\
       u_t+p(v)_x =0.\end{array} \right.
\end{aligned}
\end{align}

The idea of approximating inviscid gases by viscous gases with vanishing viscosity is due to the seminal paper by Stokes \cite{Stokes}. The vanishing viscosity limit has been used later to 
construct  entropy solutions to the isentropic Euler system, see DiPerna \cite{Di,Di_CMP}, Hoff-Liu \cite{HL}, Goodman-Xin \cite{GX}, Yu \cite{Yu}. Recently,  Chen and Perepelitsa \cite{CP} proved the  convergence of solutions to the Navier-Stokes system with $\alpha=0$ towards an entropy solution of the isentropic Euler system with finite energy initial data. 
Some results exist for the  inviscid limit of the Navier-Stokes-Fourier system in very special cases, see for instance   Feireisl \cite{Fei} and \cite{VW}, and the references cited therein.

In this article, we prove the existence of vanishing viscosity limits of solutions to \eqref{inveq} in some weak sense, and obtain a stability estimate of those limits. We then  present the class of inviscid limits, in which the entropy shocks to \eqref{Euler} are unique.

Our result provides an answer, in the case of a shock, to the  conjecture: The compressible Euler system admits a unique entropy solution in the class of vanishing viscosity solutions to the associated  compressible Navier-Stokes system. 
As a breakthrough result related to this conjecture, Bianchini-Bressan \cite{BB} constructed a globally-in-time unique entropy solution to a strictly hyperbolic $n\times n$ system with small BV initial datum, which is obtained from vanishing ``artificial'' viscosity limit of the associate parabolic system. 
However, to the best of our knowledge, there is no result on uniqueness of discontinuous entropy solutions in the class of vanishing physical viscosity solutions to the Navier-Stokes systems. 

Previous uniqueness results for special discontinuous solutions (as solutions to the Riemann problem) required suitable regularity like locally $BV$ or strong trace properties (see  Chen-Frid-Li \cite{Chen1} and Vasseur et al. \cite{KVARMA, LV, Vasseur-2013}). Unfortunately, the global-in-time propagation of those regularities is unknown in general (except for the system with  $\gamma=3$  see  \cite{Vasseur_gamma3}). 
In the multi-D case,  De Lellis-Sz\'ekelyhidi\cite{DS10} and Chiodaroli et al. \cite{Ch,CDK,CFK,CK} showed  non-uniqueness of entropy solutions. They showed that entropy solutions to the isentropic Euler systems in more than one space dimension are not unique, by constructing infinitely many entropy solutions based on the convex integration method \cite{DS09, DS10}.\\

It is well known that the system \eqref{inveq} admits viscous shock waves connecting two end states 
$(v_-,u_-)$ and $(v_+,u_+)$, provided the two end states satisfy the Rankine-Hugoniot condition and the Lax entropy condition (see Matsumura and Wang \cite{MW}): 
\begin{align}
\begin{aligned}\label{end-con} 
&\exists~\sigma\quad\mbox{s.t. }~\left\{ \begin{array}{ll}
       -\sigma (v_+-v_-) - (u_+-u_-) =0,\\
       -\sigma (u_+-u_-) +p(v_+)-p(v_-)=0, \end{array} \right. \\
&\mbox{and either $v_->v_+$ and $u_->u_+$ or $v_-<v_+$ and $u_->u_+$ holds.}        
\end{aligned}
\end{align} 
In other words, for given constant states $(v_-,u_-)$ and $(v_+,u_+)$ satisfying \eqref{end-con}, there exists a viscous shock wave $(\tilde v^{\nu},\tilde u^{\nu})(x-\sigma t)$ as a solution of
\begin{align}
\begin{aligned}\label{shock_0} 
\left\{ \begin{array}{ll}
       -\sigma (\tilde v^{\nu})' - (\tilde u^{\nu})' =0,\\
       -\sigma (\tilde u^{\nu})'+p( \tilde v^{\nu})'= \nu\Big(\frac{\mu( \tilde v^{\nu})}{ \tilde v^{\nu}}  (\tilde u^{\nu})'\Big)'\\
       \lim_{\xi\to\pm\infty}(\tilde v^{\nu},\tilde u^{\nu})(\xi)=(v_{\pm}, u_\pm). \end{array} \right.
\end{aligned}
\end{align} 
Here, if $v_->v_+$, $(\tilde v,\tilde u)(x-\sigma t)$ is a 1-shock wave with velocity $\sigma=- \sqrt{-\frac{p(v_+)-p(v_-)}{v_+-v_-}}$, whereas if $v_-<v_+$, that is a 2-shock wave with $\sigma= \sqrt{-\frac{p(v_+)-p(v_-)}{v_+-v_-}}$.\\

Let $(\bar v, \bar u)$ be an associated entropy (inviscid) shock wave connecting the two end states $(v_-,u_-)$ and $(v_+,u_+)$ satisfying \eqref{end-con} as follows:
\beq\label{shock-0}
(\bar v, \bar u)(x -\sigma t)=\left\{ \begin{array}{ll}
         (v_-,u_-)  \quad\mbox{if $ x -\sigma t<0$},\\
        (v_+,u_+) \quad \mbox{if $x -\sigma t>0$}.\end{array} \right.
\eeq

As mentioned above, our goal is to show the uniqueness of the entropy shock to \eqref{Euler} in a suitable class, based on a generalization of our recent result \cite{Kang-V-NS17} on the contraction property of viscous shocks to \eqref{inveq}. 
More precisely, we prove the contraction of any large perturbations of viscous shocks to \eqref{inveq} in the case of $0<\alpha\le \gamma\le \alpha+1$ and $\gamma>1$, which improves the special case $\gamma=\alpha$ in \cite{Kang-V-NS17}. 
The contraction holds up to a shift, and is measured by a weighted relative entropy. 
Notice that since the relative entropy is locally quadratic, the contraction measured by the relative entropy can be regarded as $L^2$-type contraction.  
To prove the contraction, we employ the new approach introduced by the authors \cite{Kang-V-NS17}, which basically uses the relative entropy method. 
The relative entropy method has been extensively used in studying the contraction (or stability) of viscous (or inviscid) shock waves (see \cite{CKKV,CV,Kang19,Kang,Kang-V-NS17,KVARMA,Kang-V-1,KVW,Leger,Serre-Vasseur,SV_16,SV_16dcds,Vasseur-2013,VW}).

\subsection{Main results}

To handle the stability and uniqueness of the entropy shocks, we use the relative entropy associated to the entropy of\eqref{Euler} as follows:
For any functions $v_1,u_1,v_2,u_2$,
\beq\label{eta_def}
\eta((v_1,u_1)|(v_2,u_2)) :=\frac{|u_1-u_2|^2}{2} +Q(v_1|v_2),
\eeq
where $Q(v_1|v_2)$ is the relative functional associated with the strictly convex function 
\[
Q(v):=\frac{v^{-\gamma+1}}{\gamma-1},\quad v>0,
\]
that is, 
\[
Q(v_1|v_2) :=Q(v_1)-Q(v_2)-Q'(v_2)(v_1-v_2).
\]
However, the first components $v_1$ that we will consider are limit of Navier-Stokes equations, for which we obtain only uniform bounds in $L^1$. Therefore, the limit can be a measure in $t,x$. This is actually physical, and is related to the possible appearance of cavitation. For this reason, we need to extend the definition of relative entropy to measures defined on $\bbr^+\times \bbr$. We will restrict the definition in the case where we compare a measure $d v$ with a simple function $\bar v$ only taking two values  $v_-$ and  $v_+$. Let  $v_a$ denote the  Radon-Nikodym derivative  of $dv$ with respect to the Lebesgue measure and $dv_s$  its singular part, i.e., $dv=v_a \,dt\,dx+dv_s$.
The relative entropy  is then itself a measure defined as
\beq\label{dQ}
dQ(v|\bar v)(t,x) := Q\left(v_a|\bar v\right) dt dx +  |Q'(\overline V(t,x))| dv_s (t,x) ,
\eeq
where we need to define $\overline V$ everywhere. Denote $\Omega_{M}=\{t,x: \bar v(t,x)=\max(v_-,v_+)\}$, we set
\begin{eqnarray*}
\overline{V}(t,x)
=\left\{ \begin{array}{ll}
        \max(v_-,v_+) \qquad \mathrm{for  \ \ } (t,x)\in \overline{\Omega_M} \mbox{ (closure of $\Omega_M$)},\\
        \min(v_-,v_+) \qquad \mathrm{for  \ \ } (t,x)\in (\overline{\Omega_M})^c .\end{array} \right.
\end{eqnarray*}
Note that $|Q'( \max(v_-,v_+))| \le |Q'( \min(v_-,v_+))| $. Also, note that if $v\in L^\infty(\bbr^+;L^\infty(\bbr)+\mathcal{M}(\bbr))$, then   $dQ(v|\bar v)$ is defined in $L^\infty(\bbr^+; L^\infty(\bbr)+\mathcal{M}(\bbr))$, where $\mathcal{M}$ denotes the space of nonnegative Radon measures.

\vskip0.3cm
For the global-in-time existence of solutions to \eqref{inveq}, we introduce the function space:
\begin{align*}
\begin{aligned}
\mathcal{X}_T := \{ (v,u)\in\bbr^+\times\bbr~&|~ v-\underline v, ~u- \underline u \in L^\infty (0,T; H^1(\bbr)),\\
&\qquad\qquad u-\underline u \in L^2 (0,T; H^2(\bbr)),~  v^{-1}\in L^\infty((0,T)\times \bbr) \} ,
\end{aligned}
\end{align*}
where $\underline v$ and $\underline u$ are smooth monotone functions such that
\beq\label{sm-end}
\underline v(x) = v_\pm \quad\mbox{and}\quad \underline u(x) = u_\pm\quad\mbox{for } \pm x \ge 1.
\eeq

The first theorem is on stability and uniqueness of the entropy shocks to \eqref{Euler}:

\begin{theorem}\label{thm_inviscid} 
Let $\gamma>1$ and $\alpha, b>0$ be any constants satisfying $\alpha\le \gamma \le \alpha +1$. For each $\nu>0$, consider the system \eqref{inveq}-\eqref{mu-def}. For a given constant state $(v_-,u_-)\in\bbr^+\times\bbr$, 
there exists a constant  $\eps_0>0$ such that for any $\eps<\eps_0$ and any $(v_+,u_+)\in\bbr^+\times\bbr$ satisfying \eqref{end-con} with $|p(v_-)-p(v_+)|=\eps$, the following holds.\\
Let $(\tilde v^{\nu}, \tilde u^{\nu})$ be a viscous shock connecting the two end states $(v_-,u_-)$ and $(v_+,u_+)$ as a solution of \eqref{shock_0}.\\
Then for a given initial datum $(v^0,u^0)$ of \eqref{Euler} satisfying 
\beq\label{basic_ini}
\mathcal{E}_0:=\int_{-\infty}^{\infty} \eta\big((v^0,u^0)| (\bar v, \bar u)\big) dx <\infty,
\eeq
the following is true.\\
(i) (Well-prepared initial data) There exists a sequence of smooth functions $\{(v^{\nu}_0, u^{\nu}_0)\}_{\nu>0}$ such that 
\begin{align}
\begin{aligned}\label{ini_conv}
&\lim_{\nu\to0} v^{\nu}_0 = v^0,\quad \lim_{\nu\to0} u^{\nu}_0 = u^0\quad \mbox{a.e.},\quad v^{\nu}_0>0,\\
&\lim_{\nu\to 0} \int_\bbr  \left(\frac{1}{2}\left(u^{\nu}_0 +\nu\Big(p(v^{\nu}_0)^{\frac{\alpha}{\gamma}}\Big)_x -\tilde u^{\nu}-\nu\Big(p(\tilde v^{\nu})^{\frac{\alpha}{\gamma}}\Big)_x  \right)^2 +Q(v^{\nu}_0|\tilde v^{\nu})
\right)dx  = \mathcal{E}_0 .
\end{aligned}
\end{align}
(ii) For a given $T>0$, let $\{(v^{\nu}, u^{\nu})\}_{\nu>0}$ be a sequence of solutions in $\mathcal{X}_T$ to \eqref{inveq} with the initial datum $(v^{\nu}_0, u^{\nu}_0)$ as above. 
Then there exist limits $v_{\infty}$ and $u_{\infty}$ such that as $\nu\to0$ (up to a subsequence),
\beq\label{wconv}
v^{\nu}\rightharpoonup  v_{\infty},\quad u^{\nu} \rightharpoonup  u_{\infty}  \quad \mbox{in} ~\mathcal{M}_{\mathrm{loc}}((0,T)\times\bbr) ~\mbox{(space of locally bounded Radon measures)} ,
\eeq
where $v_\infty$ lies in $L^\infty(0,T,L^\infty(\bbr)+\mathcal{M}(\bbr))$.\\
  In addition, there exist shift $X_{\infty}\in \mbox{BV}((0,T))$ and constant $C>0$ such that  $d Q(v_\infty|\bar v)\in L^\infty(0,T;\mathcal{M}(\bbr))$, and for almost every $t\in (0,T)$,
\beq\label{uni-est}
\int_{\bbr } \frac{|u_\infty(t,x)-\bar u(x-X_{\infty}(t))|^2}{2}  dx + \left(\int_{x\in \bbr } d Q(v_\infty | \bar v (x-X_{\infty}(\cdot))) \right)(t) \  \le C \mathcal{E}_0.
\eeq

Moreover, the shift $X_\infty$ satisfies 
\beq\label{X-control}
|X_\infty(t) -\sigma t| \le \frac{C}{|v_--v_+|}\Big( \mathcal{E}_0 + (1+t)\sqrt{\mathcal{E}_0} \Big).
\eeq
Therefore, entropy shocks \eqref{shock-0} (with small amplitude) of the isentropic Euler system \eqref{Euler} are stable and unique in the class of weak inviscid limits of solutions to the Navier-Stokes system \eqref{inveq}.
\end{theorem}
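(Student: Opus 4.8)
The plan is to obtain \eqref{uni-est}--\eqref{X-control} as the $\nu\to0$ limit of a uniform-in-$\nu$ contraction estimate for the Navier-Stokes system \eqref{inveq}, and then to deduce the stability and uniqueness claim essentially by definition of the class of inviscid limits. First I would invoke the contraction property of viscous shocks to \eqref{inveq} (the generalization of \cite{Kang-V-NS17} to the range $\alpha\le\gamma\le\alpha+1$, $\gamma>1$, which is the analytic core established earlier in the paper): for each $\nu>0$ there is a Lipschitz shift $X^{\nu}(t)$, with $\dot X^{\nu}$ controlled, such that the weighted relative entropy
\[
\int_{\bbr} a^{\nu}(x-X^{\nu}(t))\,\eta^{\mathrm{NS}}\!\big((v^{\nu},u^{\nu})(t,x)\,\big|\,(\tilde v^{\nu},\tilde u^{\nu})(x-X^{\nu}(t))\big)\,dx
\]
is nonincreasing in $t$, with a weight $a^{\nu}$ that is bounded above and below uniformly in $\nu$. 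Evaluating at $t$ and at $t=0$, and using the well-prepared data \eqref{ini_conv} together with the energy identity, gives a bound by $C\mathcal E_0$ that is \emph{independent of $\nu$}; this is exactly the point emphasized in the abstract (``the contraction estimate does not depend on the strength of the viscosity''). The effective velocity correction $u^{\nu}+\nu(p(v^{\nu})^{\alpha/\gamma})_x$ appearing in \eqref{ini_conv} is precisely the good unknown for which the Navier-Stokes relative entropy is comparable to \eqref{eta_def}; the $\nu(\cdots)_x$ terms carry a factor $\sqrt\nu$ in $L^2$ along the flow (from the viscous dissipation) and hence vanish in the limit.

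Next I would pass to the limit. From the uniform $L^1_{\mathrm{loc}}$ (indeed $L^\infty_t(L^1_x)$-type) bounds coming from the uniform relative-entropy estimate together with the boundedness of $v_-,v_+$ away from $0$ and $\infty$, extract weak-$*$ limits $v^{\nu}\rightharpoonup v_\infty$, $u^{\nu}\rightharpoonup u_\infty$ in $\mathcal M_{\mathrm{loc}}$, with $v_\infty\in L^\infty(0,T;L^\infty(\bbr)+\mathcal M(\bbr))$; the possible concentration of $v^{\nu}$ is the cavitation alluded to, and it is precisely why the relative entropy must be extended to measures as in \eqref{dQ}. For the shifts, the uniform control of $\dot X^{\nu}$ (of the form $|\dot X^{\nu}|\le C$, or more precisely an estimate yielding \eqref{X-control} after integration, via $|\dot X^{\nu}-\sigma|\lesssim |v_--v_+|^{-1}(\text{dissipation}+\sqrt{\text{rel.\ entropy}})$) gives, by Helly/Arzelà--Ascoli, a BV limit $X_\infty$, and \eqref{X-control} follows by integrating the pointwise bound in $t$ and using $\mathcal E_0$ as the bound on the relative-entropy and dissipation budget. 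The delicate part of the limit passage is lower semicontinuity: one must show
\[
\liminf_{\nu\to0}\int_{\bbr} a^{\nu}(x-X^{\nu})\,\eta^{\mathrm{NS}}\big((v^{\nu},u^{\nu})\,\big|\,(\tilde v^{\nu},\tilde u^{\nu})(\cdot-X^{\nu})\big)\,dx
\ \ge\ \int_{\bbr} d Q(v_\infty|\bar v(\cdot-X_\infty))(t)+\int_{\bbr}\frac{|u_\infty-\bar u(\cdot-X_\infty)|^2}{2}\,dx,
\]
for a.e.\ $t$. Here I would use: $\tilde v^{\nu}\to\bar v(\cdot-X_\infty)$, $\tilde u^{\nu}\to\bar u(\cdot-X_\infty)$ strongly (shock profiles converge to the jump), $a^{\nu}\to 1$ away from the shock with $a^{\nu}\ge1$ near it, convexity of $\eta$ and of $Q$, and a De Giorgi--type / Jensen argument that correctly accounts for the singular part of $dv_\infty$ through the term $|Q'(\overline V)|\,dv_s$ — this is exactly why $\overline V$ is defined using the closure $\overline{\Omega_M}$ and why $|Q'(\max(v_-,v_+))|\le|Q'(\min(v_-,v_+))|$ is recorded. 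Combining this with the $\nu$-uniform upper bound $C\mathcal E_0$ yields \eqref{uni-est}.

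Finally, the stability-and-uniqueness conclusion. By \emph{definition} here, a ``weak inviscid limit'' of \eqref{inveq} with data converging to $(v^0,u^0)$ is exactly a limit point $(v_\infty,u_\infty)$ produced in part (ii); \eqref{uni-est} says any such limit differs from the entropy shock \eqref{shock-0}, up to the BV shift $X_\infty$, by at most $C\mathcal E_0$ in the relative-entropy/$L^2$ sense, uniformly in $t$. Stability is this estimate itself. For uniqueness: if the data of \eqref{Euler} are exactly the shock, $(v^0,u^0)=(\bar v,\bar u)$, then $\mathcal E_0=0$, so \eqref{uni-est} forces $u_\infty(t,\cdot)=\bar u(\cdot-X_\infty(t))$ a.e.\ and $dQ(v_\infty|\bar v(\cdot-X_\infty(t)))\equiv0$; strict convexity of $Q$ (including on the singular part, where $|Q'(\overline V)|>0$) then gives $v_\infty(t,\cdot)=\bar v(\cdot-X_\infty(t))$ with no singular part, and \eqref{X-control} with $\mathcal E_0=0$ gives $X_\infty(t)=\sigma t$. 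Hence the only weak inviscid limit emanating from the shock datum is the shock itself, which is the asserted uniqueness.

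I expect the main obstacle to be the lower-semicontinuity step for the weighted relative entropy in the presence of concentration of $v^{\nu}$: one has only weak-$*$ convergence of $v^{\nu}$ to a measure, the entropy is convex but not lower semicontinuous along merely weak-$*$ convergence in a naive sense, and the weight $a^{\nu}$ and the profiles $(\tilde v^{\nu},\tilde u^{\nu})$ are simultaneously degenerating to the discontinuous shock; reconciling these — so that the limiting inequality produces exactly the measure-valued relative entropy \eqref{dQ} with the correct coefficient $|Q'(\overline V)|$ on the singular part — is the crux, and it is what the careful measure-theoretic setup in the statement is designed to make possible.
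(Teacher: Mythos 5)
Your overall architecture (rescale the $\nu=1$ contraction to get a $\nu$-uniform bound, extract weak limits, prove lower semicontinuity of the measure-valued relative entropy, conclude stability and uniqueness) matches the paper, and your treatment of the lower-semicontinuity step correctly identifies the crux and the role of $|Q'(\overline V)|$ on the singular part. However, there is a genuine gap in your derivation of \eqref{X-control}. You propose a pointwise estimate of the form $|\dot X^{\nu}-\sigma|\lesssim |v_--v_+|^{-1}(\text{dissipation}+\sqrt{\text{rel.\ entropy}})$ to be integrated in time. No such estimate is available: the shift produced by the contraction theorem is defined through an ODE of the form $\dot X=\Phi_\eps(Y(U^X))(2|\mathcal{J}^{bad}(U^X)|+1)$, and the only control one has is \eqref{est-shift}, namely $|\dot X(t)|\le \eps^{-2}(1+f(t))$ with $f\in L^1$ bounded by the initial relative entropy. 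After rescaling this yields BV compactness of $X_\nu$ (hence existence of $X_\infty\in BV$), but it says nothing about the proximity of $\dot X_\nu$ to $\sigma$ at any time; indeed $\dot X_\nu$ can be of size $\eps^{-2}$ on sets of positive measure even when the perturbation is small.

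The actual route to \eqref{X-control} is entirely different and uses the equation rather than the shift ODE: one first upgrades the compactness of $v^\nu$ to $C([0,T];W^{-s,1}_{\mathrm{loc}})$ via Aubin--Lions (which requires a uniform $L^2_tL^1_{x,\mathrm{loc}}$ bound on $u^\nu$, hence on $\nu(p(v^\nu)^{\alpha/\gamma})_x$, extracted from the dissipation), so that $v_\infty$ is weakly continuous in time and attains the initial datum $v^0$. Then one tests the limiting mass equation $\partial_t v_\infty-\partial_x u_\infty=0$ against $\varphi_{t,\delta}(s)\psi(x)$ with $\psi$ a plateau function of width $\sim\|X_\infty\|_{L^\infty}$; the leading contributions produce exactly $(v_--v_+)X_\infty(t)+t(u_--u_+)=(X_\infty(t)-\sigma t)(v_--v_+)$ by the Rankine--Hugoniot condition \eqref{end-con}, while the remainders are controlled by $\mathcal{E}_0+(1+t)\sqrt{\mathcal{E}_0}$ using the already-established stability estimate \eqref{uni-est} (with separate treatment of the Lebesgue and singular parts of $v_\infty$ via \eqref{rel_Q}). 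Without this argument the identification $X_\infty(t)=\sigma t$ when $\mathcal{E}_0=0$, and hence the uniqueness conclusion, is not justified.
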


\begin{remark}
1. By \eqref{uni-est}, the limits $v_{\infty}, u_{\infty}$ satisfy $v_{\infty}\in \bar v + L^{\infty}(0,T; L^\infty(\bbr)+\mathcal{M}(\bbr))$ and $u_{\infty} \in \bar u + L^{\infty}(0,T; L^2(\bbr))$, where $\mathcal{M}(\bbr)$ is the set of bounded Radon measures on $\bbr$. The control in measure of $v_\infty$ is due to the fact that $Q(v|\overline{v})\geq c_2|v-\overline{v}|$ for $v\geq 3v_-$ (see (\ref{rel_Q}) in Lemma \ref{lem-pro}). Especially,  $v_\infty$ may have some measure concentration at the limit. This corresponds physically to cavitation (the creation of bubbles in the fluid) and appearance of vacuum. It is interesting to see that this does not affect the contraction property (and the uniqueness of the shock at the limit).  \\
2. Theorem \ref{thm_inviscid} provides the stability and uniqueness of weak Euler shocks in the wide class of weak inviscid limits of solutions to the Navier-Stokes system. \\
Indeed, for the uniqueness, if $\mathcal{E}_0=0$, then \eqref{uni-est} and \eqref{X-control} imply that for a.e. $t\in (0,T)$,
\[
\int_{\bbr } \frac{|u_\infty(t,x)-\bar u(x-\sigma t)|^2}{2}  dx + \int_{\bbr }  Q(v_a(t,x) | \bar v (x-\sigma t)) dx = 0 ,
\]
where $dv_\infty=v_a \,dt\,dx+dv_s$, and the singular part $v_s$ vanishes.  Therefore, we have
\[
u_\infty(t,x) = \bar u(x-\sigma t),\qquad  v_\infty(t,x) =  \bar v (x-\sigma t),\quad \mbox{ a.e. } (t,x)\in [0,T]\times\bbr.
\]
3. In fact, the smallness of amplitude of shocks is not needed in the proof of Theorem \ref{thm_inviscid}. The constraint is due to Theorem \ref{thm_general}.\\
4.  It is worth emphasizing from the assumption on $\alpha$ and $\gamma$ that Theorem \ref{thm_inviscid} also holds in the case of the shallow water equations (i.e., $\gamma=2$) in a class of inviscid limits of solutions to the viscous shallow water equations (i.e., $\gamma=2$, $\alpha=1$). We refer to Gerbeau-Perthame \cite{GP} for a derivation of the viscous shallow water equations from the incompressible Navier-Stokes equations with free boundary.
\end{remark}

\begin{remark}\label{rem-sol}
For the global-in-time existence (and uniqueness) of any large solutions to \eqref{inveq} in $\mathcal{X}_T$, we refer to \cite{MV_sima} in the case of $\alpha<1/2$ and $\gamma>1$. More precisely, they proved that $\rho=1/v$ and $u$ satisfy
\[
\rho-\underline \rho, u-\underline u \in L^\infty (0,T; H^1(\bbr)),~u-\underline u \in L^2 (0,T; H^2(\bbr)),~  \rho^{-1}\in L^\infty((0,T)\times \bbr).
\]
This implies that there exists a solution in $\mathcal{X}_T$ to the system \eqref{inveq} with $\alpha<1/2$ and $\gamma>1$, since the system \eqref{inveq} is equivalent to the one in the Eulerian coordinates for such strong solutions. The result of \cite{MV_sima} was extended by Haspot \cite{Haspot} to the case of $\alpha\in (1/2,1]$. Recently, Constantin-Drivas-Nguyen-Pasqualotto \cite[Theorem 1.6]{CDNP} extended it to the case of $\alpha\ge 0$ and $\gamma\in [\alpha, \alpha+1]$ with $\gamma>1$, but they handled it on the periodic domain. 
Recently, the authors \cite{KV_exist19} extends the result \cite[Theorem 1.6]{CDNP}  to the case where smooth solutions connect possibly two different limits at the infinity on the whole space, which implies our solution space $\mathcal{X}_T$ .
\end{remark}

The starting point of the proof of Theorem \ref{thm_inviscid} is to derive a uniform-in-$\nu$ estimate for any large perturbations of viscous shocks to \eqref{inveq}. It is equivalent to  obtain the contraction property of any LARGE perturbations of viscous shocks to \eqref{inveq} with a fixed $\nu=1$:
\begin{align}
\begin{aligned}\label{main}
\left\{ \begin{array}{ll}
        v_t - u_x =0,\\
       u_t+p(v)_x = \Big(\frac{\mu(v)}{v} u_x\Big)_x. \end{array} \right.
\end{aligned}
\end{align}
As in \cite{Kang-V-NS17}, we first introduce the following relative functional $E(\cdot|\cdot)$ to measure the contraction:
\begin{align}
\begin{aligned}\label{psedo}
&\mbox{for any functions } v_1,u_1,v_2,u_2,\\ 
&E((v_1,u_1)|(v_2,u_2)) :=\frac{1}{2}\left(u_1 +\Big(p(v_1)^{\frac{\alpha}{\gamma}}\Big)_x -u_2 -\Big(p(v_2)^{\frac{\alpha}{\gamma}}\Big)_x  \right)^2 +Q(v_1|v_2),
\end{aligned}
\end{align}
where the constants $\gamma, \alpha$ are in \eqref{pressure} and \eqref{mu-def}.
The functional $E$ is associated to the BD entropy (see Bresch-Desjardins \cite{BD_03,BD_06,BDL}). Since $Q(v_1|v_2)$ is positive definite, \eqref{psedo} is also positive definite, that is, for any functions $(v_1,u_1)$ and $(v_2,u_2)$ we have $E((v_1,u_1)|(v_2,u_2))\ge 0$, and 
\[
\quad E((v_1,u_1)|(v_2,u_2))= 0~\mbox{a.e.} \quad\Leftrightarrow\quad (v_1,u_1)=(v_2,u_2)~\mbox{a.e.}
\]
The following result provides a contraction property  measured by the relative functional \eqref{psedo}. 

\begin{theorem}\label{thm_general}
For any $\gamma>1$ and $\alpha, b>0$ satisfying $\alpha\le \gamma \le \alpha +1$, consider the system \eqref{main} with \eqref{pressure}-\eqref{mu-def}.  For a given constant state $(v_-,u_-)\in\bbr^+\times\bbr$, 
there exist  constants  $\eps_0, \delta_0>0$ such that the following is true.\\
For any $\eps<\eps_0$, $\delta_0^{-1}\eps<\lambda<\delta_0$, and any $(v_+,u_+)\in\bbr^+\times\bbr$ satisfying \eqref{end-con} with $|p(v_-)-p(v_+)|=\eps$, there exists  a smooth monotone function $a:\bbr\to\bbr^+$ with $\lim_{x\to\pm\infty} a(x)=1+a_{\pm}$ for some  constants $a_-, a_+$ with $|a_+-a_-|=\lambda$ such that the following holds.\\
Let $\tilde U:=(\tilde v,\tilde u)$ be the viscous shock connecting $(v_-,u_-)$ and $(v_+,u_+)$ as a solution of \eqref{shock_0} with $\nu=1$.
For a given $T>0$, let $U:=(v,u)$ be a solution in $\mathcal{X}_T$ to \eqref{main} with a initial datum $U_0:=(v_0,u_0)$ satisfying $\int_{-\infty}^{\infty} E(U_0| \tilde U) dx<\infty$. Then there exists a shift $X\in W^{1,1}((0,T))$ such that 
\begin{align}
\begin{aligned}\label{cont_main}
&\int_{-\infty}^{\infty} a(x) E\big(U(t,x+X(t))| \tilde U(x)\big) dx \\
&\qquad  +\delta_0\frac{\eps}{\lambda} \int_{0}^T \int_{-\infty}^{\infty}  |\sigma a' (x)| Q\left(v(t,x+X(t))|\tilde v(x)\right) dx dt \\
&\qquad +\delta_0 \int_{0}^T \int_{-\infty}^{\infty}a(x) v^{\gamma-\alpha}(t,x+X(t))\big|\partial_x\big(p(v(t,x+X(t)))-p(\tilde v(x))\big)\big|^2dxdt  \\
&\quad\le  \int_{-\infty}^{\infty} a(x) E\big(U_0(x)| \tilde U(x)\big) dx,
\end{aligned}
\end{align}
and 
\begin{align}
\begin{aligned} \label{est-shift}
&|\dot X(t)|\le \frac{1}{\eps^2}(1 + f(t)),\\
&\mbox{for some positive function $f$ satisfying}\quad\|f\|_{L^1(0,T)} \le\frac{2\lambda}{\delta_0\eps}\int_{-\infty}^{\infty} E(U_0| \tilde U) dx.
\end{aligned}
\end{align}
\end{theorem}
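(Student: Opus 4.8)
The plan is to establish the pointwise‑in‑time differential inequality underlying \eqref{cont_main} by the shifted weighted relative‑entropy method of \cite{Kang-V-NS17}, now carried out for the whole range $\alpha\le\gamma\le\alpha+1$ rather than only $\gamma=\alpha$. \emph{First}, I would pass to the effective (``BD'') velocity: since $v_t=u_x$ and $\frac{\mu(v)}{v}=bv^{-\alpha-1}$, one has $\bigl(\tfrac{\mu(v)}{v}u_x\bigr)_x=\partial_t\partial_x\psi(v)$ with $\psi'(v)=\frac{\mu(v)}{v}$, so that $\partial_t\bigl(u-\partial_x\psi(v)\bigr)+\partial_x p(v)=0$; up to normalization the new unknown $h:=u+\partial_x\bigl(p(v)^{\alpha/\gamma}\bigr)$ (note $p(v)^{\alpha/\gamma}=v^{-\alpha}$) solves an equation of transport--pressure type while $v$ is governed by a degenerate parabolic equation, and $E(U_1|U_2)=\tfrac12|h_1-h_2|^2+Q(v_1|v_2)$ is exactly the relative entropy of the strictly convex entropy $\tfrac12 h^2+Q(v)$ of this reduced system. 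At this stage I would also record the quantitative properties of the viscous shock $\tilde U=(\tilde v,\tilde u)$ of \eqref{shock_0} with $\nu=1$ — monotonicity, $|\tilde v_x|\sim\e^2 e^{-c\e|x|}$, $|p(\tilde v)-p(v_\pm)|\lesssim\e e^{-c\e|x|}$, and the ODE relations among $\tilde v_x$, $\tilde h_x$ and $p(\tilde v)-p(v_\pm)$ — uniformly for $\alpha\le\gamma\le\alpha+1$, $\e<\e_0$, and construct the weight $a$ so that $a'\ge0$, $\int_{\bbr} a'\sim\lambda$, and $a'\sim\frac{\lambda}{\e}|\tilde v_x|$.

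\emph{Next}, I would differentiate the shifted weighted functional $\mathcal F(t):=\int_{\bbr}a(x)\,E\bigl(U(t,x+X(t))\,|\,\tilde U(x)\bigr)\,dx$ in time. Using the reduced system for $U$, the shock ODE for $\tilde U$, and several integrations by parts, one obtains a decomposition $\frac{d}{dt}\mathcal F(t)=\dot X(t)\,\mathbf Y(U)+\mathcal B(U)-\mathcal G(U)$, in which $\mathbf Y$ is affine‑linear in the perturbation $U-\tilde U$, $\mathcal G\ge0$ collects the good dissipative terms — the two appearing on the left of \eqref{cont_main}, i.e. $\delta_0\frac{\e}{\lambda}\int|\sigma a'|\,Q(v|\tilde v)$ and $\delta_0\int a\,v^{\gamma-\alpha}|\partial_x(p(v)-p(\tilde v))|^2$, together with a term controlling $\int|\tilde v_x|\,|h-\tilde h|^2$ — and $\mathcal B$ collects the remaining ``bad'' terms, each of which carries an explicit factor of $\tilde v_x$ (hence of $\e$) or of $a'$.

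\emph{Then} I would fix the shift $X$ as the solution of the ODE obtained by setting to zero the leading dangerous linear part of $\dot X\,\mathbf Y+\mathcal B$, schematically $\dot X(t)=-\frac{1}{\e^2}\int_{\bbr}(\cdots)\,\bigl(p(v)-p(\tilde v)\bigr)(t,x+X(t))\,dx$; this ODE is solved by a standard fixed‑point argument (its right‑hand side being continuous but not Lipschitz in $X$), and, together with the a priori bound $\mathcal F(t)\le\mathcal F(0)$, it yields \eqref{est-shift} and the $W^{1,1}$ regularity of $X$. It then remains to prove, for a.e. $t$, that $\dot X(t)\,\mathbf Y(U)+\mathcal B(U)\le\mathcal G(U)$. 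Because the perturbation may be large, linearization is insufficient: I would split the $x$‑integral according to whether $|p(v)-p(\tilde v)|$ lies below or above a threshold of size $\sim\sqrt{\e}$. On the small region the relative quantities are comparable to their quadratic Taylor parts up to errors of higher order in $\e$, and the problem reduces to proving negativity of an explicit quadratic form in the two scalars $p(v)-p(\tilde v)$ and $h-\tilde h$; this is obtained from a weighted Poincaré inequality with weight $\propto a'$, after completing the square in $h-\tilde h$ and optimizing $\delta_0$ and $\lambda$ against $\e$ — which is exactly the role of the restriction $\delta_0^{-1}\e<\lambda<\delta_0$. On the large region one instead uses the super‑linear growth of $v\mapsto Q(v|\tilde v)$ together with $\delta_0\int a\,v^{\gamma-\alpha}|\partial_x(p(v)-p(\tilde v))|^2$ to absorb the bad terms, exploiting the uniform smallness of $\tilde v_x$. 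Integrating the resulting differential inequality in time and using $a\ge c>0$ then gives \eqref{cont_main}.

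\emph{The main obstacle} is the quadratic‑form/Poincaré step for the full range $\alpha\le\gamma\le\alpha+1$ rather than only $\gamma=\alpha$. For $\gamma>\alpha$ the good parabolic weight $v^{\gamma-\alpha}$ degenerates as $v\to\infty$ (towards vacuum), so the dissipation no longer controls $\partial_x(p(v)-p(\tilde v))$ uniformly, and several of the bad terms produced when differentiating $\mathcal F$ carry extra positive powers of $v$. Closing the estimate therefore requires new pointwise algebraic inequalities relating $p'(v)$, $Q''(v)$ and $v^{\gamma-\alpha}$ that hold precisely when $\alpha\le\gamma\le\alpha+1$, together with a careful tracking of the $v$‑dependent coefficients through the weighted Poincaré inequality; proving these sharp inequalities is the principal new technical content, the rest of the scheme being structurally as in \cite{Kang-V-NS17}.
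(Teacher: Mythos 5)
Your overall scheme is the right one (BD effective velocity $h=u+\partial_x(p(v)^{\alpha/\gamma})$, weighted relative entropy with $a'\sim\frac{\lambda}{\eps}|\tilde v'|$, a shift, truncation of the large values of $|p(v)-p(\tilde v)|$, and a Poincar\'e-type inequality on the layer), but two steps as you describe them would not close. First, the shift: you propose to choose $\dot X$ by cancelling ``the leading dangerous linear part.'' The paper instead takes the nonlinear feedback $\dot X=\Phi_\eps\bigl(Y(U^X)\bigr)\bigl(2|\mathcal J^{bad}(U^X)|+1\bigr)$ (see \eqref{X-def}), built on the functional $Y$ appearing in front of $\dot X$ in \eqref{ineq-0}. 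The entire large-perturbation argument rests on the resulting dichotomy: when $|Y(U)|\ge\eps^2$ the term $\dot X\,Y$ absorbs \emph{all} bad terms outright, and when $|Y(U)|\le\eps^2$ this constraint --- which is genuinely nonlinear because $Y$ contains the quadratic terms $\int a'\eta(U|\tilde U_\eps)$ --- is the \emph{only} source of smallness, yielding $\int|a'|\,Q(v|\tilde v_\eps)\,d\xi\lesssim\eps^2/\lambda$ (Lemma \ref{lemmeC2}). A shift that merely kills the linear part provides no such conditional estimate, and then your ``small region'' analysis has no smallness to feed on, since $U_0$ is arbitrary.

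Second, and more fundamentally, the reduction to ``negativity of an explicit quadratic form'' is not available. Even under $|Y(U)|\le\eps^2$ the perturbation in the layer is only of size $\eps/\lambda$, which is large compared with the shock strength $\eps$; after the normalization $W=\frac{\lambda}{\eps}\bigl(p(v)-p(\tilde v_\eps)\bigr)$ the profile is $O(1)$ in $L^2(0,1)$, the quadratic form is only \emph{critically} nonpositive (the Poincar\'e constant is sharp), and the cubic contributions of $Q(v|\tilde v_\eps)$ are of the same order as the quadratic ones. One must therefore expand to third order and prove the nonlinear Poincar\'e inequality \eqref{Winst} of Proposition \ref{prop:W}, which is the analytic heart of the proof; a quadratic-form argument fails here. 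Two further points where your outline deviates from what is needed: the truncation threshold must be a constant $\delta_3$ independent of $\eps$ (it is fixed by the constant $C_2$ of Lemma \ref{lemmeC2}, and the constants of Proposition \ref{prop_out} depend on it), not $\sqrt\eps$; and for $\gamma\ne\alpha$ the maximization in $h$ can only be performed on $\{p(v)-p(\tilde v_\eps)\le\delta_3\}$, the complementary region (small $v$, large $p(v)$) being handled directly by $\mathcal G_1^-$ and the estimate \eqref{ns1} --- this is precisely the new ingredient beyond \cite{Kang-V-NS17} that your closing paragraph gestures at but does not pin down.
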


\begin{remark}
1. Theorem \ref{thm_general} provides a contraction property for viscous shocks with suitably small amplitude parametrized by $\eps=|p(v_-)-p(v_+)|$. This smallness together with \eqref{end-con} implies $|v_--v_+|=\mathcal{O}(\eps)$ and $|u_--u_+|=\mathcal{O}(\eps)$. For such a fixed small shock, the contraction holds for any large solutions  to \eqref{main}, without any smallness condition imposed on $U_0$. This implies that the contraction still holds for any large solutions  to \eqref{inveq}, which provides a weak compactness to prove Theorem \ref{thm_inviscid} as the inviscid limit problem  ($\nu\to0$).\\
2. In \eqref{cont_main}, the dissipation terms will be used to show the convergence of $\{u^\nu\}_{\nu>0}$ in \eqref{wconv}.
\end{remark}

\begin{remark}
The contraction property is non-homogenous in $x$, as measured by the function $x\to a(x)$. This is consistant with the hyperbolic case (with $\nu=0$). In the  hyperbolic case, it was shown in \cite{Serre-Vasseur} that a homogenous contraction cannot hold  for the full Euler system. However, the contraction property is true if we consider a non-homogenous pseudo-distance \cite{Vasseur-2013} providing the so-called $a$-contraction \cite{KVARMA}. Our main result shows that the non-homogeneity of the pseudo-distance can be chosen of a similar size as the strength of the shock (as measured by the quantity $\lambda$).
\end{remark}

The rest of the paper is as follows. We explain main ideas of proofs of the mains results in Section \ref{sec:idea}. In Section \ref{sec:pre}, we provides a transformation of the system \eqref{main}, and an equivalent version of Theorem \ref{thm_general}, and useful inequalities. Section \ref{section_theo} is dedicated to the proof of Theorem \ref{thm_main}. Finally, Section \ref{sec:main} is dedicated to the proof of the main Theorem.

\section{Ideas of the proof.}\label{sec:idea}
\setcounter{equation}{0}

We describe in this section the methodology and main ideas of our results.
\vskip0.5cm
\noindent{\bf Uniform estimates with respect to the viscosity.}
The main results of this paper boil down to the proof of stability of the viscous shocks to the Navier-Stokes equations UNIFORMLY with respect to the strength of the viscosity. This can be obtained by considering only the case of the viscosity $\nu=1$, replacing the notion of stability by the notion of contraction, valid even for large perturbations (Theorem \ref{thm_general}). 

Indeed, if $(v^{\nu}, u^{\nu})$ is a solution of \eqref{inveq}, then 
$$
v(t,x)=v^{\nu}(\nu t,\nu x), \qquad u(t,x)=u^{\nu}(\nu t,\nu x)
$$
is a solution to \eqref{main}, i.e., the Navier-Stokes equations with $\nu=1$.  Note that, even if the initial perturbation $(v_0^{\nu}-\tilde{v}^\nu,u_0^{\nu}-\tilde{u}^\nu)$ is small, let say of order $\mathcal{E}$, then the  perturbation $(v_0-\tilde{v},u_0-\tilde{u})$ is big (of order $\mathcal{E}/\nu$):
$$
\int_\bbr E(U_0(\xi)|\tilde{U}(\xi))\,d\xi=\frac{1}{\nu}\int_\bbr E_{\nu}(U^\nu_0(\xi)|\tilde{U}^\nu(\xi))\,d\xi=\frac{\mathcal{E}}{\nu},
$$
where $E(\cdot|\cdot)$ is defined in \eqref{psedo}, and the rescaled $\nu$-dependent functional $E_\nu(\cdot|\cdot)$ is defined in \eqref{E_nu}.\\
However, a contraction independent of the size of the perturbation in the case $\nu=1$, as in Theorem \ref{thm_general}, provides, after rescaling,  a similar contraction for any values $\nu$: 
$$
\int_\bbr a(\xi/\nu) E_\nu(U^{\nu}(t,\xi+\nu X(t/\nu))|\tilde{U}^\nu(\xi))\,d\xi\leq \int_\bbr a(\xi/\nu)E_\nu(U_0^\nu|\tilde{U}^\nu)\,d\xi.
$$
This gives a uniform stability result with respect to $\nu$ provided that the weight function $a$ is uniformly bounded from below and from above,  that we have a control on the shift which is independent of the transformation
$$
X(t) \longrightarrow  \nu X\left(\frac{t}{\nu}\right),
$$
and that we have a uniform bound of $\int_\bbr E_\nu(U_0^\nu|\tilde{U}_0)\,d\xi$. The first two conditions are verified by Theorem \ref{thm_general}  thanks to (\ref{est-shift}), and considering $\lambda<1$, and the last one is verified thanks to  the well-prepared initial data \eqref{ini_conv}.

\vskip0.5cm
\noindent{\bf The contraction when $\nu=1$: Theorem \ref{thm_main}.}  This result is a generalization of the result in \cite{Kang-V-NS17} where only the case $\alpha =\gamma$ was considered. The extension introduces severe technical difficulties. A key to the extension is the local minimization explained below.
\vskip0.1cm
{\it Step one:  Considering a new velocity variable.}  We need to control the growth of the perturbation due to the hyperbolic terms (flux functionals). Thanks to the relative entropy method, the linear fluxes are easier to handle (the relative functional of linear quantity vanishes). Therefore, the main hyperbolic quantities to control are the pressure terms depending only on the specific volume $v$. At the core of the method, we are using a generalized Poincar\'e inequality Proposition \ref{prop:W}, first proved in \cite{Kang-V-NS17}. The Navier-Stokes system can be seen as a degenerate parabolic system. But the diffusion is in the other variable, the velocity variable $u$. Bresch and Desjardins (see \cite{BDL, BD_03,BD_06}) showed that compressible Navier-Stokes systems have a natural perturbed velocity quantity associated to the viscosity:
$$
h^\nu=u^\nu+\nu \left(p(v^\nu)^{\frac{\alpha}{\gamma}}\right)_x.
$$
Remarkably, the system in the variables $(v^\nu, h^\nu)$ exhibits a diffusion in the $v$ variable (the Smoluchowski equation), rather than in the velocity variable. 
For this reason, we are working with the natural relative entropy of this system, which corresponds to the usual relative entropy of the associated p-system in the $U^\nu_h=(v^\nu, h^\nu)$ variable:
$$
\eta(U^\nu_h|\tilde{U}^\nu_h)= E_\nu(U^\nu|\tilde{U}^\nu).
$$
For the rest of the proof of this theorem, we consider only $\nu=1$ and work only in the new set of variable $(v,h)$. To simplify the notation,  we denote $U=(v,h)$ from now on.

\vskip0.1cm
{\it Step 2: Evolution of the relative entropy.}
Computing the evolution of the relative entropy in Lemma \ref{lem-rel}, we get 
\begin{eqnarray*}
&& \frac{d}{dt}\int_{-\infty}^{\infty} a(\xi) \eta\big(U(t,\xi+X(t))| \tilde U_\eps (\xi)\big) d\xi\\
 &&\qquad=\dot X(t) Y(U(t,\cdot+X(t))) +\mathcal{J}^{bad}(U(t,\cdot+X(t)))- \mathcal{J}^{good}(U(t,\cdot+X(t))).
\end{eqnarray*}
The functional  $\mathcal{J}^{good}(U)$ is non-negative  (good term) and can be split into three terms (see \eqref{ybg-first}, \eqref{ggd}):
$$
\mathcal{J}^{good}(U)=\mathcal{J}^{good}_1(U)+\mathcal{G}_2(U)+\mathcal{D}(U),
$$
where only $\mathcal{J}^{good}_1(U)$ depends on $h$ (and actually does not depend on $v$). The term $\mathcal{D}(U)$ corresponds to the diffusive term (which depends on $v$ only, thanks to the transformation of the system).

\vskip0.1cm
{\it Step 3: Construction of the shift.}
The shift $X(t)$ produces  the term $\dot{X}(t) Y(U)$.  The key idea of the technique is to take advantage of this term when $Y(U(t,\cdot))$ is not two small, by compensating all the other terms via the choice of the velocity of the shift (see (\ref{X-def})). Specifically, we algebraically ensure  that the contraction holds as long as $|Y(U(t))|\geq\eps^2$. The rest of the analysis is to ensure that when $|Y(U(t))|\leq \eps^2$, the contraction still holds. 

The condition  $|Y(U(t))|\leq \eps^2$ ensures a smallness condition that we want to fully exploit. This is where the non-homogeneity of the semi-norm is crucial. In the  case where the function $a$ is constant,
$Y(U)$ is a linear functional in $U$. The smallness of $Y(U)$ gives only that a certain weighted mean value of $U$ is almost null. However, when $a$ is decreasing, $Y(U)$ becomes convex. The smallness $Y(U(t))\leq \eps^2$ implies, for this fixed time $t$ (See Lemma \ref{lemmeC2} with \eqref{d-weight} and \eqref{tail}):
\begin{equation}\label{small}
\int_{\bbr}\eps e^{-C\eps |\xi|}Q(v(t,\xi +X(t))|\tilde{v}_\eps(\xi))\,d\xi\leq C\left(\frac{\eps}{\lambda}\right)^2.
\end{equation}
This gives a control in $L^2$ for moderate values of $v$, and in  $L^1$ for big values of $v$,  in the layer region ($|\xi-X(t)|\lesssim 1/\eps$). 

The problem now looks, at first glance, as a typical problem of stability with a smallness condition.
There are, however, three major difficulties:  The bad term $\mathcal{J}^{bad}(U)$ has some terms depending on the variable $h$ for which we do not have diffusion, we have some smallness  in $v$, only for a very weak norm, and only localized in the layer region. More importantly, the smallness is measured with respect to the smallness of the shock. It basically says that, considering only the moderate values of $v$:
 the perturbation is not bigger than $\eps/\lambda$ (which is still very big with respect to the size of the shock  $\eps$). Actually, as we will see later, it is not possible to consider only the linearized problem: Third order terms appear in the  expansion using the smallness condition (the energy method involving the linearization would have only second order term in $\eps$).
 
In the argument, for the values of $t$ such that $|Y(U(t))|\leq\eps^2$,  we construct the shift as a solution to the ODE: $\dot X(t)=-Y(U(t,\cdot+X(t)))/\eps^4$. 
 From this point, we forget that $U=U(t,\xi)$ is a solution to \eqref{NS} and $X(t)$ is the shift. That is, we leave out $X(t)$ and the $t$-variable of $U$. Then we show that for any function $U$ satisfying 
 $|Y(U)|\leq \eps^2$, we have 
 \begin{equation}\label{but}
 -\frac{1}{\eps^4}Y^2(U)+\mathcal{J}^{bad}(U)-\mathcal{J}^{good}(U)\leq0.
 \end{equation}
This is the main Proposition \ref{prop:main} (actually, the proposition is slightly stronger to ensure the control of the shift). This implies clearly the contraction. 
From now on, we are focusing on the proof of this statement.

\vskip0.1cm
{\it Step 4: Maximization in $h$ for $v$ fixed.} We need to get rid of the dependence on the $h$ variable from the bad parts $\mathcal{J}^{bad}(U)$. The idea in \cite{Kang-V-NS17}  (for $\gamma=\alpha$) is to maximize the bad term with respect to $h$ for $v$ fixed:
$$
\mathcal{B}(v)=\sup_{h} \left(\mathcal{J}^{bad}(v,h)-\mathcal{J}^{good}_1(h)\right).
$$
We then had an inequality depending only on $v$ and $\partial_x v$ (through $\mathcal{D}(U)$) for which we can apply a generalized Poincar\'e inequality.
This does not work anymore when $\gamma\neq \alpha$. This is because $\mathcal{B}(v)$ involves powers of $p(v)$ which cannot be controlled by the good terms due to big values of $p(v)$. The new idea is to maximize in $h$ ONLY for the fixed values of $v$ such that $p(v)-p(\tilde{v}_\eps)\leq \delta_3$ for a constant $\delta_3$ to be determined (and depending on the Poincar\'e inequality). This  leads to the decomposition (\ref{prop:est}), (\ref{ggd}). The bad terms involving values $p(v)-p(\tilde{v}_\eps)> \delta_3$ can now be controlled using additional information from the unconditional estimate $|Y(U)|\leq \eps^2$ (see \eqref{n12}, \eqref{ns1}).

\vskip0.1cm
{\it Step 5: Expansion in $\eps$.}  Although we have no control on the supremum of $|p(v)-p(\tilde{v}_\eps)|$, we can control independently the contribution of the values $|p(v)-p(\tilde{v}_\eps)|\geq \delta_3$ in Proposition \ref{prop_out} (for the same $\delta_3$ related to the maximization process above. The coefficient $\delta_3$ can be chosen very small, but  independent of $\eps$ and of $\eps/\lambda$). The last step is to perform an expansion in the size of the shock $\eps$ every small, uniformly in $v$ (but for a fixed small value of $\delta$). As in \cite{Kang-V-NS17}, the expansion has to be done up to the third order. It leads to the exact same generic expression \eqref{Winst}. The generalized nonlinear Poincar\'e inequality, Proposition \ref{prop:W} concludes the proof.

\vskip0.5cm
\noindent{\bf The inviscid limit:  Theorem \ref{thm_inviscid}.}  

We  have now a stability result uniform with respect to the viscosity. It is natural to expect a stability result on the corresponding inviscid limit. The result, however, is not immediate. Several difficulties have to be overcome. First, due to the BD representation as above, the stability result for $\nu$ fixed is on the quantities:
$$
U^\nu_h=(v^\nu, h^\nu),\quad h^\nu=u^\nu +\nu\left(p(v^\nu)^{\frac{\alpha}{\gamma}}\right)_x.
$$ 
This is the reason we need a compatibility condition on the family of initial values $U_0^\nu$. This also leads to a very weak convergence (in measure in $(t,x)$ only). The next difficulty is that for small values of $v$,  the relative entropy control only the $L^1$ norm of $Q(v)=1/v^{\gamma-1}$. Therefore the pressure $p(v)=1/v^\gamma$ cannot be controlled at all. Therefore, we do not control the time derivative of $u$ in any distributional sense in $x$. Moreover, we have to study  carefully the effect of small and big values of $v$ together with big values of $|u|$  through truncations (see (\ref{v-trunc}) and (\ref{h-trunc})).  This is particularly important to pass to the limit on the shift in the contraction inequality (note that the shift converges only in $L^p_{\mathrm{loc}}(\bbr^+)$, for $1\leq p<\infty$). Finally, it has to be shown that the shift converges to $\sigma t$ when the perturbation converges to 0. This can be obtained, thanks to the convergence of $v$ in $C^0(\bbr^+, W^{-s,1}_{\mathrm{loc}}(\bbr))$. It is interesting to note that the continuity (in time) of $v$ is enough. We do not obtain any such control on $u$ (nor $h$).

\section{Preliminaries} \label{sec:pre}
\setcounter{equation}{0}

\subsection{Transformation of the system \eqref{main}} 
We here provides an equivalent version of Theorem \ref{thm_general} as in \cite{Kang-V-NS17}.
First of all, since the strength of the coefficient $b$ in $\mu(v)$ does not affect our analysis, as in \cite{Kang-V-NS17}, we set $b=\gamma$ (for simplification) and introduce a new effective velocity 
\[
h:=u + \Big(p(v)^{\frac{\alpha}{\gamma}}\Big)_x.
\]
The system \eqref{main} with $\mu(v)=\gamma v^{-\alpha}$ is then  transformed into
\begin{align}
\begin{aligned}\label{NS_1}
\left\{ \begin{array}{ll}
       v_t - h_x = -\Big( v^\beta p(v)_x\Big)_{x}\\
       h_t+p(v)_x =0, \end{array} \right.
\end{aligned}
\end{align}
where $\beta:=\gamma-\alpha$.
Notice that the above system has a parabolic regularization on the specific volume, contrary to the regularization on the velocity for the original system \eqref{main}. This is better for our analysis, since the hyperbolic part of the system is linear in $u$ (or $h$) but nonlinear in $v$ (via the pressure).
This effective velocity  was first introduced by Shelukhin \cite{Shel}  for $\alpha=0$, and in the general case (in Eulerian coordinates)  by Bresch-Desjardins \cite{BD_03,BD_06,BDL}, and Haspot \cite{H1,H2,H3}.  It was also used  in \cite{VY}.

As mentioned in Theorem \ref{thm_general}, we consider  shock waves with suitably small amplitude $\eps$. For that, let $(\tilde v_\eps,\tilde u_\eps)(x-\sigma_\eps t)$ denote a shock wave with amplitude $|p(v_-)-p(v_+)|=\eps$ as a solution of \eqref{shock_0} with $\mu(v)=\gamma v^{-\gamma}$. Then, setting $\tilde h_\eps:=\tilde u_\eps + \Big(p(\tilde v_\eps)^{\frac{\alpha}{\gamma}}\Big)_x$, the shock wave $(\tilde v_\eps,\tilde h_\eps)(x-\sigma_\eps t)$ satisfies
\begin{align}
\begin{aligned}\label{small_shock1} 
\left\{ \begin{array}{ll}
       -\sigma_\eps \tilde v_{\eps}' - \tilde h_{\eps}' =-\Big( {\tilde v_\eps}^\beta p(\tilde v_\eps)' \Big)'\\
       -\sigma_\eps \tilde h_{\eps}'+p( \tilde v_\eps)'=0\\
       \lim_{\xi\to\pm\infty}(\tilde v_\eps, \tilde h_\eps)(\xi)=(v_{\pm}, u_\pm). \end{array} \right.
\end{aligned}
\end{align} 

For simplification of our analysis, we rewrite \eqref{NS_1} into the following system, based on the change of variable $(t,x)\mapsto (t, \xi=x-\sigma_\eps t)$: 
\begin{align}
\begin{aligned}\label{NS}
\left\{ \begin{array}{ll}
       v_t -\sigma_\eps v_{\xi} - h_{\xi} = -\Big( v^\beta p(v)_{\xi}\Big)_{\xi}\\
       h_t-\sigma_\eps h_{\xi}+p(v)_{\xi} =0\\
       v|_{t=0}=v_0,\quad h|_{t=0}=u_0. \end{array} \right.
\end{aligned}
\end{align}

For the global-in-time existence of solutions to \eqref{NS}, we consider the function space:
\begin{align}
\begin{aligned}\label{sol-HT}
       \mathcal{H}_T := \{ (v,h)\in\bbr^+\times\bbr~ &|~ v-\underline v \in C (0,T; H^1(\bbr)), \\
       &\qquad\qquad ~h-\underline u \in C (0,T; L^2(\bbr)),~  v^{-1}\in L^\infty((0,T)\times \bbr) \} ,
\end{aligned}
\end{align}
where $\underline v$ and $\underline u$ are as in \eqref{sm-end}.

Theorem \ref{thm_general} is a direct consequence of the following theorem on the contraction of shocks to the system \eqref{NS}. \\

\begin{theorem}\label{thm_main}
Assume $\gamma>1$ and $\alpha>0$ satisfying $\alpha\le \gamma \le \alpha +1$.
For a given constant state $(v_-,u_-)\in\bbr^+\times\bbr$, there exist constants $\eps_0,\delta_0>0$ such that the following holds.\\
For any $\eps<\eps_0$, $\delta_0^{-1}\eps<\lambda<\delta_0$, and any $(v_+,u_+)\in\bbr^+\times\bbr$ satisfying \eqref{end-con} with $|p(v_-)-p(v_+)|=\eps$, there exists a  smooth monotone function $a:\bbr\to\bbr^+$ with $\lim_{x\to\pm\infty} a(x)=1+a_{\pm}$ for some  constants $a_-, a_+$ with $|a_--a_+|=\lambda$ such that the following holds.\\
Let $\tilde U_\eps:=(\tilde v_\eps,\tilde h_\eps)$ be a viscous shock connecting $(v_-,u_-)$ and $(v_+,u_+)$ as a solution of \eqref{small_shock1}.
For a given $T>0$, let $U:=(v,h)$ be a solution in $\mathcal{H}_T$ to \eqref{NS} with a initial datum $U_0:=(v_0,u_0)$ satisfying $\int_{-\infty}^{\infty} \eta(U_0| \tilde U_\eps) dx<\infty$, there exists a shift function $X\in W^{1,1}((0,T))$ such that 
\begin{align}
\begin{aligned}\label{cont_main2}
&\int_\bbr  a(\xi) \eta\big(U(t,\xi+X(t))| \tilde U_\eps (\xi)\big) d\xi \\
&\qquad  +\delta_0\frac{\eps}{\lambda} \int_{0}^{T}\int_{-\infty}^{\infty}  \sigma a' (\xi) Q\left(v(t,\xi+X(t))|\tilde v_\eps(\xi)\right) d\xi dt \\
&\qquad +\delta_0 \int_{0}^{T}\int_{-\infty}^{\infty}a(\xi) v^{\gamma-\alpha}(t,\xi+X(t))\Big|\partial_x\big(p(v(t,\xi+X(t)))-p(\tilde v_\eps(\xi))\big)\Big|^2d\xi dt  \\
&\quad\le   \int_{\bbr} a(\xi) \eta \big(U_0| \tilde U_\eps \big) d\xi,
\end{aligned}
\end{align}
and
\begin{align}
\begin{aligned} \label{est-shift1}
&|\dot X(t)|\le \frac{1}{\eps^2}\Big(f(t) + C\int_{-\infty}^{\infty} \eta(U_0|\tilde U_\eps ) d\xi +1  \Big) \quad \mbox{ for \textit{a.e.} }t\in[0,T] ,\\
&\mbox{for some positive function $f$ satisfying}\quad\|f\|_{L^1(0,T)} \le \frac{2\lambda}{\delta_0\eps}\int_{-\infty}^{\infty} \eta(U_0| \tilde U_\eps) d\xi.
\end{aligned}
\end{align}
\end{theorem}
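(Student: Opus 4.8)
\textbf{Sketch of proof of Theorem~\ref{thm_main}.}
The plan is to run the relative entropy method in the effective (BD) variables $(v,h)$ governing \eqref{NS}, extending \cite{Kang-V-NS17} from $\gamma=\alpha$ to $\alpha\le\gamma\le\alpha+1$. First I would fix the weight $a$ to be a monotone perturbation of a constant with total variation $\lambda$ and with $a'$ concentrated in the shock layer; its precise profile is pinned down only later by the Poincar\'e-type estimate. Differentiating the $a$-weighted relative entropy along a solution $U=(v,h)$ of \eqref{NS} and an unspecified shift $X(t)$, Lemma \ref{lem-rel} produces the decomposition
\[
\ddt\int_\bbr a(\xi)\,\eta\big(U(t,\xi+X(t))\,|\,\tilde U_\eps(\xi)\big)\,d\xi=\dot X(t)\,Y(U)+\mathcal{J}^{bad}(U)-\mathcal{J}^{good}(U),
\]
where $\mathcal{J}^{good}=\mathcal{J}^{good}_1+\mathcal{G}_2+\mathcal{D}\ge 0$, with $\mathcal{J}^{good}_1$ depending only on $h$ and $\mathcal{D}$ the parabolic term depending only on $v$ and $\partial_\xi v$ — this routing of the diffusion into the $v$-variable is exactly why one works with \eqref{NS}. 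The two dissipation integrals appearing in \eqref{cont_main2} will be extracted from $\mathcal{G}_2$, $\mathcal{D}$ and the $a'$-weighted part of $\mathcal{J}^{good}$.

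Next I would build the shift. When $|Y(U(t))|>\eps^2$ I choose $\dot X(t)$ so as to algebraically cancel $\mathcal{J}^{bad}-\mathcal{J}^{good}$ while leaving a negative remainder; this uses the largeness of $|Y|$ and dictates the pointwise bound on $|\dot X|$. When $|Y(U(t))|\le\eps^2$ I instead solve the ODE $\dot X(t)=-Y(U(t,\cdot+X(t)))/\eps^4$ (cf.\ \eqref{X-def}); one checks its right-hand side is locally Lipschitz in the shift variable, so $X\in W^{1,1}(0,T)$ exists, and this regime contributes $-Y^2(U)/\eps^4$ to the derivative. Assembling the two cases, the contraction \eqref{cont_main2} reduces to the \emph{time-independent} pointwise inequality \eqref{but}, namely
\[
-\frac{1}{\eps^4}Y^2(U)+\mathcal{J}^{bad}(U)-\mathcal{J}^{good}(U)\le 0\qquad\text{for every }U\text{ with }|Y(U)|\le\eps^2,
\]
which is Proposition \ref{prop:main}; from this point the solution and the shift may be forgotten entirely. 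The shift bound \eqref{est-shift1} then follows by combining the two regimes, with the relative entropy controlling the $L^1_t$ envelope $f$ through an absorption argument.

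To prove Proposition \ref{prop:main} I would first extract from the constraint, via Lemma \ref{lemmeC2} and the profile of $a$ (i.e.\ \eqref{d-weight}, \eqref{tail}), the localized smallness \eqref{small}: an $L^2$ control of $Q(v|\tilde v_\eps)$ for moderate $v$ and an $L^1$ control for large $v$, weighted by $\eps e^{-C\eps|\xi|}$ and hence confined to the layer $|\xi-X|\lesssim 1/\eps$. Then I eliminate $h$ by maximizing $\mathcal{J}^{bad}-\mathcal{J}^{good}_1$ over $h$ for $v$ fixed — but, unlike in \cite{Kang-V-NS17}, \emph{only} on the set where $p(v)-p(\tilde v_\eps)\le\delta_3$, yielding a functional of $v$ and $\partial_\xi v$ alone; the complementary set $\{p(v)-p(\tilde v_\eps)>\delta_3\}$ is handled separately in Proposition \ref{prop_out}, relying solely on the unconditional bound $|Y(U)|\le\eps^2$. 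On the moderate region I then expand in the shock strength $\eps$ up to third order — the linearized (second-order) analysis does not suffice, genuine cubic terms appear — to reach the same generic quadratic-plus-cubic expression \eqref{Winst} as in \cite{Kang-V-NS17}, and finally invoke the generalized nonlinear Poincar\'e inequality Proposition \ref{prop:W} to close the sign.

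The main obstacle is precisely this large-pressure region: for $\gamma\ne\alpha$ the full maximization $\sup_h(\mathcal{J}^{bad}(v,h)-\mathcal{J}^{good}_1(h))$ generates powers of $p(v)$ that are not dominated by $\mathcal{D}$ when $v$ is large, so the Poincar\'e inequality cannot be applied globally in $v$ as it was when $\gamma=\alpha$. The resolution — truncating the maximization at $p(v)-p(\tilde v_\eps)\le\delta_3$ and absorbing the remainder into the $\eps^2$ gained from $|Y(U)|\le\eps^2$, with $\delta_3$ small but independent of $\eps$ and of $\eps/\lambda$ — is the delicate point, and it forces a careful logical ordering of the constants: the Poincar\'e inequality fixes $\delta_3$, which in turn fixes $\eps_0$ and $\delta_0$.
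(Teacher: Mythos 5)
Your proposal follows essentially the same route as the paper: the BD-variable relative entropy identity of Lemma \ref{lem-rel}, a shift driven by $Y$, reduction to the time-independent inequality of Proposition \ref{prop:main}, the truncated-in-$v$ maximization over $h$, Proposition \ref{prop_out} for the region $|p(v)-p(\tilde v_\eps)|\ge\delta_3$, and the third-order expansion closed by Proposition \ref{prop:W}, with the same ordering of the constants $\delta_3,\eps_0,\delta_0$. The only cosmetic difference is that you describe the shift piecewise in the two regimes, whereas the paper defines it through the single continuous ODE \eqref{X-def} with the interpolating function $\Phi_\eps$, which is what actually guarantees well-posedness of the shift equation.
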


\begin{remark}
1. In \cite{Kang-V-NS17}, the authors proved Theorem \ref{thm_general} in the case of $\alpha=\gamma$. Therefore, it suffices to prove the remaining cases where $0<\alpha< \gamma \le \alpha +1$. That is, $\beta=\gamma-\alpha\in(0,1]$.\\
2. Notice that it is enough to prove Theorem \ref{thm_main}  for  1-shocks. Indeed, the result for 2-shocks is obtained by the change of variables $x\to -x$, $u\to -u$, $\sigma_\eps\to -\sigma_\eps$. \\
Therefore, from now on, we only consider a 1-shock $(\tilde v_\eps,\tilde h_\eps)$, i.e., $v_->v_+$, $u_->u_+$, and
\beq\label{RH-con}
\sigma_\eps=- \sqrt{-\frac{p(v_+)-p(v_-)}{v_+-v_-}}.
\eeq
\end{remark}

\begin{remark} \label{rem-HT}
As mentioned in Remark \ref{rem-sol}, we consider the solution $(v,u)\in \mathcal{X}_T$ to \eqref{main}. Then, \eqref{NS} admits the solution $(v,h)$ in $\mathcal{H}_T$. Indeed,
since $v_t=u_x \in L^2(0,T; H^1(\bbr))$ by $\eqref{main}_1$, we have $v-\underline v \in C (0,T; H^1(\bbr))$. 
To show $h-\underline u \in C (0,T; L^2(\bbr))$, we first find that for $(v,u)\in \mathcal{X}_T$,
\[
h-\underline u=u-\underline u + \frac{\alpha}{\gamma} p(v)^{\frac{\alpha}{\gamma}-1} v_x \in L^\infty (0,T; L^2(\bbr)).
\]
Moreover, together with the fact that $v\in L^\infty((0,T)\times \bbr)$ by Sobolev embedding, we find that
\begin{align*}
\begin{aligned} 
& u_t = -p'(v) v_x + \frac{d}{dv}\Big(\frac{\mu(v)}{v}\Big)v_x u_x + \frac{\mu(v)}{v} u_{xx} \in L^2 (0,T; L^2(\bbr)),\\
&\Big(p(v)^{\frac{\alpha}{\gamma}-1} v_x \Big)_t = (\frac{\alpha}{\gamma}-1)p(v)^{\frac{\alpha}{\gamma}-2} v_t v_x + p(v)^{\frac{\alpha}{\gamma}-1} v_{xt}  \in L^2 (0,T; L^2(\bbr)),
\end{aligned}
\end{align*}
which implies $h_t \in L^2 (0,T; L^2(\bbr))$, and therefore $h-\underline u \in C (0,T; L^2(\bbr))$.
\end{remark}

\subsection{Global and local estimates on the relative quantities}
We here present useful inequalities on $Q$ and $p$ that are crucially used for the proofs of main results. First, the following lemma provides some global inequalities on the relative function $Q(\cdot|\cdot)$ corresponding to the convex function $Q(v)=\frac{v^{-\gamma+1}}{\gamma-1}$, $v>0$, $\gamma>1$. 
\begin{lemma}\label{lem-pro}
For given constants $\gamma>1$, and $v_->0$, there exist constants $c_1, c_2>0$ such that  the following inequalities hold.\\
1)  For any $w\in (0,v_-)$,
\begin{align}
\begin{aligned}\label{rel_Q}
& Q(v|w)\ge c_1 |v-w|^2,\quad \mbox{for all } 0<v\le 3v_-,\\
 & Q(v|w)\ge  c_2 |v-w|,\quad  \mbox{for all } v\ge 3v_-.
\end{aligned}
\end{align}
2) Moreover if $0<w\leq u\leq v$ or $0<v\leq u\leq w$ then 
\beq\label{Q-sim}
Q(v|w)\geq Q(u|w),
\eeq
and for any $\delta_*>0$ there exists a constant $C>0$ such that if, in addition, 
$v_->w>v_--\delta_*/2$ and $|w-u|>\delta_*$, we have
\beq\label{rel_Q1}
Q(v|w)-Q(u|w)\geq C|u-v|. 
\eeq
3)  For any $w\in (v_-/4, v_-)$,
\beq\label{pressure2}
|p(v)-p(w)| \le c_5 |v-w|,\quad \mbox{for all } v\ge v_-/2,
\eeq
\end{lemma}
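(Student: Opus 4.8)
\textbf{Proof plan for Lemma \ref{lem-pro}.}

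The three parts all reduce to elementary one-variable calculus with the explicit convex function $Q(v)=\frac{v^{-\gamma+1}}{\gamma-1}$, so the strategy is to exploit the explicit form of $Q'(v)=-v^{-\gamma}=-p(v)$ and $Q''(v)=\gamma v^{-\gamma-1}>0$ and work on the compact set $w\in[0,v_-]$. For part 1), I would write $Q(v|w)=\int_w^v (Q'(s)-Q'(w))\,ds=\int_w^v (v-s)Q''(s)\,ds$ and bound $Q''$ from below on the relevant ranges. For $0<v\le 3v_-$ both $v$ and $w$ lie in $(0,3v_-]$, so $Q''(s)\ge \gamma(3v_-)^{-\gamma-1}=:2c_1$ on the segment between $w$ and $v$, giving $Q(v|w)\ge c_1|v-w|^2$ after integrating; the constant is uniform in $w\in(0,v_-)$. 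For $v\ge 3v_-$ the quadratic bound degenerates, so instead I use that $Q$ is decreasing and the tangent line at $w$ has slope $Q'(w)=-w^{-\gamma}\le -v_-^{-\gamma}$, hence for $v\ge 3v_-\ge w+2v_-$ we get $Q(v|w)=Q(v)-Q(w)-Q'(w)(v-w)\ge -Q'(w)(v-w)-Q(w)\ge v_-^{-\gamma}(v-w) - \frac{v_-^{-\gamma+1}}{\gamma-1}$, and since $v-w\ge 2v_-$ one absorbs the constant term into a fraction of the linear term to obtain $Q(v|w)\ge c_2|v-w|$ with $c_2$ depending only on $\gamma,v_-$.

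For part 2), monotonicity \eqref{Q-sim} is immediate from the integral representation $Q(v|w)=\int_w^v (Q'(s)-Q'(w))\,ds$: when $0<w\le u\le v$, the integrand $Q'(s)-Q'(w)$ has a fixed sign (positive, since $Q'$ is increasing) on $[w,v]$, so extending the upper limit from $u$ to $v$ only adds nonnegative contributions — hence $Q(v|w)\ge Q(u|w)$; the case $0<v\le u\le w$ is symmetric (the integrand is negative, integrating backwards adds positive contributions). For \eqref{rel_Q1}, $Q(v|w)-Q(u|w)=\int_u^v (Q'(s)-Q'(w))\,ds$, and under the hypotheses $v_-/2<w<v_-$ and $|w-u|>\delta_*$ I distinguish the cases $u<w$ and $u>w$. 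In either case $s$ ranges over an interval at distance $>\delta_*$ from... actually $s$ ranges between $u$ and $v$ with $u$ on one side of $w$ and $v$ further in the same direction by \eqref{Q-sim}'s setup, so $|Q'(s)-Q'(w)|\ge$ some positive constant determined by $\delta_*$, $v_-$ (using that $Q''$ is bounded below on the compact range and $|s-w|\ge$ const on at least half the interval, or more simply monotonicity of $Q'$ plus $|u-w|>\delta_*$); integrating yields $\ge C|u-v|$.

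For part 3), with $w\in(v_-/4,v_-)$ and $v\ge v_-/2$, I write $p(v)-p(w)=\int_w^v p'(s)\,ds=-\gamma\int_w^v s^{-\gamma-1}\,ds$ and note $|p'(s)|=\gamma s^{-\gamma-1}\le \gamma(v_-/4)^{-\gamma-1}=:c_5$ on the segment between $w$ and $v$ (both endpoints $\ge v_-/4$), giving $|p(v)-p(w)|\le c_5|v-w|$ directly. The main obstacle in the whole lemma is really part 2)'s inequality \eqref{rel_Q1}: one must carefully track the geometry of $u$ relative to $w$ and $v$ to ensure the integrand stays uniformly bounded away from zero over a definite portion of the integration interval, and to confirm the constant $C$ depends only on $\delta_*,\gamma,v_-$ and not on the particular $w,u,v$ — everything else is a routine bound on $Q''$ or $p'$ over a compact range of arguments.
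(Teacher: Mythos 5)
The paper itself gives no argument here — it simply cites \cite[Lemma 2.4, 2.5]{Kang-V-NS17} — so your self-contained calculus proof is a reasonable thing to attempt, and parts 2) and 3), as well as the quadratic bound in \eqref{rel_Q}, are essentially correct: the integral representations $Q(v|w)=\int_w^v(Q'(s)-Q'(w))\,ds=\int_w^v(v-s)Q''(s)\,ds$ do the job, and in \eqref{rel_Q1} your key observation that every $s$ in the integration interval satisfies $|s-w|\ge|u-w|>\delta_*$ (because $u$ sits between $w$ and $v$) is exactly what makes the constant uniform.

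There is, however, a genuine error in your proof of the linear bound $Q(v|w)\ge c_2|v-w|$ for $v\ge 3v_-$. You bound $-Q(w)\ge -\frac{v_-^{-\gamma+1}}{\gamma-1}$, i.e.\ $Q(w)\le Q(v_-)$; but $Q$ is \emph{decreasing}, so $Q(w)\ge Q(v_-)$ for $w<v_-$, and in fact $Q(w)=\frac{w^{1-\gamma}}{\gamma-1}\to+\infty$ as $w\to 0^+$. The "constant term" you propose to absorb is therefore not bounded uniformly in $w\in(0,v_-)$, and the absorption step fails. The estimate is still true because the slope term $-Q'(w)(v-w)=w^{-\gamma}(v-w)$ blows up at the same rate and dominates, but you must keep the two $w$-dependent terms together rather than bounding them separately. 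A clean repair: write $Q(v|w)=\int_w^v\bigl(w^{-\gamma}-s^{-\gamma}\bigr)\,ds$, note the integrand is nonnegative, restrict to $s\in[2v_-,v]$ where $w^{-\gamma}-s^{-\gamma}\ge v_-^{-\gamma}\bigl(1-2^{-\gamma}\bigr)$, and use $v-2v_-\ge\frac{1}{3}v\ge\frac{1}{3}(v-w)$ for $v\ge 3v_-$; this yields \eqref{rel_Q} with $c_2=\frac{1}{3}v_-^{-\gamma}(1-2^{-\gamma})$.
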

\begin{proof}
We refer to \cite[Lemma 2.4, 2.5]{Kang-V-NS17}. 
\end{proof}

Next, we use \eqref{rel_Q} and \eqref{Q-sim} above to prove the following lemma, which is used for the proof of Theorem \ref{thm_inviscid}.

\begin{lemma}\label{lem_Q1}
For given constants $\gamma, M>1$, there exist constants $C>0$ and $k_0>1$  such that the following inequalities hold.\\
1) For any $k\ge 3M$ and $w\in (M^{-1}, M)$,
\beq\label{Q1}
\max\{(k^{-1}-v)_+, (v-k)_+ \} \le C Q(v|w),\quad\mbox{for any $v>0$}.
\eeq
2) For any $w_1, w_2 \in (M^{-1}, M)$,
\beq\label{Q2}
Q(v|w_1) \le CQ(v|w_2),\quad \mbox{for any $v\in (0,k_0^{-1})\cup(k_0,\infty)$}.
\eeq
\end{lemma}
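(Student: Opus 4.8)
The plan is to derive both inequalities from the asymptotics of $Q(v)=v^{1-\gamma}/(\gamma-1)$ near $v=0$ and $v=+\infty$, combined with the lower bounds already recorded in Lemma \ref{lem-pro}, all made uniform over the compact range $w\in(M^{-1},M)$. For part (1), I would first note that it suffices to treat the two tails separately: the small-$v$ regime $v\in(0,k^{-1}]$ where we must bound $k^{-1}-v\le k^{-1}$, and the large-$v$ regime $v\in[k,\infty)$ where we must bound $v-k$. In the large-$v$ regime, since $k\ge 3M>3v_-$-type thresholds are available, apply \eqref{rel_Q} (the linear growth bound $Q(v|w)\ge c_2|v-w|$ valid for $v$ large, uniformly in $w$ over the compact set by taking $v_-$ to be $M$), which gives $Q(v|w)\ge c_2(v-w)\ge c_2(v-M)\ge \frac{c_2}{2}(v-k)$ once $v\ge k\ge 2M$; absorbing constants yields $(v-k)_+\le C Q(v|w)$. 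In the small-$v$ regime, $Q(v)\to+\infty$ like $v^{1-\gamma}$ while $Q(w)$ and $Q'(w)(v-w)$ stay bounded uniformly in $w\in(M^{-1},M)$, so $Q(v|w)\to+\infty$ as $v\to 0$; in particular there is a uniform constant so that $Q(v|w)\ge c_1>0$ for all $v\le (3M)^{-1}\le k^{-1}$, whence $k^{-1}-v\le k^{-1}\le (3M)^{-1}\le c_1^{-1}(3M)^{-1}\,Q(v|w)$. Taking $C$ to be the max of the two constants completes part (1).

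For part (2), the point is that both $Q(v|w_1)$ and $Q(v|w_2)$ blow up at the same rate $v^{1-\gamma}$ as $v\to 0$ and grow at the same linear rate as $v\to\infty$, with $w_1,w_2$ ranging only over the compact interval $(M^{-1},M)$. Concretely, for $v\to 0$, $Q(v|w_i)=\frac{v^{1-\gamma}}{\gamma-1}\big(1+o(1)\big)$ uniformly in $w_i\in(M^{-1},M)$, so the ratio $Q(v|w_1)/Q(v|w_2)\to 1$, hence stays bounded above (by $C$, say) on some interval $(0,k_0^{-1})$. Similarly, using \eqref{rel_Q} for the lower bound on $Q(v|w_2)$ and the elementary upper bound $Q(v|w_1)\le Q(v)+|Q(w_1)|+|Q'(w_1)|(v+w_1)\le C'(1+v)$ for $v$ large (again uniform in $w_1$), one gets $Q(v|w_1)\le C Q(v|w_2)$ for $v\ge k_0$, after enlarging $k_0$ if necessary. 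Choosing $k_0>1$ large enough to serve both tails and $C$ the larger of the two constants gives \eqref{Q2}.

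The only mild subtlety — and the step I would be most careful about — is making all estimates \emph{uniform in} $w$ (resp. $w_1,w_2$) over the open interval $(M^{-1},M)$ rather than for a single fixed value, and likewise uniform in $k\ge 3M$ in part (1). This is handled by observing that the relevant quantities $Q(w)$, $Q'(w)$ are continuous, hence bounded, on the compact closure $[M^{-1},M]$, and that the thresholds $3v_-$, $v_-/4$ etc. in Lemma \ref{lem-pro} can be replaced by quantities depending only on $M$; no genuine obstacle arises, it is a matter of bookkeeping the constants. A secondary point is that in part (1) the constant $C$ must not depend on $k$: this is automatic because in the large-$v$ regime the bound $(v-k)_+\le \frac{2}{c_2}Q(v|w)$ holds for \emph{all} admissible $k$ simultaneously (the worst case being $v$ just above $k$), and in the small-$v$ regime we only used $v\le (3M)^{-1}\le k^{-1}$ together with the fixed lower bound $Q(v|w)\ge c_1$.
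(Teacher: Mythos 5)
Your proposal is correct and follows essentially the same route as the paper: a case split on the size of $v$, the bounds of Lemma \ref{lem-pro} applied with $v_-$ replaced by $M$, and the tail asymptotics of $Q$ at $0$ and at $\infty$ (the paper's proof of part (2) just makes your asymptotic-ratio argument constructive, exhibiting $C=2\max\{Q'(M)/Q'(M^{-1}),1\}$ and checking the resulting algebraic inequality separately for $v$ large and $v$ small). The only step to tighten is the small-$v$ lower bound $Q(v|w)\ge c_1$ in part (1): blow-up at $v=0$ alone does not give positivity on all of $(0,(3M)^{-1}]$, so you should either spell out your compactness argument or, as the paper does, use the monotonicity \eqref{Q-sim} to get $Q(v|w)\ge Q((3M)^{-1}|w)\ge c_1\,|(3M)^{-1}-w|^2\ge c_1\,(2/(3M))^2$ for $v\le (3M)^{-1}<M^{-1}<w$.
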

\begin{proof}
$\bullet$ {\it proof of \eqref{Q1}} : i) If $k^{-1}\le v\le k$, then $\max\{(k^{-1}-v)_+, (v-k)_+ \}=0\le C Q(v|w)$.\\
ii) If $0<v<k^{-1}$, we have
\beq\label{Lhs1}
\max\{(k^{-1}-v)_+, (v-k)_+ \}=k^{-1}-v<k^{-1}.
\eeq
Since $v<k^{-1}\le M^{-1}/3$, we use \eqref{rel_Q} and \eqref{Q-sim} to have
\[
Q(v|w)\ge Q(M^{-1}/3|w)\ge c_1 |M^{-1}/3-w|^2.
\]
Moreover, since $w>M^{-1}$, we have
\[
Q(v|w)\ge c_14M^{-2}/9 \ge (c_14M^{-1}/3)k^{-1} ,
\]
which together with \eqref{Lhs1} implies the desired inequality.\\
iii) If $v>k$, we have
\[
\max\{(k^{-1}-v)_+, (v-k)_+ \}=v-k\le v-3M.
\]
Likewise, using \eqref{rel_Q}, we have
\[
Q(v|w)\ge c_2 |v-w|.
\]
Since $v>3M> w$, we have
\[
Q(v|w)\ge c_2 (v-3M),
\]
which completes the desired inequality.\\

$\bullet$ {\it proof of \eqref{Q2}} : We set $C:=2\max\big\{\frac{Q'(M)}{Q'(M^{-1})},1\big\}$. Since $Q'<0$, there exists $k_1>1$ such that for all $v>k_1$,
\[
-Q'(M)v\ge (1-C)Q(v) +\big(Q'(M)M -Q(M) \big) - C\big(Q'(M^{-1})M^{-1}-Q(M^{-1})\big).  
\]
Moreover, since $Q'$ is increasing and $\frac{d}{dv}\big(Q'(v)v-Q(v)\big)>0$, we have
\begin{align*}
\begin{aligned}
&\big(-CQ'(w_2)+Q'(w_1)\big) v \ge \big(-CQ'(M^{-1})+Q'(M)\big) v \ge -Q'(M) v\\
&\quad \ge (1-C)Q(v)+\big(Q'(M)M -Q(M) \big) - C\big(Q'(M^{-1})M^{-1}-Q(M^{-1})\big)\\
&\quad \ge (1-C)Q(v) + \big(Q'(w_1)w_1 -Q(w_1) \big) - C\big(Q'(w_2)w_2-Q(w_2)\big).  
\end{aligned}
\end{align*}
which together with the definition of $Q(\cdot|\cdot)$ yields that
\[
Q(v|w_1) \le CQ(v|w_2),\quad \mbox{for all }  v>k_1.
\]
On the other hand, since $Q(v)\to +\infty$ as $v\to 0+$, there exists $k_0>k_1$ such that for all $v<k_0^{-1}$,
\[
Q(v)\ge \big(CQ'(M^{-1})-Q'(M)\big) v  +\big(Q'(M)M -Q(M) \big) - C\big(Q'(M^{-1})M^{-1}-Q(M^{-1})\big).  
\]
Then we have
\begin{align*}
\begin{aligned}
&(C-1)Q(v)\ge Q(v)\\
&\quad \ge \big(CQ'(M^{-1})-Q'(M)\big) v  +\big(Q'(M)M -Q(M) \big) - C\big(Q'(M^{-1})M^{-1}-Q(M^{-1})\big)\\
&\quad \ge \big(CQ'(w_2)-Q'(w_1)\big) v  + \big(Q'(w_1)w_1 -Q(w_1) \big) - C\big(Q'(w_2)w_2-Q(w_2)\big),
\end{aligned}
\end{align*}
which yields that $Q(v|w_1) \le CQ(v|w_2) ~\mbox{for all }  v<k_0^{-1}.$
\end{proof}

We  present now  some local estimates on $p(v|w)$ and $Q(v|w)$.  
\begin{lemma}\label{lem:local}
For given constants $\gamma>1$ and $v_->0$ 
there exist positive constants $C$ and $\delta_*$ such that for any $0<\delta<\delta_*$, the following is true.\\
1) For any $(v, w)\in \bbr_+^2$ 
satisfying $|p(v)-p(w)|<\delta$ and  $|p(w)-p(v_-)|<\delta$, 
\begin{align}
\begin{aligned}\label{p-est1}
p(v|w)&\le \bigg(\frac{\gamma+1}{2\gamma} \frac{1}{p(w)} + C\delta \bigg) |p(v)-p(w)|^2.
\end{aligned}
\end{align}
2) For any $(v, w)\in \bbr_+^2$ such that  $|p(w)-p(v_-)|\leq \delta$,  and satisfying either $Q(v|w)<\delta$ or $|p(v)-p(w)|<\delta$,
\beq\label{pQ-equi0}
|p(v)-p(w)|^2 \le C Q(v|w).
\eeq
\end{lemma}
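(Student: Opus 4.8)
\textbf{Proof plan for Lemma \ref{lem:local}.}

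The strategy is to reduce both statements to elementary one–variable estimates by using the pressure $p(v)=v^{-\gamma}$ as a new variable. Write $P=p(v)$ and $W=p(w)$, so that $v=P^{-1/\gamma}$ and $w=W^{-1/\gamma}$. The key observation is that the convex functions $Q(v)$ and $p(v)$, when re-expressed in the variable $P$, become smooth functions of $P$ on any interval bounded away from $0$ and $+\infty$; since the hypotheses $|p(w)-p(v_-)|<\delta$ and $|p(v)-p(w)|<\delta$ (resp. $Q(v|w)<\delta$) confine $P$ and $W$ to such an interval for $\delta<\delta_*$ small enough, all the relevant Taylor expansions are uniform. Concretely, for part (1) I would compute
\[
p(v|w)=p(v)-p(w)-p'(w)(v-w),
\]
and then Taylor expand $v-w=P^{-1/\gamma}-W^{-1/\gamma}$ and $p'(w)=-\gamma w^{-\gamma-1}=-\gamma W^{(\gamma+1)/\gamma}$ in powers of $(P-W)$. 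A second-order expansion gives a leading term proportional to $(P-W)^2$ with the explicit coefficient, and the remainder is $O(|P-W|^3)\le C\delta|P-W|^2$. Matching the leading coefficient to $\tfrac{\gamma+1}{2\gamma}\tfrac{1}{p(w)}$ is a direct computation: differentiating $v(P)=P^{-1/\gamma}$ twice in $P$ and using $p'(w)v'(W)=1$ (since $P\mapsto v$ and $v\mapsto P$ are inverse) yields exactly that constant.

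For part (2), the goal is the two-sided comparability $|p(v)-p(w)|^2\le CQ(v|w)$. Since $Q$ is strictly convex and $Q''$ is continuous and bounded below by a positive constant on the relevant compact $v$-range, a standard relative-entropy estimate gives $Q(v|w)\ge c|v-w|^2$ on that range (this is essentially \eqref{rel_Q} of Lemma \ref{lem-pro}, localized). On the same range $p$ is Lipschitz, so $|p(v)-p(w)|\le C|v-w|$, and combining the two yields the claim. The only subtlety is the bookkeeping of which smallness hypothesis — $Q(v|w)<\delta$ versus $|p(v)-p(w)|<\delta$ — one is given: in the first case $Q(v|w)<\delta$ together with $Q(v|w)\ge c_1|v-w|^2$ (valid once $v\le 3v_-$) forces $v$ close to $w$, hence close to $v_-$, so $v$ lies in the good range; in the second case $|p(v)-p(w)|<\delta$ and $|p(w)-p(v_-)|\le\delta$ directly force $p(v)$ near $p(v_-)$, hence $v$ near $v_-$. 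In either case one then invokes the two elementary bounds above.

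The main obstacle — and it is a mild one — is making the expansion in part (1) genuinely \emph{uniform}: one must check that the interval in which $P$ and $W$ are trapped does not shrink as parameters vary, and that the third-order remainder constant $C$ in $O(|P-W|^3)$ is controlled purely by $\gamma$ and $v_-$, independently of $\eps$, $\lambda$, and the particular endpoint $w$. This follows because all derivatives of $v(P)=P^{-1/\gamma}$ are bounded on the fixed compact $P$-interval determined by $v_-$ and $\delta_*$, but it needs to be stated carefully so that the constant $C$ in \eqref{p-est1} is the uniform one required later. Since the entire argument is a localized Taylor expansion with explicit leading coefficient, one may also simply cite \cite[Lemma 2.4, 2.5]{Kang-V-NS17} for the analogous one-variable computations and only verify that the hypotheses here imply the hypotheses there.
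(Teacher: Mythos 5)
Your proposal is correct. The paper itself gives no argument for this lemma -- it simply cites \cite[Lemma 2.6]{Kang-V-NS17} -- so your self-contained reconstruction is the actual content here, and it checks out: writing $v=P^{-1/\gamma}$ with $P=p(v)$, the second derivative $v''(W)=\frac{\gamma+1}{\gamma^2}W^{-1/\gamma-2}$ combined with $-p'(w)=\gamma W^{(\gamma+1)/\gamma}$ produces exactly the leading coefficient $\frac{\gamma+1}{2\gamma}\frac{1}{p(w)}$, the linear terms cancel against $p(v)-p(w)=P-W$, and the cubic remainder is uniformly $O(\delta)|P-W|^2$ because the hypotheses pin $P$ and $W$ to a fixed compact subinterval of $(0,\infty)$ determined by $v_-$ and $\delta_*$. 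For part (2) your argument also works, but the step "$Q(v|w)<\delta$ forces $v$ close to $w$" needs one extra line to avoid circularity: you cannot invoke $Q(v|w)\ge c_1|v-w|^2$ before knowing $v\le 3v_-$. The fix is to use the other half of \eqref{rel_Q}: if $v\ge 3v_-$ then $Q(v|w)\ge c_2|v-w|\ge 2c_2 v_-$, which contradicts $Q(v|w)<\delta$ for $\delta_*$ small; hence $v<3v_-$, and then the quadratic lower bound and the local Lipschitz bound on $p$ apply as you describe. With that remark inserted, the proof is complete and matches what the cited lemma provides.
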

\begin{proof}
We refer to \cite[Lemma 2.6]{Kang-V-NS17}. 
\end{proof}


\vspace{1cm}

\section{Proof of Theorem \ref{thm_main}}\label{section_theo}
\setcounter{equation}{0}

Throughout this section, $C$ denotes a positive constant which may change from line to line, but which stays independent on $\eps$ (the shock strength) and $\lambda$ (the total variation of the function $a$). 
We will consider two smallness conditions, one on $\eps$, and the other on $\eps/\lambda$. In the argument, $\eps$ will be far smaller than $\eps/\lambda$ .

\subsection{Properties of small shock waves}
In this subsection, we  present  useful properties of the 1-shock waves $(\tilde v_\eps,\tilde h_\eps)$ with small amplitude $\eps$. In the sequel, without loss of generality, we consider the 1-shock wave $(\tilde v_\eps,\tilde h_\eps)$ satisfying $\tilde v_\eps(0)=\frac{v_-+v_+}{2}$. Notice that the estimates in the following lemma also hold for $\tilde h_\eps$ since we have  $\tilde h_\eps'=\frac{p'(\tilde v_\eps)}{\sigma_\eps} \tilde v_\eps'$ and $C^{-1}\le\frac{p'(\tilde v_\eps)}{\sigma_\eps}\le C$. But, since the below estimates for $\tilde v_\eps$ are enough in our analysis, we give the estimates only for $\tilde v_\eps$.

\begin{lemma}
We fix $v_->0$ and $h_-\in \bbr$. Then there exists $\eps_0>0$ such that for any $0<\eps<\eps_0$ the following is true. 
Let $\tilde v_{\eps}$ be the 1-shock wave with amplitude $|p(v_-) -p(v_+)|=\eps$ and such that $\tilde v_\eps(0)=\frac{v_-+v_+}{2}$. Then, there exist constants $C, C_1, C_2>0$ such that
\beq\label{tail}
-C^{-1}\eps^2 e^{-C_1 \eps |\xi|} \le \tilde v_\eps'(\xi) \le -C\eps^2 e^{-C_2 \eps |\xi|},\quad \forall\xi\in\bbr.
\eeq
Therefore, as a consequence, we have
\beq\label{lower-v}
\inf_{\left[-\frac{1}{\eps},\frac{1}{\eps}\right]}| v'_{\eps}|\ge C\eps^2.
\eeq
\end{lemma}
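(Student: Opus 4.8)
\textbf{Plan for the proof of the bounds \eqref{tail} and \eqref{lower-v}.}

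The strategy is classical phase-plane analysis for the shock profile ODE, which we now reduce to a scalar problem. From the profile system \eqref{small_shock1}, the second equation integrates immediately to $-\sigma_\eps(\tilde h_\eps - h_-) + (p(\tilde v_\eps)-p(v_-)) = 0$, so $\tilde h_\eps$ is an affine function of $p(\tilde v_\eps)$; substituting this into the first equation and using $\tilde v_\eps' - (\sigma_\eps)^{-1}$ relations, one obtains a single first-order autonomous ODE of the form $\bigl({\tilde v_\eps}^\beta p(\tilde v_\eps)'\bigr) = F_\eps(\tilde v_\eps)$, where, after one integration in $\xi$ using the Rankine--Hugoniot conditions \eqref{end-con} (with $\sigma_\eps$ given by \eqref{RH-con}), $F_\eps(v)$ is a smooth function vanishing exactly at the two end states $v_\pm$ and negative strictly in between (this is the standard structure; the sign is fixed by the Lax condition and gives a monotone 1-shock, so $\tilde v_\eps' < 0$ everywhere). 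Writing $p(\tilde v_\eps)' = p'(\tilde v_\eps)\tilde v_\eps'$ and dividing, this takes the form $\tilde v_\eps'(\xi) = G_\eps(\tilde v_\eps(\xi))$ with $G_\eps$ smooth on $[v_+,v_-]$, $G_\eps(v_\pm)=0$, $G_\eps < 0$ on $(v_+,v_-)$.

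\textbf{Main step: Taylor expansion at the endpoints and quantitative exponential decay.} Because $|v_--v_+| = \mathcal{O}(\eps)$ by \eqref{end-con}, I would rescale: write $v_\pm = v_m \pm c\eps + \mathcal{O}(\eps^2)$ around the mid-value $v_m$ and expand $G_\eps$. The key computation is that $\partial_v G_\eps(v_\pm) = \mp\, \kappa \eps + \mathcal{O}(\eps^2)$ for a fixed constant $\kappa>0$ depending only on $\gamma,\beta,v_-$; this follows since $F_\eps$ has simple zeros at $v_\pm$ with $F_\eps'(v_\pm)$ proportional to $(v_--v_+)=\mathcal{O}(\eps)$ (a standard fact: the derivative of the profile vector field at a Lax endpoint is proportional to the shock strength). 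Hence near $v_+$ the ODE behaves like $\tilde v_\eps' \approx -\kappa\eps\,(\tilde v_\eps - v_+)$ and near $v_-$ like $\tilde v_\eps' \approx \kappa\eps\,(\tilde v_\eps - v_-)$, giving exponential convergence to the endpoints at rate $\asymp \eps$. Combined with the fact that on the ``bulk'' region $\{ |\tilde v_\eps - v_\pm| \ge \eps/10\}$ (a $\xi$-interval of length $\mathcal{O}(1/\eps)$, since $\tilde v_\eps$ moves a distance $\mathcal{O}(\eps)$ at speed $|G_\eps|\asymp\eps^2$ there), one gets $|\tilde v_\eps'| \asymp \eps^2$. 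Patching the bulk bound with the exponential-decay bound at the two tails yields two-sided estimates $-C^{-1}\eps^2 e^{-C_1\eps|\xi|} \le \tilde v_\eps'(\xi) \le -C\eps^2 e^{-C_2\eps|\xi|}$ for all $\xi$, which is \eqref{tail}. The normalization $\tilde v_\eps(0) = \frac{v_-+v_+}{2}$ fixes the translation so the estimate is centered at $\xi = 0$.

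\textbf{Consequence \eqref{lower-v}.} On $[-1/\eps, 1/\eps]$ we have $e^{-C_1\eps|\xi|}\ge e^{-C_1}$, so the lower bound in \eqref{tail} gives $|\tilde v_\eps'(\xi)| \ge C^{-1}e^{-C_1}\eps^2 \ge C'\eps^2$ there, which is exactly \eqref{lower-v}.

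\textbf{Main obstacle.} The delicate point is making the Taylor expansion at the endpoints \emph{uniform in $\eps$} — i.e., controlling the $\mathcal{O}(\eps^2)$ remainders in $\partial_v G_\eps(v_\pm)$ and in the location of the zeros of $F_\eps$ — so that the decay rates $C_1,C_2$ and the constant $C$ are genuinely $\eps$-independent. This requires care in how the degeneracy $\mu(v) = \gamma v^{-\alpha}$ (equivalently the factor $v^\beta$ with $\beta = \gamma-\alpha\in(0,1]$) interacts with the rescaling: the factor ${\tilde v_\eps}^\beta$ stays bounded above and below on the profile range (which shrinks to $v_m$), so it contributes only benign $\mathcal{O}(1)$ and $\mathcal{O}(\eps)$ corrections, but this must be checked. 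Fortunately, the case $\alpha=\gamma$ (i.e.\ $\beta=0$) is exactly the one treated in \cite{Kang-V-NS17}, and the argument there adapts verbatim once one notes that $v\mapsto v^\beta$ is smooth and bounded away from $0$ and $\infty$ on a fixed neighborhood of $v_m$; so in practice one can cite \cite[analog of this lemma]{Kang-V-NS17} for the structural input and only redo the (routine) endpoint expansion to absorb the extra $v^\beta$ factor.
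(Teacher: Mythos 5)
Your proposal is correct and is essentially the argument behind the paper's proof: the paper simply observes that $v_-/2<\tilde v_\eps<v_-$ (so the factor $\tilde v_\eps^{\beta}$ is bounded above and below) and then cites \cite[Lemma 2.1]{Kang-V-NS17}, whose proof is exactly the scalar phase-plane reduction, endpoint linearization with rate $\asymp\eps$, and bulk estimate $|\tilde v_\eps'|\asymp\eps^2$ that you sketch. Your closing remark that the only new point versus the $\beta=0$ case is the boundedness of $v^{\beta}$ on the profile range is precisely the paper's justification for omitting the details.
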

\begin{proof} 
Using $v_-/2<\tilde v_\eps<v_-$, the proof follows the same arguments as in \cite[Lemma 2.1]{Kang-V-NS17}. Therefore, we omit its details.
\end{proof}

\subsection{Relative entropy method}
Our analysis is  based on the relative entropy. The method  is purely nonlinear, and allows to handle rough and large perturbations. The relative entropy method was first introduced by Dafermos \cite{Dafermos1} and Diperna \cite{DiPerna} to prove the $L^2$ stability and uniqueness of Lipschitz solutions to the hyperbolic conservation laws endowed with a convex entropy.

To use the relative entropy method, we rewrite \eqref{NS} into the following general system of viscous conservation laws:
\beq\label{system-0}
\partial_t U +\partial_\xi A(U)= { -\partial_{\xi}\big(v^\beta \partial_{\xi}p(v)\big) \choose 0},
\eeq
where 
\[
U:={v \choose h},\quad A(U):={-\sigma_\eps v-h \choose -\sigma_\eps h+p(v)}.
\]
The system \eqref{system-0} has a convex entropy $\eta(U):=\frac{h^2}{2}+Q(v)$, where $Q(v)=\frac{v^{-\gamma+1}}{\gamma-1}$, i.e., $Q'(v)=-p(v)$.\\
Using the derivative of the entropy as 
\beq\label{nablae}
\nabla\eta(U)={-p(v)\choose h},
\eeq
the above system \eqref{system-0} can be rewritten as
\beq\label{system}
\partial_t U +\partial_\xi A(U)= \partial_\xi\Big(M(U) \partial_\xi\nabla\eta(U) \Big),
\eeq
where $M(U)={v^\beta \quad 0 \choose 0\quad 0}$, and \eqref{small_shock1} can be rewritten as 
\beq\label{re_shock}
\partial_\xi A(\tilde U_\eps)= \partial_\xi\Big(M(\tilde U_\eps) \partial_\xi\nabla\eta(\tilde U_\eps) \Big).
\eeq
Consider the relative entropy function defined by
\[
\eta(U|V)=\eta(U)-\eta(V) -\nabla\eta(V) (U-V),
\]
and the relative flux defined by
\[
A(U|V)=A(U)-A(V) -\nabla A(V) (U-V).
\] 
Let $G(\cdot;\cdot)$ be the flux of the relative entropy defined by
\[
G(U;V) = G(U)-G(V) -\nabla \eta(V) (A(U)-A(V)),
\]
where $G$ is the entropy flux of $\eta$, i.e., $\partial_{i}  G (U) = \sum_{k=1}^{2}\partial_{k} \eta(U) \partial_{i}  A_{k} (U),\quad 1\le i\le 2$.\\
Then, for our system \eqref{system-0}, we have
\begin{align}
\begin{aligned}\label{relative_e}
&\eta(U|\tilde U_\eps)=\frac{|h-\tilde h_\eps |^2}{2} + Q(v|\tilde v_\eps),\\
& A(U|\tilde U_\eps)={0 \choose p(v|\tilde v_\eps)},\\
&G(U;\tilde U_\eps)=(p(v)-p(\tilde v_\eps)) (h-\tilde h_\eps)-\sigma_\eps \eta(U|\tilde U_\eps),
\end{aligned}
\end{align}
where the relative pressure is defined as
\begin{equation}\label{pressure-relative}
p(v|w)=p(v)-p(w)-p'(w)(v-w).
\end{equation}

We consider a weighted relative entropy between the solution $U$ of \eqref{system} and the viscous shock $\tilde U_\eps:={\tilde v_\eps \choose \tilde h_\eps}$ in \eqref{small_shock1} up to a shift $X(t)$ :
\[
a(\xi)\eta\big(U(t,\xi+X(t))|\tilde U_\eps(\xi)\big).
\]
where $a$ is a smooth weight function.\\

In Lemma \ref{lem-rel}, we will derive a quadratic structure on $\frac{d}{dt}\int_{\bbr} a(\xi)\eta\big(U(t,\xi+X(t))|\tilde U_\eps(\xi)\big) d\xi$.\\
For that, we introduce a simple notation: for any function $f : \bbr^+\times \bbr\to \bbr$ and the shift $X(t)$, 
\[
f^{\pm X}(t, \xi):=f(t,\xi\pm X(t)).
\]
We also introduce the function space:
\[
\mathcal{H}:=\{(v,h)\in\bbr^+\times\bbr~|~  v^{-1}, v \in L^{\infty}(\bbr),~ h-\tilde h_\eps\in L^2( \bbr), ~ \partial_\xi \big(p(v)-p(\tilde v_\eps) \big)\in L^2( \bbr) \},
\]
on which the functionals $Y, \mathcal{J}^{bad},\mathcal{J}^{good}$ in \eqref{ybg-first} are well-defined for all $t\in (0,T)$. 

\begin{remark} \label{rem-H}
As mentioned in Remark \ref{rem-HT}, we consider the solution $(v,h)\in \mathcal{H}_T$ to \eqref{NS}.
Then, using the fact that $v_\xi \in C (0,T; L^2(\bbr)),~\tilde v_\eps ' \in L^2( \bbr)$, and $ v^{-1}, v \in C (0,T; L^{\infty}(\bbr))$, we find
\[
 \partial_\xi  \big(p(v)-p(\tilde v_\eps) \big) \in C (0,T; L^2(\bbr)),
\]
which implies $(v,h)(t)\in \mathcal{H}$ for all $t\in[0,T]$. 
\end{remark}

\begin{lemma}\label{lem-rel}
Let $a:\bbr\to\bbr^+$ be any positive smooth bounded function whose derivative is bounded and integrable. Let $\tilde U_\eps:={\tilde v_\eps \choose \tilde h_\eps}$ be the viscous shock in \eqref{small_shock1}. For any solution $U\in  \mathcal{H}_T$ to \eqref{system}, and any absolutely continuous shift $X:[0,T]\to\bbr$, the following holds. 
\begin{align}
\begin{aligned}\label{ineq-0}
\frac{d}{dt}\int_{\bbr} a(\xi)\eta(U^X(t,\xi)|\tilde U_\eps(\xi)) d\xi =\dot X(t) Y(U^X) +\mathcal{J}^{bad}(U^X) - \mathcal{J}^{good}(U^X),
\end{aligned}
\end{align}
where
\begin{align}
\begin{aligned}\label{ybg-first}
&Y(U):= -\int_\bbr a'\eta(U|\tilde U_\eps) d\xi +\int_\bbr a\partial_\xi\nabla\eta(\tilde U_\eps) (U-\tilde U_\eps) d\xi,\\
&\mathcal{J}^{bad}(U):= \int_\bbr a' \big(p(v)-p(\tilde v_\eps)\big) (h-\tilde h_\eps)d\xi +  \sigma_\eps\int_\bbr a \partial_\xi \tilde v_\eps p(v|\tilde v_\eps) d\xi\\
& \qquad\quad -\int_\bbr a' v^\beta \big(p(v)-p(\tilde v_\eps)\big)\partial_\xi \big(p(v)-p(\tilde v_\eps)\big)  d\xi-\int_\bbr a' \big(p(v)-p(\tilde v_\eps)\big) (v^\beta - \tilde v_\eps^\beta) \partial_{\xi} p(\tilde v_\eps)d\xi \\
&\qquad\quad -\int_\bbr a \partial_\xi \big(p(v)-p(\tilde v_\eps)\big) (v^\beta - \tilde v_\eps^\beta) \partial_{\xi} p(\tilde v_\eps)  d\xi,\\
&\mathcal{J}^{good}(U):= \frac{\sigma_\eps}{2}\int_\bbr a'\left| h-\tilde h_\eps\right|^2  d\xi +\sigma_\eps  \int_\bbr  a' Q(v|\tilde v_\eps) d\xi + \int_\bbr a v^\beta \left|\partial_\xi \big(p(v)-p(\tilde v_\eps)\big)\right|^2 d\xi.
\end{aligned}
\end{align}
\end{lemma}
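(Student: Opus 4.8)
\textbf{Proof plan for Lemma \ref{lem-rel}.}

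The plan is to compute $\frac{d}{dt}\int_\bbr a(\xi)\eta(U^X(t,\xi)|\tilde U_\eps(\xi))\,d\xi$ directly, using the evolution equation \eqref{system} for $U$ and the profile equation \eqref{re_shock} for $\tilde U_\eps$, and then to organize the resulting terms into the three functionals $Y$, $\mathcal{J}^{bad}$, $\mathcal{J}^{good}$. First I would expand the time derivative: writing $U^X(t,\xi)=U(t,\xi+X(t))$, the chain rule produces two contributions, one from the explicit time dependence of $U$ (which brings in $\partial_t U$, hence $-\partial_\xi A(U) + \partial_\xi(M(U)\partial_\xi\nabla\eta(U))$ via \eqref{system}) and one from $\dot X(t)$ multiplying $\partial_\xi U^X$. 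The latter, paired with $\nabla\eta(U^X)-\nabla\eta(\tilde U_\eps)$ and integrated against $a$, after an integration by parts and using $\partial_\xi[\eta(U^X)-\nabla\eta(\tilde U_\eps)(U^X-\tilde U_\eps)] = \ldots$, will yield the functional $Y(U^X)$ exactly as in \eqref{ybg-first}: the first term $-\int a'\eta(U|\tilde U_\eps)$ comes from moving the derivative off $a$, and the second term $\int a\,\partial_\xi\nabla\eta(\tilde U_\eps)(U-\tilde U_\eps)$ comes from the derivative falling on $\nabla\eta(\tilde U_\eps)$.

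For the non-shift part I would follow the standard relative entropy bookkeeping of Dafermos--DiPerna as adapted in \cite{Kang-V-NS17}. After substituting $\partial_t U$ and subtracting the steady relation \eqref{re_shock}, the hyperbolic flux terms reassemble (using the definition of the relative flux $A(U|\tilde U_\eps)$ and the entropy-flux compatibility $\partial_i G = \sum_k \partial_k\eta\,\partial_i A_k$) into $-\int a'\,G(U;\tilde U_\eps)\,d\xi$ plus a term $-\int a\,\partial_\xi\nabla\eta(\tilde U_\eps)\,A(U|\tilde U_\eps)\,d\xi$; here I would use the explicit formulas \eqref{relative_e} for $A(U|\tilde U_\eps)={0\choose p(v|\tilde v_\eps)}$ and $G(U;\tilde U_\eps)=(p(v)-p(\tilde v_\eps))(h-\tilde h_\eps)-\sigma_\eps\eta(U|\tilde U_\eps)$. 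This gives the $a'$-pressure-velocity term, the $\sigma_\eps a' \big(\frac12|h-\tilde h_\eps|^2 + Q(v|\tilde v_\eps)\big)$ terms (which become part of $\mathcal{J}^{good}$ up to sign), and the $\sigma_\eps\int a\,\partial_\xi\tilde v_\eps\, p(v|\tilde v_\eps)\,d\xi$ term in $\mathcal{J}^{bad}$ (since $\partial_\xi\nabla\eta(\tilde U_\eps)$ has first component $-p'(\tilde v_\eps)\tilde v_\eps'$). The parabolic contribution $\int a\,(\nabla\eta(U^X)-\nabla\eta(\tilde U_\eps))\,\partial_\xi(M(U)\partial_\xi\nabla\eta(U) - M(\tilde U_\eps)\partial_\xi\nabla\eta(\tilde U_\eps))\,d\xi$ is the delicate bookkeeping: integrating by parts and recalling $M(U)={v^\beta\ 0\choose 0\ 0}$ and $\nabla\eta(U)={-p(v)\choose h}$, the leading diffusive piece produces $-\int a\,v^\beta|\partial_\xi(p(v)-p(\tilde v_\eps))|^2\,d\xi$ (the last term of $\mathcal{J}^{good}$), while the cross terms involving the difference $v^\beta-\tilde v_\eps^\beta$ and the $a'$ boundary terms produce precisely the three remaining integrals in $\mathcal{J}^{bad}$. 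Throughout, the justification of the integrations by parts (no boundary contributions at $\pm\infty$) uses the function space $\mathcal{H}_T\subset\mathcal{H}$ of Remark \ref{rem-H}, which guarantees $h-\tilde h_\eps\in L^2$, $\partial_\xi(p(v)-p(\tilde v_\eps))\in L^2$, and $v,v^{-1}\in L^\infty$, together with the exponential decay \eqref{tail} of $\tilde v_\eps'$.

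The main obstacle I anticipate is the careful matching of the diffusive cross terms: because the viscosity matrix $M(U)$ depends on $v$ and the diffusion acts through $\nabla\eta$, the term $\partial_\xi(M(U)\partial_\xi\nabla\eta(U))-\partial_\xi(M(\tilde U_\eps)\partial_\xi\nabla\eta(\tilde U_\eps))$ does not split cleanly; one must add and subtract $M(\tilde U_\eps)\partial_\xi\nabla\eta(U)$ (or the reverse) to isolate a pure quadratic $v^\beta|\partial_\xi(p(v)-p(\tilde v_\eps))|^2$ and relegate everything else — terms proportional to $(v^\beta-\tilde v_\eps^\beta)\partial_\xi p(\tilde v_\eps)$ and $a'$-weighted products — into $\mathcal{J}^{bad}$, checking signs so that $\mathcal{J}^{good}$ is genuinely nonnegative (which it is, since $\sigma_\eps<0$ for a 1-shock by \eqref{RH-con}, $a'<0$ along the relevant monotone branch, and $Q(v|\tilde v_\eps)\ge0$). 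Since this is a direct (if lengthy) computation of the type already carried out in \cite{Kang-V-NS17} for $\alpha=\gamma$, and the structure \eqref{system}--\eqref{re_shock} here differs only through the exponent $\beta=\gamma-\alpha\in(0,1]$ in $M(U)$, I would present the key identities and refer to \cite{Kang-V-NS17} for the routine algebra, emphasizing the one point where $\beta\neq1$ changes the $v^\beta-\tilde v_\eps^\beta$ factors in $\mathcal{J}^{bad}$.
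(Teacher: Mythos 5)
Your proposal follows essentially the same route as the paper: expand the time derivative using \eqref{system} and \eqref{re_shock}, extract $Y$ from the $\dot X$ contribution, reassemble the hyperbolic part via the relative flux $A(U|\tilde U_\eps)$ and the relative entropy flux $G(U;\tilde U_\eps)$, and integrate the diffusive part by parts after splitting $M(U)\partial_\xi\nabla\eta(U)-M(\tilde U_\eps)\partial_\xi\nabla\eta(\tilde U_\eps)$ into a quadratic piece plus cross terms in $v^\beta-\tilde v_\eps^\beta$. One small correction to your bookkeeping: the term $\sigma_\eps\int_\bbr a\,\partial_\xi\tilde v_\eps\,p(v|\tilde v_\eps)\,d\xi$ does not come from $-\int_\bbr a\,\partial_\xi\nabla\eta(\tilde U_\eps)\,A(U|\tilde U_\eps)\,d\xi$ alone (since $A(U|\tilde U_\eps)$ has zero first component, that integral equals $-\int_\bbr a\,\partial_\xi\tilde h_\eps\,p(v|\tilde v_\eps)\,d\xi$); it appears only after cancellation with the term $\int_\bbr a\,(\nabla\eta)(U|\tilde U_\eps)\,\partial_\xi A(\tilde U_\eps)\,d\xi$ produced by the second-order remainder of $\nabla\eta$ paired with the profile equation \eqref{re_shock}.
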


\begin{remark}
In what follows, we will define the weight function $a$ such that $\sigma_\eps a' >0$. Therefore, $-\mathcal{J}^{good}$ consists of three good terms, while $\mathcal{J}^{bad}$ consists of bad terms. 
\end{remark}

\begin{proof}
To derive the desired structure, we  use here a change of variable $\xi\mapsto \xi-X(t)$ as
\beq\label{move-X}
\int_{\bbr} a(\xi)\eta(U^X(t,\xi)|\tilde U_\eps(\xi)) d\xi=\int_{\bbr} a^{-X}(\xi)\eta(U(t,\xi)|\tilde U_\eps^{-X}(\xi)) d\xi.
\eeq
Then, using the same computation in \cite[Lemma 2.3]{Kang-V-NS17} (see also \cite[Lemma 4]{Vasseur_Book}), we have
\begin{align*}
\begin{aligned}
&\frac{d}{dt}\int_{\bbr} a^{-X}(\xi)\eta(U(t,\xi)|\tilde U_\eps^{-X}(\xi)) d\xi\\
&=-\dot X \int_{\bbr} \!a'^{-X} \eta(U|\tilde U_\eps^{-X} ) d\xi +\int_\bbr \!\!a^{-X}\bigg[\Big(\nabla\eta(U)-\nabla\eta(\tilde U_\eps^{-X})\Big)\!\Big(\!\!\!-\partial_\xi A(U)+ \partial_\xi\Big(M(U)\partial_\xi\nabla\eta(U) \Big) \Big)\\
&\qquad -\nabla^2\eta(\tilde U_\eps^{-X}) (U-\tilde U_\eps^{-X}) \Big(-\dot X \partial_\xi\tilde U_\eps^{-X} -\partial_\xi A(\tilde U_\eps^{-X})+ \partial_\xi\Big(M(\tilde U_\eps^{-X})\partial_\xi\nabla\eta(\tilde U_\eps^{-X}) \Big)\Big)  \bigg] d\xi\\
&\quad =\dot X \Big( -\int_\bbr a'^{-X}\eta(U|\tilde U_\eps^{-X}) d\xi +\int_\bbr a^{-X}\partial_\xi\nabla\eta(\tilde U_\eps^{-X}) (U-\tilde U_\eps^{-X})  \Big) +I_1+I_2+I_3+I_4,
\end{aligned}
\end{align*}
where 
\begin{align*}
\begin{aligned}
&I_1:=-\int_\bbr a^{-X} \partial_\xi G(U;\tilde U_\eps^{-X}) d\xi,\\
&I_2:=- \int_\bbr a^{-X} \partial_\xi \nabla\eta(\tilde U_\eps^{-X}) A(U|\tilde U_\eps^{-X}) d\xi,\\
&I_3:=\int_\bbr a^{-X} \Big( \nabla\eta(U)-\nabla\eta(\tilde U_\eps^{-X})\Big) \partial_\xi \Big(M(U) \partial_\xi \big(\nabla\eta(U)-\nabla\eta(\tilde U_\eps^{-X})\big) \Big)  d\xi, \\
&I_4:=\int_\bbr a^{-X} \Big( \nabla\eta(U)-\nabla\eta(\tilde U_\eps^{-X})\Big) \partial_\xi \Big(\big(M(U)-M(\tilde U_\eps^{-X})\big) \partial_\xi \nabla\eta(\tilde U_\eps^{-X}) \Big)  d\xi \\
&I_5:=\int_\bbr a^{-X}(\nabla\eta)(U|\tilde U_\eps^{-X})\partial_\xi \Big(M(\tilde U_\eps^{-X}) \partial_\xi \nabla\eta(\tilde U_\eps^{-X}) \Big)  d\xi.
\end{aligned}
\end{align*}
Using \eqref{relative_e} and \eqref{nablae}, we have
\begin{align*}
\begin{aligned}
I_1&=\int_\bbr a'^{-X} G(U;\tilde U_\eps^{-X}) d\xi = \int_\bbr a'^{-X} \Big(\big(p(v)-p(\tilde v_\eps^{-X})\big) \big(h-\tilde h_\eps^{-X}\big)  -\sigma_\eps \eta(U|\tilde U_\eps^{-X})\Big)  d\xi\\
&= \int_\bbr a'^{-X} \big(p(v)-p(\tilde v_\eps^{-X})\big) \big(h-\tilde h_\eps^{-X}\big) d\xi -\frac{\sigma_\eps}{2}\int_\bbr a'^{-X}\left| h-\tilde h_\eps\right|^2 d\xi  -\sigma_\eps  \int_\bbr  a'^{-X} Q(v|\tilde v_\eps) d\xi ,\\
I_2&=-\int_\bbr a^{-X} \partial_\xi \tilde h_\eps^{-X} p(v|\tilde v_\eps^{-X}) d\xi.
\end{aligned}
\end{align*}
By integration by parts, we have
\begin{align*}
\begin{aligned}
I_3&=\int_\bbr a^{-X} \big(p(v)-p(\tilde v_\eps^{-X})\big)\partial_{\xi}\Big(v^\beta \partial_{\xi}\big(p(v)-p(\tilde v_\eps^{-X})\big) \Big) d\xi \\
&=-\int_\bbr a^{-X}v^\beta |\partial_\xi \big(p(v)-p(\tilde v_\eps^{-X})\big)|^2 d\xi -\int_\bbr a'^{-X} v^\beta \big(p(v)-p(\tilde v_\eps^{-X})\big)\partial_\xi \big(p(v)-p(\tilde v_\eps^{-X})\big)  d\xi,\\
I_4&= \int_\bbr a^{-X} \big(p(v)-p(\tilde v_\eps^{-X})\big)\partial_{\xi}\Big((v^\beta - \tilde v_\eps^\beta) \partial_{\xi} p(\tilde v_\eps^{-X}) \Big) d\xi \\
&=-\int_\bbr a'^{-X} \big(p(v)-p(\tilde v_\eps^{-X})\big) (v^\beta - \tilde v_\eps^\beta) \partial_{\xi} p(\tilde v_\eps^{-X})  d\xi \\
&\qquad -\int_\bbr a^{-X} \partial_\xi \big(p(v)-p(\tilde v_\eps^{-X})\big) (v^\beta - \tilde v_\eps^\beta) \partial_{\xi} p(\tilde v_\eps^{-X})  d\xi.
\end{aligned}
\end{align*}
Since it follows from \eqref{re_shock} and \eqref{nablae} that
\begin{align*}
\begin{aligned}
I_5=\int_\bbr a^{-X}(\nabla\eta)(U|\tilde U_\eps^{-X})\partial_\xi A(\tilde U_\eps^{-X})  d\xi =\int_\bbr a^{-X} p(v|\tilde v_\eps^{-X}) \Big(\partial_\xi \tilde h_\eps^{-X} + \sigma_\eps \partial_\xi \tilde v_\eps^{-X} \Big) d\xi,
\end{aligned}
\end{align*}
we have some cancellation
\[
I_2+I_5=\sigma_\eps\int_\bbr a^{-X} \partial_\xi \tilde v_\eps^{-X} p(v|\tilde v_\eps^{-X}) d\xi.
\]
Therefore, we have
\begin{align*}
\begin{aligned}
&\frac{d}{dt}\int_{\bbr} a^{-X}\eta(U|\tilde U_\eps^{-X}) d\xi\\
&\quad =\dot X \Big( -\int_\bbr a'^{-X}\eta(U|\tilde U_\eps^{-X}) d\xi +\int_\bbr a^{-X}\partial_\xi\nabla\eta(\tilde U_\eps^{-X}) (U-\tilde U_\eps^{-X}) d\xi  \Big)\\
&\quad + \int_\bbr a'^{-X} \big(p(v)-p(\tilde v_\eps^{-X})\big) (h-\tilde h_\eps^{-X}) d\xi -\frac{\sigma_\eps}{2}\int_\bbr a'^{-X}\left| h-\tilde h_\eps\right|^2 d\xi  -\sigma_\eps  \int_\bbr  a'^{-X} Q(v|\tilde v_\eps) d\xi  \\
&\quad + \sigma_\eps\int_\bbr a^{-X} \partial_\xi \tilde v_\eps^{-X} p(v|\tilde v_\eps^{-X}) d\xi -\int_\bbr a'^{-X} v^\beta \big(p(v)-p(\tilde v_\eps^{-X})\big)\partial_\xi \big(p(v)-p(\tilde v_\eps^{-X})\big)  d\xi\\
&\quad -\int_\bbr a'^{-X} \big(p(v)-p(\tilde v_\eps^{-X})\big) \left(v^\beta - (\tilde v_\eps^\beta)^{-X}\right) \partial_{\xi} p(\tilde v_\eps^{-X})d\xi \\
&\quad -\int_\bbr a^{-X} \partial_\xi \big(p(v)-p(\tilde v_\eps^{-X})\big) \left(v^\beta - (\tilde v_\eps^\beta)^{-X}\right) \partial_{\xi} p(\tilde v_\eps^{-X})  d\xi -\int_\bbr a^{-X} v^\beta |\partial_\xi (p(v)-p(\tilde v_\eps^{-X}))|^2 d\xi.
\end{aligned}
\end{align*}
Again, we use a change of variable $\xi\mapsto \xi+X(t)$ to have
\begin{align*}
\begin{aligned}
&\frac{d}{dt}\int_{\bbr} a\eta(U^X|\tilde U_\eps) d\xi\\
&\quad =\dot X \Big( -\int_\bbr a'\eta(U^X|\tilde U_\eps) d\xi +\int_\bbr a\partial_\xi\nabla\eta(\tilde U_\eps) (U^X-\tilde U_\eps) d\xi  \Big)\\
&\qquad + \int_\bbr a' \big(p(v^X)-p(\tilde v_\eps)\big) (h^X-\tilde h_\eps)d\xi  -\frac{\sigma_\eps}{2}\int_\bbr a' \left| h^X-\tilde h_\eps\right|^2 d\xi -\sigma_\eps  \int_\bbr  a' Q(v^X|\tilde v_\eps) d\xi \\
&\qquad + \sigma_\eps\int_\bbr a \partial_\xi \tilde v_\eps p(v^X|\tilde v_\eps) d\xi -\int_\bbr a' (v^\beta)^X \big(p(v^X)-p(\tilde v_\eps)\big)\partial_\xi \big(p(v^X)-p(\tilde v_\eps)\big)  d\xi\\
&\qquad -\int_\bbr a' \big(p(v^X)-p(\tilde v_\eps)\big) \left((v^\beta)^{X} - \tilde v_\eps^\beta\right) \partial_{\xi} p(\tilde v_\eps)d\xi \\
&\qquad -\int_\bbr a \partial_\xi \big(p(v^X)-p(\tilde v_\eps)\big) \left((v^\beta)^X - \tilde v_\eps^\beta\right) \partial_{\xi} p(\tilde v_\eps)  d\xi -\int_\bbr a (v^\beta)^X |\partial_\xi (p(v^X)-p(\tilde v_\eps))|^2 d\xi,
\end{aligned}
\end{align*}
which provides the desired representation.
\end{proof}

\subsection{Construction of the weight function}
We define the weight function $a$ by
\beq\label{weight-a}
a(\xi)=1-\lambda \frac{p(\tilde v_\eps(\xi))-p(v_-)}{[p]},
\eeq
where $[p]:=p(v_+)-p(v_-)$.
We briefly present some useful properties on the weight $a$.\\
First of all, the weight function $a$ is positive and decreasing, and satisfies $1-\lambda\le a\le 1$.\\
Since $[p]=\eps$, $p'(v_-/2)\le p'(\tilde v_\eps)\le p'(v_-)$ and
\beq\label{der-a}
a'=-\lambda \frac{\partial_\xi p(\tilde v_\eps)}{[p]},
\eeq
we have 
\beq\label{d-weight}
|a'|\sim \frac{\lambda}{\eps}|\tilde v_\eps'|.
\eeq

\subsection{Maximization in terms of $h-\tilde h_\eps$}\label{sec:mini}

In order to estimate the right-hand side of \eqref{ineq-1}, we will use Proposition \ref{prop:main3}, i.e., a sharp estimate with respect to $v-\tilde v_\eps$ when $v-\tilde v_\eps\ll 1$, for which we need to rewrite $\mathcal{J}^{bad}$ on the right-hand side of \eqref{ineq-0} only in terms of $v$ near $\tilde v_\eps$, by separating $h-\tilde h_\eps$ from the first term of $\mathcal{J}^{bad}$. Therefore, we will rewrite $\mathcal{J}^{bad}$ into the maximized representation in terms of $h-\tilde h_\eps$ in the following lemma. However, we will keep all terms of $\mathcal{J}^{bad}$ in a region $\{p(v)-\pt >\delta\}$ for small values of $v$, since we use the estimate \eqref{ns1} to control the first term of $\mathcal{J}^{bad}$ in that region.  

\begin{lemma}\label{lem-max}
Let $a:\bbr\to\bbr^+$ be as in \eqref{weight-a}, and $\tilde U_\eps={\tilde v_\eps \choose \tilde h_\eps}$ be the viscous shock in \eqref{small_shock1}. Let $\delta$ be any positive constant.
Then, for any $U\in \mathcal{H}$, 
\begin{align}
\begin{aligned}\label{ineq-1}
\mathcal{J}^{bad} (U) -\mathcal{J}^{good} (U)= \mathcal{B}_\delta(U)- \mathcal{G}_\delta(U),
\end{aligned}
\end{align}
where
\begin{align}
\begin{aligned}\label{badgood}
&\mathcal{B}_\delta(U):=  \sigma_\eps\int_\bbr a \partial_\xi \tilde v_\eps p(v|\tilde v_\eps) d\xi+ \frac{1}{2\sigma_\eps} \int_\bbr a' |p(v)-p(\tilde v_\eps)|^2 {\mathbf 1}_{\{p(v)-\pt \leq\delta\}} d\xi \\
&\qquad \quad + \int_\bbr a' \big(p(v)-p(\tilde v_\eps)\big) (h-\tilde h_\eps) {\mathbf 1}_{\{p(v)-\pt >\delta\}}d\xi \\
& \qquad\quad -\int_\bbr a' v^\beta \big(p(v)-p(\tilde v_\eps)\big)\partial_\xi \big(p(v)-p(\tilde v_\eps)\big)  d\xi-\int_\bbr a' \big(p(v)-p(\tilde v_\eps)\big) (v^\beta - \tilde v_\eps^\beta) \partial_{\xi} p(\tilde v_\eps)d\xi \\
&\qquad\quad -\int_\bbr a \partial_\xi \big(p(v)-p(\tilde v_\eps)\big) (v^\beta - \tilde v_\eps^\beta) \partial_{\xi} p(\tilde v_\eps)  d\xi,\\
&\mathcal{G}_\delta(U):=\frac{\sigma_\eps}{2}\int_\bbr a'\left| h-\tilde h_\eps -\frac{p(v)-p(\tilde v_\eps)}{\sigma_\eps}\right|^2 {\mathbf 1}_{\{p(v)-\pt \leq\delta\}} d\xi  + \frac{\sigma_\eps}{2}\int_\bbr a'\left| h-\tilde h_\eps\right|^2 {\mathbf 1}_{\{p(v)-\pt >\delta\}} d\xi  \\
&\qquad\quad+\sigma_\eps  \int_\bbr  a' Q(v|\tilde v_\eps) d\xi +\int_\bbr a v^\beta |\partial_\xi \big(p(v)-p(\tilde v_\eps)\big)|^2 d\xi.
\end{aligned}
\end{align}

\begin{remark}\label{rem:0}
Since $\sigma_\eps a' >0$ and $a>0$, $-\mathcal{G}_\delta$ consists of four good terms. 
\end{remark}

\end{lemma}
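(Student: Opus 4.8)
The statement to prove is Lemma~\ref{lem-max}, the maximization identity
\eqref{ineq-1}. The plan is to verify this by a direct algebraic computation,
the key observation being that the only genuinely new manipulation compared to
\eqref{ybg-first} is a completion of the square in the variable
$h-\tilde h_\eps$ on the region $\{p(v)-\pt \le \delta\}$, while on the
complementary region $\{p(v)-\pt > \delta\}$ every term is simply kept as is.

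First I would start from \eqref{ineq-0}--\eqref{ybg-first} and isolate, in
$\mathcal{J}^{bad}(U)$, the single term that depends on $h-\tilde h_\eps$,
namely $\int_\bbr a'\big(p(v)-p(\tilde v_\eps)\big)(h-\tilde h_\eps)\,d\xi$, and
in $\mathcal{J}^{good}(U)$ the term $\frac{\sigma_\eps}{2}\int_\bbr a'|h-\tilde
h_\eps|^2\,d\xi$. I would split both integrals according to the indicator
${\mathbf 1}_{\{p(v)-\pt \le\delta\}} + {\mathbf 1}_{\{p(v)-\pt >\delta\}} = 1$.
On the set $\{p(v)-\pt >\delta\}$ there is nothing to do: the $h$-dependent bad
term goes verbatim into $\mathcal{B}_\delta(U)$ (third line of its definition)
and the $h$-dependent good term goes verbatim into $\mathcal{G}_\delta(U)$
(second term). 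On the set $\{p(v)-\pt \le\delta\}$ I would complete the square:
\[
\frac{\sigma_\eps}{2}a'|h-\tilde h_\eps|^2 - a'\big(p(v)-p(\tilde v_\eps)\big)(h-\tilde h_\eps)
= \frac{\sigma_\eps}{2}a'\Big|h-\tilde h_\eps - \frac{p(v)-p(\tilde v_\eps)}{\sigma_\eps}\Big|^2
 - \frac{1}{2\sigma_\eps}a'|p(v)-p(\tilde v_\eps)|^2,
\]
which is a pointwise identity valid for each $\xi$. Integrating over
$\{p(v)-\pt\le\delta\}$, the squared term produces the first term of
$\mathcal{G}_\delta(U)$ (with a minus sign, since $-\mathcal{J}^{good}$ contributes),
and the leftover $-\frac{1}{2\sigma_\eps}\int a'|p(v)-p(\tilde v_\eps)|^2
{\mathbf 1}_{\{p(v)-\pt\le\delta\}}\,d\xi$, when moved to the bad side, becomes
$+\frac{1}{2\sigma_\eps}\int a'|p(v)-p(\tilde v_\eps)|^2 {\mathbf
1}_{\{p(v)-\pt\le\delta\}}\,d\xi$, i.e.\ precisely the second term in the
definition of $\mathcal{B}_\delta(U)$. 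All remaining terms of
$\mathcal{J}^{bad}$ (the $\sigma_\eps\int a\partial_\xi\tilde v_\eps
p(v|\tilde v_\eps)$ term and the three $v^\beta$-terms) and of
$\mathcal{J}^{good}$ (the $\sigma_\eps\int a' Q(v|\tilde v_\eps)$ term and the
diffusion term $\int a v^\beta|\partial_\xi(p(v)-p(\tilde v_\eps))|^2$) do not
involve $h$ and are carried over unchanged into $\mathcal{B}_\delta(U)$ and
$\mathcal{G}_\delta(U)$ respectively. Collecting terms gives exactly
\eqref{ineq-1} with $\mathcal{B}_\delta(U)$ and $\mathcal{G}_\delta(U)$ as in
\eqref{badgood}.

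Two minor points need checking to make this rigorous. One is that all integrals
are well defined for $U\in\mathcal{H}$: this is immediate from the definition of
$\mathcal{H}$, since $h-\tilde h_\eps\in L^2$, $\partial_\xi(p(v)-p(\tilde
v_\eps))\in L^2$, $v,v^{-1}\in L^\infty$, $a$ is bounded with $a'$ integrable
(indeed $|a'|\sim\frac{\lambda}{\eps}|\tilde v_\eps'|$ by \eqref{d-weight}, and
$\tilde v_\eps'$ decays exponentially by \eqref{tail}), and $p(v|\tilde v_\eps)$
together with $Q(v|\tilde v_\eps)$ are controlled locally by Lemmas
\ref{lem-pro} and \ref{lem:local}; the same bounds already underlie the
well-definedness of $Y,\mathcal{J}^{bad},\mathcal{J}^{good}$ asserted before
Lemma~\ref{lem-rel}. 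The other is that the completion of the square is a purely
algebraic pointwise identity, so splitting by the measurable set
$\{p(v)-\pt\le\delta\}$ causes no difficulty. Since this is entirely elementary,
I do not anticipate a genuine obstacle; the only thing to be careful about is
bookkeeping of signs, in particular remembering that $\mathcal{J}^{good}$ enters
\eqref{ineq-0} with a minus sign so that the residual
$-\frac{1}{2\sigma_\eps}a'|p(v)-p(\tilde v_\eps)|^2$ flips sign when it is
absorbed into the (positively-signed) bad functional $\mathcal{B}_\delta$.
Finally, Remark~\ref{rem:0} follows immediately because $\sigma_\eps a'>0$
(by the construction \eqref{weight-a}, \eqref{der-a}, since for a $1$-shock
$\sigma_\eps<0$ and $\partial_\xi p(\tilde v_\eps)>0$, so $a'>0$... more
precisely $\sigma_\eps a' = -\lambda\,\sigma_\eps\,\partial_\xi p(\tilde v_\eps)/[p]>0$
as $[p]=p(v_+)-p(v_-)<0$ for a $1$-shock) and $a>0$, hence each of the four
terms constituting $\mathcal{G}_\delta(U)$ is nonnegative.
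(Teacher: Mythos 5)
Your proof is correct and is essentially identical to the paper's: both split the $h$-dependent terms of $\mathcal{J}^{bad}$ and $\mathcal{J}^{good}$ by the indicator of $\{p(v)-\pt\le\delta\}$ and complete the square in $h-\tilde h_\eps$ on that region via the identity $\alpha x^2+\beta x=\alpha(x+\tfrac{\beta}{2\alpha})^2-\tfrac{\beta^2}{4\alpha}$, carrying all $h$-independent terms over unchanged. One small slip in your aside on Remark \ref{rem:0}: for a $1$-shock $v_->v_+$, so $[p]=p(v_+)-p(v_-)>0$ and $a'<0$; combined with $\sigma_\eps<0$ this still gives $\sigma_\eps a'>0$, so the conclusion stands.
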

\begin{proof}
For any fixed $\delta>0$, we first rewrite $\mathcal{J}^{bad}$ and $-\mathcal{J}^{good}$ into
\begin{align*}
\begin{aligned}
&\mathcal{J}^{bad}(U):=\underbrace{ \int_\bbr a' \big(p(v)-p(\tilde v_\eps)\big) (h-\tilde h_\eps) {\mathbf 1}_{\{p(v)-\pt \leq\delta\}} d\xi }_{=:J_1}\\
& \qquad\quad +\int_\bbr a' \big(p(v)-p(\tilde v_\eps)\big) (h-\tilde h_\eps) {\mathbf 1}_{\{p(v)-\pt >\delta\}} d\xi 
+  \sigma_\eps\int_\bbr a \partial_\xi \tilde v_\eps p(v|\tilde v_\eps) d\xi\\
& \qquad\quad -\int_\bbr a' v^\beta \big(p(v)-p(\tilde v_\eps)\big)\partial_\xi \big(p(v)-p(\tilde v_\eps)\big)  d\xi-\int_\bbr a' \big(p(v)-p(\tilde v_\eps)\big) (v^\beta - \tilde v_\eps^\beta) \partial_{\xi} p(\tilde v_\eps)d\xi \\
&\qquad\quad -\int_\bbr a \partial_\xi \big(p(v)-p(\tilde v_\eps)\big) (v^\beta - \tilde v_\eps^\beta) \partial_{\xi} p(\tilde v_\eps)  d\xi,
\end{aligned}
\end{align*}
and 
\begin{align*}
\begin{aligned}
&-\mathcal{J}^{good}(U):= \underbrace{- \frac{\sigma_\eps}{2}\int_\bbr a'\left| h-\tilde h_\eps\right|^2  {\mathbf 1}_{\{p(v)-\pt \leq\delta\}} d\xi }_{=:J_2} 
 -\frac{\sigma_\eps}{2}\int_\bbr a'\left| h-\tilde h_\eps\right|^2  d\xi  {\mathbf 1}_{\{p(v)-\pt >\delta\}} d\xi \\
&\qquad\quad -\sigma_\eps  \int_\bbr  a' Q(v|\tilde v_\eps) d\xi -\int_\bbr a v^\beta \left|\partial_\xi \big(p(v)-p(\tilde v_\eps)\big)\right|^2 d\xi.
\end{aligned}
\end{align*}
Applying the quadratic identity $\alpha x^2+ \beta x =\alpha(x+\frac{\beta}{2\alpha})^2-\frac{\beta^2}{4\alpha}$ with $x:=h-\tilde h_\eps$ to the integrands of $J_1+J_2$, we find
\begin{align*}
\begin{aligned}
- \frac{\sigma_\eps}{2} \left| h-\tilde h_\eps\right|^2 +\big(p(v)-p(\tilde v_\eps)\big) (h-\tilde h_\eps)  = - \frac{\sigma_\eps}{2} \left| h-\tilde h_\eps -\frac{p(v)-p(\tilde v_\eps)}{\sigma_\eps}\right|^2+ \frac{1}{2\sigma_\eps}|p(v)-p(\tilde v_\eps)|^2.
\end{aligned}
\end{align*}
Therefore, we have the desired representation \eqref{ineq-1}-\eqref{badgood}.
\end{proof}

\subsection{Main proposition}
The main proposition for the proof of Theorem \ref{thm_main} is the following.

\begin{proposition}\label{prop:main}
There exist $\eps_0,\delta_0, \delta_3\in(0,1/2)$ such that for any $\eps<\eps_0$ and $\delta_0^{-1}\eps<\lambda<\delta_0$, the following is true.\\
For any $U\in\mathcal{H} \cap \{U~|~|Y(U)|\le\eps^2 \}$,
\begin{align}
\begin{aligned}\label{prop:est}
\mathcal{R}(U)&:= -\frac{1}{\eps^4}Y^2(U) +\mathcal{B}_{\delta_3}(U)+\delta_0\frac{\eps}{\lambda} |\mathcal{B}_{\delta_3}(U)|\\
&\qquad -\mathcal{G}_1^-(U)-\mathcal{G}_1^+(U) -\left(1-\delta_0\frac{\eps}{\lambda}\right)\mathcal{G}_2(U) -(1-\delta_0)\mathcal{D}(U)  \le 0,
\end{aligned}
\end{align}
where $Y$ and $\mathcal{B}_{\delta_3}$ are as in \eqref{ybg-first} and \eqref{badgood}, and $\mathcal{G}_1^-, \mathcal{G}_1^+, \mathcal{G}_2, \mathcal{D}$ denote the four terms of $\mathcal{G}_{\delta_3}$ as follows:
\begin{align}
\begin{aligned}\label{ggd}
&\mathcal{G}_1^-(U):=\frac{\sigma_\eps}{2}\int_{\Omega^c} a' |h-\tilde h_\eps |^2  {\mathbf 1}_{\{p(v)-\pt >\delta_3\}} d\xi,\\
&\mathcal{G}_1^+(U):=\frac{\sigma_\eps}{2}\int_\Omega a'\Big(h-\tilde h_\eps -\frac{p(v)-p(\tilde v_\eps)}{\sigma_\eps}\Big)^2  {\mathbf 1}_{\{p(v)-\pt \leq\delta_3 \}} d\xi,\\
&\mathcal{G}_2(U):=\sigma_\eps  \int_\bbr  a' Q(v|\tilde v_\eps) d\xi,\\
&\mathcal{D}(U):= \int_\bbr a v^{\beta} |\partial_\xi (p(v)-p(\tilde v_\eps))|^2 d\xi.
\end{aligned}
\end{align} 
\end{proposition}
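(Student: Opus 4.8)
\textbf{Proof plan for Proposition \ref{prop:main}.}

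The plan is to reduce \eqref{prop:est} to an inequality depending essentially only on $v$ near $\tilde v_\eps$, to which a generalized nonlinear Poincar\'e inequality (Proposition \ref{prop:W}) can be applied. The starting point is the observation that the condition $|Y(U)|\le\eps^2$ built into the hypothesis provides a smallness estimate of the form \eqref{small}, i.e.\ a weighted $L^2$ control on $Q(v|\tilde v_\eps)$ of order $(\eps/\lambda)^2$ for moderate values of $v$, localized in the shock layer $|\xi|\lesssim 1/\eps$, together with an $L^1$ control for large $v$; this should be extracted first via a lemma (in the spirit of Lemma \ref{lemmeC2}) using \eqref{d-weight}, \eqref{tail}, and the structure of $Y$ in \eqref{ybg-first}. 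With this smallness in hand, the region $\{p(v)-\pt>\delta_3\}$ contributes only a small measure, and the terms of $\mathcal{B}_{\delta_3}$ and $Y$ supported there can be absorbed: the first term of $\mathcal{B}_{\delta_3}$ restricted to $\{p(v)-\pt>\delta_3\}$ is controlled by $\mathcal{G}_1^-$ together with the unconditional bound \eqref{ns1} (this is where the $\mathbf 1_{\{p(v)-\pt>\delta\}}$ splitting in Lemma \ref{lem-max} pays off), and the diffusive term $\mathcal{D}(U)$ absorbs the terms containing $\partial_\xi(p(v)-p(\tilde v_\eps))$ after Young's inequality, using $a v^\beta$ as weight and the pointwise bounds on $\tilde v_\eps,\tilde v_\eps'$. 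The $-\frac{1}{\eps^4}Y^2(U)$ term is kept nonnegative-signed (it only helps) and is used at the end only to recover the slightly stronger statement controlling the shift.

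Once the ``outer'' region is disposed of, the core estimate is on the region $\{p(v)-\pt\le\delta_3\}$, where one has genuine smallness of $v-\tilde v_\eps$. Here I would substitute the maximized representation \eqref{ineq-1}--\eqref{badgood}: the good term $\mathcal{G}_1^+$ exactly cancels the worst part of the $h$-dependence coming from completing the square in $h-\tilde h_\eps$, so that after this step $\mathcal{B}_{\delta_3}$ minus the remaining good terms is, up to harmless lower-order pieces, a functional of $v-\tilde v_\eps$ and $\partial_\xi(p(v)-p(\tilde v_\eps))$ alone. Then, exactly as in \cite{Kang-V-NS17}, I would perform a Taylor expansion in the shock strength $\eps$, uniformly in $v$ but only on $\{p(v)-\pt\le\delta_3\}$, carrying it to third order (the second-order truncation is not sufficient, since the cubic term in the expansion via the smallness condition \eqref{small} is genuinely of the same size). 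Using the local estimates of Lemma \ref{lem:local} — in particular \eqref{p-est1} to convert $p(v|\tilde v_\eps)$ into $\frac{\gamma+1}{2\gamma}\frac{1}{p(\tilde v_\eps)}|p(v)-p(\tilde v_\eps)|^2$ plus $O(\delta_3)$ errors, and \eqref{pQ-equi0} to compare $|p(v)-p(\tilde v_\eps)|^2$ with $Q(v|\tilde v_\eps)$ — the expression reduces to the same generic quadratic-plus-cubic form \eqref{Winst} in the variable $w:=p(v)-p(\tilde v_\eps)$ (suitably rescaled), weighted against $a'$ and with the diffusion term $\int a v^\beta|\partial_\xi w|^2$. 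The terms $\beta=\gamma-\alpha\in(0,1]$ introduced by the non-constant $\mu$ enter only through the factor $v^\beta$ multiplying the diffusion and through $(v^\beta-\tilde v_\eps^\beta)\partial_\xi p(\tilde v_\eps)$ in $\mathcal{B}_{\delta_3}$; on $\{p(v)-\pt\le\delta_3\}$ one has $v\sim\tilde v_\eps\sim v_-$ so $v^\beta$ is bounded above and below, and $|v^\beta-\tilde v_\eps^\beta|\lesssim|v-\tilde v_\eps|\lesssim|p(v)-p(\tilde v_\eps)|$, so these are absorbed just like in the $\gamma=\alpha$ case.

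The main obstacle, and the genuinely new point compared to \cite{Kang-V-NS17}, is precisely the handling of large values of $v$ (equivalently $p(v)$), where no supremum bound is available: the naive maximization in $h$ over all of $\bbr$ produces powers of $p(v)$ that cannot be dominated by the good terms. The resolution is the \emph{conditional} maximization — maximizing in $h$ only on $\{p(v)-\pt\le\delta_3\}$ and keeping the original bilinear term $a'(p(v)-p(\tilde v_\eps))(h-\tilde h_\eps)$ on $\{p(v)-\pt>\delta_3\}$ (as done in Lemma \ref{lem-max}) — combined with a separate estimate (Proposition \ref{prop_out}) bounding the whole contribution of $\{|p(v)-\pt|\ge\delta_3\}$ using only the smallness from $|Y(U)|\le\eps^2$ and the good terms $\mathcal{G}_1^-,\mathcal{G}_2,\mathcal{D}$; here $\delta_3$ must be chosen small (fixed by the requirements of the Poincar\'e inequality) but crucially independent of $\eps$ and of $\eps/\lambda$. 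Organizing the choice of constants so that $\delta_3$ is fixed first (from Proposition \ref{prop:W}), then $\delta_0$, then $\eps_0$, and checking that all error terms genuinely carry a factor of $\eps$, $\eps/\lambda$, or $\delta_3$ that makes them subordinate — that bookkeeping is the delicate part of the argument. Once the generic inequality \eqref{Winst} is reached, Proposition \ref{prop:W} closes the estimate and yields \eqref{prop:est}.
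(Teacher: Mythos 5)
Your plan follows the paper's own strategy almost step for step: the unconditional smallness extracted from $|Y(U)|\le\eps^2$ (Lemma \ref{lemmeC2}), the conditional maximization in $h$ of Lemma \ref{lem-max}, the separate treatment of the region $\{|p(v)-\pt|\ge\delta_3\}$ via Proposition \ref{prop_out}, the truncation $\bar v=\bar v_{\delta_3}$, the third-order expansion in $\eps$ (Proposition \ref{prop:main3}), and the nonlinear Poincar\'e inequality (Proposition \ref{prop:W}). Two points, however, need correction.

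First, your claim that the term $-\frac{1}{\eps^4}Y^2(U)$ ``only helps'' and ``is used at the end only to recover the slightly stronger statement controlling the shift'' misreads the architecture of the argument. That term is the engine of the proof: after writing $Y=Y_g+Y_b+Y_l+Y_s$ and using $-4|Y|^2\le -|Y_g(\bar U)|^2+4|Y_g(U)-Y_g(\bar U)|^2+4|Y_b|^2+4|Y_l|^2+4|Y_s|^2$, it supplies the term $-\frac{1}{\eps\delta_3}|Y_g(\bar U)|^2$ fed into Proposition \ref{prop:main3}, which after normalization becomes the $-\frac{1}{\delta}\bigl(\int_0^1W^2\,dy+2\int_0^1W\,dy\bigr)^2$ term of \eqref{Winst}. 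Without that negative square of the approximate mean of $W$, the inequality \eqref{Winst} is simply false (test it on constant $W>0$), so the plan as you state it --- invoking \eqref{Winst} while setting aside $-Y^2/\eps^4$ --- is internally inconsistent. The extra strength of \eqref{prop:est} needed for the shift control comes instead from the terms $\delta_0\frac{\eps}{\lambda}|\mathcal{B}_{\delta_3}|$ and the weakened coefficients on $\mathcal{G}_2$ and $\mathcal{D}$.

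Second, your plan omits the dichotomy on the size of the dissipation that organizes the paper's proof. The error estimates that let you replace $Y(U)$ by $Y_g(\bar U)$ and $\mathcal{B}_i(U)$ by $\mathcal{B}_i(\bar U)$ --- in particular \eqref{m1} --- are only proved under the extra hypothesis $\mathcal{D}(U)\lesssim\eps^2/\lambda$; without it the squared quantities $|Y_b|^2$, $|Y_s|^2$, $|Y_g(U)-Y_g(\bar U)|^2$, divided by $\eps\delta_3$, cannot be absorbed by $-(1-\delta_0)\mathcal{D}(U)$ and the other good terms. The complementary regime $\mathcal{D}(U)\ge 4C^*\eps^2/\lambda$ must be handled separately and directly: there \eqref{n2} gives $|\mathcal{B}_{\delta_3}(U)|\le C^*\frac{\eps^2}{\lambda}+C\sqrt{\delta_0}\,\mathcal{D}(U)$, so the dissipation alone dominates and $\mathcal{R}(U)\le 2C^*\frac{\eps^2}{\lambda}-\frac12\mathcal{D}(U)\le0$ without any expansion. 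You should also make explicit that the constant $C_2$ calibrating $\delta_3$ must come from a bound on $Y_g(\bar v_k)$ uniform in the truncation level $k$ (this is why Lemma \ref{lemmeC2} is stated for all $k\le k_0$), otherwise the choice of $\delta_3$ would be circular.
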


\subsection{Proof of Theorem~\ref{thm_main} from Proposition \ref{prop:main}}
We will first show how Proposition \ref{prop:main} implies Theorem~\ref{thm_main}.\\

For any fixed $\eps>0$, we consider a continuous function $\Phi_\eps$ defined by
\beq\label{Phi-d}
\Phi_\eps (y)=
\left\{ \begin{array}{ll}
      \frac{1}{\eps^2},\quad \mbox{if}~ y\le -\eps^2, \\
      -\frac{1}{\eps^4}y,\quad \mbox{if} ~ |y|\le \eps^2, \\
       -\frac{1}{\eps^2},\quad \mbox{if}  ~y\ge \eps^2. \end{array} \right.
\eeq
Let $\eps_0,\delta_0, \delta_3$ be the constants in Proposition \ref{prop:main}. Then, let $\eps, \lambda$ be any constants such that $0<\eps<\eps_0$ and $\delta_0^{-1}\eps<\lambda<\delta_0<1/2$.\\

We define a shift function $X(t)$ as a solution of the nonlinear ODE:
\beq\label{X-def}
\left\{ \begin{array}{ll}
       \dot X(t) = \Phi_\eps (Y(U^X)) \Big(2|\mathcal{J}^{bad}(U^X)|+1 \Big),\\
       X(0)=0, \end{array} \right.
\eeq
where $Y$ and $\mathcal{J}^{bad}$ are as in \eqref{ybg-first}.\\
Then, for the solution $U\in \mathcal{H}_T$, there exists a unique absolutely continuous shift $X$ on $[0,T]$. 
Indeed, since $\tilde v_\eps', \tilde h_\eps', a'$ are bounded, smooth and integrable, using $U\in \mathcal{H}_T$ together with the change of variables $\xi\mapsto \xi-X(t)$ as in \eqref{move-X}, we find that there exists $a, b\in L^2(0,T)$ such that
\[
\sup_{x\in\bbr}| F(t,x)| \le a(t)\quad\mbox{and}\quad \sup_{x\in\bbr} |\partial_x F(t,x)| \le b(t),\quad \forall t\in[0,T],
\]
where $F(t,X)$ denotes the right-hand side  of the ODE \eqref{X-def}. For more details on the existence and uniqueness theory of the ODE, we refer to \cite[Lemma A.1]{CKKV}. \\

Based on \eqref{ineq-0} and \eqref{X-def}, to get the contraction estimate \eqref{cont_main2}, it is enough to prove that for almost every time $t>0$ ,
\beq\label{contem0}
\Phi_\eps (Y(U^X)) \Big(2|\mathcal{J}^{bad}(U^X)|+1 \Big) Y(U^X) +\mathcal{J}^{bad} (U^X) -\mathcal{J}^{good} (U^X) \le0.
\eeq
We define 
\[
\mathcal{F}(U):=\Phi_\eps (Y(U)) \Big(2|\mathcal{J}^{bad}(U)|+1 \Big)Y(U) +\mathcal{J}^{bad}(U) -\mathcal{J}^{good} (U) ,\quad \forall U\in \mathcal{H}.
\]
From \eqref{Phi-d}, we have 
\beq\label{XY}
\Phi_\eps (Y) \Big(2|\mathcal{J}^{bad}|+1 \Big)Y\le
\left\{ \begin{array}{ll}
     -2|\mathcal{B}_{\delta_3}|,\quad \mbox{if}~  |Y|\ge \eps^2,\\
     -\frac{1}{\eps^4}Y^2,\quad  \mbox{if}~ |Y|\le \eps^2. \end{array} \right.
\eeq
Hence,   for all $U\in \mathcal{H}$ satisfying  $|Y(U)|\ge \eps^2 $, we have 
$$
\mathcal{F}(U) \le -|\mathcal{J}^{bad}(U)|-\mathcal{J}^{good}(U) \le 0.
$$
Using \eqref{ineq-1}, \eqref{XY} and Proposition \ref{prop:main}, we find that for all $U\in \mathcal{H}$ satisfying  $|Y(U)|\le \eps^2 $, 
$$
\mathcal{F}(U) \le -\delta_0\frac{\eps}{\lambda}|\mathcal{B}_{\delta_3}(U)| -\delta_0\frac{\eps}{\lambda}\mathcal{G}_2(U) - \delta_0\mathcal{D}(U) \le 0.
$$
 Since $\delta_0 <1/2$,   these two estimates show that for every $U\in \mathcal{H}$ we have 
 $$
\mathcal{F}(U) \le -\delta_0\frac{\eps}{\lambda} |\mathcal{B}_{\delta_3}(U)| -\delta_0\frac{\eps}{\lambda}\mathcal{G}_2(U)  - \delta_0\mathcal{D}(U).
$$
Thus, using the above estimates with $U=U^X$, together with  \eqref{ineq-0}, \eqref{contem0} and the definition of $\mathcal{I}^{good}$, we find that for a.e. $t>0$,
\begin{align}
\begin{aligned}\label{111}
&\frac{d}{dt}\int_{\bbr} a\eta(U^X|\tilde U_\eps)+ \delta_0\frac{\eps}{\lambda}\mathcal{G}_2(U^X) + \delta_0\mathcal{D}(U^X) d\xi = \mathcal{F}(U^X)+ \delta_0\frac{\eps}{\lambda}\mathcal{G}_2(U^X) + \delta_0\mathcal{D}(U^X) \\
&\qquad\qquad\qquad\qquad\qquad\qquad \le -|\mathcal{J}^{bad}(U^X)| \mathbf{1}_{\{|Y(U^X)|\ge\eps^2\}} -\delta_0\frac{\eps}{\lambda} |\mathcal{B}_{\delta_3}(U^X)| \mathbf{1}_{\{|Y(U^X)|\le\eps^2\}} \le 0.
\end{aligned}
\end{align}
Therefore we have 
\beq\label{cont-pre}
\int_{\bbr} a\eta(U^X|\tilde U_\eps) d\xi  + \delta_0\frac{\eps}{\lambda}\mathcal{G}_2(U^X) + \delta_0\mathcal{D}(U^X)\le \int_{\bbr} a\eta(U_0|\tilde U_\eps) d\xi <\infty ,
\eeq
which completes \eqref{cont_main2}.\\

To estimate $|\dot X|$, we first observe that \eqref{Phi-d} and \eqref{X-def} yield
\beq\label{contx}
|\dot X| \le \frac{1}{\eps^2} (2|\mathcal{J}^{bad}(U^X)| +1) , \quad\mbox{for a.e. } t\in (0,T),
\eeq
Notice that it follows from \eqref{111} and $1/2\le a\le 1$ by $\delta_0<1/2$ that
\beq\label{jb-cont}
\int_0^{T}\left(|\mathcal{J}^{bad}(U^X)| \mathbf{1}_{\{|Y(U^X)|\ge\eps^2\}} + \delta_0\frac{\eps}{\lambda} |\mathcal{B}_{\delta_3}(U^X)| \mathbf{1}_{\{|Y(U^X)|\le\eps^2\}} \right)dt \le  2\int_{\bbr} \eta(U_0|\tilde U_\eps) d\xi.
\eeq
To estimate $|\mathcal{J}^{bad}(U^X)|$ globally in time, using  \eqref{ineq-1} and the definitions  of $\mathcal{I}^{good}$ and $\mathcal{G}_{\delta_3}$,
we find that
\begin{align*}
\begin{aligned}
&|\mathcal{J}^{bad}(U^X)|\\
&=|\mathcal{J}^{bad}(U^X)| {\mathbf 1}_{\{|Y(U^X)|\ge \eps^2\}} +|\mathcal{J}^{bad}(U^X)| {\mathbf 1}_{\{|Y(U^X)|\le \eps^2\}}  \\
&=|\mathcal{J}^{bad}(U^X)| {\mathbf 1}_{\{|Y(U^X)|\ge \eps^2\}} +| \mathcal{J}^{good}(U^X) + \mathcal{B}_{\delta_3}(U^X)-\mathcal{G}_{\delta_3}(U^X)| {\mathbf 1}_{\{|Y(U^X)|\le \eps^2\}}\\
&\le |\mathcal{J}^{bad}(U^X)| {\mathbf 1}_{\{|Y(U^X)|\ge \eps^2\}}  +| \mathcal{B}_{\delta_3}(U^X)|{\mathbf 1}_{\{|Y(U^X)|\le \eps^2\}}\\
&\quad + \frac{|\sigma| }{2}\int_\bbr |a'| \Big| \big(h^X-\tilde h_\eps \big)^2 - \left(h^X-\tilde h_\eps -\frac{p(v)-p(\tilde v_\eps)}{\sigma_\eps}\right)^2 \Big|{\mathbf 1}_{\{p(v)-\pt \leq\delta_3 \}}  d\xi\\
&\le |\mathcal{J}^{bad}(U^X)| {\mathbf 1}_{\{|Y(U^X)|\ge \eps^2\}}  +| \mathcal{B}_{\delta_3}(U^X)|{\mathbf 1}_{\{|Y(U^X)|\le \eps^2\}}\\
&\quad  +C \int_\bbr |a'| \Big( \big(h^X-\tilde h_\eps \big)^2 + \big(p(v^X)-p(\tilde v_\eps) \big)^2 \Big) {\mathbf 1}_{\{p(v^X)-\pt \leq\delta_3 \}}  d\xi .
\end{aligned}
\end{align*}
Since for any $v$ satisfying $p(v)-\pt \leq\delta_3$, there exists a positive constant $c_*$ such that $v>c_*^{-1}$ and $|p(v)-\pt|\le c_*$, we use \eqref{pressure2} and \eqref{rel_Q} to have
\begin{align*}
\begin{aligned}
& \int_\bbr \big(p(v)-p(\tilde v_\eps) \big)^2 {\mathbf 1}_{\{p(v)-\pt \leq\delta_3 \}}  d\xi \\
&\quad  \le  c_* \int_{v>c_*^{-1}} \big|p(v)-p(\tilde v_\eps) \big| {\mathbf 1}_{\{v\ge 3v_- \}}  d\xi + \int_{v>c_*^{-1}} \big|p(v)-p(\tilde v_\eps) \big|^2 {\mathbf 1}_{\{v\le 3v_- \}}  d\xi  \\
&\quad \le C\int_{v>c_*^{-1}} \Big( |v-\tilde v_\eps| {\mathbf 1}_{\{v\ge 3v_- \}} + |v-\tilde v_\eps|^2 {\mathbf 1}_{\{v\le 3v_- \}} \Big) d\xi \le C\int_\bbr Q(v| \tilde v_\eps) d\xi .
\end{aligned}
\end{align*}
Therefore, using $a' \leq C \delta_0$ and $\delta_0\le \frac{1}{2}\le a$, we have
\[
|\mathcal{J}^{bad}(U^X)|\le  |\mathcal{J}^{bad}(U^X)| {\mathbf 1}_{\{|Y(U^X)|\ge \eps^2\}}  +| \mathcal{B}_{\delta_3}(U^X)|{\mathbf 1}_{\{|Y(U^X)|\le \eps^2\}} + C \int_\bbr a\eta(U^X|\tilde U_\eps) d\xi,  
\]
which together with \eqref{cont-pre} and \eqref{contx} implies
\[
|\dot X|\le \frac{1}{\eps^2}\left[ \Big( |\mathcal{J}^{bad}(U^X)| {\mathbf 1}_{\{|Y(U^X)|\ge \eps^2\}}  +| \mathcal{B}_{\delta_3}(U^X)|{\mathbf 1}_{\{|Y(U^X)|\le \eps^2\}}  \Big) + C \int_{\bbr} \eta(U_0|\tilde U_\eps) d\xi + 1 \right],
\]
and \eqref{jb-cont} implies
\[
\int_0^{T}\Big( |\mathcal{J}^{bad}(U^X)| {\mathbf 1}_{\{|Y(U^X)|\ge \eps^2\}}  +| \mathcal{B}_{\delta_3}(U^X)|{\mathbf 1}_{\{|Y(U^X)|\le \eps^2\}}  \Big)  dt  \le \frac{2\lambda}{\delta_0\eps}\int_\bbr \eta(U_0|\tilde U_\eps)  d\xi.
\]
Hence we complete \eqref{est-shift1}.\\

\vskip0.1cm The rest of this section is dedicated to the proof of Proposition \ref{prop:main}.

\subsection{Expansion in the size of the shock}\label{section-expansion}

We define the following functionals:
\begin{align}
\begin{aligned}\label{note-in}
&Y_g(v):=-\frac{1}{2\sigma_\eps^2}\int_\bbr a' |p(v)-p(\tilde v_\eps)|^2 d\xi -\int_\bbr a' Q(v|\tilde v_\eps) d\xi -\int_\bbr a \partial_\xi p(\tilde v_\eps)(v-\tilde v_\eps)d\xi\\
&\quad\qquad+\frac{1}{\sigma_\eps}\int_\bbr a \partial_\xi \tilde h_\eps\big(p(v)-p(\tilde v_\eps)\big)d\xi,\\
&\mathcal{I}_1(v):= \sigma_\eps\int_\bbr a \partial_\xi \tilde v_\eps p(v|\tilde v_\eps) d\xi,\\
&\mathcal{I}_2(v):= \frac{1}{2\sigma_\eps} \int_\bbr a' |p(v)-p(\tilde v_\eps)|^2d\xi,\\
&\mathcal{G}_2(v):=\sigma_\eps  \int_\bbr  a' Q(v|\tilde v_\eps) d\xi, \\
&\mathcal{D}(v):=\int_\bbr a\, v^\beta |\partial_\xi (p(v)-p(\tilde v_\eps))|^2 d\xi.
\end{aligned}
\end{align}
Note that all these quantities depend only on $v$ (not on $h$).

\begin{proposition}\label{prop:main3}
For any constant $C_2>0$, there exist $\eps_0, \delta_3>0$, such that for any $\eps\in(0,\eps_0)$, and any $\lambda, \delta\in(0,\delta_3)$ such that $\eps\leq \lambda$,
 the following is true.\\
For any function $v:\bbr\to \bbr^+$ such that $\mathcal{D}(v)+\mathcal{G}_2(v)$
 is finite, if
\beq\label{assYp}
|Y_g(v)|\leq C_2 \frac{\eps^2}{\lambda},\qquad  \|p(v)-p(\vt)\|_{L^\infty(\bbr)}\leq \delta_3,
\eeq
then
\begin{align}
\begin{aligned}\label{redelta}
\mathcal{R}_{\eps,\delta}(v)&:=-\frac{1}{\eps\delta}|Y_g(v)|^2 +\mathcal{I}_1(v)+\delta|\mathcal{I}_1(v)|\\
&\quad\quad+\mathcal{I}_2(v)+\delta\left(\frac{\eps}{\lambda}\right)|\mathcal{I}_2(v)|-\left(1-\delta\left(\frac{\eps}{\lambda}\right)\right)\mathcal{G}_2(v)-(1-\delta)\mathcal{D}(v)\le 0,
\end{aligned}
\end{align}
where note that $\mathcal{I}_1, \mathcal{I}_2 \ge 0$.
\end{proposition}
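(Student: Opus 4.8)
The plan is to reduce Proposition~\ref{prop:main3} to a single sharp nonlinear inequality by a careful Taylor expansion in the shock strength $\eps$, following the strategy of \cite{Kang-V-NS17} but tracking the new diffusion term $\mathcal{D}(v)=\int_\bbr a\,v^\beta|\partial_\xi(p(v)-p(\tilde v_\eps))|^2\,d\xi$. The first step is a change of the unknown from $v$ to the pressure-type variable $w:=p(v)-p(\tilde v_\eps)$, so that the layer region is $\{|w|\leq\delta_3\}$ and the constraint $\|w\|_{L^\infty}\leq\delta_3$ lets us expand every nonlinearity. Using Lemma~\ref{lem:local}, in particular \eqref{p-est1} and \eqref{pQ-equi0}, together with the profile bounds \eqref{tail}, \eqref{d-weight}, one expands $p(v|\tilde v_\eps)$, $Q(v|\tilde v_\eps)$, and the coefficient $v^\beta=(p^{-1})(w+p(\tilde v_\eps))^{?}$ to the relevant order in $w$ (and uses $|a'|\sim(\lambda/\eps)|\tilde v_\eps'|\sim\eps\lambda e^{-C\eps|\xi|}$, $[p]=\eps$). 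The point is that each of $\mathcal{I}_1,\mathcal{I}_2,\mathcal{G}_2,\mathcal{D}$ and $Y_g$ becomes, up to errors controlled by $\mathcal{D}+\mathcal{G}_2$ times a small factor, an explicit weighted quadratic (or cubic) functional of $w$ with weights that are powers of $\tilde v_\eps'$.

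Second, I would normalize: introduce the rescaled spatial variable adapted to the layer width $1/\eps$ and extract the common factor $\eps^3$ (or the appropriate power dictated by $\mathcal{I}_1\sim\eps\int a|\tilde v_\eps'|w^2$, $\mathcal{I}_2\sim\mathcal{G}_2\sim\eps\lambda\int |\tilde v_\eps'| w^2/\eps^2$, $\mathcal{D}\sim\int a v^\beta|\partial_\xi w|^2$), so that all terms are comparable. The antisymmetric term $Y_g(v)$ expands to leading order as a weighted \emph{linear} functional of $w$ plus a quadratic correction; because $a$ is \emph{decreasing} (the non-homogeneity, with $|a_+-a_-|=\lambda$), the quadratic part of $Y_g$ is sign-definite, which is exactly what upgrades the smallness $|Y_g(v)|\leq C_2\eps^2/\lambda$ into an $L^2$-type smallness of $w$ in the layer (this is the content of \eqref{small} in the sketch). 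The $-\frac{1}{\eps\delta}|Y_g(v)|^2$ term is then available to absorb the dangerous cubic contributions: as flagged in Step~5 of the idea section, the expansion must go to \emph{third} order in $\eps$, since the second-order (linearized) terms alone cancel by the Rankine--Hugoniot relations \eqref{RH-con} and one must beat the third-order remainder using $|Y_g|^2/(\eps\delta)$ together with the genuinely dissipative $-(1-\delta)\mathcal{D}(v)$.

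Third, after these reductions the inequality \eqref{redelta} collapses to the generic weighted functional inequality of the form treated by Proposition~\ref{prop:W} (the ``generalized nonlinear Poincar\'e inequality'' \eqref{Winst} mentioned in the excerpt): schematically, for $\phi$ a rescaled version of $w$ on $\bbr$,
\[
-\Big(\int \alpha_1 \phi\,d\xi\Big)^2 + \int \alpha_2 \phi^2\,d\xi - \int \alpha_3 |\phi'|^2\,d\xi \;\leq\; 0,
\]
with explicit weights $\alpha_i$ built from the limiting shock profile (and small perturbations thereof controlled by $\delta_3,\eps,\eps/\lambda$). I would invoke Proposition~\ref{prop:W} with constants chosen so that the $\delta$-perturbations $\delta|\mathcal{I}_1|$, $\delta(\eps/\lambda)|\mathcal{I}_2|$ and the losses $\delta(\eps/\lambda)\mathcal{G}_2$, $\delta\mathcal{D}$ are strictly dominated once $\delta_3$ (hence $\delta,\lambda$) is taken small enough depending only on the Poincar\'e constant; the constant $C_2$ enters only through the coefficient $1/(\eps\delta)$ in front of $|Y_g|^2$, which is why $\eps_0,\delta_3$ are allowed to depend on $C_2$.

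The main obstacle — and the genuinely new point compared to \cite{Kang-V-NS17} — is controlling the expansion of the diffusion term $\mathcal{D}(v)=\int a\,v^\beta|\partial_\xi(p(v)-p(\tilde v_\eps))|^2$ when $\beta=\gamma-\alpha\in(0,1]$ rather than $\beta$ being fixed by $\alpha=\gamma$ ($\beta=0$). One must show that the factor $v^\beta$ can be replaced, up to a multiplicative error $1+O(\delta_3)$, by the constant $\tilde v_\eps(\xi)^\beta\approx v_-^\beta$ throughout the layer, \emph{uniformly in $v$}, despite the fact that $v$ is only known to satisfy a weak $L^1+L^2$ bound and can be large (cavitation) or near vacuum. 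The resolution is that the smallness hypothesis $\|p(v)-p(\tilde v_\eps)\|_{L^\infty}\leq\delta_3$ pins $v$ to a small neighborhood of the profile pointwise, so $v^\beta=(v_-+O(\delta_3))^\beta$ on the whole line; the subtlety is then purely one of bookkeeping the resulting $O(\delta_3)$ errors so that they are absorbed by $(1-\delta)\mathcal{D}(v)$ and not by $\mathcal{G}_2$ alone. Once this is in place, the structure of the proof is identical to the $\alpha=\gamma$ case, and Proposition~\ref{prop:W} finishes it.
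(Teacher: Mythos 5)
Your proposal is correct and follows essentially the same route as the paper's proof in Appendix \ref{app-exp}: the change of variables to $w=p(v)-p(\tilde v_\eps)$ and the layer variable $y$, the normalization $W=(\lambda/\eps)w$, reuse of the expansions of $Y_g,\mathcal{I}_1,\mathcal{I}_2,\mathcal{G}_2$ from \cite{Kang-V-NS17}, the replacement of $v^\beta$ by $\tilde v_\eps^\beta$ up to a $1+O(\delta_3)$ factor using the $L^\infty$ hypothesis in \eqref{assYp}, and the final appeal to the nonlinear Poincar\'e inequality of Proposition \ref{prop:W}. You also correctly identify the treatment of the diffusion coefficient $v^\beta$ as the only genuinely new ingredient relative to the $\alpha=\gamma$ case.
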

\begin{proof}
The proof is almost the same as that of \cite[Proposition 3.4]{Kang-V-NS17}, because $Y_g, \mathcal{I}_1, \mathcal{I}_2, \mathcal{G}_2$ defined in \eqref{note-in} are the exactly same functionals as in \cite[Proposition 3.4]{Kang-V-NS17}, and the diffusion $\mathcal{D}$ is slightly different but has the same expansion. 
Note that this proposition corresponds to an expansion in $p(v)$ near $p(\vt)$ (up to $\delta_3$) for $p(\vt)$ close to $p(v_-)$ (up to $\eps$). It is therefore natural that the expansion is similar to the case of $\alpha=\gamma$ as in \cite[Proposition 3.4]{Kang-V-NS17}, since the viscosity is almost constant near $p(v_-)$.
For completeness, the main part of the proof is given in Appendix \ref{app-exp}.
\end{proof}

\subsection{Truncation of the big values of $|p(v)-p(\tilde v_\eps)|$}\label{section-finale}
In order to use Proposition \ref{prop:main3} in the proof of Proposition \ref{prop:main}, we need to show that  the values for $p(v)$ such that $|p(v)-p(\tilde{v}_\eps)|\geq\delta_3$ have a small effect. However, the value of $\delta_3$ is itself conditioned to the constant $C_2$ in the proposition. Therefore, we need first to find a uniform bound on $Y_g$ which is not yet conditioned on the level of truncation $k$.

We consider a truncation on $|p(v)-p(\tilde v_\eps)|$ with a  constant $k>0$. Later we will consider the case  $k=\delta_3$ as in Proposition \ref{prop:main3}. But for now, we consider the general case $k$ to estimate the constant $C_2$.  For that, let $\psi_k$ be a continuous function defined by
\beq\label{psi}
 \psi_k(y)=\inf\left(k,\sup(-k,y)\right).
\eeq
We then define the function $\bar{v}_k$ uniquely (since the function $p$ is one to one) as
\beq\label{trunc-def}
p(\bar v_k)-\pt=\psi_k(p(v)-\pt).
\eeq
We have the following lemma (see \cite[Lemma 3.2]{Kang-V-NS17}).
\begin{lemma}\label{lemmeC2}
For a fixed $v_-\geq0$, $u_-\in\bbr$, there exists $C_2, k_0, \eps_0, \delta_0>0$ such that for any $\eps\leq \eps_0$, $\eps/\lambda\leq \delta_0$ with $\lambda<1/2$, the following is true whenever $|Y(U)|\leq \eps^2$:
\begin{eqnarray}
\label{l1}
&& \int_\bbr|a'||h-\tilde{h}_\eps|^2\,d\xi + \int_\bbr|a'| Q(v|\tilde{v}_\eps)\,d\xi\leq C\frac{\eps^2}{\lambda}, \\
\label{lbis}
&& |Y_g(\bar v_k)|\leq C_2\frac{\eps^2}{\lambda}, \qquad \mathrm{for \ every \ } k\leq k_0.
\end{eqnarray}
\end{lemma}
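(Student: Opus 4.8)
The plan is to exploit the hypothesis $|Y(U)|\le\eps^2$ together with the structure of $Y$ in \eqref{ybg-first}. First I would recall that $Y(U)$ splits into a term coming from $-a'$ (which is, up to sign, a {\it convex} functional of $U$ because $\sigma_\eps a'>0$ and $\eta(\cdot|\tilde U_\eps)$ is convex), and a linear term $\int a\,\partial_\xi\nabla\eta(\tilde U_\eps)(U-\tilde U_\eps)\,d\xi$. Since $\tilde U_\eps$ solves \eqref{small_shock1}, the linear term can be rewritten, after an integration by parts and using $\tilde h_\eps'=(p'(\tilde v_\eps)/\sigma_\eps)\tilde v_\eps'$, in terms of weighted integrals of $p(v)-p(\tilde v_\eps)$ and $h-\tilde h_\eps$ against $a'$. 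Combining these, $-Y(U)$ is bounded below by a positive quadratic form in $(h-\tilde h_\eps,\,p(v)-p(\tilde v_\eps))$ weighted by $|a'|$, minus lower-order cross terms that can be absorbed; this is exactly the computation underlying \eqref{l1}. Because $|a'|\sim(\lambda/\eps)|\tilde v_\eps'|$ by \eqref{d-weight} and $\int|\tilde v_\eps'|\lesssim\eps$ by \eqref{tail}, the hypothesis $|Y(U)|\le\eps^2$ yields the right-hand side $C\eps^2/\lambda$ once one checks that the linear-term contribution to $Y$ is itself $O(\eps^2/\lambda)$ after Cauchy--Schwarz and the local estimate \eqref{pQ-equi0} relating $|p(v)-p(\tilde v_\eps)|^2$ to $Q(v|\tilde v_\eps)$ where the pressure difference is small, and \eqref{rel_Q} (the linear-growth lower bound on $Q$) where it is large. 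Here the estimates from Lemma \ref{lem-pro} are what make the big-$v$ (near-vacuum) region harmless.

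Next, to get \eqref{lbis} I would express $Y_g$ in terms of $Y$. Comparing \eqref{note-in} with \eqref{ybg-first}, $Y_g(v)$ is obtained from $Y(U)$ by performing the maximization in $h-\tilde h_\eps$ (the same quadratic completion used in Lemma \ref{lem-max}) and dropping the genuinely $h$-dependent leftover, i.e. $Y(U)=Y_g(v)+(\text{a term controlled by }\mathcal{G}_1^\pm)$ plus, crucially, terms that already involve only $v$. More precisely one shows $|Y_g(v)-Y(U)|\le C\int|a'|\big(|h-\tilde h_\eps|^2+|p(v)-p(\tilde v_\eps)|^2\mathbf 1_{\{p(v)-\pt\le\delta\}}\big)\,d\xi$ for an appropriate fixed $\delta$, and the right-hand side is $\le C\eps^2/\lambda$ by \eqref{l1} and the $Q$-comparison of Lemma \ref{lem-pro}. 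Hence $|Y_g(v)|\le C\eps^2/\lambda$. The final point is to replace $v$ by the truncated profile $\bar v_k$: since $p(\bar v_k)-\pt=\psi_k(p(v)-\pt)$ with $\psi_k$ $1$-Lipschitz and non-expansive, each of the four integrals defining $Y_g$ is Lipschitz-stable under this truncation — $|p(\bar v_k)-p(\tilde v_\eps)|\le|p(v)-p(\tilde v_\eps)|$ pointwise, and $Q(\bar v_k|\tilde v_\eps)\le Q(v|\tilde v_\eps)$ and $|\bar v_k-\tilde v_\eps|\le C|v-\tilde v_\eps|$ uniformly for $k\le k_0$ by the monotonicity \eqref{Q-sim} and \eqref{rel_Q} — so $|Y_g(\bar v_k)|\le C|Y_g(v)|+C\eps^2/\lambda\le C_2\eps^2/\lambda$ with $C_2$ independent of $k\le k_0$. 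I would fix $k_0$ (and shrink $\eps_0,\delta_0$ if needed) precisely so that all these comparison constants stay uniform.

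The main obstacle I anticipate is bookkeeping the contribution of the large values of $v$ (near vacuum), where $Q$ and $|p(v)-p(\tilde v_\eps)|$ are no longer comparable to $|v-\tilde v_\eps|^2$ but only to $|v-\tilde v_\eps|$: one must everywhere use the linear-growth half of \eqref{rel_Q} rather than the quadratic half, and check that the weighted $L^1$-in-$\xi$ bounds this produces (again using $\int|a'|\sim\lambda\int|\tilde v_\eps'|/\eps\lesssim\lambda$) still close at order $\eps^2/\lambda$. The other delicate point, already present in \cite{Kang-V-NS17}, is that obtaining \eqref{l1} from $|Y(U)|\le\eps^2$ requires the linear part of $Y$ not to overwhelm the convex part; this forces the smallness $\eps/\lambda\le\delta_0$ and $\lambda<1/2$ and is where those constraints are really used. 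Since all the needed inequalities are in Lemma \ref{lem-pro} and the shock estimates \eqref{tail}--\eqref{d-weight}, and since the argument is essentially that of \cite[Lemma 3.2]{Kang-V-NS17}, I would present it by reducing to that reference for the core quadratic estimate and spelling out only the truncation-stability step.
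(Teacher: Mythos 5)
Your argument for \eqref{l1} is sound and is essentially the one the paper relies on (it defers to \cite[Lemma 3.2]{Kang-V-NS17}): isolate the convex part $\int_\bbr|a'|\eta(U|\tilde U_\eps)\,d\xi$ of $-Y(U)$, bound the linear part by $C(\eps/\lambda)\int_\bbr|a'|\big(|v-\tilde v_\eps|+|h-\tilde h_\eps|\big)\,d\xi$ using $|\partial_\xi p(\tilde v_\eps)|+|\tilde h_\eps'|\le C(\eps/\lambda)|a'|$, and absorb it via Young's inequality together with the quadratic/linear dichotomy of \eqref{rel_Q}, using $\int_\bbr|a'|\,d\xi\le C\lambda$ and the smallness $\eps/\lambda\le\delta_0$.

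Your route to \eqref{lbis}, however, has a genuine gap. You claim $|Y_g(v)-Y(U)|\le C\int_\bbr|a'|\big(|h-\tilde h_\eps|^2+|p(v)-\pt|^2{\mathbf 1}_{\{p(v)-\pt\le\delta\}}\big)\,d\xi$ and deduce $|Y_g(v)|\le C\eps^2/\lambda$ for the \emph{untruncated} $v$. But the difference $Y(U)-Y_g(v)$ contains $\tfrac{1}{2\sigma_\eps^2}\int_\bbr a'|p(v)-\pt|^2\,d\xi$ over all of $\bbr$, plus a term linear in $p(v)-\pt$; the indicator you insert is not there. Where $v$ is small, $|p(v)-\pt|^2\sim v^{-2\gamma}$ is not controlled by $Q(v|\tilde v_\eps)\sim v^{1-\gamma}$, so neither $|Y_g(v)-Y(U)|$ nor $|Y_g(v)|$ is $O(\eps^2/\lambda)$ uniformly over $U\in\mathcal H$ with $|Y(U)|\le\eps^2$ --- this is precisely why the lemma is stated for $\bar v_k$ and why the truncation machinery exists at all. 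The further inference $|Y_g(\bar v_k)|\le C|Y_g(v)|+C\eps^2/\lambda$ from pointwise term-wise domination is also illegitimate, since $Y_g$ is a signed sum and term-wise bounds do not transfer through possible cancellations. The repair is to bypass $Y_g(v)$ entirely and estimate the four integrals in \eqref{note-in} evaluated at $\bar v_k$ directly, which your own truncation observations already permit: since $|p(\bar v_k)-\pt|\le k\le k_0\le\delta_*$, \eqref{pQ-equi0} gives $\int_\bbr|a'||p(\bar v_k)-\pt|^2\,d\xi\le C\int_\bbr|a'|Q(\bar v_k|\tilde v_\eps)\,d\xi\le C\int_\bbr|a'|Q(v|\tilde v_\eps)\,d\xi\le C\eps^2/\lambda$ by \eqref{Q-sim} and \eqref{l1}, and the two linear terms follow from Cauchy--Schwarz against $\int_\bbr|a'|\,d\xi\le C\lambda$ with the weights $|\partial_\xi p(\tilde v_\eps)|,|\tilde h_\eps'|\le C(\eps/\lambda)|a'|$, using that $\bar v_k$ is uniformly bounded so that $|\bar v_k-\tilde v_\eps|^2\le CQ(\bar v_k|\tilde v_\eps)$. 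This direct term-by-term bound is the argument of \cite[Lemma 3.2]{Kang-V-NS17}.
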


We now fix the constant $\delta_3$ of Proposition \ref{prop:main3} associated to the constant $C_2$ of Lemma \ref{lemmeC2}. Without loss of generality, we can assume that $\delta_3< k_0$ (since Proposition \ref{prop:main3} is valid for any smaller $\delta_3$). From now on, we set (without confusion)
$$
\bar v:=\bar v_{\delta_3}, \qquad  \bar{U}:=(\bar v, h), \qquad \mathcal{B}:=\mathcal{B}_{\delta_3}, \qquad \mathcal{G}:=\mathcal{G}_{\delta_3}.
$$
Note from Lemma \ref{lemmeC2} that  
\begin{equation}\label{YC2}
|Y_g(\bar v)|\leq C_2 \frac{\eps^2}{\lambda}.
\end{equation}

We first recall the terms $Y$ in \eqref{ybg-first} as
\[
Y= -\int_\bbr a' \frac{|h-\tilde h_\eps|^2}{2} d\xi -\int_\bbr a' Q(v|\tilde v_\eps) d\xi +\int_\bbr a \Big(-\partial_\xi p(\tilde v_\eps)(v-\tilde v_\eps) +\partial_\xi \tilde h_\eps(h-\tilde h_\eps) \Big) d\xi.
\]

In what follows, for simplification, we use the notation:
 \[
 \Omega:=\{\xi~|~ (p(v)-\pt)(\xi) \leq\delta_3\}. 
 \]

We split $Y$ into four parts $Y_g$,  $Y_b$, $Y_l$ and $Y_s$ as follows: \\
\[
Y= Y_g +Y_b +Y_l + Y_s,
\]
where 
\begin{align*}
\begin{aligned}
Y_g&:= -\frac{1}{2\sigma_\eps^2}\int_\Omega a' |p(v)-p(\tilde v_\eps)|^2 d\xi -\int_\Omega a' Q(v|\tilde v_\eps) d\xi -\int_\Omega a \partial_\xi p(\tilde v_\eps)(v-\tilde v_\eps)d\xi\\
&\quad+\frac{1}{\sigma_\eps}\int_\Omega a \partial_\xi \tilde h_\eps\big(p(v)-p(\tilde v_\eps)\big)d\xi,\\
Y_b&:= -\frac{1}{2}\int_\Omega a' \Big(h-\tilde h_\eps-\frac{p(v)-p(\tilde v_\eps)}{\sigma_\eps}\Big)^2 d\xi \\
&\quad -\frac{1}{\sigma_\eps} \int_\Omega a' \big(p(v)-p(\tilde v_\eps)\big)\Big(h-\tilde h_\eps-\frac{p(v)-p(\tilde v_\eps)}{\sigma_\eps}\Big) d\xi,\\
Y_l&=\int_\Omega a \partial_\xi \tilde h_\eps\Big(h-\tilde h_\eps-\frac{p(v)-p(\tilde v_\eps)}{\sigma_\eps}\Big)d\xi,\\
Y_s&=-\int_{\Omega^c} a' Q(v|\tilde v_\eps) d\xi -\int_{\Omega^c} a \partial_\xi p(\tilde v_\eps)(v-\tilde v_\eps)d\xi \\
&\quad -\int_{\Omega^c} a' \frac{|h-\tilde h_\eps|^2}{2} d\xi +\int_{\Omega^c} a \partial_\xi \tilde h_\eps(h-\tilde h_\eps) d\xi.
\end{aligned}
\end{align*}
Notice that $Y_g$ consists of the terms related to $v-\tilde v_\eps$,  while $Y_b$ and $Y_l$ consist of   terms related to $h-\tilde h_\eps$. While $Y_b$ is quadratic, and $Y_l$ is linear in $h-\tilde h_\eps$. 
Since $\{\xi~|~ |p(v)-\pt| \leq\delta_3\} \subset \Omega$, $Y_g(\bar U)$ is the same as $Y_g(v)$ in Proposition \ref{prop:main3}. Therefore we need show that $Y_g(U)-Y_g(\bar U)$, $Y_b(U)$, $Y_l(U)$ and $Y_s(U)$ are negligible by the good term $\mathcal{G}$. 

For the bad terms $\mathcal{B}_{\delta_3}$ in \eqref{badgood}, we will use the following notations :
\beq\label{bad0}
\mathcal{B}_{\delta_3}=\mathcal{B}_1+\mathcal{B}_2^- +\mathcal{B}_2^+ +\mathcal{B}_3 +\mathcal{B}_4+\mathcal{B}_5,
\eeq
where
\begin{align*}
\begin{aligned}
&\mathcal{B}_1=  \sigma_\eps\int_\bbr a \partial_\xi \tilde v_\eps p(v|\tilde v_\eps) d\xi,\\
&\mathcal{B}_2^- =\int_{\Omega^c} a' \big(p(v)-p(\tilde v_\eps)\big) (h-\tilde h_\eps)d\xi , \qquad \mathcal{B}_2^+ = \frac{1}{2\sigma_\eps} \int_\Omega a' |p(v)-p(\tilde v_\eps)|^2 d\xi,\\
&\mathcal{B}_3= -\int_\bbr a' v^\beta \big(p(v)-p(\tilde v_\eps)\big)\partial_\xi \big(p(v)-p(\tilde v_\eps)\big)  d\xi,\\
&\mathcal{B}_4= -\int_\bbr a' \big(p(v)-p(\tilde v_\eps)\big) (v^\beta - \tilde v_\eps^\beta) \partial_{\xi} p(\tilde v_\eps)d\xi, \\
&\mathcal{B}_5= -\int_\bbr a \partial_\xi \big(p(v)-p(\tilde v_\eps)\big) (v^\beta - \tilde v_\eps^\beta) \partial_{\xi} p(\tilde v_\eps)  d\xi.
\end{aligned}
\end{align*}
We also recall the notations $\mathcal{G}_1^-,\mathcal{G}_1^+, \mathcal{G}_2, \mathcal{D}$ in \eqref{ggd} for the good terms.

We now state the following proposition.

\begin{proposition}\label{prop_out}
There exist constants $\eps_0, \deo, C, C^*>0$ (in particular, $C$ depends on the constant $\delta_3$ in Proposition \ref{prop:main}) such that for any $\eps<\eps_0$ and $\delta_0^{-1}\eps<\lambda<\delta_0<1/2$, the following statements hold true.
\begin{itemize}
\item[1.] For any $U$ such that $|Y(U)|\leq \eps^2$,
\begin{eqnarray}
\label{n1}
&&|\mathcal{B}_1(U)-\mathcal{B}_1(\bar U)| \leq C\frac{\eps}{\lambda} \left( \mathcal{D}(U) + \left(\mathcal{G}_2(U)-\mathcal{G}_2(\bar U) \right)+\left(\frac{\eps}{\lambda}\right)^2\mathcal{G}_2(\bar U) \right) ,\\
\label{n12}
&&|\mathcal{B}_2^-(U)|  \le\delta_0 \left(\mathcal{D}(U) +\left(\frac{\eps}{\lambda}\right)^2\mathcal{G}_2(U) \right) + \frac{1}{2} \mathcal{G}_1^-(U),\\
\label{n13}
&&|\mathcal{B}_2^+(U)-\mathcal{B}_2^+(\bar U)|   \le \sqrt{\frac{\eps}{\lambda}}\mathcal{D}(U),\\
\label{n14}
&&|\mathcal{B}_3(U)|+|\mathcal{B}_4(U)|+|\mathcal{B}_5(U)|  \le C \delta_0 \left( \mathcal{D}(U) +\left(\mathcal{G}_2(U)-\mathcal{G}_2(\bar U) \right)+\frac{\eps}{\lambda} \mathcal{G}_2(\bar U)\right),\\
\label{n2}
&&|\mathcal{B}_{\delta_3}(U)| \le C^*\frac{\eps^2}{\lambda} + C \sqrt{\delta_0} \mathcal{D}(U).
\end{eqnarray}
\item[2.] For any $U$ such that $|Y(U)|\leq \eps^2$ and $\mathcal{D}(U)\leq \frac{C^*}{4}\frac{\eps^2}{\lambda}$,
\begin{align}
\begin{aligned}\label{m1}
&|Y_g(U)-Y_g(\bar U)|^2 +|Y_b(U)|^2+|Y_l(U)|^2+|Y_s(U)|^2 \\
&\quad\le C\frac{\eps^2}{\lambda}\left(\sqrt{\frac{\eps}{\lambda}}\mathcal{D}(U)+\left(\mathcal{G}_2(U)-\mathcal{G}_2(\bar U) \right)+\mathcal{G}_1^-(U) + \left(\frac{\lambda}{\eps}\right)^{1/4}\mathcal{G}_1^+(U) +\left(\frac{\eps}{\lambda}\right)^{1/4}\mathcal{G}_2(\bar U) \right).
\end{aligned}
\end{align}
\end{itemize}
\end{proposition}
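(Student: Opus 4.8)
The plan is to estimate each piece of the decomposition $\mathcal{B}_{\delta_3}=\mathcal{B}_1+\mathcal{B}_2^-+\mathcal{B}_2^++\mathcal{B}_3+\mathcal{B}_4+\mathcal{B}_5$ of \eqref{bad0}, and each piece of $Y=Y_g+Y_b+Y_l+Y_s$, separately, using one of two recurring mechanisms. The first, for the terms integrated over all of $\bbr$ or over $\Omega$ (namely $\mathcal{B}_3,\mathcal{B}_4,\mathcal{B}_5$, the $\Omega$-parts of $\mathcal{B}_1,\mathcal{B}_2^+$, and $Y_b,Y_l$), is a Cauchy--Schwarz splitting: the factor carrying $\partial_\xi\big(p(v)-\pt\big)$ is absorbed into the diffusion $\mathcal{D}(U)=\int a v^\beta|\partial_\xi(p(v)-\pt)|^2\,d\xi$, the factor carrying $h-\tilde h_\eps$ (or $h-\tilde h_\eps-\frac{p(v)-\pt}{\sigma_\eps}$) into $\mathcal{G}_1^\pm(U)$, and the remaining weighted integral of $|p(v)-\pt|^2$ or of $p(v|\tilde v_\eps)$ against $|a'|$ is controlled by $\mathcal{G}_2(U)=\sigma_\eps\int a'Q(v|\tilde v_\eps)\,d\xi$. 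The small constants $\eps/\lambda$, $\sqrt{\eps/\lambda}$, $\sqrt{\delta_0}$ come out of the two pointwise facts $|a'|\sim\frac{\lambda}{\eps}|\tilde v_\eps'|$ (from \eqref{d-weight}) and $|\tilde v_\eps'|=\mathcal{O}(\eps^2)$ (from \eqref{tail}), which give $|a'|\le C\lambda\eps$, $\frac{|a'|^2}{a}\le C\lambda\eps\,|a'|$, and $|\sigma_\eps\partial_\xi\tilde v_\eps|\le C\frac{\eps}{\lambda}|a'|$. The last reduction, $\int|a'|v^\beta|p(v)-\pt|^2\,d\xi\le C\mathcal{G}_2(U)$, is where $\beta=\gamma-\alpha\le1$ enters: for $v$ near $\tilde v_\eps$ it is \eqref{pQ-equi0}, for $v\le3v_-$ it is \eqref{rel_Q}, and for $v\ge3v_-$ one writes $v^\beta|p(v)-\pt|^2\le Cv^\beta\le Cv\le CQ(v|\tilde v_\eps)$, again by \eqref{rel_Q}.

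The second mechanism handles the truncation defects $\mathcal{B}_i(U)-\mathcal{B}_i(\bar U)$ and $Y_g(U)-Y_g(\bar U)$, $Y_s(U)$. All of these are supported on $\{|p(v)-\pt|>\delta_3\}$, where $v$ is bounded away from $\tilde v_\eps$ and $\bar v$ lies between $v$ and $\tilde v_\eps$; there \eqref{Q-sim} gives $Q(v|\tilde v_\eps)-Q(\bar v|\tilde v_\eps)\ge0$, so $\mathcal{G}_2(U)-\mathcal{G}_2(\bar U)=\sigma_\eps\int a'\big(Q(v|\tilde v_\eps)-Q(\bar v|\tilde v_\eps)\big)\,d\xi$ is a nonnegative good term, and by the linear lower bound \eqref{rel_Q1} it dominates $\int a'|v-\bar v|\,d\xi$. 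The plan is to bound every pressure-type difference on this set by $|v-\bar v|$ via monotonicity of $v\mapsto p(v)-\pt$ and of $v\mapsto p(v|\tilde v_\eps)$, using in addition that $\partial_\xi\big(p(\bar v)-\pt\big)=0$ on the open set $\{|p(v)-\pt|>\delta_3\}$ since $\psi_{\delta_3}'\equiv0$ there (so that $\mathcal{D}(\bar v)\le\mathcal{D}(U)$ and no uncontrolled derivative of the truncated profile appears). This places the defects inside $\mathcal{G}_2(U)-\mathcal{G}_2(\bar U)$ up to a remainder $\lesssim(\eps/\lambda)^2\mathcal{G}_2(\bar U)$ coming from the term linear in $v-\bar v$ in the Taylor expansion of the relative quantities at $\tilde v_\eps$ (paired with $\partial_\xi\tilde v_\eps$ and the a priori bound of Lemma~\ref{lemmeC2}). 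The $h$-dependent defect $\mathcal{B}_2^-(U)$ (and the $h$-part of $Y_s(U)$) is paired with $\mathcal{G}_1^-(U)$ by Cauchy--Schwarz, the leftover $\int_{\Omega^c}a'|p(v)-\pt|^2\,d\xi$ being absorbed into $\mathcal{D}(U)+(\eps/\lambda)^2\mathcal{G}_2(U)$ through a degenerate Poincar\'e estimate on each connected component of $\Omega^c$ (on which $p(v)-\pt$ must climb from $\delta_3/2$ to its maximum).

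Once \eqref{n1}, \eqref{n12}, \eqref{n13}, \eqref{n14} are in hand, \eqref{n2} follows by summation together with a direct estimate of the truncated quantity: since ${\mathbf 1}_{\{p(\bar v)-\pt>\delta_3\}}=0$ a.e., $\mathcal{B}_{\delta_3}(\bar U)$ depends only on $\bar v$, and since $\bar v$ satisfies $\|p(\bar v)-\pt\|_{L^\infty}\le\delta_3$ and $|Y_g(\bar v)|\le C_2\eps^2/\lambda$ by Lemma~\ref{lemmeC2}, the expansion behind Proposition~\ref{prop:main3} gives $|\mathcal{B}_{\delta_3}(\bar U)|\le C\frac{\eps^2}{\lambda}+C\delta_0\mathcal{D}(\bar v)\le C\frac{\eps^2}{\lambda}+C\delta_0\mathcal{D}(U)$, which after absorbing the $\delta_0$-term yields \eqref{n2}. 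Finally \eqref{m1} is the squared version of the same inequalities: the same Cauchy--Schwarz pairings are used, now keeping one factor as the $L^2$-norm of a good term and estimating the other by $C\eps^2/\lambda$ via both Lemma~\ref{lemmeC2} and the extra hypothesis $\mathcal{D}(U)\le\frac{C^*}{4}\eps^2/\lambda$, which is precisely what produces the fractional exponents $(\lambda/\eps)^{1/4}\mathcal{G}_1^+(U)$, $(\eps/\lambda)^{1/4}\mathcal{G}_2(\bar U)$ and $(\eps/\lambda)^{1/2}\mathcal{D}(U)$ on the right-hand side.

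The step I expect to be the main obstacle is the near-vacuum part of $\{|p(v)-\pt|>\delta_3\}$: there $p(v)=v^{-\gamma}$ is unbounded, the relative entropy $Q(v|\tilde v_\eps)\sim v^{-\gamma+1}$ grows one power slower, and the diffusion coefficient $v^\beta$ degenerates. The pressure cannot be controlled pointwise, so the reconciliation rests entirely on combining $\beta\le1$ (which makes $v^\beta|p(v)-\pt|^2\lesssim Q(v|\tilde v_\eps)$ uniformly) with the comparison against $\bar v$, exploiting that the lost good term $\mathcal{G}_2(U)-\mathcal{G}_2(\bar U)$ is simultaneously nonnegative and, via \eqref{rel_Q1}, linearly strong in $|v-\bar v|$. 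Making the weights and the three small parameters $\eps$, $\eps/\lambda$, $\delta_0$ line up termwise, so that the total fits inside the budget $(1-\delta_0)\mathcal{D}(U)+(1-\delta_0\eps/\lambda)\mathcal{G}_2(U)+\mathcal{G}_1^-(U)+\mathcal{G}_1^+(U)$ left available in Proposition~\ref{prop:main}, is the remaining, largely bookkeeping, difficulty.
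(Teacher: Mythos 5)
Your decomposition, the target good terms, and even the identification of the near-vacuum region as the crux all match the paper. But the central reduction you propose for that region is false, and this is a genuine gap rather than bookkeeping. You claim $\int_\bbr |a'|\,v^\beta|p(v)-\pt|^2\,d\xi\le C\,\mathcal{G}_2(U)$ (and use the analogous bound with weight $|a'|^2$ for $\mathcal{B}_3,\mathcal{B}_4,\mathcal{B}_5$), justifying the case $v\le 3v_-$ by \eqref{rel_Q}. That inequality controls $|v-\tilde v_\eps|^2$, not $v^\beta|p(v)-\pt|^2$: as $v\to0$ one has $v^\beta|p(v)-\pt|^2\sim v^{-\gamma-\alpha}$ while $Q(v|\tilde v_\eps)\sim v^{1-\gamma}$, so the ratio blows up like $v^{-\alpha-1}$ and no such constant $C$ exists. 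This is exactly why Lemmas \ref{lemma_out1}--\ref{lemma_out4} put $\mathcal{D}(U)$ (with prefactors $\sqrt{\eps/\lambda}$ or $\lambda^2$) on the right-hand side of \eqref{l3}, \eqref{n1}, \eqref{n14}: the substitute for the failed pointwise comparison is the estimate $|v^{\beta/2}(p(v)-p(\bar v_s))(\xi)|\le C\sqrt{|\xi|+1/\eps}\,\big(\sqrt{\mathcal{D}(U)}+\tfrac{\eps}{\lambda}\sqrt{\mathcal{G}_2(U)}\big)$, obtained by integrating $\partial_\xi\big(v^{\beta/2}(p(v)-p(\bar v_s))\big)$ from a reference point $\xi_0\in[-1/\eps,1/\eps]$ where the truncation is inactive --- a point whose existence is itself a consequence of the hypothesis $|Y(U)|\le\eps^2$ through \eqref{l1}. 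Your proposal never uses $|Y(U)|\le\eps^2$ beyond quoting Lemma \ref{lemmeC2}, and without this pointwise bound (and the algebraic identity $v^\beta|p(v)-p(\bar v_s)|^2\gtrsim|p(v)-p(\bar v_s)|^{(\gamma+\alpha)/\gamma}$ that converts it into the exponent $q=2\gamma/(\gamma+\alpha)\in(1,2)$ used in \eqref{ns1}), the small-$v$ contributions to $\mathcal{B}_2^-,\mathcal{B}_3,\mathcal{B}_4,\mathcal{B}_5$ and to the $Y$-defects are simply not controlled.

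A second, related problem: even where your ``monotonicity plus \eqref{rel_Q1}'' mechanism does produce a finite bound (e.g.\ for $\mathcal{B}_2^+(U)-\mathcal{B}_2^+(\bar U)$, supported where $v$ is large and the pressure bounded), it yields $C(\delta_3)\,(\mathcal{G}_2(U)-\mathcal{G}_2(\bar U))$ with a constant of order one, whereas \eqref{n13} requires the prefactor $\sqrt{\eps/\lambda}$ in front of a good term: in the final bookkeeping of Proposition \ref{prop:main} only $\tfrac12(\mathcal{G}_2(U)-\mathcal{G}_2(\bar U))$ is available for all such defects combined, so an order-one multiple of it cannot be absorbed. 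Similarly, your plain Cauchy--Schwarz treatment of $\mathcal{B}_2^-$ would need $\int_{\Omega^c}|a'|\,|p(v)-\pt|^2\,d\xi\le\delta_0\big(\mathcal{D}(U)+(\eps/\lambda)^2\mathcal{G}_2(U)\big)$ with a small constant, which does not follow from a ``climb from $\delta_3/2$'' argument alone (a naive Poincar\'e bound produces $\mathcal{D}(U)^q$ with $q>1$, not $\delta_0\mathcal{D}(U)$); the paper instead runs H\"older with the exponent $q$ above against $\mathcal{G}_1^-$ and closes with Young's inequality and the a priori bound \eqref{l1} on $\int|a'|\,|h-\tilde h_\eps|^2$. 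So the architecture of your proof is right, but the analytic engine that makes the near-vacuum and large-pressure terms small is missing.
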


To prove this proposition, 
we will control the bad terms in different ways for each case of small or big values of $v$, which all correspond to the big values of $|p(v)-p(\tilde v_\eps)|$ (as $|p(v)-p(\tilde v_\eps)|\ge\delta_3$). 
For that, we set
\beq\label{chi}
p(\bar v_s)- p(\tilde v_\eps) := \psi_{\delta_3}^s\big(p(v)- p(\tilde v_\eps)\big),\quad p(\bar v_b)- p(\tilde v_\eps) := \psi_{\delta_3}^b
\big(p(v)- p(\tilde v_\eps)\big),
\eeq
where $\psi_{\delta_3}^s$ and $\psi_{\delta_3}^b$ are one-sided truncations of $\psi_{\delta_3}$ defined in \eqref{psi}, i.e.,
\[\psi_{\delta_3}^s(y)=\inf (\delta_3,y),\quad \psi_{\delta_3}^b(y)=\sup (-\delta_3,y).\]
Notice that the function $\bar v_s$ (resp. $\bar v_b$) represents the truncation of small (resp. big) values of $v$ corresponding to $|p(v)-p(\tilde v_\eps)|\ge\delta_3$.\\

By comparing the definitions of \eqref{trunc-def} and \eqref{chi}, we see
\begin{align}
\begin{aligned}\label{compare1}
& \left( p(\bar v_s)-\pt \right) {\mathbf 1}_{\{p(v)-\pt \ge -\delta_3\}} = \left( p(\bar v)-\pt  \right) {\mathbf 1}_{\{p(v)-\pt \ge -\delta_3\}}  ,\\
& \left( p(\bar v_b)-\pt \right) {\mathbf 1}_{\{p(v)-\pt \le \delta_3\}} = \left( p(\bar v)-\pt\right) {\mathbf 1}_{\{p(v)-\pt \le \delta_3\}}  .
\end{aligned}
\end{align}
We also note that 
\begin{equation}\label{def-bar}
\begin{array}{rl}
p(v)-p(\bar v_s)=& (p(v)-\pt)+(\pt-p(\bar v_s)) \\[0.2cm]
=&\left(I-\psi^s_{\delta^3}\right)\left(p(v)-\pt\right) \\[0.2cm]
=& \left(\left(p(v)-\pt \right)-\delta_3\right)_+,\\[0.2cm]
p(\bar v_b)-p(v)=& (p(\bar v_b)-\pt)+(\pt-p(v)) \\[0.2cm]
=&\left(\psi^b_{\delta^3}-I\right)\left(p(v)-\pt\right) \\[0.2cm]
=& \left(-\left(p(v)-\pt\right)-\delta_3\right)_+,\\
|p(v)-p(\bar v)|=& |(p(v)-\pt)+(\pt-p(\bar v))|\\[0.2cm]
=&|(I-\psi_\delta)(p(v)-\pt)|\\[0.2cm]
=& (|p(v)-\pt|-\delta_3)_+.
\end{array}
\end{equation}
Therefore, using \eqref{trunc-def}, \eqref{chi} and \eqref{def-bar}, we have
\begin{align}
\begin{aligned}\label{eq_D}
\mathcal{D}(U)&=\int_\bbr av^{\beta}  |\partial_\xi (p(v)-p(\tilde v_\eps))|^2 d\xi\\
&=\int_\bbr a v^{\beta} |\partial_\xi (p(v)-p(\tilde v_\eps))|^2 ( {\mathbf 1}_{\{|p(v)-\pt |\leq\delta_3\}} + {\mathbf 1}_{\{p(v)-\pt >\delta_3\}}+ {\mathbf 1}_{\{p(v)-\pt <-\delta_3\}}  )d\xi\\
&=\mathcal{D}(\bar U)+\int_\bbr a v^{\beta} |\partial_\xi (p(v)-p(\bar v_s))|^2 d\xi+\int_\bbr av^{\beta}  |\partial_\xi (p(v)-p(\bar v_b))|^2 d\xi\\
&\ge \int_\bbr a v^{\beta} |\partial_\xi (p(v)-p(\bar v_s))|^2 d\xi+\int_\bbr av^{\beta}  |\partial_\xi (p(v)-p(\bar v_b))|^2 d\xi.
\end{aligned}
\end{align}
On the other hand, since $Q(v|\tilde{v}_\eps)\geq  Q(\bar v|\tilde{v}_\eps)$,
we have
\begin{equation}\label{eq_G}
|\sigma_\eps| \int_\bbr |a'|Q(v|\tilde{v}_\eps)\,d\xi\geq \mathcal{G}_2(U)-\mathcal{G}_2(\bar U)=|\sigma_\eps|\int_\bbr |a'|\left(Q(v|\tilde{v}_\eps)-Q(\bar v|\tilde{v}_\eps)\right)\,d\xi\geq 0,
\end{equation}
which together with  \eqref{l1} yields
\beq\label{l2}
0\leq \mathcal{G}_2(U)-\mathcal{G}_2(\bar U)\leq  \mathcal{G}_2(U)\leq C\frac{\eps^2}{\lambda}.
\eeq

We first present a series of following lemmas.

\begin{lemma}\label{lemma_out1}
Under the same assumption as Proposition \ref{prop_out}, we have
\begin{eqnarray}
\label{big1}
&& \int_\bbr|a'| \big| p(v)-p(\bar v_b) \big|^2 d\xi + \int_\bbr|a'| \big| p(v)-p(\bar v_b) \big| d\xi  \le \sqrt{\frac{\eps}{\lambda}}\mathcal{D}(U),\\
\label{big2}
&&  \int_\bbr|a'| \Big | |p(v)-\pt|^2-  |p(\bar v_b)-\pt|^2\Big| {\mathbf 1}_{\{p(v)-\pt \le \delta_3\}} \,d\xi \le \sqrt{\frac{\eps}{\lambda}}\mathcal{D}(U).
\end{eqnarray}
\begin{eqnarray}
\label{l3}
&& \int_\bbr|a'|^2 v^\beta  |p(v)-p(\bar v)|^2 \,d\xi + \int_\bbr|a'|^2 v^\beta  |p(v)-p(\bar v)| \,d\xi \le C\lambda^2 \left( \mathcal{D}(U) +\frac{\eps}{\lambda} \mathcal{G}_2(U)\right) ,\\
\label{l4}
&&  \int_\bbr|a'|^2 \Big |v^\beta |p(v)-\pt|^2- \bar v^\beta |p(\bar v)-\pt|^2\Big| \,d\xi \le C\lambda^2 \left( \mathcal{D}(U) +\frac{\eps}{\lambda} \mathcal{G}_2(U)\right).
\end{eqnarray}
\end{lemma}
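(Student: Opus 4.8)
The plan is to prove the four estimates of Lemma \ref{lemma_out1} by systematically exploiting the pointwise algebraic identities in \eqref{def-bar}, the weight properties \eqref{d-weight} and \eqref{der-a}, the decay estimate \eqref{tail} for $\tilde v_\eps'$, the dissipation decomposition \eqref{eq_D}, and the $L^\infty$-smallness of $p(v)-\pt$ on the relevant sets. Since each of the quantities on the left-hand sides is supported on $\{|p(v)-\pt|\geq \delta_3\}$ (because $p(v)-p(\bar v_b)$, $|p(v)-\pt|^2-|p(\bar v_b)-\pt|^2$, and $p(v)-p(\bar v)$ all vanish where $|p(v)-\pt|\leq\delta_3$), the main mechanism is that on that set we can absorb one factor of the (large) quantity $|p(v)-\pt|$ into its gradient via the dissipation $\mathcal{D}(U)$, paying only a small constant coming from $|a'|\sim \frac{\lambda}{\eps}|\tilde v_\eps'|\lesssim \lambda\eps\, e^{-c\eps|\xi|}$ together with the layer width $1/\eps$.

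For \eqref{big1}: I would start from the identity $p(v)-p(\bar v_b)=(-(p(v)-\pt)-\delta_3)_+$ from \eqref{def-bar}, which is a nondecreasing Lipschitz (indeed $1$-Lipschitz, modulus $\leq 1$) function of $p(v)-\pt$ vanishing at $\xi=\pm\infty$; hence $|\partial_\xi(p(v)-p(\bar v_b))|\leq |\partial_\xi(p(v)-p(\tilde v_\eps))|$ a.e., and $|p(v)-p(\bar v_b)|(\xi)\leq \int |\partial_\xi(p(v)-p(\bar v_b))|$. Writing $|p(v)-p(\bar v_b)|^2 = 2\int_{-\infty}^\xi (p(v)-p(\bar v_b))\partial_\xi(p(v)-p(\bar v_b))$ and using Cauchy–Schwarz with the weight, plus $|a'|\sim\frac{\lambda}{\eps}|\tilde v_\eps'|$ and \eqref{eq_D} (which gives $\int a v^\beta|\partial_\xi(p(v)-p(\bar v_b))|^2\leq \mathcal{D}(U)$), together with the lower bound $v^\beta\geq c$ on the region where $\bar v_b$ lives (there $p(v)-\pt<-\delta_3$ forces $v$ bounded below, so $v^\beta$ is bounded below), produces the factor $\sqrt{\eps/\lambda}$ after using $\int|a'|\,d\xi = \mathcal{O}(\lambda)$ and $\int|a'|^2\,d\xi=\mathcal{O}(\lambda^2\eps)$ against the layer; this is essentially the computation of \cite[proof of Lemma 3.2 or its analogue]{Kang-V-NS17} adapted with the extra $v^\beta$ weight, which only helps. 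The $L^1$ term is handled the same way or directly by Cauchy–Schwarz. For \eqref{big2}, factor $|p(v)-\pt|^2-|p(\bar v_b)-\pt|^2 = (p(v)-p(\bar v_b))(p(v)+p(\bar v_b)-2\pt)$ on $\{p(v)-\pt\leq\delta_3\}$; there $|p(\bar v_b)-\pt|=\delta_3$ and the second factor is bounded by $|p(v)-\pt|+\delta_3$, but since on the support $p(v)-\pt<-\delta_3<0$ we get $|p(v)+p(\bar v_b)-2\pt|\leq |p(v)-p(\bar v_b)|+2\delta_3$, reducing \eqref{big2} to \eqref{big1} plus a $\delta_3$-multiple of the $L^1$ part of \eqref{big1}.

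For \eqref{l3} and \eqref{l4}: these are the same estimates but with $|a'|^2 v^\beta$ in place of $|a'|$, and with the two-sided truncation $\bar v$ rather than the one-sided $\bar v_b$. Here $p(v)-p(\bar v) = \mathrm{sgn}(p(v)-\pt)\,(|p(v)-\pt|-\delta_3)_+$, again $1$-Lipschitz in $p(v)-\pt$, so $|\partial_\xi(p(v)-p(\bar v))|\leq|\partial_\xi(p(v)-p(\tilde v_\eps))|$; the new subtlety is that on the part of $\{|p(v)-\pt|\geq\delta_3\}$ where $v$ is large (so $v^\beta$ is large and we only have $L^1$ control of $Q$, hence of $|p(v)-\pt|$ via \eqref{rel_Q}, \eqref{pressure2}), I cannot use the $L^2$ trick crudely. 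The resolution: on $\{p(v)-\pt>\delta_3\}$ (small $v$, hence $v^\beta$ bounded) do the same gradient-absorption as above, getting $\lesssim\lambda^2\mathcal{D}(U)$ after the weight bound $\||a'|^2 v^\beta\|_{L^1}\lesssim\lambda^2\eps$ on the layer; on $\{p(v)-\pt<-\delta_3\}$ (large $v$) split $v^\beta|p(v)-p(\bar v)|^2\lesssim v^{\beta}|p(v)-p(\bar v)|\cdot\delta_3^{-1}|p(v)-p(\bar v)|$... actually better: use that there $v^\beta|\partial_\xi(p(v)-p(\bar v))|^2 \leq v^\beta|\partial_\xi(p(v)-p(\tilde v_\eps))|^2$ is still controlled by $\mathcal{D}(U)$ directly (the $v^\beta$ weight is built into $\mathcal{D}$), so the Cauchy–Schwarz step $\int |a'|^2 v^\beta|p(v)-p(\bar v)|^2 \leq \big(\sup_\xi \int_{-\infty}^\xi v^\beta|\partial_\xi(\cdots)|^2\big)\cdot\big(\int|a'|^2\,d\xi\big)\cdot(\text{layer length})$ combined with $\int|a'|^2\lesssim\lambda^2\eps$ and layer $\sim 1/\eps$ gives exactly $\lambda^2\mathcal{D}(U)$; the extra $\frac{\eps}{\lambda}\mathcal{G}_2(U)$ term on the right of \eqref{l3}–\eqref{l4} is the slack needed to dominate the $L^1$-tail of $v^\beta|p(v)-p(\bar v)|$ coming from large $v$ via $|p(v)-p(\bar v)|\leq|p(v)-\pt|\lesssim Q(v|\tilde v_\eps)$ (for $v\geq 3v_-$, by \eqref{rel_Q} and \eqref{pressure2}) against $|a'|^2\lesssim\frac{\lambda}{\eps}|a'|$, which yields $\frac{\lambda}{\eps}\cdot\lambda\cdot\sup|a'|\cdot$... more precisely against $\int|a'|Q(v|\tilde v_\eps)=\mathcal{G}_2(U)/|\sigma_\eps|$ with an extra $\lambda$ from $\|a'\|_{L^\infty}\lesssim\lambda\eps$, giving $\lambda^2\eps\cdot\frac{1}{\eps}\mathcal{G}_2(U) = \lambda^2\cdot\frac{\lambda}{\eps}\cdot\frac{\eps}{\lambda}\mathcal{G}_2(U)$ — this bookkeeping is the part I expect to be fiddly. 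For \eqref{l4} factor the difference of squares as in \eqref{big2} and reduce to \eqref{l3}.

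The main obstacle I anticipate is the large-$v$ region in \eqref{l3}–\eqref{l4}: because the relative entropy only controls $v^{-\gamma+1}$ in $L^1$ there, the pressure $p(v)=v^{-\gamma}$ and the extra weight $v^\beta=v^{\gamma-\alpha}$ are not controlled pointwise, and one must carefully split according to whether $|p(v)-\pt|$ exceeds $\delta_3$ from above (small $v$, safe) or from below (large $v$, dangerous), using in the latter case that $v^\beta$ is precisely the weight already present in $\mathcal{D}(U)$ so that Cauchy–Schwarz can still be applied with that weight, and then tracking the precise powers of $\eps/\lambda$ so that the claimed right-hand sides — in particular the $\frac{\eps}{\lambda}\mathcal{G}_2(U)$ slack — come out with the right constants. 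All of this closely parallels \cite[Lemma 3.2]{Kang-V-NS17}; the only genuinely new ingredient is threading the $v^\beta$ weight (with $\beta=\gamma-\alpha\in(0,1]$) through the estimates, and this weight always appears on the favorable side.
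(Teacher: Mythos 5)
Your overall architecture (fundamental theorem of calculus from a point where the truncation error vanishes, Cauchy--Schwarz against the $v^\beta$-weighted dissipation, splitting into an inner layer and an exponentially decaying tail, and reducing \eqref{big2}, \eqref{l4} to \eqref{big1}, \eqref{l3} by factoring differences of squares) is the same as the paper's. But there is a quantitative gap at the heart of your argument for \eqref{big1}--\eqref{big2}: nowhere do you invoke the hypothesis $|Y(U)|\le\eps^2$ of Proposition \ref{prop_out}, i.e.\ the bound $\int|a'|\,Q(v|\tilde v_\eps)\,d\xi\le C\eps^2/\lambda$ of Lemma \ref{lemmeC2}. The pointwise estimate only gives $|p(v)-p(\bar v_b)(\xi)|^2\le C(|\xi|+1/\eps)\,\mathcal{D}(U)$, which grows in $\xi$, and pairing this with your stated counting $\int|a'|\,d\xi=\mathcal{O}(\lambda)$ over the layer yields a bound of order $(\lambda/\eps)^{3/2}\mathcal{D}(U)$ --- larger than the target $\sqrt{\eps/\lambda}\,\mathcal{D}(U)$ by a factor $(\lambda/\eps)^2$. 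The saving in the paper is that the integrand is supported on $\{|p(v)-p(\tilde v_\eps)|>\delta_3\}$, where $Q(v|\tilde v_\eps)\ge\alpha>0$ (inequality \eqref{beta2}), so the $|a'|$-weighted measure of the support is not $\mathcal{O}(\lambda)$ but $\mathcal{O}(\eps^2/\lambda)$ by \eqref{l1}; only then does $\sup_{|\xi|\le\eps^{-1}\sqrt{\lambda/\eps}}(|\xi|+1/\eps)\cdot\eps^2/\lambda=\sqrt{\eps/\lambda}$ come out. This step is where the conditional assumption enters and it cannot be dispensed with (for \eqref{l3}--\eqref{l4} the extra factor $|a'|\le C\eps\lambda$ makes $\int|a'|^2(|\xi|+1/\eps)\,d\xi\le C\lambda^2$ converge unconditionally, so your counting there is essentially right).

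Two further points. First, the linear ($L^1$) terms in \eqref{big1} and \eqref{l3} are \emph{not} handled ``directly by Cauchy--Schwarz'': with only $\mathcal{D}(U)$ on the right-hand side, Cauchy--Schwarz leaves you with $\sqrt{\mathcal{D}(U)}$ times a quantity not controlled by $\sqrt{\mathcal{D}(U)}$. The paper's device is the elementary inequality $(y-\delta_3)_+\le\frac{2}{\delta_3}(y-\delta_3/2)_+^2$, which converts the linear truncation error at level $\delta_3$ into the quadratic one at level $\delta_3/2$, so that the quadratic estimate applies verbatim. Second, for the small-$v$ part of \eqref{l3} (the set $\{p(v)-p(\tilde v_\eps)>\delta_3\}$, where $v^{-\beta}$ is unbounded) the Cauchy--Schwarz with weight $v^\beta$ forces you to propagate the weight \emph{inside} the fundamental theorem of calculus, i.e.\ to estimate $\partial_\xi\big(v^{\beta/2}(p(v)-p(\bar v_s))\big)$ and control the commutator terms $\frac{p(v)-p(\bar v_s)}{p(v)}\partial_\xi p(\tilde v_\eps)$, etc.\ (leading to \eqref{beta1}); you correctly flag this as the new difficulty but your claim that the weight ``always appears on the favorable side'' hides the actual work. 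Also note the paper controls the large-$v$ contribution to \eqref{l3} via the linear lower bound \eqref{rel_Q1}, $|v-\bar v|\lesssim Q(v|\tilde v_\eps)-Q(\bar v|\tilde v_\eps)$, rather than via $|p(v)-p(\tilde v_\eps)|\lesssim Q$; this is what produces the clean $\eps\lambda\,\mathcal{G}_2(U)\le\lambda^2\cdot\frac{\eps}{\lambda}\mathcal{G}_2(U)$ bookkeeping you anticipated would be fiddly.
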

\begin{proof}
\noindent{\bf Proof of \eqref{big1}:} 
We split the proof into two steps.
\vskip0.2cm
\noindent{\it Step 1:} Note first that since $(y-\delta_3/2)_+\geq \delta_3/2$ whenever $(y-\delta_3)_+>0$, we have
\beq\label{y-identity}
(y-\delta_3)_+\leq (y-\delta_3/2)_+{\mathbf 1}_{\{y-\delta_3>0\}}\leq (y-\delta_3/2)_+\left(\frac{(y-\delta_3/2)_+}{\delta_3/2}\right)\leq \frac{2}{\delta_3} (y-\delta_3/2)_+^2.
\eeq
Hence, to show \eqref{big1}, it is enough to show it only for the quadratic part, with $\bar v_b$ defined with $\delta_3/2$ instead of $\delta_3$. We will keep the notation $\bar v_b$ in Step 2 below.

\vskip0.2cm
\noindent{\it Step 2:} 
First, using \eqref{eq_D}, we find that for any $\xi\in\bbr$,
\begin{align}
\begin{aligned}\label{est6}
 |p(v)-p(\bar v_b)(\xi)|  &\le \int_{\xi_0}^\xi \left|\partial_\xi \big( p(v)-p(\bar v_b)\big) \right| {\mathbf 1}_{\{p(v)-\pt < -\delta_3\}}  \,d\xi \\
 &\le C\int_{\xi_0}^\xi v^{\beta/2}\left|\partial_\xi \big( p(v)-p(\bar v_b)\big) \right| {\mathbf 1}_{\{p(v)-\pt < -\delta_3\}}  \,d\xi \\
 &\le C\sqrt{|\xi|+\frac{1}{\eps}}\sqrt{\mathcal{D}(U)}.
 \end{aligned}
\end{align}
For any $\xi$ such that $|(p(v)-p(\bar v))(\xi)|>0$, we have from  \eqref{def-bar} that $|(p(v)-\pt)(\xi)|> \delta_3$. Thus using \eqref{pressure2} and \eqref{rel_Q}, we have $Q(v(\xi)|\tilde{v}_\eps(\xi))\geq \alpha$, for some constant $\alpha>0$ depending only on $\delta_3$. Thus,
\begin{equation}\label{beta2}
{\mathbf 1}_{\{|p(v)-p(\bar v)|>0\}}\leq \frac{Q(v|\tilde{v}_\eps)}{\alpha}.
\end{equation}
Since ${\mathbf 1}_{\{|p(v)-p(\bar v_b)|>0\}}\le {\mathbf 1}_{\{|p(v)-p(\bar v)|>0\}}$, using \eqref{est6}, \eqref{beta2} and \eqref{l1}, we estimate
\begin{align}
\begin{aligned}\label{stand1}
&\int_\bbr|a'| \big| p(v)-p(\bar v_b) \big|^2 d\xi\\
&\le \int_{|\xi|\le\frac{1}{\eps}\sqrt{\frac{\lambda}{\eps}}}|a'||p(v)-p(\bar v_b)|^2  \,d\xi+ \int_{|\xi|\geq\frac{1}{\eps}\sqrt{\frac{\lambda}{\eps}}}|a'||p(v)-p(\bar v_b)|^2 \,d\xi\\
&\leq\left(\sup_{\left[-\sqrt{\frac{\lambda}{\eps^3}},\sqrt{\frac{\lambda}{\eps^3}}\right]}|p(v)-p(\bar v_s)|\right)
\int_{|\xi|\le\frac{1}{\eps}\sqrt{\frac{\lambda}{\eps}}}|a'| {\mathbf 1}_{\{|p(v)-p(\bar v)|>0\}} \,d\xi\\
&\qquad+C\mathcal{D}(U)  \int_{|\xi|\geq\frac{1}{\eps}\sqrt{\frac{\lambda}{\eps}}}|a'| \left(|\xi|+\frac{1}{\eps}\right)\,d\xi\\
&\leq C \mathcal{D}(U) \left(\sqrt{\frac{\lambda}{\eps^3}}
\int_\bbr|a'|\frac{Q(v|\tilde{v}_\eps)}{\alpha}\,d\xi+2\int_{|\xi|\geq\frac{1}{\eps}\sqrt{\frac{\lambda}{\eps}}}|a'| |\xi|\,d\xi\right).
\end{aligned}
\end{align} 
Therefore we have
\[
\int_\bbr|a'| \big| p(v)-p(\bar v_b) \big|^2 d\xi \le C\sqrt{\frac{\eps}{\lambda}}\mathcal{D}(U).
\]
Indeed, using \eqref{l1} and \eqref{tail} (recalling $|a'|=(\lambda/\eps)|\tilde{v}'_\eps|$), we have
\[
 \int_\bbr|a'||p(v)-p(\bar v)|^2\,d\xi \leq  C \mathcal{D}(U)\left(\sqrt{\frac{\eps}{\lambda}} +\lambda \eps
\int_{|\xi|\geq\frac{1}{\eps}\sqrt{\frac{\lambda}{\eps}}}e^{-c\eps|\xi|} |\xi|\,d\xi \right),
\]
and for the last term, we take $\deo$ small enough such that for any $\eps/\lambda\leq \deo$,
$$
\lambda \eps\int_{|\xi|\geq\frac{1}{\eps}\sqrt{\frac{\lambda}{\eps}}}e^{-c\eps|\xi|} |\xi|\,d\xi =\frac{\lambda}{\eps} \int_{|\xi|\geq \sqrt{\frac{\lambda}{\eps}}}  e^{-c|\xi|} |\xi| d\xi \le \frac{\lambda}{\eps} \int_{|\xi|\geq \sqrt{\frac{\lambda}{\eps}}} e^{-\frac{c}{2}|\xi|}d\xi=\frac{2\lambda}{c\eps}e^{-\frac{c}{2}\sqrt{\frac{\lambda}{\eps}}}\leq \sqrt{\frac{\eps}{\lambda}}.
$$

As mentioned in Step 1, recall that $\bar v_b=\bar v_{b_{\delta_3/2}}$ in the above estimate. Then using \eqref{def-bar}, we have
\begin{align}
\begin{aligned}\label{shift1}
\int_\bbr|a'|^2 v^\beta |p(v)-p(\bar v_{b_{\delta_3}})|^2\,d\xi &=\int_\bbr|a'|^2 v^\beta (-(p(v)-\pt)-\delta_3)_+^2\,d\xi \\
&\le \int_\bbr|a'|^2 v^\beta (-(p(v)-\pt)-\delta_3/2)_+^2\,d\xi\\
&=\int_\bbr|a'|^2 v^\beta |p(v)-p(\bar v_{b_{\delta_3/2}})|^2\,d\xi \\
&\leq C\sqrt{\frac{\eps}{\lambda}}\mathcal{D}(U).
\end{aligned}
\end{align}
For the linear part, using \eqref{def-bar} and \eqref{y-identity} with $y:=-(p(v)-\pt)$, we have
\begin{align}
\begin{aligned}\label{shift2}
\int_\bbr|a'|^2 v^\beta |p(v)-p(\bar v_{b_{\delta_3}})|\,d\xi &\le \frac{2}{\delta_3}\int_\bbr|a'|^2 v^\beta  |p(v)-p(\bar v_{b_{\delta_3/2}})|^2 \,d\xi\\
& \leq  C\sqrt{\frac{\eps}{\lambda}}\mathcal{D}(U).
\end{aligned}
\end{align}
Hence, we obtain \eqref{big1}.\\

\noindent{\bf Proof of \eqref{big2}:}  Since it follows from \eqref{compare1} that 
\[
|p(\bar v_b)-\pt| {\mathbf 1}_{\{p(v)-\pt \le \delta_3\}}=|p(\bar v)-\pt|{\mathbf 1}_{\{p(v)-\pt \le \delta_3\}} \le \delta_3,
\]
using \eqref{big1}, we have
\begin{align}
\begin{aligned}\label{dif1}
&\int_\bbr|a'| \Big | |p(v)-\pt|^2-  |p(\bar v_b)-\pt|^2\Big| {\mathbf 1}_{\{p(v)-\pt \le \delta_3\}} \,d\xi \\
&\quad = \int_\bbr |a'| |p( v)-p(\bar v_b)| |p(v)+p(\bar v_b)-2p(\tilde v_\eps)|{\mathbf 1}_{\{p(v)-\pt \le \delta_3\}} \,d\xi\\
&\quad\leq \int_\bbr |a'| |p(v)-p(\bar v_b)| \left( |p(v)-p(\bar v_b)|+2|p(\bar v_b) -\pt|    \right){\mathbf 1}_{\{p(v)-\pt \le \delta_3\}}\,d\xi\\
&\quad \le  \int_\bbr |a'| \left( |p(v)-p(\bar v_b)|^2+2\delta_3 |p(v)-p(\bar v_b)| \right)\,d\xi \le C\sqrt{\frac{\eps}{\lambda}}\mathcal{D}(U).
\end{aligned}
\end{align}

\noindent{\bf Proof of \eqref{l3}:}  
Thanks to \eqref{y-identity}, it is enough to show the quadratic part, with $\bar v$ defined with $\delta_3/2$ instead of $\delta_3$.
For this case, we will keep the notations $\bar v_s$ and $\bar v_b$ below without confusion.\\
We first decompose the quadratic part into two parts:
\[
 \int_\bbr|a'|^2 v^\beta  |p(v)-p(\bar v)|^2\,d\xi =\underbrace{\int_\bbr|a'|^2 v^\beta  |p(v)-p(\bar v_b)|^2 \,d\xi}_{=:Q_b}+ \underbrace{\int_\bbr|a'|^2 v^\beta  |p(v)-p(\bar v_s)|^2 \, d\xi}_{=:Q_s}.
\]
First, using the condition $\beta=\gamma-\alpha\le1$, we have
\begin{align*}
\begin{aligned}
|Q_b| & = \int_\bbr|a'|^2 \frac{v^\beta  |p(v)-p(\bar v_b)|^2 }{|v-\bar v| }  |v-\bar v|  {\mathbf 1}_{\{p(v)-p(\tilde v_\eps)<-\delta_3/2\}} \,d\xi\\
& \le C \int_\bbr|a'|^2 |v-\bar v| {\mathbf 1}_{\{p(v)-p(\tilde v_\eps)<-\delta_3/2\}}  \,d\xi.
\end{aligned}
\end{align*} 
To control the right hand side, we use \eqref{rel_Q1} as follows: If $|v-\bar v|>0$, using \eqref{pressure2}, we find 
\[|\bar v-\tilde v_\eps|\ge \min(c_5^{-1} \delta_3/2, v_-/2-\eps_0).\]
Taking $\delta_*$ in $2)$ of Lemma \ref{lem-pro} such that $\eps_0\leq \delta_*/2$ and $\min(c_5^{-1} \delta_3, v_-/2-\eps_0)\ge \delta_*$, we use \eqref{rel_Q1} with $w=\tilde{v}_\eps$, $u=\bar v$ and $v=v$ to find that there exists a constant $C>0$ such that 
\beq\label{line_lower}     
C|v-\bar v|\leq Q(v|\tilde{v}_\eps)- Q(\bar v|\tilde{v}_\eps).
\eeq
Therefore, using $|a'|\le \eps\lambda$, we find
\beq\label{Qb}
|Q_b| \le  C \int_\bbr|a'|^2  \big(Q(v|\tilde{v}_\eps)-Q(\bar v|\tilde{v}_\eps)\big) \,d\xi \le C\eps\lambda \mathcal{G}_2(U).
\eeq

On the other hand, to control $Q_s$, we will first derive a point-wise estimate \eqref{beta1} as below:  \\
Using $|a'|=(\lambda/\eps)|\tilde{v}'_\eps|$, together with \eqref{lower-v} and \eqref{l1}, we get
\begin{eqnarray*}
2\eps\int_{-1/\eps}^{1/\eps} Q(v|\tilde{v}_\eps)\, d\xi&\leq& \frac{2\eps}{\inf_{[-1/\eps,1/\eps]}|a'|}\int_\bbr|a'|Q(v|\tilde{v}_\eps)\,d\xi\\
&\leq& C \frac{\eps}{\lambda\eps}\frac{\eps^2}{\lambda}=C\left(\frac{\eps}{\lambda}\right)^2.
\end{eqnarray*}
Therefore, there exists $\xi_0\in [-1/\eps,1/\eps]$ such that $Q(v(\xi_0),\tilde{v}_\eps(\xi_0))\leq C(\eps/\lambda)^2$. For $\deo$ small enough, and using  \eqref{pQ-equi0}, we have 
$$
|(p(v)-\pt)(\xi_0)|\leq C\frac{\eps}{\lambda}.
$$
Thus, if $\deo$ is small enough such that $C\eps/\lambda\leq \delta_3/2$, then we have from the definition of $\bar v_s$ that
$$
(p(v)-p(\bar v_s))(\xi_0)=0.
$$
Therefore,  for any $\xi\in \bbr$,
\[
|v^{\beta/2}(p(v)-p(\bar v_s))(\xi)|=\left|\int_{\xi_0}^\xi \partial_\xi \big(v^{\beta/2}(p(v)-p(\bar v_s))\big)\,d\zeta\right|.
\]
To control the right-hand side by the good terms, we observe that 
since $v^{\beta/2}=p(v)^{-(\gamma-\alpha)/2\gamma}$, 
we have
\begin{eqnarray*}
&&\partial_\xi \big(v^{\beta/2}(p(v)-p(\bar v_s))\big)= \partial_\xi \big( p(v)^{-(\gamma-\alpha)/2\gamma} ((p(v)-p(\bar v_s)) \big)\\
&&\qquad =  p(v)^{-(\gamma-\alpha)/2\gamma}\partial_\xi ((p(v)-p(\bar v_s))\\
&&\qquad\quad -\frac{\gamma-\alpha}{2\gamma}p(v)^{-(\gamma-\alpha)/2\gamma} \frac{p(v)-p(\bar v_s)}{p(v)}\partial_\xi \big[ ((p(v)-\pt)+\pt \big]\\
&&\qquad = v^{\beta/2} \partial_\xi ((p(v)-p(\bar v_s)) -\frac{\gamma-\alpha}{2\gamma}v^{\beta/2} \underbrace{ \frac{p(v)-p(\bar v_s)}{p(v)}  \partial_\xi ((p(v)-\pt)}_{=:K}\\
&&\qquad\quad -\frac{\gamma-\alpha}{2\gamma}v^{\beta/2}  \frac{p(v)-p(\bar v_s)}{p(v)} \partial_\xi p(\tilde v_\eps).
\end{eqnarray*}
In particular, note that (by the definition of $\bar v_s$) the part $K$ above can be rewritten by
\[
K= \frac{p(v)-p(\bar v_s)}{p(v)} {\mathbf 1}_{\{p(v)-p(\tilde v_\eps)>\delta_3/2\}}  \partial_\xi ((p(v)-\pt)= \frac{p(v)-p(\bar v_s)}{p(v)}  \partial_\xi ((p(v)-p(\bar v_s)).
\]
Then, using $ |\partial_\xi p(\tilde v_\eps)| \le C |\tilde v_\eps'| $ and
\[
 \frac{p(v)-p(\bar v_s)}{p(v)} \le C,
\]
we have
\[
|\partial_\xi \big(v^{\beta/2}(p(v)-p(\bar v_s))\big)|\le C v^{\beta/2}(|\partial_\xi ((p(v)-p(\bar v_s))| +|\tilde v_\eps'|).
\]
Therefore, using \eqref{eq_D}, we have that for any $\xi\in \bbr$,
\begin{align*}
\begin{aligned}
 |v^{\beta/2} (p(v)-p(\bar v_s))(\xi)|  &=\left|\int_{\xi_0}^\xi \partial_\xi \big(v^{\beta/2} (p(v)-p(\bar v_s))\big)\,d\xi \right| \\
&\le \int_{\xi_0}^\xi \big|\partial_\xi \big(v^{\beta/2} (p(v)-p(\bar v_s))\big)\big| {\mathbf 1}_{\{p(v)-\pt > \delta_3/2\}} d\xi\\
&\le \int_{\xi_0}^\xi  v^{\beta/2}(|\partial_\xi ((p(v)-p(\bar v_s))| +|\tilde v_\eps'| ) {\mathbf 1}_{\{p(v)-\pt > \delta_3/2\}} d\xi\\
 &\le C\sqrt{|\xi|+\frac{1}{\eps}}\bigg(\sqrt{\mathcal{D}(U)} + \sqrt{ \int_\bbr a |\tilde v_\eps'|^2 v^{\beta}{\mathbf 1}_{\{p(v)-\pt > \delta_3/2\}} d\xi} \bigg).
\end{aligned}
\end{align*}
Using the condition $\beta=\gamma-\alpha>0$, we have
\begin{align}
\begin{aligned}\label{same-1}
 \int_\bbr a |\tilde v_\eps'|^2 v^{\beta}{\mathbf 1}_{\{p(v)-\pt > \delta_3/2\}} d\xi  &= \int_\bbr a |\tilde v_\eps'|^2 \frac{v^{\beta}}{|v-\tilde v_\eps|^2}  |v-\tilde v_\eps|^2 {\mathbf 1}_{\{p(v)-\pt > \delta_3/2\}} d\xi\\
&\le  C \int_\bbr a |\tilde v_\eps'|^2  |v-\tilde v_\eps|^2 {\mathbf 1}_{\{p(v)-\pt > \delta_3/2\}} d\xi.
\end{aligned}
\end{align}
In addition, using \eqref{rel_Q} and $|\tilde v_\eps'|\le C\frac{\eps}{\lambda} |a'|$, we have
\beq\label{same-s}
 \int_\bbr a |\tilde v_\eps'|^2  |v-\tilde v_\eps|^2{\mathbf 1}_{\{p(v)-\pt > \delta_3/2\}} d\xi\le C\frac{\eps^2}{\lambda^2}   \int_\bbr |a'| Q(v|\tilde v_\eps) d\xi.
\eeq
Therefore we obtain that  
\beq\label{beta1}
\forall \xi\in \bbr,\quad  |v^{\beta/2} (p(v)-p(\bar v_s))(\xi)| \le C\sqrt{|\xi|+\frac{1}{\eps}}\bigg(\sqrt{\mathcal{D}(U)} +\frac{\eps}{\lambda} \sqrt{\mathcal{G}_2(U)} \bigg).
\eeq

Now, using \eqref{d-weight} with \eqref{tail}, we have
\begin{align*}
\begin{aligned}
Q_s& \le C\bigg(\mathcal{D}(U) +\left(\frac{\eps}{\lambda}\right)^2 \mathcal{G}_2(U) \bigg) \lambda^2\eps  \int_\bbr e^{-C|\eps\xi|} (|\eps \xi|+1) d\xi \\
&\le C\lambda^2 \bigg(\mathcal{D}(U) +\left(\frac{\eps}{\lambda}\right)^2 \mathcal{G}_2(U) \bigg).
\end{aligned}
\end{align*}
Therefore, this and \eqref{Qb} complete the estimate:
\[
\int_\bbr|a'|^2 v^\beta |p(v)-p(\bar v)|^2 \,d\xi \leq C\lambda^2 \left( \mathcal{D}(U) +\frac{\eps}{\lambda} \mathcal{G}_2(U)\right).
\]
Hence using the similar estimates as in \eqref{shift1} and \eqref{shift2} (i.e., using \eqref{def-bar} and \eqref{y-identity} with $y:=|p(v)-p(\tilde v_\eps)|$), we obtain \eqref{l3}.

\noindent{\bf Proof of \eqref{l4}:}  
We first separate it into two parts:
\begin{eqnarray*}
&&\int_\bbr |a'|^2 \left| v^\beta |p(v)-\pt|^2 - \bar v^\beta |p(\bar v) -\pt|^2  \right|\,d\xi\\
&&\quad\le \underbrace{\int_\bbr |a'|^2 v^\beta \left| |p(v)-\pt|^2 - |p(\bar v) -\pt|^2  \right| \,d\xi }_{=:I_1} +\underbrace{\int_\bbr |a'|^2 \big|v^\beta -\bar v^\beta\big| |p(\bar v) -\pt|^2 \,d\xi}_{=:I_2}.
\end{eqnarray*}
Using the same arguments as in \eqref{dif1}, it follows from \eqref{l3} that
\begin{eqnarray*}
I_1&\leq&  \int_\bbr |a'|^2 v^\beta \left( |p(v)-p(\bar v)|^2+2\delta_3 |p(v)-p(\bar v)| \right)\,d\xi\\
\nonumber&\leq& C\lambda^2 \left( \mathcal{D}(U) +\frac{\eps}{\lambda} \mathcal{G}_2(U)\right).
\end{eqnarray*}
For $I_2$, we first separate $I_2$ into two parts:
\[
I_2=\underbrace{\int_\bbr |a'|^2 \big|v^\beta -\bar v^\beta_b\big| |p(\bar v_b) -\pt|^2 \,d\xi}_{=:I_2^b}+\underbrace{\int_\bbr |a'|^2 \big|v^\beta -\bar v^\beta_s\big| |p(\bar v_s) -\pt|^2 \,d\xi}_{=:I_2^s}.
\]
Using the assumption $\beta\le 1$, we have
\begin{align*}
\begin{aligned}
I_2^b&\le \delta_3^2 \int_\bbr |a'|^2 \frac{\big|v^\beta -\bar v^\beta_b\big|}{|v-\bar v|} |v-\bar v|{\mathbf 1}_{\{p(v)-p(\tilde v_\eps)<-\delta_3\}} \,d\xi\\
&\le C\int_\bbr |a'|^2  |v-\bar v|{\mathbf 1}_{\{p(v)-p(\tilde v_\eps)<-\delta_3\}} \,d\xi\\
&\le C \int_\bbr|a'|^2  \big(Q(v|\tilde{v}_\eps)-Q(\bar v|\tilde{v}_\eps)\big) \,d\xi \le C\eps\lambda \mathcal{G}_2(U).
\end{aligned}
\end{align*}
Likewise, we use $\beta>0$ to have
\begin{align*}
\begin{aligned}
I_2^s &\le \delta_3^2  \int_\bbr |a'|^2 \frac{\big|v^\beta -\bar v^\beta_s\big|} { |v-\bar v|}  |v-\bar v| {\mathbf 1}_{\{p(v)-p(\tilde v_\eps)>\delta_3\}}  \,d\xi \\
&\le C \int_\bbr |a'|^2  |v-\bar v| {\mathbf 1}_{\{p(v)-p(\tilde v_\eps)>\delta_3\}} \,d\xi  \le C\eps\lambda \mathcal{G}_2(U).
\end{aligned}
\end{align*}

\end{proof}

\begin{lemma}\label{lemma_out2}
Under the same assumption as Proposition \ref{prop_out}, we have
\begin{eqnarray}
\label{l5}
&&  \int_\bbr|a'| \left|p(v|\tilde{v}_\eps)-p(\bar v|\tilde{v}_\eps)\right | \,d\xi\\
\nonumber
&&\quad \leq C\sqrt{\frac{\eps}{\lambda}} \left(\mathcal{D}(U) +\left(\frac{\eps}{\lambda}\right)^2\mathcal{G}_2(\bar U) \right) +C\left(\mathcal{G}_2(U)-\mathcal{G}_2(\bar U) \right),\\
\label{l50}
&&   \int_\bbr|a'| \left|Q(v|\tilde{v}_\eps)-Q(\bar v|\tilde{v}_\eps)\right | \,d\xi+\int_\bbr|a'| |v-\bar v| \,d\xi \leq  C\left(\mathcal{G}_2(U)-\mathcal{G}_2(\bar U) \right).
\end{eqnarray}
\end{lemma}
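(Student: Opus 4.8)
The plan is to prove \eqref{l50} first, since \eqref{l5} will lean on it. For the $Q$-term in \eqref{l50}, observe that $\bar v$ always lies between $\tilde v_\eps$ and $v$, so the monotonicity \eqref{Q-sim} gives $Q(v|\tilde v_\eps)\ge Q(\bar v|\tilde v_\eps)\ge 0$; hence $\int_\bbr |a'|\,|Q(v|\tilde v_\eps)-Q(\bar v|\tilde v_\eps)|\,d\xi=\int_\bbr |a'|\,(Q(v|\tilde v_\eps)-Q(\bar v|\tilde v_\eps))\,d\xi=|\sigma_\eps|^{-1}(\mathcal G_2(U)-\mathcal G_2(\bar U))\le C(\mathcal G_2(U)-\mathcal G_2(\bar U))$, using that $|\sigma_\eps|$ is bounded below and above. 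For $\int_\bbr|a'|\,|v-\bar v|\,d\xi$ the integrand vanishes where $v=\bar v$; where $v\ne\bar v$ one has $|p(v)-\pt|>\delta_3$, hence $|p(\bar v)-\pt|=\delta_3$ and, by \eqref{pressure2}, $|\bar v-\tilde v_\eps|\ge\delta_3/c_5$. Choosing $\delta_*$ in $2)$ of Lemma \ref{lem-pro} so that $\eps_0\le\delta_*/2$ and $\delta_3/c_5\ge\delta_*$, inequality \eqref{rel_Q1} with $w=\tilde v_\eps$, $u=\bar v$, $v=v$ gives the pointwise bound \eqref{line_lower}, $C|v-\bar v|\le Q(v|\tilde v_\eps)-Q(\bar v|\tilde v_\eps)$ (trivially valid also when $v=\bar v$); integrating against $|a'|$ completes \eqref{l50}.

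For \eqref{l5} I would start from the algebraic identity $p(v|\tilde v_\eps)-p(\bar v|\tilde v_\eps)=\bigl(p(v)-p(\bar v)\bigr)-p'(\tilde v_\eps)(v-\bar v)$. The second term is harmless: $|p'(\tilde v_\eps)|\le C$ (as $\tilde v_\eps\in(v_-/2,v_-)$), so $\int_\bbr|a'|\,|p'(\tilde v_\eps)|\,|v-\bar v|\,d\xi\le C\int_\bbr|a'|\,|v-\bar v|\,d\xi\le C(\mathcal G_2(U)-\mathcal G_2(\bar U))$ by \eqref{l50}. The first term is supported on $\{|p(v)-\pt|>\delta_3\}$ and, on the disjoint sets there, coincides with $p(v)-p(\bar v_b)$ on $\{p(v)-\pt<-\delta_3\}$ and with $p(v)-p(\bar v_s)$ on $\{p(v)-\pt>\delta_3\}$; I split accordingly. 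On the big-$v$ part \eqref{big1} applies directly: $\int_\bbr|a'|\,|p(v)-p(\bar v_b)|\,d\xi\le\sqrt{\eps/\lambda}\,\mathcal D(U)$. What remains is the small-$v$ part $\int_\bbr|a'|\,|p(v)-p(\bar v_s)|\,{\mathbf 1}_{\{p(v)-\pt>\delta_3\}}\,d\xi$.

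This last integral is the crux. First, using \eqref{y-identity} one reduces the linear integrand to the quadratic one $|p(v)-p(\bar v_s)|^2$ with $\bar v_s$ redefined with $\delta_3/2$ in place of $\delta_3$ (at the cost of the factor $2/\delta_3$). On the set $\{p(v)-\pt>\delta_3/2\}$ the pressure $p(v)$ is bounded below by a fixed positive constant, so $v^{-\beta/2}=p(v)^{\beta/(2\gamma)}\le C\bigl(1+|p(v)-p(\bar v_s)|\bigr)^{\beta/(2\gamma)}$; combined with the pointwise diffusion estimate \eqref{beta1} for $v^{\beta/2}|p(v)-p(\bar v_s)|$ this yields, by an elementary computation (inverting the increasing map $P\mapsto P^{-\beta/(2\gamma)}(P-P_0)$, which produces the exponent $\theta:=\tfrac{2\gamma}{2\gamma-\beta}>1$), a pointwise control of $|p(v)-p(\bar v_s)|$ by $C\sqrt{|\xi|+1/\eps}\,\bigl(\sqrt{\mathcal D(U)}+\tfrac{\eps}{\lambda}\sqrt{\mathcal G_2(U)}\bigr)$ and a $\theta$-power of it. Then, exactly as in the estimate of $Q_s$ in the proof of \eqref{l3} and as in \eqref{stand1} for \eqref{big1} — splitting $|\xi|\le\tfrac1\eps\sqrt{\lambda/\eps}$ versus $|\xi|>\tfrac1\eps\sqrt{\lambda/\eps}$, using the indicator bound \eqref{beta2}, the tail bound \eqref{tail}, \eqref{d-weight}, \eqref{l1}, and the smallness $\eps/\lambda\le\delta_0$ — the contribution is absorbed into $C\sqrt{\eps/\lambda}\bigl(\mathcal D(U)+(\eps/\lambda)^2\mathcal G_2(U)\bigr)$. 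Finally, writing $\mathcal G_2(U)=(\mathcal G_2(U)-\mathcal G_2(\bar U))+\mathcal G_2(\bar U)$ and using \eqref{l2} to dispose of the extra powers of $\eps/\lambda$, one reaches the right-hand side of \eqref{l5}.

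The main obstacle is precisely this small-$v$ region: there $|p(v)-p(\bar v_s)|$ (equivalently $p(v)$ itself) is genuinely unbounded and is \emph{not} controlled by $Q(v|\tilde v_\eps)$, so the only available quantity is the degenerate parabolic term $\mathcal D(U)$, which carries the vanishing weight $v^{\beta}$. Removing that degeneracy is what forces the superlinear inequality above, and it requires careful bookkeeping of the powers of $\eps$ and of $\eps/\lambda$ so that the final estimate lands on $\sqrt{\eps/\lambda}\,\mathcal D(U)$ together with the difference $\mathcal G_2(U)-\mathcal G_2(\bar U)$, rather than on $\mathcal D(U)$ or $\mathcal G_2(U)$ alone. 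The big-$v$ side is by contrast routine, since there $v$ is bounded below and \eqref{big1} and \eqref{line_lower} apply without loss.
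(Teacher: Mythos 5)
Your proof of \eqref{l50} (via \eqref{Q-sim} and \eqref{rel_Q1}/\eqref{line_lower}), your decomposition $p(v|\tilde v_\eps)-p(\bar v|\tilde v_\eps)=(p(v)-p(\bar v))-p'(\tilde v_\eps)(v-\bar v)$, and your treatment of the big-$v$ part via \eqref{big1} all match the paper. The genuine gap is in the remaining term $\int_\bbr|a'|\,|p(v)-p(\bar v_s)|\,d\xi$. You first upgrade the linear integrand to the quadratic one via \eqref{y-identity} and then claim that $\int_\bbr|a'|\,|p(v)-p(\bar v_s)|^2\,d\xi$ is absorbed into $C\sqrt{\eps/\lambda}\,(\mathcal{D}(U)+(\eps/\lambda)^2\mathcal{G}_2(U))$. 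That bound is not available: because of the degenerate weight $v^\beta$ in $\mathcal{D}$, your pointwise inversion gives $|p(v)-p(\bar v_s)|\le C(B+B^{\theta})$ with $B=\sqrt{|\xi|+1/\eps}\,\bigl(\sqrt{\mathcal{D}(U)}+\tfrac\eps\lambda\sqrt{\mathcal{G}_2(U)}\bigr)$ and $\theta=\tfrac{2\gamma}{\gamma+\alpha}>1$, so squaring produces $B^{2\theta}$, i.e. a power $(\mathcal{D}+\cdots)^{\theta}$ that is \emph{superlinear} in $\mathcal{D}$. No near/far splitting in $\xi$ removes this: the estimate of $J_1$ in the proof of \eqref{ns1} shows that the correct bound for $\int|a'|\,|p(v)-p(\bar v_s)|^2\,d\xi$ carries exactly the power $q=\tfrac{2\gamma}{\gamma+\alpha}>1$ of $(\mathcal{D}+(\eps/\lambda)^2\mathcal{G}_2)$, and there it must be tamed by Young's inequality against $\mathcal{G}_1^-$. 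Since \eqref{l5} feeds into \eqref{n1}, hence into part 1 of Proposition \ref{prop_out} and into Step 1 of the proof of Proposition \ref{prop:main}, where no upper bound on $\mathcal{D}(U)$ is assumed, a term growing like $\mathcal{D}^{\theta}$ cannot be absorbed into $C\sqrt{\eps/\lambda}\,\mathcal{D}$.

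The repair is to keep the integrand linear and split once more according to the size of $v$, which is what the paper does. On $\{v<v_-/2\}$ one has the lower bound \eqref{lower-p}, $|p(v)-p(\bar v_s)|\ge\tfrac12|p(v_-/2)-p(3v_-/4)|$, so the \emph{linear} quantity is dominated by its own power $\tfrac{\gamma+\alpha}{\gamma}>1$, which by \eqref{pw1} is $\le C v^\beta|p(v)-p(\bar v_s)|^2$ and hence, by \eqref{beta1}, bounded pointwise by $C(|\xi|+1/\eps)\bigl(\mathcal{D}(U)+(\eps/\lambda)^2\mathcal{G}_2(U)\bigr)$ — linear in $\mathcal{D}$; the $\xi$-integration as in \eqref{stand1} then yields $C\sqrt{\eps/\lambda}\bigl(\mathcal{D}(U)+(\eps/\lambda)^2\mathcal{G}_2(U)\bigr)$ (this is the estimate \eqref{long-est}). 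On $\{v\ge v_-/2\}\cap\{p(v)-\pt>\delta_3\}$ the quantity $|p(v)-p(\bar v_s)|$ is bounded above by $p(v_-/2)$, and \eqref{line_lower} converts it into $C\bigl(\mathcal{G}_2(U)-\mathcal{G}_2(\bar U)\bigr)$. So the superlinearity you fight with the $\theta$-power inversion is avoided entirely by exploiting that, on the only set where $p(v)$ is unbounded, the integrand is already bounded below by a constant.
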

\begin{proof}
Following the proof of \cite[Lemma 3.3]{Kang-V-NS17} together with  \eqref{l2}, we have
\begin{eqnarray*}
&&\int_\bbr|a'| \left|Q(v|\tilde{v}_\eps)-Q(\bar v|\tilde{v}_\eps)\right | \,d\xi+\int_\bbr|a'| |v-\bar v| \,d\xi \\
&& \qquad\le C \int_\bbr|a'| \left(Q(v|\tilde{v}_\eps)-Q(\bar v|\tilde{v}_\eps)\right) \,d\xi \le C\left(\mathcal{G}_2(U)-\mathcal{G}_2(\bar U) \right).
\end{eqnarray*}
Following the proof of \cite[Lemma 3.3]{Kang-V-NS17}, we have
\begin{eqnarray*}
&&\int_\bbr|a'| \left|p(v|\tilde{v}_\eps)-p(\bar v|\tilde{v}_\eps)\right | \,d\xi\\
&& \qquad\le \underbrace{\int_\bbr|a'| \left|p(v)-p(\bar v) \right | \,d\xi}_{=:I_1} + C \underbrace{\int_\bbr|a'| \left(Q(v|\tilde{v}_\eps)-Q(\bar v|\tilde{v}_\eps)\right) \,d\xi}_{=:I_2}.
\end{eqnarray*}
First, using \eqref{l2}, we have 
\[
I_2\le C\left(\mathcal{G}_2(U)-\mathcal{G}_2(\bar U) \right).
\]
We separate $I_1$ into three parts:
\[
I_1 =\underbrace{ \int_\bbr|a'| \left|p(v)-p(\bar v_b) \right | \,d\xi}_{=:I_{11}} +\underbrace{\int_\bbr|a'| \left|p(v)-p(\bar v_s) \right | {\mathbf 1}_{\{v<v_-/2\}}\,d\xi}_{=:I_{12}}+\underbrace{\int_\bbr|a'| \left|p(v)-p(\bar v_s) \right | {\mathbf 1}_{\{v\ge v_-/2\}}\,d\xi}_{=:I_{13}}.
\]
By \eqref{big1}, we have
\[
I_{11}\le C\sqrt{\frac{\eps}{\lambda}} \mathcal{D}(U).
\]
For $I_{12}$, we first observe that
\begin{align}
\begin{aligned}\label{pw1}
v^\beta \big| p(v)-p(\bar v_s) \big|^2 &= p(v)^{-\frac{\gamma-\alpha}{\gamma}} |p(v)-p(\bar v_s)|^2  {\mathbf 1}_{\{p(v)-p(\tilde v_\eps)>\delta_3\}}\\
&= \Big(\frac{|p(v)-p(\bar v_s)|}{p(v)} \Big)^{\frac{\gamma-\alpha}{\gamma}} {\mathbf 1}_{\{p(v)-p(\tilde v_\eps)>\delta_3\}}  |p(v)-p(\bar v_s)|^{\frac{\gamma+\alpha}{\gamma}}\\
&\ge C\delta_3^{\frac{\gamma-\alpha}{\gamma}}  |p(v)-p(\bar v_s)|^{\frac{\gamma+\alpha}{\gamma}}.
\end{aligned}
\end{align}
Since (by the smallness of $\eps_0$ and $\delta_3$)
\begin{align}
\begin{aligned}\label{lower-p}
\left|p(v)-p(\bar v_s) \right | {\mathbf 1}_{\{v<v_-/2\}} &\ge \left|p(v)-p(\tilde v_\eps) \right | {\mathbf 1}_{\{v<v_-/2\}} -\left|p(\bar v_s) -\pt \right| {\mathbf 1}_{\{v<v_-/2\}} \\
&\ge \left|p(v)-p(\tilde v_\eps) \right | {\mathbf 1}_{\{v<v_-/2\}} -\delta_3\\
&\ge \left|p(v_-/2)-p(3v_-/4) \right |  -\delta_3\\
&\ge \frac{1}{2}\left|p(v_-/2)-p(3v_-/4) \right |,
\end{aligned}
\end{align}
using \eqref{pw1}, we have
\[
\left|p(v)-p(\bar v_s) \right | {\mathbf 1}_{\{v<v_-/2\}} \le  C |p(v)-p(\bar v_s)|^{\frac{\gamma+\alpha}{\gamma}} \le C v^\beta \big| p(v)-p(\bar v_s) \big|^2.
\]
Then it follows from \eqref{beta1} that for all $\xi\in\bbr$,
\beq\label{pwi}
\left|p(v)-p(\bar v_s) \right | {\mathbf 1}_{\{v<v_-/2\}}(\xi) \le C \left(|\xi|+\frac{1}{\eps}\right) \left(\mathcal{D}(U) +\left(\frac{\eps}{\lambda}\right)^2\mathcal{G}_2(U) \right).
\eeq
Therefore, using \eqref{pwi} together with the same estimate as in \eqref{stand1}, we have
\begin{align}
\begin{aligned}\label{long-est}
I_{12}&\le \int_{|\xi|\le\frac{1}{\eps}\sqrt{\frac{\lambda}{\eps}}}|a'||p(v)-p(\bar v_s)| {\mathbf 1}_{\{v<v_-/2\}} \,d\xi+ \int_{|\xi|\geq\frac{1}{\eps}\sqrt{\frac{\lambda}{\eps}}}|a'||p(v)-p(\bar v_s)| {\mathbf 1}_{\{v<v_-/2\}} \,d\xi\\
& \leq C\left(\mathcal{D}(U) +\left(\frac{\eps}{\lambda}\right)^2\mathcal{G}_2(U) \right) \left(\sqrt{\frac{\lambda}{\eps^3}}
\int_\bbr|a'| Q(v|\tilde{v}_\eps) \,d\xi+2\int_{|\xi|\geq\frac{1}{\eps}\sqrt{\frac{\lambda}{\eps}}}|a'| |\xi|\,d\xi\right)\\
& \leq C\left(\mathcal{D}(U) +\left(\frac{\eps}{\lambda}\right)^2\mathcal{G}_2(U) \right) \left(\sqrt{\frac{\eps}{\lambda}} +\lambda \eps
\int_{|\xi|\geq\frac{1}{\eps}\sqrt{\frac{\lambda}{\eps}}}e^{-c\eps|\xi|} |\xi|\,d\xi \right)\\
&\leq C\sqrt{\frac{\eps}{\lambda}} \left(\mathcal{D}(U) +\left(\frac{\eps}{\lambda}\right)^2\mathcal{G}_2(U) \right).
\end{aligned}
\end{align}

For $I_{13}$, since $\left|p(v)-p(\bar v_s) \right|{\mathbf 1}_{\{v\ge v_-/2\}} \le p(v_-/2)$,  we have
\begin{align*}
\begin{aligned}
I_{13} &= \int_\bbr|a'| \frac{\left|p(v)-p(\bar v_s) \right |}{|v-\bar v|} |v-\bar v| {\mathbf 1}_{\{v\ge v_-/2\} \cap\{p(v)-\pt >\delta_3\} }\,d\xi \\
&\le C \int_\bbr|a'|  |v-\bar v| {\mathbf 1}_{\{v\ge v_-/2\} \cap\{p(v)-\pt >\delta_3\} }\,d\xi  \le C\left(\mathcal{G}_2(U)-\mathcal{G}_2(\bar U) \right).
\end{aligned}
\end{align*}

Hence, we have
\[
I_1\le C\sqrt{\frac{\eps}{\lambda}} \left(\mathcal{D}(U) +\left(\frac{\eps}{\lambda}\right)^2\mathcal{G}_2(U) \right) +C\left(\mathcal{G}_2(U)-\mathcal{G}_2(\bar U) \right),
\]
which gives \eqref{l5}.
\end{proof}

\begin{lemma}\label{lemma_out3}
Under the same assumption as Proposition \ref{prop_out}, we have
\begin{eqnarray}
\label{ns1}
&& \int_{\Omega^c} |a'| \big |p(v)-\pt\big| |h-\tilde h_\eps| d\xi  \le \delta_0 \left(\mathcal{D}(U) +\left(\frac{\eps}{\lambda}\right)^2\mathcal{G}_2(U) \right) + \frac{1}{2} \mathcal{G}_1^-(U),\\
\label{ns2}
&&\int_{\Omega^c} |a'| \left( Q(\bar v|\tilde v_\eps) +|\bar v -\tilde v_\eps | \right)  d\xi \le C \sqrt{\frac{\eps}{\lambda}} \left(\mathcal{D}(U) +\left(\frac{\eps}{\lambda}\right)^2\mathcal{G}_2(U) \right).
\end{eqnarray}
\end{lemma}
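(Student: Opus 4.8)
The plan is to handle the two estimates separately, in both cases localizing to the set $\Omega^c=\{p(v)-\pt>\delta_3\}$, where the pressure difference is bounded below, so that (via \eqref{pressure2} and \eqref{rel_Q}) the relative entropy $Q(v|\tilde v_\eps)$ is bounded below by a positive constant $\alpha_0$ depending only on $\delta_3$. This gives the key indicator bound ${\mathbf 1}_{\Omega^c}\le Q(v|\tilde v_\eps)/\alpha_0$, analogous to \eqref{beta2}, which will let me convert "measure of $\Omega^c$" type quantities into the good term $\mathcal{G}_2(U)-\mathcal{G}_2(\bar U)$ or into $\mathcal{G}_2(U)$ via \eqref{l1}.

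For \eqref{ns1}: I would first apply Young's inequality to split $|a'|\,|p(v)-\pt|\,|h-\tilde h_\eps|$. The $h$-factor must be absorbed into $\mathcal{G}_1^-(U)=\frac{\sigma_\eps}{2}\int_{\Omega^c}a'|h-\tilde h_\eps|^2{\mathbf 1}_{\{p(v)-\pt>\delta_3\}}d\xi$; so I write, pointwise on $\Omega^c$,
\[
|a'|\,|p(v)-\pt|\,|h-\tilde h_\eps|\le \frac{\sigma_\eps}{4}|a'|\,|h-\tilde h_\eps|^2 + \frac{1}{\sigma_\eps}|a'|\,|p(v)-\pt|^2,
\]
so the first term contributes $\tfrac12\mathcal{G}_1^-(U)$ after integration. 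It remains to bound $\int_{\Omega^c}|a'|\,|p(v)-\pt|^2 d\xi$ by $\delta_0(\mathcal{D}(U)+(\eps/\lambda)^2\mathcal{G}_2(U))$. On $\Omega^c$ one has $p(v)-\pt=\big(p(v)-p(\bar v_s)\big)+\delta_3$, and the constant-$\delta_3$ piece is controlled by $|a'|\cdot\delta_3\cdot{\mathbf 1}_{\Omega^c}$, which via ${\mathbf 1}_{\Omega^c}\le Q(v|\tilde v_\eps)/\alpha_0$ and \eqref{l1}, together with the smallness of $\delta_3$ and an extra $\eps/\lambda$ coming from $|a'|\sim(\lambda/\eps)|\tilde v_\eps'|$ and $|\tilde v_\eps'|\le C\eps^2$, is $\le\delta_0(\eps/\lambda)^2\mathcal{G}_2(U)$; and the genuinely varying piece $|p(v)-p(\bar v_s)|$ is handled by the pointwise bound \eqref{beta1} exactly as in the treatment of $I_{12}$ in \eqref{long-est} (splitting the $\xi$-integral at $|\xi|\simeq\eps^{-1}\sqrt{\lambda/\eps}$ and using \eqref{tail}), yielding a factor $\sqrt{\eps/\lambda}\le\delta_0$ in front of $\mathcal{D}(U)+(\eps/\lambda)^2\mathcal{G}_2(U)$.

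For \eqref{ns2}: on $\Omega^c$ the truncated profile $\bar v$ coincides with $\bar v_s$ and satisfies $p(\bar v)-\pt=\delta_3$, hence $|\bar v-\tilde v_\eps|$ and $Q(\bar v|\tilde v_\eps)$ are bounded above by a fixed constant depending only on $\delta_3$; so $Q(\bar v|\tilde v_\eps)+|\bar v-\tilde v_\eps|\le C\,{\mathbf 1}_{\Omega^c}$ on $\Omega^c$. Thus $\int_{\Omega^c}|a'|(Q(\bar v|\tilde v_\eps)+|\bar v-\tilde v_\eps|)d\xi\le C\int_{\Omega^c}|a'|\,{\mathbf 1}_{\Omega^c}d\xi$. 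Now I use ${\mathbf 1}_{\Omega^c}\le Q(v|\tilde v_\eps)/\alpha_0$ but split the spatial integral as in \eqref{stand1}/\eqref{long-est}: on $|\xi|\le\eps^{-1}\sqrt{\lambda/\eps}$ I bound ${\mathbf 1}_{\Omega^c}$ through the pointwise estimate \eqref{beta1} (which, combined with the lower bound $|p(v)-p(\bar v_s)|\ge c(\delta_3)>0$ wherever the indicator is nonzero, forces that region to be small unless $\mathcal{D}(U)+(\eps/\lambda)^2\mathcal{G}_2(U)$ is not too small), while on $|\xi|\ge\eps^{-1}\sqrt{\lambda/\eps}$ I use the exponential tail \eqref{tail} of $|a'|$; the two contributions combine to give the factor $\sqrt{\eps/\lambda}$ multiplying $\mathcal{D}(U)+(\eps/\lambda)^2\mathcal{G}_2(U)$.

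The main obstacle I anticipate is the first region in both estimates — $|\xi|\le\eps^{-1}\sqrt{\lambda/\eps}$ — where one cannot simply use the tail decay of $a'$ and must instead extract smallness from \eqref{beta1}: one has to convert the pointwise control of $v^{\beta/2}(p(v)-p(\bar v_s))$ into control of $|p(v)-p(\bar v_s)|$ itself (this requires the lower bound $v\ge v_-/2$ or, for the small-$v$ range, the argument of \eqref{pw1}–\eqref{lower-p} using $\beta\le 1$), and then carefully book-keep the powers of $\eps/\lambda$ so that the prefactor is genuinely $\sqrt{\eps/\lambda}$ (hence $\le\delta_0$) rather than $O(1)$. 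This is the same delicate scaling bookkeeping already carried out for $I_{12}$ in the proof of Lemma~\ref{lemma_out2}, so I would model the computation on \eqref{long-est}.
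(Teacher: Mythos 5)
Your treatment of \eqref{ns2} follows the paper's route (reduce the integrand to a multiple of ${\mathbf 1}_{\Omega^c}$ and estimate $\int_{\Omega^c}|a'|\,d\xi$ by splitting at $|\xi|\simeq \eps^{-1}\sqrt{\lambda/\eps}$ using the pointwise bound \eqref{beta1} and the tail of $a'$), up to one imprecision: on $\Omega^c$ one has $p(v)-p(\bar v_s)=(p(v)-\pt)-\delta_3>0$, but this is \emph{not} bounded below by a constant — it vanishes as $p(v)-\pt\downarrow\delta_3$. The paper obtains the needed lower bound by passing to the half-level truncation, $|p(v)-p(\bar v_{\delta_3/2})|=(|p(v)-\pt|-\delta_3/2)_+\ge\delta_3/2$ on $\Omega^c$ (see \eqref{y-out}); with that substitution your argument for \eqref{ns2} goes through.

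The proof of \eqref{ns1}, however, has a genuine gap. The pointwise Young's inequality forces you to prove $\int_{\Omega^c}|a'|\,|p(v)-\pt|^2\,d\xi\le \delta_0\big(\mathcal{D}(U)+(\eps/\lambda)^2\mathcal{G}_2(U)\big)$, and this is not available. The only control on large values of $|p(v)-p(\bar v_s)|$ (i.e.\ on small $v$) comes from the diffusion via \eqref{pw1}--\eqref{beta1}, which give the \emph{superlinear} bound $|p(v)-p(\bar v_s)|^2\le C\big[(|\xi|+\eps^{-1})(\mathcal{D}+(\eps/\lambda)^2\mathcal{G}_2)\big]^{q}$ with $q=\tfrac{2\gamma}{\gamma+\alpha}>1$; integrating yields $\int|a'|\,|p(v)-p(\bar v_s)|^2\,d\xi\le C(\mathcal{D}+\cdots)^q\,\eps^{2-q}/\lambda$, which for large $\mathcal{D}(U)$ cannot be dominated by $\delta_0\mathcal{D}(U)$ (and \eqref{ns1} must hold with no a priori bound on $\mathcal{D}$, since \eqref{n12} belongs to part 1 of Proposition \ref{prop_out}). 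This is precisely why the paper keeps the two factors coupled: it applies Cauchy--Schwarz to $\int_{\Omega^c}|a'|\,|p(v)-p(\bar v_s)|\,|h-\tilde h_\eps|\,d\xi$, then Young's inequality with exponents $2/q$ and $2/(2-q)$, and uses \eqref{l1} (valid because $|Y(U)|\le\eps^2$) to turn the excess power $\big(\int_{\Omega^c}|a'||h-\tilde h_\eps|^2\big)^{(q-1)/(2-q)}$ into the small factor $(\eps^2/\lambda)^{(q-1)/(2-q)}$, ending with $\delta_0(\mathcal{D}+\cdots)+C\delta_0^{-1}(\eps/\lambda)^2\mathcal{G}_1^-(U)$; the smallness of the weighted $h$-norm is what absorbs the superlinearity in $\mathcal{D}$, and decoupling $p$ from $h$ at the start destroys this mechanism. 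Your handling of the constant-$\delta_3$ piece is also off: ${\mathbf 1}_{\Omega^c}\le Q(v|\tilde v_\eps)/\alpha_0$ only gives $C\delta_3^2\,\mathcal{G}_2(U)$ with no factor $(\eps/\lambda)^2$ (there is no ``extra $\eps/\lambda$'' to extract once $|a'|$ already sits inside $\mathcal{G}_2$); the correct bound $\int_{\Omega^c}|a'|\,d\xi\le C\sqrt{\eps/\lambda}\,(\mathcal{D}+(\eps/\lambda)^2\mathcal{G}_2)$ is the one you invoke for \eqref{ns2}, and it is what the paper uses for this term as well.
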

\begin{proof}
\noindent{\bf Proof of \eqref{ns1}:}  
We first separate it into two parts:
\begin{align*}
\begin{aligned}
&\int_{\Omega^c} |a'| \big| p(v)-\pt \big| |h-\tilde h_\eps| \,d\xi\\
&\quad\le \underbrace{\int_{\Omega^c} |a'| \big| p(v)-p(\bar v) \big| |h-\tilde h_\eps| \,d\xi}_{=:J_1}  + \underbrace{\int_{\Omega^c} |a'| \big| p(\bar v)-\pt \big| |h-\tilde h_\eps| \,d\xi}_{=:J_2}
\end{aligned}
\end{align*}
We use the definition of $\bar v_s$ and H\"older's inequality to have
\begin{align*}
\begin{aligned}
J_1&=\int_{\Omega^c} |a'| \big| p(v)-p(\bar v_s) \big| |h-\tilde h_\eps| \,d\xi\\
&\le  \left(\int_{\Omega^c}|a'| \big| p(v)-p(\bar v_s) \big|^2 d\xi \right)^{1/2}  \left(\int_{\Omega^c}|a'| \big| h -\tilde h_\eps \big|^2 d\xi \right)^{1/2}.
\end{aligned}
\end{align*}
To estimate $\int_\bbr|a'| \big| p(v)-p(\bar v_s) \big|^2 d\xi$, using \eqref{pw1} and \eqref{beta1}, we find that for any $\xi\in\bbr$,
\begin{align}
\begin{aligned}\label{est5}
 |(p(v)-p(\bar v_s))(\xi)|^2 \le  C \left(|\xi|+\frac{1}{\eps}\right)^{\frac{2\gamma}{\gamma+\alpha}} \left(\mathcal{D}(U) +\left(\frac{\eps}{\lambda}\right)^2\mathcal{G}_2(U) \right) ^{\frac{2\gamma}{\gamma+\alpha}}.
\end{aligned}
\end{align}
Following the similar arguments as in \eqref{stand1}, and using \eqref{est5} with $q:=\frac{2\gamma}{\gamma+\alpha}$ (note $1<q<2$ by $0<\alpha<\gamma$), we obtain
\begin{align*}
\begin{aligned}
&\int_\bbr|a'| \big| p(v)-p(\bar v_s) \big|^2 d\xi\\
&\quad\le \int_{|\xi|\le\frac{1}{\eps}\left(\frac{\lambda}{\eps}\right)^{1/q}}|a'||p(v)-p(\bar v_s)|^2  \,d\xi+ \int_{|\xi|\geq\frac{1}{\eps}\left(\frac{\lambda}{\eps}\right)^{1/q}}|a'||p(v)-p(\bar v_s)|^2 \,d\xi\\
& \quad\leq C\left(\mathcal{D}(U) +\left(\frac{\eps}{\lambda}\right)^2\mathcal{G}_2(U) \right) ^q \left(\frac{1}{\eps^q}
\int_\bbr|a'| Q(v|\tilde{v}_\eps) \,d\xi+2\int_{|\xi|\geq\left(\frac{\lambda}{\eps}\right)^{1/q}}|a'| |\xi|^q\,d\xi\right)\\
&\quad\leq C\left(\mathcal{D}(U) +\left(\frac{\eps}{\lambda}\right)^2\mathcal{G}_2(U) \right)^q \left(\frac{1}{\eps^q} \frac{\eps^2}{\lambda} +\frac{\lambda}{\eps^q} \int_{|\xi|\geq\left(\frac{\lambda}{\eps}\right)^{1/q}} |\xi|^q e^{-c|\xi|}\,d\xi\right)\\
&\quad\leq C\left(\mathcal{D}(U) +\left(\frac{\eps}{\lambda}\right)^2\mathcal{G}_2(U) \right)^q   \frac{\eps^{2-q}}{\lambda}.
\end{aligned}
\end{align*}
Therefore, 
\[
J_1 \le C \sqrt{ \frac{\eps^{2-q}}{\lambda}} \left(\mathcal{D}(U) +\left(\frac{\eps}{\lambda}\right)^2\mathcal{G}_2(U) \right)^{q/2}  \left(\int_{\Omega^c}|a'| \big| h -\tilde h_\eps \big|^2 d\xi \right)^{1/2}.
\]
Using the Young's inequality (recall $1<q<2$), we have
\[
J_1 \le \delta_0 \left(\mathcal{D}(U) +\left(\frac{\eps}{\lambda}\right)^2\mathcal{G}_2(U) \right) +  \frac{C}{\delta_0}\left( \frac{\eps^{2-q}}{\lambda}\right)^{\frac{1}{2-q}} \left(\int_{\Omega^c}|a'| \big| h -\tilde h_\eps \big|^2 d\xi \right)^{\frac{1}{2-q}}.
\]
Since \eqref{l1} yields
\begin{align*}
\begin{aligned}
 \left( \frac{\eps^{2-q}}{\lambda}\right)^{\frac{1}{2-q}} \left(\int_{\Omega^c} |a'| \big| h -\tilde h_\eps \big|^2 d\xi \right)^{\frac{1}{2-q}} &\le C \left( \frac{\eps^{2-q}}{\lambda}\right)^{\frac{1}{2-q}}
\left( \frac{\eps^2}{\lambda} \right)^{\frac{q-1}{2-q}}
 \int_{\Omega^c}|a'| \big| h -\tilde h_\eps \big|^2 d\xi\\
&=C\left( \frac{\eps}{\lambda} \right)^{\frac{2}{2-q}}\int_{\Omega^c}|a'| \big| h -\tilde h_\eps \big|^2 d\xi \\
&\le C\left( \frac{\eps}{\lambda} \right)^2 \int_{\Omega^c}|a'| \big| h -\tilde h_\eps \big|^2 d\xi,
\end{aligned}
\end{align*}
we have
\[
J_1 \le \delta_0 \left(\mathcal{D}(U) +\left(\frac{\eps}{\lambda}\right)^2\mathcal{G}_2(U) \right) + \frac{C}{\delta_0}\left( \frac{\eps}{\lambda} \right)^2 \mathcal{G}_1^-(U).
\]

For $J_2$, we use $\big| p(\bar v)-\pt \big|\le\delta_3$ and Young's inequality to have
\[
J_2 \le \delta_3\int_{\Omega^c} |a'| |h-\tilde h_\eps|  \,d\xi \le \frac{1}{2}\mathcal{G}_1^-(U) +C\underbrace{\int_\bbr |a'| {\mathbf 1}_{\{p(v)-\pt >\delta_3\}}  \,d\xi}_{=:J_{21}}.
\]
To control $J_{21}$, we observe that since $(y-\delta_3/2)_+\geq \delta_3/2$ whenever $(y-\delta_3)_+>0$, we have
\beq\label{y-out}
|p(v)-p(\bar v_{\delta_3/2})|=  (|p(v)-\pt|-\delta_3/2)_+ \ge \frac{\delta_3}{2} {\mathbf 1}_{\{p(v)-\pt >\delta_3\}}.
\eeq
Then, using \eqref{y-out} and \eqref{est5} (with $q:=\frac{2\gamma}{\gamma+\alpha}$) and following the same estimates as in \eqref{stand1}, we have
\begin{align*}
\begin{aligned}
J_{21} &\le C\int_\bbr |a'| |p(v)-p(\bar v_{\delta_3/2})|^{2/q}  {\mathbf 1}_{\{p(v)-\pt >\delta_3\}}  \,d\xi\\
&\leq C\sqrt\frac{\eps}{\lambda}\left(\mathcal{D}(U) +\left(\frac{\eps}{\lambda}\right)^2\mathcal{G}_2(U) \right).
\end{aligned}
\end{align*}
Therefore,
\[
J_2\le \frac{1}{2}\mathcal{G}_1^-(U) +C\sqrt\frac{\eps}{\lambda}\left(\mathcal{D}(U) +\left(\frac{\eps}{\lambda}\right)^2\mathcal{G}_2(U) \right).
\]
Hence we obtain \eqref{ns1}.\\

\noindent{\bf Proof of \eqref{ns2}:} The proof follows from the above estimate for $J_{12}$ as follows:  
\[
\int_{\Omega^c} |a'| \left( Q(\bar v|\tilde v_\eps) +|\bar v -\tilde v_\eps | \right)  d\xi  \le C\int_\bbr |a'| {\mathbf 1}_{\{p(v)-\pt >\delta_3\}}  \,d\xi \le C\sqrt\frac{\eps}{\lambda}\left(\mathcal{D}(U) +\left(\frac{\eps}{\lambda}\right)^2\mathcal{G}_2(U) \right).
\]
\end{proof}

\begin{lemma}\label{lemma_out4}
Under the same assumption as Proposition \ref{prop_out}, we have
\begin{eqnarray}
\label{l7}
&&  \int_\bbr|a'|^2  \frac{|v^\beta-\bar v^\beta|^2}{v^\beta} \,d\xi\le C\lambda  \bigg(\mathcal{D}(U) +\left(\mathcal{G}_2(U)-\mathcal{G}_2(\bar U)\right)+\left(\frac{\eps}{\lambda}\right)^2 \mathcal{G}_2(\bar U) \bigg)  ,\\
\label{l8}
&&  \int_\bbr|a'|^2  \Big| \frac{|v^\beta-\tilde v^\beta_\eps|^2}{v^\beta} - \frac{|\bar v^\beta-\tilde v^\beta_\eps|^2}{\bar v^\beta} \Big| \,d\xi \\
\nonumber
&&\le C\lambda  \bigg(\mathcal{D}(U) +\left(\mathcal{G}_2(U)-\mathcal{G}_2(\bar U)\right)+\left(\frac{\eps}{\lambda}\right)^2 \mathcal{G}_2(\bar U) \bigg) .
\end{eqnarray}
\end{lemma}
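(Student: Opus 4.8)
The strategy is to reduce both \eqref{l7} and \eqref{l8} to the estimates already established in Lemmas \ref{lemma_out1}--\ref{lemma_out3}, by exploiting the elementary inequality $|a^\beta - b^\beta| \le C\,\min(a,b)^{\beta-1}|a-b|$ for $0<\beta\le 1$ and $a,b$ in a fixed positive range (here $v,\bar v,\tilde v_\eps$ are all bounded below; the only unbounded direction is $v\to+\infty$, which must be handled separately). The point of dividing by $v^\beta$ in the statement is precisely to tame large values of $v$: on the set $\{v \ge v_-/2\}$ we have $|v^\beta - \bar v^\beta|^2/v^\beta \le C|v-\bar v|$ after a short computation using $|v^\beta - \bar v^\beta|\le C v^{\beta-1}|v-\bar v|$ when $v$ is large and $\bar v$ is bounded (since $\bar v = \bar v_s$ there), while on $\{v<v_-/2\}$ the quantity $v^\beta$ is bounded above and below and one reduces to $|v^\beta - \bar v^\beta|^2 \le C|v-\bar v|^2$.

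For \eqref{l7}, I would split according to the sign of $p(v)-p(\tilde v_\eps)$ relative to $\pm\delta_3$, i.e.\ write $v^\beta - \bar v^\beta$ in terms of $\bar v_b$ on $\{p(v)-\pt<-\delta_3\}$ (big values of $v$) and $\bar v_s$ on $\{p(v)-\pt>\delta_3\}$ (small values of $v$); on $\Omega$ the integrand vanishes since $\bar v = v$. On the ``big'' set, one has $|v^\beta - \bar v_b^\beta|^2/v^\beta \le C|v-\bar v|$ (the crucial point: $v$ large, $\bar v_b$ bounded, $\beta\le 1$), and $|a'|\le \eps\lambda$ together with \eqref{line_lower} and \eqref{l2} gives a bound by $C\eps\lambda\,\mathcal{G}_2(U)\le C\lambda(\mathcal G_2(U)-\mathcal G_2(\bar U)) + C\eps\lambda\,\mathcal G_2(\bar U)$, which is of the desired form since $\eps\lambda \le \lambda(\eps/\lambda)^2\cdot(\lambda/\eps)$... more directly $\eps\lambda\,\mathcal G_2(\bar U)\le \lambda(\eps/\lambda)^2\mathcal G_2(\bar U)$ fails dimensionally, so one instead absorbs it via $\eps\lambda\mathcal G_2(U)\le C\lambda(\eps/\lambda)^2$ using \eqref{l1}, or keeps it inside $\mathcal G_2(U)-\mathcal G_2(\bar U)$ plus the tolerated $(\eps/\lambda)^2\mathcal G_2(\bar U)$ term. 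On the ``small'' set one uses $|v^\beta - \bar v_s^\beta|^2/v^\beta \le C v^\beta|p(v)-p(\bar v_s)|^2$ (rewriting $v - \bar v_s$ through $p$, using $p'$ bounded away from $0$ and $\infty$ there since $v$ is bounded), and then invokes the pointwise estimate \eqref{beta1} exactly as in the proof of \eqref{l3}: integrate against $|a'|^2$, use \eqref{d-weight}, \eqref{tail}, and the fact that $\int e^{-C|\eps\xi|}(|\eps\xi|+1)d\xi = O(1/\eps)$, producing the factor $\lambda^2/\eps \cdot \eps = \lambda^2$ — wait, one needs the extra $\lambda$ rather than $\lambda^2$; this comes from one power of $|a'|\sim(\lambda/\eps)|\tilde v_\eps'|$ being estimated in $L^\infty$ by $\lambda\eps$ and the other kept, giving $\lambda\cdot\lambda/\eps\cdot\eps = \lambda$ after the $\xi$-integration. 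This is the bookkeeping that needs care.

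For \eqref{l8}, I would write
\[
\frac{|v^\beta-\tilde v_\eps^\beta|^2}{v^\beta} - \frac{|\bar v^\beta - \tilde v_\eps^\beta|^2}{\bar v^\beta}
= \left(\frac{1}{v^\beta}-\frac{1}{\bar v^\beta}\right)|v^\beta - \tilde v_\eps^\beta|^2 + \frac{1}{\bar v^\beta}\Big(|v^\beta-\tilde v_\eps^\beta|^2 - |\bar v^\beta - \tilde v_\eps^\beta|^2\Big),
\]
and bound each piece. The second piece factors as $\frac{1}{\bar v^\beta}|v^\beta - \bar v^\beta|\,|v^\beta + \bar v^\beta - 2\tilde v_\eps^\beta|$; on the big set $|v^\beta - \tilde v_\eps^\beta|$ is itself comparable to $v^\beta$ so one gets $\le C|v^\beta - \bar v^\beta|$ and reduces to $\int|a'|^2|v-\bar v|\,d\xi$-type bounds via \eqref{line_lower}; on the small set everything is bounded and one reduces to \eqref{l7} plus the cross terms, estimated again by \eqref{beta1}. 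The first piece: $|1/v^\beta - 1/\bar v^\beta| = |v^\beta - \bar v^\beta|/(v^\beta\bar v^\beta)$, and $|v^\beta - \tilde v_\eps^\beta|^2 \le C v^{2\beta}$ on the big set (and bounded on the small set), so the product is again controlled by $|v^\beta - \bar v^\beta|\cdot(\text{bounded})$, reducing to \eqref{l7} and the Lemma~\ref{lemma_out1} estimates.

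\textbf{Main obstacle.} The delicate point is the matching of powers of $\eps$ and $\lambda$: every use of the pointwise bound \eqref{beta1} costs a factor $(|\xi|+1/\eps)$ which upon integration against $|a'|^2 \sim (\lambda/\eps)^2 |\tilde v_\eps'|^2 \sim \lambda^2 \eps^2 e^{-C\eps|\xi|}$ produces $\lambda^2\eps^2 \cdot \eps^{-2} = \lambda^2$, but the statement only asks for $C\lambda$, so one has room (since $\lambda<\delta_0<1$); conversely the terms coming from \eqref{line_lower}/\eqref{l2} naturally produce $\eps\lambda\,\mathcal G_2$, and one must verify that $\eps\lambda\,\mathcal G_2(U) \le C\lambda(\mathcal G_2(U)-\mathcal G_2(\bar U)) + C\lambda(\eps/\lambda)^2\mathcal G_2(\bar U)$ fails in general, so instead one keeps $\eps\lambda\,\mathcal G_2(U)\le C\lambda\cdot C(\eps/\lambda)^2\cdot(\lambda/\eps)\cdot(\eps/\lambda)\cdots$ — the clean route is $\eps\lambda\,\mathcal G_2(U) \le \lambda\,\mathcal G_2(U)\cdot\eps$ and note $\mathcal G_2(U) = (\mathcal G_2(U)-\mathcal G_2(\bar U)) + \mathcal G_2(\bar U)$ with $\eps\,\mathcal G_2(\bar U)\le (\eps/\lambda)^2\mathcal G_2(\bar U)$ when $\eps\le(\eps/\lambda)^2$, i.e.\ $\lambda^2\le\eps$, which need \emph{not} hold. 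The correct observation is rather that $\eps\lambda\,\mathcal G_2(\bar U) \le \lambda\cdot(\eps/\lambda)\cdot\lambda\,\mathcal G_2(\bar U)$ and using the crude bound $\mathcal G_2(\bar U)\le \mathcal G_2(U)\le C\eps^2/\lambda$ from \eqref{l1} gives $\eps\lambda\,\mathcal G_2(\bar U)\le C\eps^3 \le C\lambda(\eps/\lambda)^2\mathcal G_2(\bar U)$ provided $\mathcal G_2(\bar U)\gtrsim \eps^2$, which is circular. In practice \cite{Kang-V-NS17} resolves this by simply noting $\eps\lambda\,\mathcal G_2(U) \le C\lambda\,\mathcal D(U)$ is false but $\eps\lambda\,\mathcal G_2(U)$ can be absorbed into the tolerated term $C\lambda(\mathcal G_2(U)-\mathcal G_2(\bar U))$ on the big set because there $Q(v|\tilde v_\eps)-Q(\bar v|\tilde v_\eps)$ already controls $Q(v|\tilde v_\eps)$ up to a constant (as $v$ is bounded away from $\tilde v_\eps$ on the big set, cf.\ \eqref{beta2}). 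That localization is the key mechanism and the step I expect to require the most care; modulo it, the lemma follows by assembling the pointwise inequalities above with the already-proven $L^1$- and $L^2$-type bounds of Lemmas~\ref{lemma_out1}--\ref{lemma_out3}.
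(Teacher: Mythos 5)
Your strategy for \eqref{l7} --- split according to the truncations $\bar v_b$ (large $v$) and $\bar v_s$ (small $v$), use $\beta\le 1$ to bound $|v^\beta-\bar v_b^\beta|$ by $|v-\bar v|$ on the large-$v$ set so that \eqref{rel_Q1} lands the contribution directly in $\mathcal{G}_2(U)-\mathcal{G}_2(\bar U)$, and use the pointwise bound \eqref{beta1} on the small-$v$ set --- is sound, and your small-set reduction $|v^\beta-\bar v_s^\beta|^2/v^\beta\le C v^\beta|p(v)-p(\bar v_s)|^2$ is correct (it amounts to $t^{\beta/\gamma}-1\le C(t-1)$ for $t=p(v)/p(\bar v_s)\ge1$, using that $p(\bar v_s)$ is bounded below). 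This actually bypasses the case distinction $\alpha\ge 1$ versus $\alpha<1$ and the Young-inequality step that the paper uses for $I_s$, which is a genuine simplification. Your long ``main obstacle'' discussion about absorbing $\eps\lambda\,\mathcal{G}_2(U)$ is a red herring: \eqref{line_lower} bounds $|v-\bar v|$ on the truncated set directly by $Q(v|\tilde{v}_\eps)-Q(\bar v|\tilde{v}_\eps)$, so the large-$v$ contribution is $\le C\eps\lambda\,(\mathcal{G}_2(U)-\mathcal{G}_2(\bar U))$ with nothing left to absorb. (Your opening claim that on $\{v<v_-/2\}$ ``$v^\beta$ is bounded above and below'' is false --- $v$ may approach $0$ there --- but your actual small-set argument does not rely on it.)

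The genuine gap is in \eqref{l8}. Writing $A=|v^\beta-\tilde v_\eps^\beta|$ and $B=|\bar v^\beta-\tilde v_\eps^\beta|$, your decomposition pairs $A^2$ with the factor $(1/v^\beta-1/\bar v^\beta)$ and divides $A^2-B^2$ by $\bar v^\beta$. On $\{p(v)-\pt<-\delta_3\}$, where $v$ is large, $\bar v^\beta=\bar v_b^\beta$ is bounded while $A^2\sim v^{2\beta}$, so each of your two pieces is of size $v^{2\beta}$ in absolute value even though their sum is only $O(v^\beta)$: the cancellation is destroyed by estimating them separately. Since the good terms give at most linear control in $v$ on that set ($Q(v|\tilde v_\eps)\gtrsim |v-\tilde v_\eps|$ by \eqref{rel_Q}), the quantity $\int|a'|^2 v^{2\beta}{\mathbf 1}_{\{p(v)-\pt<-\delta_3\}}\,d\xi$ is not controlled whenever $\beta>1/2$ --- in particular for the shallow-water case $\gamma=2$, $\alpha=1$, $\beta=1$. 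The fix is to keep the unbounded reciprocal attached to the difference and the bounded quantity $B^2$ attached to the difference of reciprocals, i.e. $\frac{A^2}{v^\beta}-\frac{B^2}{\bar v^\beta}=\frac{A^2-B^2}{v^\beta}+B^2\big(\frac{1}{v^\beta}-\frac{1}{\bar v^\beta}\big)$, which reduces everything to $|v^\beta-\bar v^\beta|^2/v^\beta$ (handled by \eqref{l7}) plus the linear quantity $|v^\beta-\bar v^\beta|/v^\beta$. Your sketch also leaves this linear term untreated: it requires a further split into the range $v_-/2\le v\le 2v_-$, where $|v-\bar v|\le C|p(v)-p(\bar v)|$ reduces it to \eqref{l3}, and its complement, where $|v^\beta-\bar v^\beta|$ is bounded below so the linear term is dominated by the quadratic one.
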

\begin{proof}
\noindent{\bf Proof of \eqref{l7}:}  
We first have
\[
\int_\bbr|a'|^2  \frac{|v^\beta-\bar v^\beta|^2}{v^\beta} \,d\xi =\underbrace{\int_\bbr|a'|^2  \frac{|v^\beta-\bar v_b^\beta|^2}{v^\beta} \,d\xi}_{=:I_b} +\underbrace{\int_\bbr|a'|^2  \frac{|v^\beta-\bar v_s^\beta|^2}{v^\beta} \,d\xi}_{=:I_s}.
\]
Since $0<\beta\le 1$, we have
\begin{align*}
\begin{aligned}
I_b&\le C\int_\bbr|a'|^2 |v^\beta-\bar v_b^\beta| {\mathbf 1}_{\{p(v)-p(\tilde v_\eps)<-\delta_3\}} \,d\xi = C\int_\bbr|a'|^2 \frac{|v^\beta-\bar v_b^\beta|} {|v-\bar v| } |v-\bar v|  {\mathbf 1}_{\{p(v)-p(\tilde v_\eps)<-\delta_3\}} \,d\xi\\
&\le C\int_\bbr|a'|^2 |v-\bar v| {\mathbf 1}_{\{p(v)-p(\tilde v_\eps)<-\delta_3\}} \,d\xi \le C\int_\bbr|a'|^2 \left(Q(v|\tilde v_\eps) -Q(\bar v|\tilde v_\eps)\right) \,d\xi\\
&\le C\eps\lambda\left(\mathcal{G}_2(U)-\mathcal{G}_2(\bar U)\right).
\end{aligned}
\end{align*}

For $I_s$, we separate it into two cases of $\alpha\ge 1$ and $\alpha<1$.\\
\noindent{\it Case of  $\alpha\ge 1$ :}
Since $\beta\le\gamma-1$ by  $\alpha\ge 1$, we observe
\[
v^{-\beta}\le v^{-\gamma+1}=Q(v)\quad\mbox{as }~v\to 0,
\]
we have 
\begin{align*}
\begin{aligned}
I_s&= \int_\bbr|a'|^2 \frac{v^{-\beta} |v^\beta-\bar v_s^\beta|^2}{Q(v|\tilde v_\eps) -Q(\bar v|\tilde v_\eps)} {\mathbf 1}_{\{p(v)-p(\tilde v_\eps)>\delta_3\}}\left(Q(v|\tilde v_\eps) -Q(\bar v|\tilde v_\eps)\right) \,d\xi\\ 
&\le C \int_\bbr|a'|^2 \left(Q(v|\tilde v_\eps) -Q(\bar v|\tilde v_\eps)\right) \,d\xi \le C\eps\lambda\left(\mathcal{G}_2(U)-\mathcal{G}_2(\bar U)\right).
\end{aligned}
\end{align*}

\noindent{\it Case of  $0<\alpha< 1$ :}
Since $\frac{1-\alpha}{\gamma}<1$ and
\[
v^\beta p(v)^{\frac{1-\alpha}{\gamma}}Q(v)=v^\beta(v^{-\gamma})^{\frac{1-\alpha}{\gamma}}v^{-\gamma+1}=v^0=1,
\]
using \eqref{line_lower}, we have
\begin{align*}
\begin{aligned}
I_s&= \int_\bbr|a'|^2 \frac{ |v^\beta-\bar v_s^\beta|^2 {\mathbf 1}_{\{p(v)-p(\tilde v_\eps)>\delta_3\}}}{v^\beta |p(v)-p(\bar v_s)|^{\frac{1-\alpha}{\gamma}}\left(Q(v|\tilde v_\eps) -Q(\bar v|\tilde v_\eps)\right) }  |p(v)-p(\bar v_s)|^{\frac{1-\alpha}{\gamma}}\left(Q(v|\tilde v_\eps) -Q(\bar v|\tilde v_\eps)\right)  d\xi\\ 
&\le C \int_\bbr|a'|^2 |p(v)-p(\bar v_s)|^{\frac{1-\alpha}{\gamma}}\left(Q(v|\tilde v_\eps) -Q(\bar v|\tilde v_\eps)\right) \,d\xi.
\end{aligned}
\end{align*}
Then, using the fact from \eqref{pw1} and \eqref{beta1} that
\[
 |p(v)-p(\bar v_s)|^{\frac{\gamma+\alpha}{\gamma}}\le C \left(|\xi|+\frac{1}{\eps}\right) \bigg(\mathcal{D}(U) +\left(\frac{\eps}{\lambda}\right)^2 \mathcal{G}_2(U) \bigg) ,
\]
we have
\begin{align*}
\begin{aligned}
I_s&\le C\bigg(\mathcal{D}(U) +\left(\frac{\eps}{\lambda}\right)^2 \mathcal{G}_2(U) \bigg)^{\frac{1-\alpha}{\gamma+\alpha}} \int_\bbr|a'|^2 \left(|\xi|+\frac{1}{\eps}\right)^{\frac{1-\alpha}{\gamma+\alpha}}\left(Q(v|\tilde v_\eps) -Q(\bar v|\tilde v_\eps)\right)  d\xi.
\end{aligned}
\end{align*}
Notice that since $0<\frac{1-\alpha}{\gamma+\alpha}<1$ by $0<\alpha< 1$, we have
\begin{align*}
\begin{aligned}
|a'| \left(|\xi|+\frac{1}{\eps}\right)^{\frac{1-\alpha}{\gamma+\alpha}} &\le C \eps\lambda e^{-c\eps|\xi|} \left(|\xi|+\frac{1}{\eps}\right)^{\frac{1-\alpha}{\gamma+\alpha}}\\
&\le  C \eps^{1-\frac{1-\alpha}{\gamma+\alpha}} \lambda e^{-c\eps|\xi|} \left(|\eps \xi|+1\right)^{\frac{1-\alpha}{\gamma+\alpha}} \le C\lambda.
\end{aligned}
\end{align*}
Thus, we have
\[
I_s\le  C\lambda \bigg(\mathcal{D}(U) +\left(\frac{\eps}{\lambda}\right)^2 \mathcal{G}_2(U) \bigg)^{\frac{1-\alpha}{\gamma+\alpha}} \left(\mathcal{G}_2(U)-\mathcal{G}_2(\bar U)\right).
\]
Now, using the Young's inequality with $\frac{1}{p}:=\frac{1-\alpha}{\gamma+\alpha}$ and $\frac{1}{p}+\frac{1}{p'}=1$, and then \eqref{l1},
we have
\begin{align*}
\begin{aligned}
I_s&\le C\lambda  \bigg(\mathcal{D}(U) +\left(\frac{\eps}{\lambda}\right)^2 \mathcal{G}_2(U) \bigg) + C\lambda \left(\mathcal{G}_2(U)-\mathcal{G}_2(\bar U)\right)^{p'}\\
&\le C\lambda  \bigg(\mathcal{D}(U) +\left(\frac{\eps}{\lambda}\right)^2 \mathcal{G}_2(U) \bigg) + C\lambda \left(\mathcal{G}_2(U)-\mathcal{G}_2(\bar U)\right).
\end{aligned}
\end{align*}
Hence we complete the proof.

\noindent{\bf Proof of \eqref{l8}:}  
Since $C^{-1}\le \bar v^\beta \le C$ and $|\bar v^\beta-\tilde v^\beta_\eps|\le C$, we have
\begin{align*}
\begin{aligned}
& \int_\bbr|a'|^2  \Big| \frac{|v^\beta-\tilde v^\beta_\eps|^2}{v^\beta} - \frac{|\bar v^\beta-\tilde v^\beta_\eps|^2}{\bar v^\beta} \Big| \,d\xi \\
&\quad \le  \int_\bbr|a'|^2 \left(\frac{1}{v^\beta} \left| |v^\beta-\tilde v^\beta_\eps|^2 -|\bar v^\beta-\tilde v^\beta_\eps|^2 \right| + \frac{|v^\beta-\bar v^\beta|}{v^\beta \bar v^\beta} |\bar v^\beta-\tilde v^\beta_\eps|^2 \right) \,d\xi \\
&\quad \le C \int_\bbr|a'|^2 \left(\frac{1}{v^\beta}  |v^\beta-\bar v^\beta|\left( |v^\beta-\bar v^\beta|+2|\bar v^\beta-\tilde v^\beta_\eps| \right) + \frac{|v^\beta-\bar v^\beta|}{v^\beta} \right) \,d\xi \\
&\quad \le C \int_\bbr|a'|^2 \frac{1}{v^\beta}  |v^\beta-\bar v^\beta|^2  \,d\xi + C \underbrace{\int_\bbr|a'|^2 \frac{1}{v^\beta}  |v^\beta-\bar v^\beta| \,d\xi}_{=:J}.
\end{aligned}
\end{align*}
By \eqref{l7}, it remains to estimate the term $J$. For that, we separate it into two parts:
\[
J=\underbrace{\int_\bbr|a'|^2 \frac{1}{v^\beta} |v^\beta-\bar v^\beta|  {\mathbf 1}_{\{v< v_-/2 \}\cup\{v> 2v_-\} } \,d\xi }_{=:J_1} + \underbrace{\int_\bbr|a'|^2 \frac{1}{v^\beta} |v^\beta-\bar v^\beta| {\mathbf 1}_{\{v_-/2\le v\le 2v_- \}} \,d\xi}_{=:J_2}.
\]
Using the same argument as \eqref{lower-p} together with the definition of $\bar v$, we have
\[
 |v^\beta-\bar v^\beta|  {\mathbf 1}_{\{v< v_-/2 \}\cup\{v> 2v_-\} } >C>0,
\]
which yields
\[
J_1\le C \int_\bbr|a'|^2 \frac{1}{v^\beta} |v^\beta-\bar v^\beta|^2  \,d\xi.
\]
Since
\[
|v-\bar v|{\mathbf 1}_{\{v_-/2\le v\le 2v_- \}} \le |p'(v_-/2)| |p(v)-p(\bar v)|,
\]
we have
\begin{align*}
\begin{aligned}
J_2 &=\int_\bbr|a'|^2 \frac{1}{v^\beta} \frac{|v^\beta-\bar v^\beta|}{|v-\bar v|} |v-\bar v| {\mathbf 1}_{\{v_-/2\le v\le 2v_- \}} \,d\xi \\
&\le C \int_\bbr|a'|^2 |v-\bar v| {\mathbf 1}_{\{v_-/2\le v\le 2v_- \}} \,d\xi \\
&\le C \int_\bbr|a'|^2 v^\beta  |p(v)-p(\bar v)| {\mathbf 1}_{\{v_-/2\le v\le 2v_- \}} \,d\xi.
\end{aligned}
\end{align*}
Therefore, \eqref{l3} and \eqref{l7} give the desired result.
\end{proof}

\vspace{0.5cm}
 
\subsubsection{Proof of Proposition \ref{prop_out}}
\noindent{\bf Proof of \eqref{n1}:} 
It follows from \eqref{l5} together with $|\tilde v_\eps'|\le C\frac{\eps}{\lambda}|a'|$ that
 \[
|\mathcal{B}_1(U)-\mathcal{B}_1(\bar U)|\leq C\frac{\eps}{\lambda} \left(\mathcal{D}(U) +\left(\frac{\eps}{\lambda}\right)^2\mathcal{G}_2(\bar U)+ \left(\mathcal{G}_2(U)-\mathcal{G}_2(\bar U) \right) \right) .
\] 

\noindent{\bf Proof of \eqref{n12}:} 
By \eqref{ns1}, we have
\[
|\mathcal{B}_2^-(U)| \le \delta_0 \left(\mathcal{D}(U) +\left(\frac{\eps}{\lambda}\right)^2\mathcal{G}_2(U) \right) + \frac{1}{2} \mathcal{G}_1^-(U).
\]

\noindent{\bf Proof of \eqref{n13}:} 
We use \eqref{compare1} and \eqref{big2} to have
\[
|\mathcal{B}_2^+(U)-\mathcal{B}_2^+(\bar U)| =\left| \int_\bbr a' \left(| p(v)-\pt |^2 - |p(\bar v_b)-\pt| \right){\mathbf 1}_{\{p(v)-\pt \le \delta_3\}} d\xi \right|  \le \sqrt\frac{\eps}{\lambda} \mathcal{D}(U).
\]

\noindent{\bf Proof of \eqref{n14}:} 
Using Young's inequality together with $\frac{1}{2}\le a\le 1$, we first find
\begin{align*}
\begin{aligned}
&|\mathcal{B}_3(U)|\le \delta_0 \mathcal{D}(U) + \underbrace{ \frac{C}{ \delta_0} \int_\bbr|a'|^2 v^\beta  |p(v)-p(\tilde v_\eps)|^2 \,d\xi}_{=:\mathcal{B}_{6}} ,\\
&|\mathcal{B}_4(U)|\le \underbrace{ \int_\bbr|a'|| \partial_{\xi} p(\tilde v_\eps)| v^\beta  |p(v)-p(\tilde v_\eps)|^2 \,d\xi}_{=:\mathcal{B}_{7}} + 
 \underbrace{\int_\bbr|a'|| \partial_{\xi} p(\tilde v_\eps)|  \frac{|v^\beta-\tilde v_\eps^\beta|^2}{v^\beta} \,d\xi}_{=:\mathcal{B}_{8}} ,\\
&|\mathcal{B}_5(U)|\le \delta_0 \mathcal{D}(U) +  \underbrace{ \frac{C}{\delta_0} \int_\bbr | \partial_{\xi} p(\tilde v_\eps)|^2 \frac{|v^\beta-\tilde v_\eps^\beta|^2}{v^\beta} \,d\xi}_{=:\mathcal{B}_{9}} . 
\end{aligned}
\end{align*}
Using \eqref{l4} and \eqref{l8} together with $| \partial_{\xi} p(\tilde v_\eps)|\le C\frac{\eps}{\lambda}|a'|$ and $\delta_0^{-1}\eps<\lambda<\delta_0$, we have
\begin{align*}
\begin{aligned}
&|\mathcal{B}_6(U)-\mathcal{B}_6(\bar U)|+|\mathcal{B}_7(U)-\mathcal{B}_7(\bar U)|\le C \delta_0  \left( \mathcal{D}(U) +\frac{\eps}{\lambda} \mathcal{G}_2(U)\right),\\
&|\mathcal{B}_8(U)-\mathcal{B}_8(\bar U)|+|\mathcal{B}_9(U)-\mathcal{B}_9(\bar U)|\le C\delta_0 \left( \mathcal{D}(U) + \left(\mathcal{G}_2(U)-\mathcal{G}_2(\bar U) \right)+\left(\frac{\eps}{\lambda}\right)^2 \mathcal{G}_2(\bar U)\right) .
\end{aligned}
\end{align*}
Therefore, we have
\begin{align*}
\begin{aligned}
\sum_{i=3}^5 |\mathcal{B}_i(U)| \le \sum_{i=6}^9 |\mathcal{B}_i(\bar U)|+ C\delta_0 \mathcal{D}(U) + C\delta_0 \left(\left(\mathcal{G}_2(U)-\mathcal{G}_2(\bar U) \right)+\frac{\eps}{\lambda} \mathcal{G}_2(\bar U)\right).
\end{aligned}
\end{align*}
Since $|a'|\le C\eps\lambda$, we have
\[
\sum_{i=6}^9 |\mathcal{B}_i(\bar U)|\le C\frac{\eps\lambda}{\delta_0} \int_\bbr|a'| \bar v^\beta  |p(\bar v)-p(\tilde v_\eps)|^2 \,d\xi 
+C \eps^2 \int_\bbr|a'| \frac{|\bar v^\beta-\tilde v_\eps^\beta|^2}{\bar v^\beta} \,d\xi .
\]
Using $C^{-1}\le \bar v\le C$ and \eqref{pQ-equi0} together with $ |p(\bar v)-p(\tilde v_\eps)|\le\delta_3$, we have
\[
\int_\bbr|a'| \bar v^\beta  |p(\bar v)-p(\tilde v_\eps)|^2 \,d\xi \le C\int_\bbr|a'| Q(\bar v|\tilde v_\eps) \,d\xi \le C \mathcal{G}_2(\bar U). 
\]
Moreover, since 
\begin{align*}
\begin{aligned}
\int_\bbr|a'| \frac{|\bar v^\beta-\tilde v_\eps^\beta|^2}{\bar v^\beta} \,d\xi &\le C
\int_\bbr|a'| \frac{|\bar v^\beta-\tilde v_\eps^\beta|^2}{Q(\bar v|\tilde v_\eps)}Q(\bar v|\tilde v_\eps) \,d\xi \\
 &\le C \int_\bbr|a'| Q(\bar v|\tilde v_\eps) \,d\xi \le C \mathcal{G}_2(\bar U),
\end{aligned}
\end{align*}
we have
\[
\sum_{i=6}^9 |\mathcal{B}_i(\bar U)|\le C\delta_0\frac{\eps}{\lambda} \mathcal{G}_2(\bar U).
\]
Hence we have the desired estimate \eqref{n14}.\\

\noindent{\bf Proof of \eqref{n2}:}
First, using \eqref{p-est1}, \eqref{pQ-equi0} and \eqref{l1}, we have
\begin{align*}
\begin{aligned}
|\mathcal{B}_1(\bar U)|+|\mathcal{B}_2^+(\bar U)| &\leq C\int_\bbr |a'| Q(\bar v|\tilde{v}_\eps)\,d\xi \leq C\int_\bbr |a'| Q( v|\tilde{v}_\eps)\,d\xi  \le C\frac{\eps^2}{\lambda}.
\end{aligned}
\end{align*}
Then, it follows from \eqref{bad0}, \eqref{n1}-\eqref{n14} and \eqref{l2} that
\[
|\mathcal{B}_{\delta_3}(U)| \le C\frac{\eps^2}{\lambda} + C\sqrt{\delta_0} \left(\mathcal{D}(U)+\mathcal{G}_2(U) \right) \le C^* \frac{\eps^2}{\lambda}+ C\sqrt{\delta_0} \mathcal{D}(U).
\]
\noindent{\bf Proof of \eqref{m1}:} We split the proof in three steps.
\vskip0.2cm
\noindent{\it Step 1:} 
First of all, we use the notations $Y_1^s, Y_2^s, Y_3^s$ and $Y_4^s$ for the terms of $Y_s$ as follows:
\[
Y_s =\underbrace{-\int_{\Omega^c} a' Q(v|\tilde v_\eps) d\xi}_{=:Y_1^s} \underbrace{-\int_{\Omega^c} a \partial_\xi p(\tilde v_\eps)(v-\tilde v_\eps)d\xi}_{=:Y_2^s} \underbrace{-\int_{\Omega^c} a' \frac{|h-\tilde h_\eps|^2}{2} d\xi}_{=:Y_3^s} \underbrace{+\int_{\Omega^c} a \partial_\xi \tilde h_\eps(h-\tilde h_\eps) d\xi}_{=:Y_4^s}.
\]

We use \eqref{big1},\eqref{big2}, \eqref{l50} together with \eqref{compare1} to have
\begin{align}
\begin{aligned}\label{Ygd}
&|Y_g(U)-Y_g(\bar U)| +|Y_1^s(U)-Y_1^s(\bar U)| +|Y_2^s(U)-Y_2^s(\bar U)| \\
&\quad \le C \int_\bbr |a'| \Big(\big| |p(v)-\pt|^2-|p(\bar v_b)-\pt|^2\big| +\big|Q(v|\tilde{v}_\eps)-Q(\bar v|\tilde{v}_\eps)\big| \\
&\qquad\quad +|v-\bar v|+ |p(v)-p(\bar v_b)|\Big)\,d\xi \leq C\sqrt{\frac{\eps}{\lambda}}\mathcal{D}(U)+C\left(\mathcal{G}_2(U)-\mathcal{G}_2(\bar U) \right).
\end{aligned}
\end{align}
On the other hand, \eqref{ns2} yields
\beq\label{Y12}
|Y_1^s(\bar U)|+|Y_2^s(\bar U)|\le \int_{\Omega^c} |a'| \left( Q(\bar v|\tilde v_\eps) +|\bar v -\tilde v_\eps | \right)  d\xi \le C \sqrt{\frac{\eps}{\lambda}} \left(\mathcal{D}(U) +\left(\frac{\eps}{\lambda}\right)^2\mathcal{G}_2(U) \right).
\eeq

Next, by the definitions of $\mathcal{G}_1^\pm$ in \eqref{ggd}, we have
\begin{align*}
\begin{aligned}
|Y_3^s(U)|+|Y_b(U)| &\le C\mathcal{G}_1^-(U)+C\mathcal{G}_1^+(U) + C\int_\Omega |a'| |p(v)-\pt|^2 \,d\xi \\
&\le C(\mathcal{G}_1^-(U)+\mathcal{G}_1^+(U)+|\mathcal{B}_{\delta_3}(U)|),
\end{aligned}
\end{align*}
Moreover, since
\[
\mathcal{G}_1^+(U) \leq C\int_\Omega |a'| \left(|h-\tilde h_\eps|^2+|p(v)-\pt|^2\right)\,d\xi \le C\int_\Omega |a'| |h-\tilde h_\eps|^2\,d\xi +C |\mathcal{B}_{\delta_3}(U)|,
\]
using \eqref{n2}, we have
\[
|Y_3^s(U)|+|Y_b(U)|  \le C\int_\bbr |a'| |h-\tilde h_\eps|^2 \,d\xi +C^* \frac{\eps^2}{\lambda}+ C\sqrt{\delta_0} \mathcal{D}(U).
\]
Therefore, using \eqref{l1}, \eqref{l2}, and the assumption $\mathcal{D}(U)\leq C^*\eps^2/\lambda$, it follows from \eqref{Ygd}, \eqref{Y12} and the above estimate that
\[
|Y_g(U)-Y_g(\bar U)| +|Y_1^s(U)| +|Y_2^s(U)| +|Y_3^s(U)|+|Y_b(U)|  \le C\frac{\eps^2}{\lambda}.
\]
\vskip0.2cm
\noindent{\it Step 2:} 
First of all, using Young's inequality and \eqref{pQ-equi0}, \eqref{n13}, we estimate
\begin{align*}
\begin{aligned}
&\left|  \int_\Omega a' \big(p(v)-p(\tilde v_\eps)\big)\Big(h-\tilde h_\eps-\frac{p(v)-p(\tilde v_\eps)}{\sigma_\eps}\Big) d\xi \right|\\
&\quad \le\left(\frac{\lambda}{\eps}\right)^{1/4}\mathcal{G}_1^+(U)+C\left(\frac{\eps}{\lambda}\right)^{1/4}\int_\Omega |a'| |p(v)-\pt|^2\,d\xi\\
&\quad \le\left(\frac{\lambda}{\eps}\right)^{1/4}\mathcal{G}_1^+(U)+C\left(\frac{\eps}{\lambda}\right)^{1/4}\left(\mathcal{B}_2^+(\bar U) + \left( \mathcal{B}_2^+(U)-\mathcal{B}_2^+(\bar U) \right) \right)\\
&\quad \le\left(\frac{\lambda}{\eps}\right)^{1/4}\mathcal{G}_1^+(U)+C\left(\frac{\eps}{\lambda}\right)^{1/4}\left(\mathcal{G}_2(\bar U) +\sqrt{\frac{\eps}{\lambda}} \mathcal{D}(U) \right).
\end{aligned}
\end{align*}
Therefore, this estimate, \eqref{Ygd} and \eqref{Y12} yield
\begin{align*}
\begin{aligned}
&|Y_g(U)-Y_g(\bar U)| +|Y_1^s(U)| +|Y_2^s(U)| +|Y_3^s(U)|+|Y_b(U)|   \\
&\le C\sqrt{\frac{\eps}{\lambda}}\mathcal{D}(U)+C\left(\mathcal{G}_2(U)-\mathcal{G}_2(\bar U) \right)+C\mathcal{G}_1^-(U) + 2\left(\frac{\lambda}{\eps}\right)^{1/4}\mathcal{G}_1^+(U) +C\left(\frac{\eps}{\lambda}\right)^{1/4}\mathcal{G}_2(\bar U).
\end{aligned}
\end{align*}
\vskip0.2cm
\noindent{\it Step 3:} 
For the remaining terms, using H\"older's inequality together with $|\tilde{h}_\eps'|\le C\frac{\eps}{\lambda} |a'|$, we estimate
\begin{align*}
\begin{aligned}
&|Y_4^s(U)|^2\leq C\left(\frac{\eps}{\lambda}\right)^2 \left(\int_\bbr|a'|\,d\xi\right)\int_{\Omega^c} |a'| |h-\tilde{h}_\eps|^2\,d\xi \leq C\frac{\eps^2}{\lambda} \mathcal{G}_1^-(U),\\
&|Y_l(U)|^2\leq C\left(\frac{\eps}{\lambda}\right)^2 \left(\int_\bbr|a'|\,d\xi\right)\int_{\Omega} |a'| \left(h-\tilde h_\eps -\frac{p(\bar v)-p(\tilde v_\eps)}{\sigma_\eps} \right)^2\,d\xi \leq C\frac{\eps^2}{\lambda} \mathcal{G}_1^+(U).
\end{aligned}
\end{align*}
Therefore, this together with Step1 and Step2 yield
\begin{align*}
\begin{aligned}
&|Y_g(U)-Y_g(\bar U)|^2 +|Y_b(U)|^2 +|Y_l(U)|^2+|Y_s(U)|^2  \\
&\le\left(|Y_g(U)-Y_g(\bar U)| +|Y_1^s(U)| +|Y_2^s(U)| +|Y_3^s(U)|+|Y_b(U)|\right)^2 +  |Y_4^s(U)|^2 +|Y_l(U)|^2  \\
&\le C\frac{\eps^2}{\lambda}\left(\sqrt{\frac{\eps}{\lambda}}\mathcal{D}(U)+\left(\mathcal{G}_2(U)-\mathcal{G}_2(\bar U) \right)+\mathcal{G}_1^-(U) + \left(\frac{\lambda}{\eps}\right)^{1/4}\mathcal{G}_1^+(U) +\left(\frac{\eps}{\lambda}\right)^{1/4}\mathcal{G}_2(\bar U) \right).
\end{aligned}
\end{align*}

\subsection{Proof of Proposition \ref{prop:main}}
We now prove the main Proposition  \ref{prop:main}. We split the proof into two steps, depending on the strength of the dissipation term $\mathcal{D}(U)$.

\vskip0.2cm
\noindent{\it Step 1:}  
We first consider the case of
$
\mathcal{D}(U)\geq 4 C^* \frac{\eps^2}{\lambda}, $  where the constant $C^*$ is defined as in Proposition \ref{prop_out}.
Then using $\eqref{n2}$ and taking $\delta_0$ small enough, we have
\begin{align*}
\begin{aligned}
\mathcal{R}(U)&\le-\frac{|Y(U)|^2}{\eps^4}+\left(1+\delta_0\frac{\eps}{\lambda}\right)|\mathcal{B}_{\delta_3}(U)|-\mathcal{G}_1^-(U)-\mathcal{G}_1^+(U) -\left(1-\delta_0\frac{\eps}{\lambda}\right)\mathcal{G}_2(U) -(1-\delta_0)\mathcal{D}(U)  \\
&\leq 2|\mathcal{B}_{\delta_3}(U)|-(1-\delta_0)\mathcal{D}(U)\\
& \leq 2C^*\frac{\eps^2}{\lambda}-\left(1-\delta_0-2C\sqrt{\delta_0}\right)\mathcal{D}(U)\\
& \leq 2 C^* \frac{\eps^2}{\lambda}-\frac{1}{2}\mathcal{D}(U)\leq 0,
\end{aligned}
\end{align*}
which gives the desired result.

\vskip0.2cm
\noindent{\it Step 2:}  We now assume the other alternative, i.e., $\mathcal{D}(U)\leq 4 C^* \frac{\eps^2}{\lambda}.$ \\
We will use Proposition \ref{prop:main3} to get the desired result.  First of all, we have \eqref{YC2}, and for the small constant $\delta_3$ of Proposition \ref{prop:main3} associated to the constant $C_2$ of \eqref{YC2}, we have $|p(\bar v)-\pt|\leq \delta_3$.\\
Using
$$
Y_g(\bar U)=Y(U)-(Y_g(U)-Y_g(\bar U))-Y_b(U)-Y_l(U)-Y_s(U),
$$
we have
$$
|Y_g(\bar U)|^2\leq 4\left(|Y(U)|^2+|Y_g(U)-Y_g(\bar U)|^2+ |Y_b(U)|^2+|Y_l(U)|^2+|Y_s(U)|^2\right),
$$
which can be written as
$$
-4|Y(U)|^2\leq -|Y_g(\bar U)|^2+4|Y_g(U)-Y_g(\bar U)|^2+ 4|Y_b(U)|^2+4|Y_l(U)|^2+4|Y_s(U)|^2.
$$
Now, let us take $\delta_0$ small enough such that $\delta_0\le\delta_3^9$. (In fact, since we see from the proofs of Lemma \ref{lemma_out1}-\ref{lemma_out4} that the constants $C$ in Proposition \ref{prop_out} depend on $\delta_3$ as algebraically negative power of it, we take  $\delta_0$ smaller enough if needed.)\\
Then we find that for any $\eps<\eps_0(\le\delta_3)$ and $\eps/\lambda<\delta_0(\le\delta_3^9)$,
\begin{align*}
\begin{aligned}
\mathcal{R}(U)&\le-\frac{4|Y(U)|^2}{\eps\delta_3}+\mathcal{B}_{\delta_3}(U) +\delta_0\frac{\eps}{\lambda} |\mathcal{B}_{\delta_3}(U)| \\
&\quad -\mathcal{G}_1^-(U)-\mathcal{G}_1^+(U) -\left(1-\delta_0\frac{\eps}{\lambda}\right)\mathcal{G}_2(U) -(1-\delta_0)\mathcal{D}(U) \\
&\leq -\frac{|Y_g(\bar U)|^2}{\eps\delta_3}+ \left(\mathcal{B}_1(\bar U)+\mathcal{B}_2^+(\bar U)\right)+\delta_0\frac{\eps}{\lambda}\left(|\mathcal{B}_1(\bar U)|+|\mathcal{B}_2^+(\bar U)|\right)\\
&\quad-\left(1-\delta_3\frac{\eps}{\lambda}\right)\mathcal{G}_2(\bar U)-(1-\delta_3)\mathcal{D}(\bar U)\\
&\quad \underbrace{+\frac{4}{\eps\delta_3}\left(|Y_g(U)-Y_g(\bar U)|^2+|Y_b(U)|^2+|Y_l(U)|^2+|Y_s(U)|^2\right)}_{=:J_1} \\
&\quad \underbrace{+\left(1+\delta_0\frac{\eps}{\lambda}\right)\left(|\mathcal{B}_1(U)-\mathcal{B}_1(\bar U)|+|\mathcal{B}_2^+(U)-\mathcal{B}_2^+(\bar U)| +|\mathcal{B}_2^-(U)| +\sum_{i=3}^5|\mathcal{B}_i(U)|  \right)}_{=:J_2} \\
&\quad -\mathcal{G}_1^-(U) -\mathcal{G}_1^+(U)-\frac{1}{2}\left(\mathcal{G}_2(U)-\mathcal{G}_2(\bar U)\right)-\frac{\delta_3}{2}\frac{\eps}{\lambda}\mathcal{G}_2(\bar U)-(\delta_3-\delta_0)\mathcal{D}(U),
\end{aligned}
\end{align*}
where we used $\mathcal{D}(\bar U)\le \mathcal{D}(U)$ by \eqref{eq_D}. We claim that $J_1$, $J_2$ are controlled by the last line above.  
Indeed, it follows from \eqref{n1}-\eqref{n14} and \eqref{m1} that for any $\eps/\lambda<\delta_0(\le\delta_3^9)$,
\begin{align*}
\begin{aligned}
J_1&\le \frac{C}{\delta_3}\frac{\eps}{\lambda}\left(\sqrt{\frac{\eps}{\lambda}}\mathcal{D}(U)+\left(\mathcal{G}_2(U)-\mathcal{G}_2(\bar U) \right)+\mathcal{G}_1^-(U) + \left(\frac{\lambda}{\eps}\right)^{1/4}\mathcal{G}_1^+(U) +\left(\frac{\eps}{\lambda}\right)^{1/4}\mathcal{G}_2(\bar U) \right)\\
&\le \frac{C}{\delta_3}\left(\frac{\eps}{\lambda}\right)^{1/4} \left(\mathcal{D}(U)+\left(\mathcal{G}_2(U)-\mathcal{G}_2(\bar U) \right)+\mathcal{G}_1^-(U) + \mathcal{G}_1^+(U) +\frac{\eps}{\lambda}\mathcal{G}_2(\bar U) \right)\\
&\le \frac{1}{4}\delta_3 \left(\mathcal{D}(U)+\left(\mathcal{G}_2(U)-\mathcal{G}_2(\bar U) \right)+\mathcal{G}_1^-(U) + \mathcal{G}_1^+(U) +\frac{\eps}{\lambda}\mathcal{G}_2(\bar U) \right),\\
J_2&\le C \sqrt\delta_0 \left( \mathcal{D}(U) +\left(\mathcal{G}_2(U)-\mathcal{G}_2(\bar U) \right)+\frac{\eps}{\lambda} \mathcal{G}_2(\bar U)\right)\\
&\le \frac{1}{4}\delta_3\left( \mathcal{D}(U) +\left(\mathcal{G}_2(U)-\mathcal{G}_2(\bar U) \right)+\frac{\eps}{\lambda} \mathcal{G}_2(\bar U)\right).
\end{aligned}
\end{align*}
Therefore, we have
\begin{align*}
\begin{aligned}
\mathcal{R}(U) &\le -\frac{|Y_g(\bar U)|^2}{\eps\delta_3}+\left(\mathcal{B}_1(\bar U)+\mathcal{B}_2^+(\bar U)\right)+\delta_3\frac{\eps}{\lambda}\left(|\mathcal{B}_1(\bar U)|+|\mathcal{B}_2^+(\bar U)|\right)\\
&\quad -\left(1-\delta_3\frac{\eps}{\lambda}\right)\mathcal{G}_2(\bar U)-(1-\delta_3)\mathcal{D}(\bar U).
\end{aligned}
\end{align*}
Since the above quantities $Y_g(\bar U), \mathcal{B}_1, \mathcal{B}_2^+(\bar U), \mathcal{G}_2(\bar U)$ and $\mathcal{D}(\bar U)$ depends only on $\bar v$ through $\bar U$, and 
$\mathcal{B}_1(\bar U)=\mathcal{I}_1(\bar v)$ and $\mathcal{B}_2^+(\bar U)=\mathcal{I}_2^+(\bar v)$,
it follows from Proposition \ref{prop:main3} that $\mathcal{R}(U)\le 0$. \\
Hence we complete the proof of Proposition \ref{prop:main}.\\

\section{Proof of Theorem \ref{thm_inviscid}}\label{sec:main}
\setcounter{equation}{0}

\subsection{Proof of \eqref{ini_conv} : Well-prepared initial data}
For a given datum $(v^0,u^0)$ satisfying \eqref{basic_ini}, let $\{(v_0^{r},u_0^r)\}_{r>0}$ be a sequence of truncations defined by
\[
v_0^r=\left\{ \begin{array}{ll}
       v^0{\mathbf 1}_{\{r\le v^0\le r^{-1}\}}\quad &\mbox{if } -r^{-1}\le x\le r^{-1},\\
       v_-\quad &\mbox{if } x\le -r^{-1},\\
       v_+\quad &\mbox{if } x\ge r^{-1}, \end{array} \right.
\]
and
\[
u_0^r=\left\{ \begin{array}{ll}
       u^0{\mathbf 1}_{\{-r^{-1}\le u^0\le r^{-1}\}}\quad &\mbox{if } -r^{-1}\le x\le r^{-1},\\
       u_-\quad &\mbox{if } x\le -r^{-1},\\
       u_+\quad &\mbox{if } x\ge r^{-1}. \end{array} \right.
\]
Then, we consider a mollification of the above sequence: using $\phi_{\nu}(x):=\frac{1}{\sqrt\nu}\phi_1\big(\frac{x}{\sqrt\nu}\big)$
where $\phi_1$ is a smooth mollifier with supp$\phi_1=[-1,1]$, consider a double sequence $\{(v_0^{r,\nu},u_0^{r,\nu})\}_{r,\nu>0}$ defined by
\[
v_0^{r,\nu}=v_0^{r}*\phi_{\nu},\quad u_0^{r,\nu}=u_0^{r}*\phi_{\nu}.
\]
First, we will show 
\beq\label{claim-con1}
\lim_{r\to 0}\lim_{\nu\to0}\int_\bbr  Q\big(v_0^{r,\nu}| \tilde v\big(\frac{x}{\nu}\big)\big)dx= \int_{-\infty}^0 Q(v^0|v_-)dx+\int_0^{\infty} Q(v^0|v_+)dx.
\eeq
For a fixed $r$, since $v_0^{r,\nu} \to v_0^{r}$ a.e., and $\tilde v\big(\frac{x}{\nu}\big)\to v_-$ a.e. $x<0$ as $\nu\to0$, using 
\begin{align*}
\begin{aligned}
& Q\big(v_0^{r,\nu}| \tilde v\big(\frac{x}{\nu}\big)\big)-  Q(v_0^r|v_-) =  \big(Q(v_0^{r,\nu})-Q(v_0^r)\big) + \Big( Q(v_-)-Q\big( \tilde v\big(\frac{x}{\nu}\big) \big) \Big)\\
&\qquad -\Big( Q'\big( \tilde v\big(\frac{x}{\nu}\big) \big) -Q'(v_-) \Big) \big(v_0^{r,\nu}- \tilde v\big(\frac{x}{\nu}\big) \big)
-Q'(v_-) \Big( \big(v_0^{r,\nu}-v_0^r \big) -  \big( \tilde v\big(\frac{x}{\nu}\big) - v_- \big)  \Big),
\end{aligned}
\end{align*}
we have
\[
 Q\big(v_0^{r,\nu}| \tilde v\big(\frac{x}{\nu}\big)\big) \to  Q(v_0^r|v_-)\quad \mbox{a.e. } x<0,\quad \mbox{as }~\nu\to0.
\]
Likewise,
\[
 Q\big(v_0^{r,\nu}| \tilde v\big(\frac{x}{\nu}\big)\big) \to Q(v_0^r|v_+)\quad \mbox{a.e. } x>0,\quad \mbox{as }~\nu\to0.
\]
Moreover, since 
\[
|v_0^{r}-\bar v| \le  \max(r^{-1},v_\pm) {\mathbf 1}_{\{|x|\le r^{-1}\}},
\]
we have
\begin{align*}
\begin{aligned}
 Q\big(v_0^{r,\nu}| \tilde v\big(\frac{x}{\nu}\big)\big) &\le C_r \big|v_0^{r,\nu}-\tilde v\big(\frac{x}{\nu}\big)\big|^2
\le C_r \Big( \big|v_0^{r,\nu}- \bar v\big|^2 + \big|\tilde v\big(\frac{x}{\nu}\big) -\bar v\big|^2 \Big)\\
&\le   \max(r^{-2},v_\pm^2){\mathbf 1}_{\{|x|\le r^{-1}+1\}} +\big|\tilde v-\bar v\big|^2,\quad \nu<1.
\end{aligned}
\end{align*}
Since ${\mathbf 1}_{\{|x|\le r^{-1}+1\}} +\big|\tilde v-\bar v\big|^2\in L^1(\bbr)$, the dominated convergence theorem implies
\[
\lim_{\nu\to0}\int_\bbr  Q\big(v_0^{r,\nu}| \tilde v\big(\frac{x}{\nu}\big)\big)dx= \int_{-\infty}^0 Q(v_0^r|v_-)dx+\int_0^{\infty} Q(v_0^r|v_+)dx.
\]
Furthermore, since $Q(v_0^r|\bar v)\le Q(v^0|\bar v)\in L^1(\bbr)$ and $v_0^r\to v^0$ a.e. as $r\to 0$, we have 
\[
\lim_{r\to0} \Big(\int_{-\infty}^0 Q(v_0^r|v_-)dx\,+\,\int_0^{\infty} Q(v_0^r|v_+)dx\Big) = \int_{-\infty}^0 Q(v^0|v_-)dx\,+\,\int_0^{\infty} Q(v^0|v_+)dx,
\]
which completes \eqref{claim-con1}.\\
Hence by the diagonal extraction of \eqref{claim-con1}, there exists a sequence (still denoted by $v_0^\nu$) such that
\[
\lim_{\nu\to0}\int_\bbr Q\big(v_0^{\nu}| \tilde v\big(\frac{x}{\nu}\big)\big)dx= \int_{-\infty}^0 Q(v^0|v_-)dx+\int_0^{\infty} Q(v^0|v_+)dx.
\]
In particular, we have from the above construction that $v_0^\nu$ converges to $v^0$ in $L_{loc}^{1}(\bbr)$, and especially :
\beq\label{v-inicon}
v_0^\nu \to v^0 \quad \mbox{in } W_{loc}^{-s,1}(\bbr),~ s>0.
\eeq
where this convergence will be used in the proof of \eqref{v-conti}.\\

Using the same argument as above, we show
\begin{align}
\begin{aligned}\label{claim-con2}
\lim_{r\to0}\lim_{\nu\to0}\int_\bbr \frac{1}{2}\left(u_0^{r,\nu} +\nu \left(p(v_0^{r,\nu})^{\frac{\alpha}{\gamma}}\right)_x  -\tilde u^\nu(x) - \nu \left(p\left(\tilde v^\nu(x) \right)^{\frac{\alpha}{\gamma}}\right)_x\right)^2 dx = \int_{-\infty}^\infty  \frac{|u^0-\bar u|^2}{2}dx,
\end{aligned}
\end{align}
where $\tilde u^\nu(x)=\tilde u (x/\nu)$ and $\tilde v^\nu(x)=\tilde v (x/\nu)$.\\
Indeed, since $|v_0^r|\le r^{-1}$ for any small $r>0$, we have
\begin{align*}
\begin{aligned}
\left|\nu  \left(p(v_0^{r,\nu})^{\frac{\alpha}{\gamma}}\right)_x\right| &=\nu \frac{\alpha}{\gamma}p(v_0^{r,\nu})^{\frac{\alpha}{\gamma}-1}\left|p'(v_0^{r,\nu})\right| \left| v_0^{r}* \left(\phi_\nu (x)\right)_x \right|\\
& \le \frac{\alpha}{\gamma} p(r^{-1})^{\frac{\alpha}{\gamma}-1} |p'(r)| r^{-1} \int_\bbr \big|\phi_1'\big(\frac{y}{\sqrt\nu}\big)\big|dy\\
&\le C(r) \sqrt{\nu} ,
\end{aligned}
\end{align*}
which means
\[
\left\|\nu   \left(p(v_0^{r,\nu})^{\frac{\alpha}{\gamma}}\right)_x\right\|_{L^{\infty}(\bbr)}\to 0\quad \mbox{as}~ \nu\to0. 
\]
Moreover, since $\tilde v'\big(\frac{x}{\nu}\big)\to 0$ a.e. as $\nu\to0$, we have
\[
\left(u_0^{r,\nu} +\nu \left(p(v_0^{r,\nu})^{\frac{\alpha}{\gamma}}\right)_x  -\tilde u\left(\frac{x}{\nu}\right) - \nu \left(p\left(\tilde v\left(\frac{x}{\nu}\right)\right)^{\frac{\alpha}{\gamma}}\right)_x\right)^2
\to \left\{ \begin{array}{ll}
       |u_0^r-u_-|^2 \quad \mbox{for a.e. } x<0,\\
        |u_0^r-u_+|^2 \quad \mbox{for a.e. } x>0. \end{array} \right. 
\]
Furthermore, since 
\begin{align*}
\begin{aligned}
&\left(u_0^{r,\nu} +\nu \left(p(v_0^{r,\nu})^{\frac{\alpha}{\gamma}}\right)_x  -\tilde u\left(\frac{x}{\nu}\right) - \nu \left(p\left(\tilde v\left(\frac{x}{\nu}\right)\right)^{\frac{\alpha}{\gamma}}\right)_x\right)^2\\
&\quad \le 2\left( |u_0^{r,\nu}-\bar u|^2 + \left|\tilde u\left(\frac{x}{\nu}\right)-\bar u\right|^2 + \left|\nu \left(p(v_0^{r,\nu})^{\frac{\alpha}{\gamma}}\right)_x \right|^2 + \left|\nu  \left(p\left(\tilde v\left(\frac{x}{\nu}\right)\right)^{\frac{\alpha}{\gamma}}\right)_x\right|^2\right)\\
&\quad \le C(r)\left( {\mathbf 1}_{\{|x|\le r^{-1}+1\}} +\big|\tilde u-\bar u\big|^2 +\left| \phi_1'\left(\frac{x}{\sqrt\nu}\right) \right|^2 +\left|\tilde v'\left(\frac{x}{\nu}\right)\right|^2 \right)\\
&\quad \le C(r)\left( {\mathbf 1}_{\{|x|\le r^{-1}+1\}} +\big|\tilde u-\bar u\big|^2 +\left| \phi_1'\left(x\right) \right|^2 +\left|\tilde v'\left(x\right)\right|^2 \right) =: g(x),
\end{aligned}
\end{align*}
and $g\in L^1(\bbr)$, the dominated convergence theorem implies
\begin{align*}
\begin{aligned}
&\lim_{\nu\to0}\int_\bbr \frac{1}{2}\left(u_0^{r,\nu} +\nu \left(p(v_0^{r,\nu})^{\frac{\alpha}{\gamma}}\right)_x  -\tilde u\left(\frac{x}{\nu}\right) - \nu \left(p\left(\tilde v\left(\frac{x}{\nu}\right)\right)^{\frac{\alpha}{\gamma}}\right)_x\right)^2 dx\\
&\quad= \int_{-\infty}^0 \frac{|u_0^r-u_-|^2}{2}dx+\int_0^{\infty} \frac{|u_0^r-u_+|^2}{2}dx.
\end{aligned}
\end{align*}
Furthermore, since $|u_0^r - \bar u|^2\le |u^0 - \bar u|^2 \in L^1(\bbr)$ and $u_0^r\to u^0$ a.e. as $r\to 0$, we have 
\[
\lim_{r\to0} \Big(\int_{-\infty}^0 \frac{|u_0^r-u_-|^2}{2}dx+\int_0^{\infty} \frac{|u_0^r-u_+|^2}{2}dx\Big) = \int_{-\infty}^\infty  \frac{|u^0-\bar u|^2}{2}dx,
\]
which completes \eqref{claim-con2}. Hence, using the diagonal extraction as before, there exists a sequence (still denoted by $u_0^\nu$) such that
\begin{align*}
\begin{aligned}
\lim_{\nu\to0}\int_\bbr \frac{1}{2}\left(u_0^{r} +\nu \left(p(v_0^{r})^{\frac{\alpha}{\gamma}}\right)_x  -\tilde u^\nu(x) - \nu \left(p\left(\tilde v^\nu(x) \right)^{\frac{\alpha}{\gamma}}\right)_x\right)^2 dx = \int_{-\infty}^\infty  \frac{|u^0-\bar u|^2}{2}dx.
\end{aligned}
\end{align*}

\subsection{Proof for the main part of Theorem \ref{thm_inviscid}}
We here present a proof for the second part (ii) of Theorem \ref{thm_inviscid}.

\subsubsection{{\bf Uniform estimates in $\nu$}}
Let $\{(v^{\nu}, u^{\nu})\}_{\nu>0}$ be a sequence of solutions on $(0,T)$ to \eqref{inveq} with the initial datum $(v^{\nu}_0, u^{\nu}_0)$. Our starting point is to apply Theorem \ref{thm_general} to the below functions: 
\begin{align*}
\begin{aligned}
&v(t,x)=v^{\nu}(\nu t, \nu x),\quad \tilde v(x):= \tilde v^{\nu}(\nu x),\quad u(t,x)=u^{\nu}(\nu t, \nu x),\quad \tilde u(x):= \tilde u^{\nu}(\nu x).
\end{aligned}
\end{align*}
That is, using \eqref{cont_main} in Theorem \ref{thm_general} together with \eqref{d-weight}, we have
\begin{align*}
\begin{aligned}
&\int_{-\infty}^\infty E\big((v,u)(t,x)| (\tilde v, \tilde u) (x-X(t))\big) dx \\
&\quad+\int_{0}^{T/\nu} \int_{-\infty}^{\infty} |\tilde v' (x)| Q\left(v(t,x)|\tilde v(x-X(t))\right) dx dt \\
&\quad + \int_{0}^{T/\nu}\int_{-\infty}^{\infty} v^{\gamma-\alpha}(t,x)\big|\partial_x\big(p(v(t,x))-p(\tilde v(x-X(t)))\big)\big|^2dxdt  \\
&\le   C\int_{-\infty}^{\infty} E\big((v_0^\nu,u_0^\nu)(\nu x)| (\tilde v, \tilde u)(x)\big) dx.
\end{aligned}
\end{align*}
Then by the change of variables $t\mapsto t/\nu, x\mapsto x/\nu$, we have
\begin{align*}
\begin{aligned}
&\int_{-\infty}^{\infty}  E_\nu \big((v^{\nu},u^{\nu})(t,x)| (\tilde v^{\nu}, \tilde u^{\nu})(x-X_{\nu}(t))\big) dx \\
&\qquad+\int_{0}^{T}\int_{-\infty}^{\infty} |(\tilde v^{\nu})' (x)| Q\left(v^{\nu}(t,x)|\tilde v^{\nu}(x-X_{\nu}(t))\right) dx dt \\
&\qquad +\nu\int_{0}^{T}\int_{-\infty}^{\infty} (v^{\nu})^{\gamma-\alpha}(t,x)\big|\partial_x\big(p(v^{\nu}(t,x))-p(\tilde v^{\nu}(x-X_{\nu}(t)))\big)\big|^2dxdt\\
&\quad \le C\int_{-\infty}^{\infty} E_\nu\big((v^{\nu}_0,u^{\nu}_0)(x)| (\tilde v^{\nu}, \tilde u^{\nu})(x) \big) dx,
\end{aligned}
\end{align*}
where $X_{\nu}(t):= \nu X(t/\nu)$, and 
\beq\label{E_nu}
E_\nu((v_1,u_1)|(v_2,u_2)) :=\frac{1}{2}\left(u_1 +\nu \Big(p(v_1)^{\frac{\alpha}{\gamma}}\Big)_x -u_2 -\nu \Big(p(v_2)^{\frac{\alpha}{\gamma}}\Big)_x  \right)^2 +Q(v_1|v_2).
\eeq
For simplification, we introduce the variables: 
\beq\label{effective}
h^{\nu}:=u^{\nu}+\nu \Big(p(v^\nu)^{\frac{\alpha}{\gamma}}\Big)_x,\quad \tilde h^{\nu}:=\tilde u^{\nu}+\nu \Big(p(\tilde v^\nu)^{\frac{\alpha}{\gamma}}\Big)_x, 
\eeq
Then, recalling \eqref{eta_def}, the above estimate implies
\begin{align*}
\begin{aligned}
&\int_{-\infty}^{\infty} \eta\big((v^{\nu},h^{\nu})(t,x)| (\tilde v^{\nu}, \tilde h^{\nu})(x-X_{\nu}(t))\big) dx \\
&\qquad+\int_{0}^{T}\int_{-\infty}^{\infty} |(\tilde v^{\nu})' | \, Q\left(v^{\nu}(t,x)|\tilde v^{\nu}(x-X_{\nu}(t))\right) dx dt \\
&\qquad +\nu\int_{0}^{T}\int_{-\infty}^{\infty} (v^{\nu})^{\gamma-\alpha}\big|\partial_x\big(p(v^{\nu}(t,x))-p(\tilde v^{\nu}(x-X_{\nu}(t)))\big)\big|^2dxdt\\
&\quad \le C\int_{-\infty}^{\infty} E_\nu\big((v^{\nu}_0,u^{\nu}_0)| (\tilde v^{\nu}, \tilde u^{\nu}) \big) dx.
\end{aligned}
\end{align*}
Therefore, using \eqref{ini_conv}, we find that 
\begin{align}
\begin{aligned}\label{ineq-m}
&\mbox{for any $\delta\in(0,1)$, there exists $\nu_*$ such that for all $\nu<\nu_*$}, \\
&\int_{-\infty}^{\infty} \eta\big((v^{\nu},h^{\nu})(t,x)| (\tilde v^{\nu}, \tilde h^{\nu})(x-X_{\nu}(t))\big) dx\\
&\qquad+\int_{0}^{T}\int_{-\infty}^{\infty} |(\tilde v^{\nu})' | \, Q\left(v^{\nu}(t,x)|\tilde v^{\nu}(x-X_{\nu}(t))\right) dx dt \\
&\qquad +\nu\int_{0}^{T}\int_{-\infty}^{\infty} (v^{\nu})^{\gamma-\alpha}\big|\partial_x\big(p(v^{\nu}(t,x))-p(\tilde v^{\nu}(x-X_{\nu}(t)))\big)\big|^2dxdt\\
&\quad \le C\mathcal{E}_0 +\delta,
\end{aligned}
\end{align}
where 
\[
\mathcal{E}_0:=\int_{-\infty}^{\infty} \eta\big((v^0,u^0)| (\bar v, \bar u)\big) dx.
\]

\subsubsection{\bf Proof of \eqref{wconv}} 
We first prove the weak convergence \eqref{wconv}.\\

$\bullet$ {\bf Convergence of $\{v^\nu\}_{\nu>0}$ :}
For the given two end states $v_\pm$, we first fix a constant $M>1$ such that 
\[
\big(\min\{v_-,v_+\},\max\{v_-,v_+\} \big)\subset (M^{-1},M).
\]
Then we fix the constant $k_0>1$ in Lemma \ref{lem_Q1}, and set
\beq\label{Kdef}
K:=\max\{3M, k_0 \}.
\eeq
For the constant $K>1$, let $\overline\psi$ be a continuous function defined by
\beq\label{psi_k}
\overline\psi(x)=\left\{ \begin{array}{ll}
       x,\quad\mbox{if } K^{-1}\le x\le K,\\
       K^{-1},\quad\mbox{if } x< K^{-1},\\
       K,\quad\mbox{if } x> K.\end{array} \right.
\eeq
Then we set
\beq\label{v-trunc}
\underline v^\nu  :=\overline\psi(v^\nu),\quad v_e^\nu := v^\nu- \underline v^\nu.
\eeq
Note that the truncation $\underline v^\nu$ will be used in the proof of \eqref{uni-est} below.\\
Since
\[
|v_e^\nu|\le \max\Big\{(K^{-1}-v^\nu)_+, (v^\nu-K)_+ \Big\},
\]
and $M^{-1}<\tilde v^\nu (x-X_{\nu})<M$, we use \eqref{Q1} in Lemma \ref{lem_Q1} to have
\[
|v_e^\nu|\le C Q\big(v^\nu| \tilde v^\nu (x-X_{\nu}) \big).
\]
Then it follows from \eqref{ineq-m} that for all $\nu<\nu_*$,
\[
\int_{-\infty}^{\infty} |v_e^\nu| dx \le C\big( \mathcal{E}_0 +1\big).
\]
Therefore, $\{v_e^\nu\}_{\nu>0}$ is bounded in $L^{\infty}(0,T;L^1(\bbr))$.
Moreover, since the definition \eqref{v-trunc} implies that $\{\underline v^\nu\}_{\nu>0}$ is bounded in $L^{\infty}((0,T)\times\bbr)\subset L^\infty(0,T;L^1_{loc}(\bbr))$, we obtain that 
\beq\label{cpt-v}
\{v^\nu\}_{\nu>0} \mbox{ is bounded in } L^\infty(0,T;L^1_{loc}(\bbr)).
\eeq
Therefore,
there exists $v_{\infty} $ such that
\beq\label{vwc}
v^\nu \rightharpoonup v_{\infty} \quad \mbox{in} ~\mathcal{M}_{\mathrm{loc}}((0,T)\times\bbr),
\eeq
and
\[
v_\infty \in L^\infty(0,T; L^\infty(\bbr)+\mathcal{M}(\bbr)). 
\]
\vskip0.2cm

$\bullet$ {\bf Convergence of $\{u^\nu\}_{\nu>0}$ :}

We split the proof into two steps.
\vskip0.2cm
\noindent{\it Step 1:}
We will first show convergence of $\{h^\nu\}_{\nu>0}$.\\
For the given two end states $u_\pm$, we first fix a constant $L>1$ such that 
\[
\big(\min\{u_-,u_+\},\max\{u_-,u_+\} \big)\subset \big(-\frac{L}{2},\frac{L}{2}\big).
\]
Then let $\overline\varphi$ be a continuous function defined by  
\beq\label{phi_k}
\overline\varphi(x)=\left\{ \begin{array}{ll}
       x,\quad\mbox{if }  |x|\le L,\\
       -L,\quad\mbox{if } x< -L,\\
       L,\quad\mbox{if } x> L.\end{array} \right.
\eeq 
Then we set
\beq\label{h-trunc}
\underline h^\nu  :=\overline\varphi(h^\nu),\quad h_e^\nu := h^\nu- \underline h^\nu.
\eeq
Note that the truncation $\underline h^\nu$ will be used in the proof of \eqref{uni-est} below.\\
Likewise, since
\[
|h_e^\nu|\le \max\Big\{(-h^\nu-L)_+, (h^\nu-L)_+ \Big\},
\]
and $-L<\tilde h^\nu (x-X_{\nu})<L$, we have
\[
|h_e^\nu|\le \max\Big\{(-h^\nu-L)_+, (h^\nu-L)_+ \Big\}\le |h^\nu- \tilde h^\nu (x-X_{\nu})  |.
\]
Then it follows from \eqref{ineq-m} that for all $\nu<\nu_*$,
\[
\int_{-\infty}^{\infty} |h_e^\nu|^2dx \le C\big(\mathcal{E}_0 +1\big).
\]
Therefore, $\{h_e^\nu\}_{\nu>0}$ is bounded in $L^{\infty}(0,T;L^2(\bbr))$. 
Moreover, since $\{\underline h^\nu\}_{\nu>0}$ is bounded in $L^{\infty}((0,T)\times\bbr)\subset L^\infty(0,T;L^2_{loc}(\bbr))$, we have
\beq\label{cpt-h}
\{h^\nu\}_{\nu>0} \mbox{ is bounded in } L^\infty(0,T;L^2_{loc}(\bbr)).
\eeq
Therefore,
there exists $u_{\infty} \in  L^\infty(0,T;L^2_{loc}(\bbr))$ such that
\beq\label{hwc}
h^\nu \rightharpoonup u_{\infty} \quad \mbox{in} ~L^\infty(0,T;L^2_{loc}(\bbr)) .
\eeq

\vskip0.2cm
\noindent{\it Step 2:}
We now prove that $u^\nu \rightharpoonup u_{\infty}$ in $\mathcal{M}_{\mathrm{loc}}((0,T)\times\bbr)$. \\
 Since $u^\nu=h^\nu-\nu  \Big(p(v^\nu)^{\frac{\alpha}{\gamma}}\Big)_x$, it is enough to show that 
\beq\label{pfpr}
\nu  \Big(p(v^\nu)^{\frac{\alpha}{\gamma}}\Big)_x \rightharpoonup 0 \quad \mbox{in} ~\mathcal{M}((0,T)\times\bbr).
\eeq
In fact, since 
\[
 \Big(p(v^\nu)^{\frac{\alpha}{\gamma}}\Big)_x =\frac{\alpha}{\gamma} p(v^\nu)^{\frac{\alpha-\gamma}{\gamma}} p(v^\nu)_x = \frac{\alpha}{\gamma} (v^\nu)^\beta p(v^\nu)_x,\quad \mbox{(recall $\beta=\gamma-\alpha$)},
\]
it is enough to show that 
\beq\label{weak_pv}
\nu   (v^\nu)^\beta p(v^\nu)_x \rightharpoonup 0 \quad \mbox{in} ~\mathcal{M}((0,T)\times\bbr).
\eeq
For that, we separate $\nu   (v^\nu)^\beta p(v^\nu)_x$ into two parts:
\begin{align*}
\begin{aligned} 
&\nu   (v^\nu)^\beta p(v^\nu)_x = \underbrace{ \nu (v^\nu)^\beta \Big(p(v^\nu) -p(\tilde v^\nu (x-X_{\nu}(t))) \Big)_x }_{=:J_1}+\underbrace{ \nu (v^\nu)^\beta  p(\tilde v^\nu (x-X_{\nu}(t)))_x }_{=:J_2}  
\end{aligned}
\end{align*}
For any $\Psi\in C_c((0,T)\times\bbr)$, using \eqref{ineq-m} and \eqref{rel_Q} together with the condition $0<\beta\le1$, we find that for all $\nu<\nu_*$,
\begin{align*}
\begin{aligned} 
\int_{\bbr^+\times\bbr} J_1\Psi dx dt &\le \nu \sqrt {\int_{\bbr^+\times\bbr}  (v^\nu)^\beta \left| \big(p(v^\nu) -p(\tilde v^\nu (x-X_{\nu}(t))) \big)_x \right|^2 \Psi dx dt}  \sqrt {\int_{\bbr^+\times\bbr}  (v^\nu)^\beta \Psi dx dt}   \\
&\le C\sqrt\nu  \sqrt{\mathcal{E}_0+1} \sqrt {\int_{\bbr^+\times\bbr}  (v^\nu)^\beta \left(  {\mathbf 1}_{\{v^\nu\le 3v_-\}}+ {\mathbf 1}_{\{v^\nu\ge 3v_-\}} \right)  \Psi dx dt}  \\
&\le  C\sqrt\nu \sqrt{\mathcal{E}_0+1} \sqrt {1+ \int_{\bbr^+\times\bbr}  |v^\nu - \tilde v^\nu (x-X_{\nu}(t)) | {\mathbf 1}_{\{v^\nu\ge 3v_-\}}  \Psi dx dt}  \\
&\le  C\sqrt\nu  \sqrt{\mathcal{E}_0+1} \sqrt {1+ \int_{\bbr^+\times\bbr}  Q\left(v^\nu | \tilde v^\nu (x-X_{\nu}(t)) \right)  \Psi dx dt}  \\
&\le  C\sqrt\nu  \sqrt{\mathcal{E}_0+1} \sqrt {1+ \int_{\mbox{supp} (\Psi)}  \eta\big((v^{\nu},h^{\nu})(t,x)| (\tilde v^{\nu}, \tilde h^{\nu})(x-X_{\nu}(t))\big)   dx dt}  \\
&\le  C\sqrt\nu  (\mathcal{E}_0+1).
\end{aligned}
\end{align*}
Likewise, we find that for all $\nu<\nu_*$,
\begin{align*}
\begin{aligned} 
\int_{\bbr^+\times\bbr} J_2\Psi dx dt &\le \nu C \int_{\bbr^+\times\bbr}  p(\tilde v^\nu (x-X_{\nu}(t)))_x {\mathbf 1}_{\{v^\nu\le 3v_-\}} \Psi dx dt  \\
&\quad + \nu \int_{\bbr^+\times\bbr}  (v^\nu)^\beta  p(\tilde v^\nu (x-X_{\nu}(t)))_x {\mathbf 1}_{\{v^\nu\ge 3v_-\}}  \Psi dx dt  \\
&\le \nu C \int_{\bbr^+\times\bbr}  |(\tilde v^\nu)' (x) | \Psi dx dt  \\
&\quad + C \nu \int_{\bbr^+\times\bbr}  |(\tilde v^\nu)' (x) |   Q\left(v^\nu | \tilde v^\nu (x-X_{\nu}(t)) \right)  {\mathbf 1}_{\{v^\nu\ge 3v_-\}}  \Psi dx dt  \\
&\le \nu C \int_{\bbr}  |\tilde v' (x) | dx   + C \nu (\mathcal{E}_0+1).
\end{aligned}
\end{align*}
Therefore we have 
\[
\int_{\bbr^+\times\bbr}  \nu   (v^\nu)^\beta p(v^\nu)_x\Psi dx dt \to 0\quad\mbox{as }\nu\to0,
\]
which implies \eqref{weak_pv}, and thus,
\beq\label{uwc}
u^\nu \rightharpoonup u_{\infty}  \quad \mbox{in} ~\mathcal{M}_{\mathrm{loc}}((0,T)\times\bbr).
\eeq
Hence we complete the proof of \eqref{wconv}.\\
 
\subsubsection{\bf Convergence of $\{X_{\nu}\}_{\nu>0}$}

\begin{lemma}\label{lem-X}
There exists $X_\infty\in \mbox{BV}(0,T)$ such that 
\beq\label{X-con}
X_\nu \to X_\infty \quad \mbox{in } L^1(0,T),\quad \mbox{up to subsequence as }\nu\to0.
\eeq
\end{lemma}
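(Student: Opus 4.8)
The plan is to derive a $W^{1,1}(0,T)$ bound on the rescaled shifts $X_\nu$ that is uniform in $\nu$, and then to conclude by compactness.

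Recall from Section~\ref{sec:idea} and the preceding uniform estimates that, writing $(v,u)(t,x):=(v^\nu,u^\nu)(\nu t,\nu x)$ — which solves \eqref{main} on $(0,T/\nu)\times\bbr$ — Theorem~\ref{thm_general} (applied with $T$ replaced by $T/\nu$) produces a shift $X=X^\nu\in W^{1,1}((0,T/\nu))$, and one sets $X_\nu(t):=\nu X(t/\nu)$, so that $X_\nu(0)=0$ and $\dot X_\nu(t)=\dot X(t/\nu)$ for almost every $t$. First I would estimate, using \eqref{est-shift} and the substitution $s=t/\nu$,
\begin{align*}
\int_0^T|\dot X_\nu(t)|\,dt=\nu\int_0^{T/\nu}|\dot X(s)|\,ds\le\frac{\nu}{\eps^2}\Big(\frac{T}{\nu}+\|f^\nu\|_{L^1(0,T/\nu)}\Big)=\frac{1}{\eps^2}\Big(T+\nu\|f^\nu\|_{L^1(0,T/\nu)}\Big),
\end{align*}
where $f^\nu$ is the function from \eqref{est-shift} attached to the rescaled problem. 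By the bound on $f^\nu$ in \eqref{est-shift}, and using that under the space dilation the $\partial_x$-terms in $E$ acquire a factor $\nu$, so that $\int_\bbr E(U_0|\tilde U)\,d\xi=\frac{1}{\nu}\int_\bbr E_\nu(U_0^\nu|\tilde U^\nu)\,d\xi$, one obtains
\begin{align*}
\nu\|f^\nu\|_{L^1(0,T/\nu)}\le\frac{2\lambda}{\delta_0\eps}\int_\bbr E_\nu(U_0^\nu|\tilde U^\nu)\,d\xi ,
\end{align*}
and the right-hand side converges to $\frac{2\lambda}{\delta_0\eps}\mathcal{E}_0$ as $\nu\to0$ by the well-prepared initial data \eqref{ini_conv}, hence is bounded uniformly for $\nu$ small. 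Therefore $\|\dot X_\nu\|_{L^1(0,T)}$ is bounded uniformly in $\nu$, and since $X_\nu(0)=0$ this also bounds $\|X_\nu\|_{L^\infty(0,T)}$; in particular $\{X_\nu\}$ is bounded in $W^{1,1}(0,T)$.

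The conclusion then follows from Helly's selection theorem: $\{X_\nu\}$ is bounded in $W^{1,1}(0,T)$, hence in $\mathrm{BV}(0,T)$, so along a subsequence it converges pointwise and, thanks to the uniform $L^\infty$ bound and dominated convergence, in $L^1(0,T)$ to a limit $X_\infty$, which belongs to $\mathrm{BV}(0,T)$ by lower semicontinuity of the total variation. By a further extraction this subsequence may be taken to coincide with the one along which the weak convergences \eqref{wconv} already hold; and since $\{X_\nu\}$ is uniformly bounded in $L^\infty(0,T)$, the convergence then also holds in $L^p(0,T)$ for every $p\in[1,\infty)$. I do not anticipate a genuine obstacle in this lemma; the only point requiring care is the bookkeeping of the two rescalings — the parabolic-type space-time dilation by $\nu$, and the $1/\nu$ factor relating $E$ to $E_\nu$ — together with the observation that the uniform bound on $\int_\bbr E_\nu(U_0^\nu|\tilde U^\nu)\,d\xi$ is precisely what the well-prepared data \eqref{ini_conv} supply.
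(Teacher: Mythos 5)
Your proposal is correct and follows essentially the same route as the paper: both deduce $\dot X_\nu(t)=\dot X(t/\nu)$, use \eqref{est-shift} together with the scaling identity $\int_\bbr E(U_0|\tilde U)\,d\xi=\nu^{-1}\int_\bbr E_\nu(U_0^\nu|\tilde U^\nu)\,d\xi$ and the well-prepared data \eqref{ini_conv} to get a uniform $W^{1,1}(0,T)$ bound on $X_\nu$ (with $X_\nu(0)=0$), and then conclude by BV compactness. No issues.
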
 
\begin{proof}
First, since $X_{\nu}'(t)=X'(t/\nu)$, it follows from \eqref{est-shift} that 
\[
|X_{\nu}'(t)|\le C(1+ f_\nu (t)),
\]
where $f_\nu(t):= f\big(\frac{t}{\nu}\big)$. Notice that \eqref{est-shift} and \eqref{ini_conv} imply that for any $\nu<\nu_*$,
\begin{align*}
\begin{aligned} 
\|f_\nu \|_{L^1(0,T)}&=\nu\|f \|_{L^1(0,T/\nu)} \le \nu C\int_{-\infty}^{\infty}\eta\big((v_0^1,h_0^1)| (\tilde v, \tilde h)\big) dx \\
&\le C\Big(\int_{-\infty}^{\infty} \eta\big((v^0,u^0)| (\bar v, \bar u)\big) dx +1 \Big).
\end{aligned}
\end{align*}
Thus, $f_\nu$ is uniformly bounded in $L^1(0,T)$. Therefore, $X_{\nu}'$ is uniformly bounded in $L^1(0,T)$.\\
Moreover, since $X_\nu (0)=0$ and thus,
\[
|X_{\nu}(t)|\le Ct+ C\int_0^t f_\nu (s) ds,
\]
$X_{\nu}$ is also uniformly bounded in $L^1(0,T)$.\\
Therefore, by the compactness of BV (see for example \cite[Theorem 3.23]{AFP}), we have the desired convergence.
\end{proof}

\subsubsection{\bf Proof of \eqref{uni-est}}
Consider a mollifier
\beq\label{def-time}
\phi_\eps(t):=\frac{1}{\eps}\phi \big(\frac{t}{\eps}\big)\quad\mbox{for any }\eps>0,
\eeq
where $\phi:\bbr\to\bbr$ is a nonnegative smooth function such that  $\int_{\bbr}\phi=1$ and $\mbox{supp } \phi = [-1,1]$.\\
For the truncations $\underline v^{\nu}, \underline h^{\nu}$ defined by  \eqref{v-trunc}, \eqref{h-trunc} with $L, K$ fixed, we let
\[
L_{\nu}:= \int_0^{T} \phi_\eps(s) \int_\bbr  \eta \big( (\underline v^{\nu}, \underline h^{\nu})(s,x) | (\tilde v^{\nu},\tilde h^{\nu})(x-X_{\nu}(s))  \big) dxds,
\]
Using the definition of the truncations together with \eqref{ineq-m} and $\int_0^T  \phi_\eps = 1$, we find that for all $\nu<\nu_*$,
\beq\label{L-est}
L_{\nu} \le \int_0^{T}  \phi_\eps(s) \int_\bbr  \eta \big( ( v^{\nu}, h^{\nu})(s,x) | (\tilde v^{\nu},\tilde h^{\nu})(x-X_{\nu}(s))  \big) dxds \le C\mathcal{E}_0 +\delta.
\eeq

Then we have the following.

\begin{lemma}\label{lem-LR}
For the fixed constants $L, K$, let
\[
R_{\nu}:= \int_0^{T}  \phi_\eps(s) \int_\bbr  \eta \big( (\underline v^{\nu}, \underline h^{\nu})(s,x) | (\bar v,\bar u) (x-X_{\infty}(s))  \big) dxds.
\]
Then
\[
\Big| L_{\nu} - R_{\nu} \Big| \to 0 \quad\mbox{up to a subsequence as }\nu\to0.
\]
\end{lemma}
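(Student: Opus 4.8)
The plan is to control $|L_\nu-R_\nu|$ by two vanishing contributions: the error in replacing the viscous profile $(\tilde v^{\nu},\tilde h^{\nu})$ by the inviscid shock $(\bar v,\bar u)$, and the error in replacing the shift $X_\nu$ by $X_\infty$. The starting observation is that the truncations in \eqref{v-trunc}--\eqref{h-trunc} keep everything in fixed compact sets: $\underline v^{\nu}(s,x)\in[K^{-1},K]$ and $\underline h^{\nu}(s,x)\in[-L,L]$ for all $\nu$, while $\tilde v^{\nu}(\xi)=\tilde v(\xi/\nu)$ and $\tilde h^{\nu}(\xi)=\tilde h(\xi/\nu)$ take values in $[v_+,v_-]$ and in a bounded interval independent of $\nu$ respectively (by \eqref{tail}, which forces $\tilde v\in[v_+,v_-]$ and bounds $\tilde h=\tilde u+(p(\tilde v)^{\alpha/\gamma})'$), and likewise $\bar v\in[v_+,v_-]$, $\bar u\in[u_+,u_-]$. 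On such compacts the map $(w,k)\mapsto\eta\big((\underline v^{\nu},\underline h^{\nu})\,\big|\,(w,k)\big)$ has gradient $\big(p'(w)(\underline v^{\nu}-w),\,-(\underline h^{\nu}-k)\big)$ bounded by a constant $C$ independent of $\nu$ and of the base point; since the segment joining $(\tilde v^{\nu}(x-X_\nu(s)),\tilde h^{\nu}(x-X_\nu(s)))$ and $(\bar v(x-X_\infty(s)),\bar u(x-X_\infty(s)))$ stays in that compact, the mean value theorem yields, pointwise in $(s,x)$,
\begin{align*}
&\Big|\eta\big((\underline v^{\nu},\underline h^{\nu})(s,x)\,\big|\,(\tilde v^{\nu},\tilde h^{\nu})(x-X_\nu(s))\big)-\eta\big((\underline v^{\nu},\underline h^{\nu})(s,x)\,\big|\,(\bar v,\bar u)(x-X_\infty(s))\big)\Big|\\
&\qquad\le C\Big(\big|\tilde v^{\nu}(x-X_\nu(s))-\bar v(x-X_\infty(s))\big|+\big|\tilde h^{\nu}(x-X_\nu(s))-\bar u(x-X_\infty(s))\big|\Big).
\end{align*}

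Next I would integrate this bound in $x$ over $\bbr$ and split the right-hand side, by the triangle inequality, into a \emph{translation error} and a \emph{profile error}. For $f\in\{\tilde v^{\nu},\tilde h^{\nu}\}$ one has $\int_\bbr\big|f(x-X_\nu(s))-f(x-X_\infty(s))\big|\,dx\le|X_\nu(s)-X_\infty(s)|\,\|f'\|_{L^1(\bbr)}$, and both $\|(\tilde v^{\nu})'\|_{L^1(\bbr)}=v_--v_+$ (monotonicity of the $1$-shock profile) and $\|(\tilde h^{\nu})'\|_{L^1(\bbr)}=\|\tilde h'\|_{L^1(\bbr)}$ (scale invariance, using $\tilde h^{\nu}=\tilde h(\cdot/\nu)$ and $\tilde h'=\sigma_\eps^{-1}p'(\tilde v)\tilde v'$) are finite by \eqref{tail} and independent of $\nu$. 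For the profile error, the substitution $\xi\mapsto\nu\xi$ gives $\|\tilde v^{\nu}-\bar v\|_{L^1(\bbr)}=\nu\,\|\tilde v-\bar v\|_{L^1(\bbr)}$ and $\|\tilde h^{\nu}-\bar u\|_{L^1(\bbr)}\le\nu\big(\|\tilde u-\bar u\|_{L^1(\bbr)}+\|(p(\tilde v)^{\alpha/\gamma})'\|_{L^1(\bbr)}\big)$; all the norms on the right are finite because \eqref{tail} gives exponential decay of $\tilde v-\bar v$, of $\tilde u-\bar u$ (since $\tilde u'$ decays exponentially too) and of $(p(\tilde v)^{\alpha/\gamma})'$, so these terms are $O(\nu)$.

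Finally I would integrate in $s$ against $\phi_\eps(s)\,ds$. Since $\int_0^T\phi_\eps\le1$, the profile-error terms contribute an $O(\nu)$ quantity, while the translation-error term is bounded by $C\,\|\phi_\eps\|_{L^\infty(0,T)}\,\|X_\nu-X_\infty\|_{L^1(0,T)}$, which tends to $0$ along the subsequence furnished by Lemma~\ref{lem-X}. Combining the three steps gives $|L_\nu-R_\nu|\to0$ along that subsequence. The argument is essentially bookkeeping; the two points that genuinely require care are the $\nu$-independence of the Lipschitz constant for $\eta(\cdot\,|\,\cdot)$, which rests on $\tilde v^{\nu}$ staying uniformly away from the vacuum and from $+\infty$ (i.e.\ on the bounds $v_-/2<\tilde v<v_-$ underlying \eqref{tail}), and the $\nu$-independent control of the $L^1$-in-$x$ translation error, for which the monotonicity of $\tilde v^{\nu}$ and the scaling identity for $\|(\tilde h^{\nu})'\|_{L^1(\bbr)}$ are used.
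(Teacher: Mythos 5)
Your proof is correct and follows essentially the same route as the paper: both arguments exploit the boundedness of the truncated quantities $\underline v^{\nu},\underline h^{\nu}$ to get a $\nu$-independent Lipschitz bound on $\eta(\,\cdot\,|\,\cdot\,)$ in its second argument, and then split the resulting $L^1$ error into a profile error of size $O(\nu)$ and a translation error controlled by $\|X_\nu-X_\infty\|_{L^1(0,T)}$ via Lemma \ref{lem-X}. The only (immaterial) difference is the intermediate point in the triangle inequality — the paper compares through $\bar u(x-X_\nu)$ and uses the step function's jump, whereas you compare through $\tilde h^{\nu}(x-X_\infty)$ and use the uniform BV bound of the viscous profile.
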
 
\begin{proof}
Since $\underline h^\nu, \tilde h^{\nu}, \bar u$ are bounded, we have
\[
\big| |\underline h^{\nu}-\tilde h^{\nu}(x-X_{\nu})|^2 - |\underline h^{\nu}-\bar u(x-X_{\infty})|^2 \big| \le C\big|\tilde h^{\nu}(x-X_{\nu})-\bar u(x-X_{\infty})\big|.
\]
We separate the right-hand side into two parts:
\[
|\tilde h^\nu(x-X_\nu)-\bar u(x-X_\infty)|\le \underbrace{|\tilde h^\nu(x-X_\nu)-\bar u(x-X_\nu)|}_{=:I_1}+\underbrace{|\bar u (x-X_\nu)-\bar u(x-X_\infty)|}_{=:I_2}.
\] 
Since $\tilde h^{\nu}=\tilde u^{\nu}+\nu p(\tilde v^{\nu})_x$, using $\|\tilde u^\nu-\bar u\|_{L^1(\bbr)}=\nu \|\tilde u-\bar u\|_{L^1(\bbr)}$, we have
\[
\|I_1\|_{L^1(\bbr)}=\|\tilde h^\nu-\bar u\|_{L^1(\bbr)}\le \|\tilde u^{\nu}-\bar u\|_{L^1(\bbr)}+\nu \|p(\tilde v^{\nu})_x\|_{L^1(\bbr)} \le C\nu.
\]
Moreover, since $\|I_2\|_{L^1(\bbr)}=|u_- -u_+||X_\nu -X_\infty|$, it follows from Lemma \ref{lem-X} that
\begin{align*}
\begin{aligned} 
&\int_0^{T}  \phi_\eps(s) \int_\bbr \frac{1}{2}\big| |\underline h^{\nu}-\tilde h^{\nu}(x-X_{\nu}(s))|^2 - |\underline h^{\nu}-\bar u(x-X_{\infty}(s))|^2 \big|dxds \\
&\qquad\le C\nu +C\int_0^{\infty}  \phi_\eps(s) |X_\nu(s) -X_\infty(s)| ds \to 0.
\end{aligned}
\end{align*}
Likewise, since $\underline v^{\nu}$ is bounded, using the definition of $Q(\cdot|\cdot)$, we have
\begin{align*}
\begin{aligned} 
&\big| Q(\underline v^{\nu} | \tilde v^{\nu}(x-X_{\nu})) - Q(\underline v^{\nu} | \bar v(x-X_{\infty})) \big| \\
&\quad \le \left|Q(\tilde v^{\nu}(x-X_{\nu}))-Q(\bar v(x-X_{\infty})) \right| +\left| Q'(\tilde v^{\nu}(x-X_{\nu})) \right| \big|\tilde v^{\nu}(x-X_{\nu})-\bar v(x-X_{\infty})\big|  \\
&\qquad +\left(|\underline v^{\nu}| + \left| \bar v(x-X_{\infty})  \right| \right) \left| Q'(\tilde v^{\nu}(x-X_{\nu}))-Q'(\bar v(x-X_{\infty}))  \right| \\
&\quad \le C\big|\tilde v^{\nu}(x-X_{\nu})-\bar v(x-X_{\infty})\big|.
\end{aligned}
\end{align*}
Therefore, following the same computations as above, we have the desired result.
\end{proof}

Recalling \eqref{psi_k} and \eqref{phi_k}, we now consider
\begin{align*}
\begin{aligned} 
&\iint_{(0,T)\times\bbr}  \phi_\eps(s)  \eta \big( ( v^{\nu}, h^{\nu})(s,x) | (\bar v,\bar u)(x-X_\infty(s))  \big) dxds\\
&\quad = \underbrace{\iint_{h^\nu\in [-L,L]}  \phi_\eps  \frac{\big| \underline h^{\nu} - \bar u (x-X_{\infty})  \big|^2}{2} dxds}_{=:J_1} 
+\underbrace{ \iint_{h^\nu\notin [-L,L]}   \phi_\eps  \frac{\big| h^{\nu} - \bar u (x-X_{\infty})  \big|^2}{2}   dxds}_{=:J_2}\\
&\qquad + \underbrace{\iint_{v^\nu\in [K^{-1},K]}  \phi_\eps Q \big( \underline v^{\nu} | \bar v (x-X_{\infty})  \big) dxds}_{=:J_3} +\underbrace{ \iint_{v^\nu\notin [K^{-1},K]}   \phi_\eps Q \big( v^{\nu} |\bar  v (x-X_{\infty})  \big)  dxds}_{=:J_4}.
\end{aligned}
\end{align*}
Note that (using \eqref{L-est})
\[
J_1+J_3 \le R_\nu = (R_\nu- L_\nu) +L_\nu  \le (R_\nu- L_\nu) +C\mathcal{E}_0 +\delta.
\]
For $J_2$, we use the fact that since $\bar u, \tilde h^\nu \in \big(\min\{u_-,u_+\},\max\{u_-,u_+\} \big)\subset \big(-L/2, L/2\big)$, we find
\[
\big| h^{\nu} - \bar u (x-X_{\infty})  \big| \le 3 \big| h^{\nu} - \tilde h^\nu (x-X_\nu)  \big|\quad\mbox{for all }h^\nu\notin [-L,L].
\]
Then using \eqref{ineq-m},
\[
J_2\le \frac{9}{2}\iint_{h^\nu\notin [-L,L]}  \phi_\eps \big| h^{\nu} -  \tilde h^\nu (x-X_\nu)  \big|^2  dxds \le C(\mathcal{E}_0 +\delta).
\]
Likewise for $J_4$, since $\bar v, \tilde v^\nu \in \big(\min\{v_-,v_+\},\max\{v_-,v_+\} \big)\subset \big(M^{-1}, M\big)$, using \eqref{Q2} in Lemma \ref{lem_Q1} with the choice \eqref{Kdef}, we have
\[
J_4\le C \iint_{v^\nu\notin [K^{-1},K]}  \phi_\eps Q \big( v^{\nu} |\tilde  v^\nu (x-X_\nu)  \big)  dxds \le C(\mathcal{E}_0 +\delta).
\]
Therefore, we have
\begin{align}
\begin{aligned} \label{last-1}
&\iint_{(0,T)\times\bbr} \phi_\eps(s)  \frac{|h^\nu(t,x)-\bar u(x-X_{\infty}(s))|^2}{2}  ds dx \\
&\quad+ \iint_{(0,T)\times\bbr} \phi_\eps(s)  Q(v^\nu(t,x) | \bar v (x-X_{\infty}(s))) ds dx \le |R_\nu- L_\nu| +C(\mathcal{E}_0 +\delta).
\end{aligned}
\end{align}

Now, it remains to show that the left-hand side of \eqref{last-1} is lower semi-continuous with respect to the weak convergences \eqref{vwc} and \eqref{hwc}. \\
First of all, using the weak lower semi-continuity of the $L^2$-norm (for example see \cite{evans-w}) together with \eqref{hwc}, we have
\begin{align}
\begin{aligned} \label{conv-hu}
&\iint_{(0,T)\times\bbr} \phi_\eps(s)  \frac{|u_\infty(t,x)-\bar u(x-X_{\infty}(s))|^2}{2}  ds dx \\
&\quad \le \liminf_{\nu\to 0}\iint_{(0,T)\times\bbr} \phi_\eps(s)  \frac{|h^\nu(t,x)-\bar u(x-X_{\infty}(s))|^2}{2}  ds dx .
\end{aligned}
\end{align}

However, since $v_\infty$ is a measure in space as $v_\infty \in L^\infty(0,T; L^\infty(\bbr)+\mathcal{M}(\bbr))$, we may use the generalized relative functional \eqref{dQ} to handle the measure $v_\infty$. \\
In the following lemma, we show the weakly lower semi-continuity of the functional 
\[
dQ(v^\nu | \bar v(x-X_\infty))
\] in the left-hand side of \eqref{last-1}. In fact, Lemma \ref{lem:mlsc} deals with more general case where $\{v^\nu\}_{\nu>0}$ is the sequence of measures.
Without loss of generality, we only handle the case of $v_->v_+$, and set 
\beq\label{def-OM}
\Omega_M :=\{ (t,x)\in(0,T)\times\bbr~|~ x< X_\infty(t) \} .
\eeq
Since $X_\infty\in BV(0,T)$, we have that
\beq\label{bdry}
\mbox{Lebesgue measure on $\bbr^2$ of $\partial\Omega_M$ (:= the boundary of $\Omega_M$) is zero},
\eeq
and the complement of $\overline{\Omega_M}$ (:= the closure of $\Omega_M$) in $(0,T)\times\bbr$ is as follows:
\beq\label{com-Om}
(\overline{\Omega_M})^c=\{ (t,x)\in(0,T)\times\bbr~|~ x> X_\infty(t) \}. 
\eeq
Note that 
\beq\label{def-Omv}
\bar v(x-X_\infty(t))=\left\{ \begin{array}{ll}
        v_- \qquad \mathrm{for  \ \ } (t,x)\in \Omega_M,\\
        v_+ \qquad \mathrm{for  \ \ } (t,x)\in (\overline{\Omega_M})^c .\end{array} \right.
\eeq
\begin{lemma}\label{lem:mlsc}
Assume $v_+<v_-$. Consider the set \eqref{def-OM} and the properties \eqref{bdry},  \eqref{com-Om}.\\
Let $\Phi:\bbr^+\times\bbr\to\bbr$ be any compactly supported nonnegative function.\\
Let $\{v^k\}_{k=1}^{\infty}$ be a sequence of positive measures in $L^\infty((0,T)\times\bbr)+\mathcal{M}((0,T)\times\bbr)$ such that for some constant $C_0>0$ (independent of $k$),
\[
\int_{(0,T)\times\bbr} \Phi(t,x)~d Q\left(v^k | \bar v(x-X_\infty(t))\right) (t,x) \le C_0 ,
\]
where 
\[
dQ\left(v^k |\bar v(x-X_\infty(t))\right)(t,x) := Q\left(v^k_a|\bar v(x-X_\infty(t))\right) dt dx +  |Q'(\overline V(t,x))| dv^k_s (t,x) ,
\]
where $d v^k (t,x) := v^k_a(t,x) dtdx + dv^k_s (t,x)$ \mbox{(by Radon-Nikodym's theorem)}, and
\beq\label{def-vom}
\overline{V}(t,x):=\left\{ \begin{array}{ll}
        v_- \qquad \mathrm{for  \ \ } (t,x)\in \overline{\Omega_M},\\
        v_+ \qquad \mathrm{for  \ \ } (t,x)\in (\overline{\Omega_M})^c .\end{array} \right.
\eeq
Then, there exists a limit $v_\infty\in L^\infty((0,T)\times\bbr)+\mathcal{M}((0,T)\times\bbr)$ such that $v^k \rightharpoonup v_\infty$ in $\mathcal{M}_{\mathrm{loc}}(\bbr^+\times\bbr)$,
and 
\[
\int_{(0,T)\times\bbr}  \Phi(t,x)~d Q\left(v_\infty | \bar v(x-X_\infty(t))\right) (t,x)  \le  C_0.
\]
\end{lemma}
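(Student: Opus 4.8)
The plan is to prove Lemma~\ref{lem:mlsc} by splitting the total measure $dQ(v^k|\bar v(\cdot-X_\infty))$ into three pieces according to the decomposition of $\bbr^+\times\bbr$ into the open set $\Omega_M$, the open set $(\overline{\Omega_M})^c$, and the boundary $\partial\Omega_M$. Since $\Phi$ is compactly supported and nonnegative and the total masses are bounded by $C_0$, the family $\{v^k\}$ is bounded in $\mathcal{M}_{\mathrm{loc}}$ (this uses the coercivity of $Q(\cdot|v_\pm)$ from Lemma~\ref{lem-pro}, namely $Q(v|w)\geq c_1|v-w|^2$ for bounded $v$ and $Q(v|w)\geq c_2|v-w|$ for $v\geq 3v_-$, applied on the compact support of $\Phi$), so after passing to a subsequence we obtain a weak-$*$ limit $v_\infty\in L^\infty+\mathcal{M}$. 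First I would record, for each of the two open regions, that the convex integrand is a fixed function of $v$ (it is $Q(\cdot|v_-)$ on $\Omega_M$ and $Q(\cdot|v_+)$ on $(\overline{\Omega_M})^c$), and that $\Phi$ restricted to these open sets is an admissible nonnegative test density.

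The core of the argument is the classical lower-semicontinuity of a convex functional of a measure under weak-$*$ convergence: if $f:\bbr^+\to\bbr^+$ is convex, lower-semicontinuous, with recession function $f^\infty$, and $v^k\rightharpoonup v_\infty$ in $\mathcal{M}_{\mathrm{loc}}(\mathcal{O})$ on an open set $\mathcal{O}$, then
\[
\int_{\mathcal{O}}\Phi\,f(v_{\infty,a})\,dtdx+\int_{\mathcal{O}}\Phi\, f^\infty\Big(\frac{dv_{\infty,s}}{d|v_{\infty,s}|}\Big)d|v_{\infty,s}|\leq\liminf_{k\to\infty}\left(\int_{\mathcal{O}}\Phi\,f(v^k_a)\,dtdx+\int_{\mathcal{O}}\Phi\,f^\infty(\cdots)\,d|v^k_s|\right),
\]
for $\Phi\geq0$ continuous compactly supported (Reshetnyak-type / Ambrosio–Fusco–Pallara, which the paper already cites for BV compactness). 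I would apply this with $f(v)=Q(v|v_-)$ on $\mathcal{O}=\Omega_M$ and with $f(v)=Q(v|v_+)$ on $\mathcal{O}=(\overline{\Omega_M})^c$. The key point is that here $f^\infty(1)=\lim_{v\to\infty}Q(v|w)/v=\frac{1}{\gamma-1}\cdot 0$? — no: $Q(v)=v^{1-\gamma}/(\gamma-1)\to0$, so $Q(v|w)=Q(v)-Q(w)-Q'(w)(v-w)$ grows like $-Q'(w)v=|Q'(w)|v=p(w)v$ as $v\to+\infty$; hence $f^\infty(1)=p(v_\pm)=|Q'(v_\pm)|$. This is exactly the coefficient $|Q'(\overline V)|$ appearing in the definition \eqref{dQ} of $dQ$, which is why the singular-part term in the statement has precisely that weight on each region. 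On $\partial\Omega_M$, which by \eqref{bdry} has two-dimensional Lebesgue measure zero, the absolutely continuous contribution vanishes, and the singular part of $v_\infty$ carried on $\partial\Omega_M$ contributes nonnegatively with weight $|Q'(\overline V)|=|Q'(v_-)|$ (the larger of the two by the remark after \eqref{dQ}), which is consistent with — indeed dominated by — what one would get from the definition; in any case dropping it only decreases the left side, so the inequality is preserved. Summing the two open-region estimates and adding the nonnegative boundary term gives
\[
\int_{(0,T)\times\bbr}\Phi\,dQ(v_\infty|\bar v(\cdot-X_\infty))\leq\liminf_{k\to\infty}\int_{(0,T)\times\bbr}\Phi\,dQ(v^k|\bar v(\cdot-X_\infty))\leq C_0.
\]

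I expect the main obstacle to be the bookkeeping around the boundary $\partial\Omega_M$ and the matching of the singular-part weights: one must check that the two pieces $\mathbf 1_{\Omega_M}dv_{\infty,s}$ and $\mathbf 1_{(\overline{\Omega_M})^c}dv_{\infty,s}$ together with any mass on $\partial\Omega_M$ reassemble exactly into the $|Q'(\overline V)|dv_{\infty,s}$ prescribed by \eqref{dQ}, using the definition \eqref{def-vom} of $\overline V$ (closed set gets $v_-$, complement gets $v_+$) and the ordering $|Q'(v_-)|\le|Q'(v_+)|$ — wait, the remark says $|Q'(\max(v_-,v_+))|\le|Q'(\min(v_-,v_+))|$, i.e. since here $v_-<v_+$ we have $\overline V=v_-=\min$ on $\overline{\Omega_M}$, so $|Q'(\overline V)|=|Q'(v_-)|$ is the \emph{larger} weight there, which is the conservative (correct) choice for a lower bound. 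A second, more routine technical point is justifying the weak-$*$ convergence of the singular parts and the use of the continuous compactly supported $\Phi$ rather than merely Borel: one reduces to continuous $\Phi$ by inner regularity and monotone approximation of the given nonnegative compactly supported $\Phi$ from below by continuous ones, or simply notes that $\Phi$ may be taken continuous in the application (it is $\phi_\eps(s)$ times a cutoff), so no real subtlety remains. Everything else — the compactness extraction, the identification of $f^\infty$, and the additivity of the liminf bound over the finitely many disjoint regions — is standard.
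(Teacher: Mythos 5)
Your overall strategy---identifying the recession function $f^\infty(1)=\lim_{v\to\infty}Q(v|w)/v=|Q'(w)|=p(w)$ and invoking the Reshetnyak/Goffman--Serrin lower semicontinuity of convex functionals of measures---is a legitimate alternative to the paper's argument, which instead proceeds by hand: it truncates $v^k_a$ at a level $\xi$, replaces $Q$ by a function $Q_\xi$ that is linear near $0$, proves the elementary measure inequality \eqref{Q-cut}, uses weak-$*$ lower semicontinuity only for the bounded truncations, and recovers the full statement by Fatou as $\xi\to\infty$ and $\eps\to0$. However, your proof has a genuine gap exactly at the point you flag as "bookkeeping": the possible concentration of the singular part of $v_\infty$ on $\partial\Omega_M=\{x=X_\infty(t)\}$. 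Applying lower semicontinuity separately on the two \emph{open} sets $\Omega_M$ and $(\overline{\Omega_M})^c$ and summing controls only $\int_{\Omega_M}+\int_{(\overline{\Omega_M})^c}$, whereas the quantity you must bound by $C_0$ is the \emph{full} left-hand side, which also contains $\int_{\partial\Omega_M}\Phi\,|Q'(\overline V)|\,dv_{\infty,s}$. Your remark that "dropping it only decreases the left side, so the inequality is preserved" is backwards: you are trying to prove an upper bound on the left side, so a nonnegative term you have not estimated cannot simply be dropped. Mass escaping to the boundary under weak-$*$ convergence is invisible to both open-set estimates, and it is precisely the physically relevant scenario (cavitation concentrating on the shock curve), so it cannot be excluded.

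The gap is fixable, and the paper's proof shows one way: since $v_+<v_-$ and $|Q'(v)|=v^{-\gamma}$ is decreasing, one has $|Q'(v_-)|\le|Q'(v_+)|$ (note you also inverted this: with $\overline V=v_-=\max(v_-,v_+)$ on $\overline{\Omega_M}$, the boundary carries the \emph{smaller} weight $|Q'(v_-)|$, not the larger one). The paper therefore introduces a cutoff $\psi_1^\delta$ equal to $1$ on a neighborhood of $\overline{\Omega_M}$ and assigns the smaller coefficient $|Q'(v_-)|-\eps$ there at level $k$; this is a valid lower bound for the $k$-level singular contribution even on the sliver of $\Omega_m$ near the boundary (where the true weight is the larger $|Q'(v_+)|$), and after passing to the limit and sending $\delta\to0$ it captures any boundary concentration of $dv_{\infty,s}$ with exactly the weight $|Q'(v_-)|$ demanded by \eqref{dQ}. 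Alternatively, within your framework, you could apply a version of the lower semicontinuity theorem with the jointly lower semicontinuous $x$-dependent recession weight $(t,x)\mapsto|Q'(\overline V(t,x))|$ on the whole domain (it is l.s.c. because it takes its smaller value on the closed set $\overline{\Omega_M}$), rather than splitting into the two open pieces. As written, though, the boundary term is unaccounted for and the proof is incomplete.
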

\begin{proof}
Since $v^k$ are positive measures in $L^\infty((0,T)\times\bbr)+\mathcal{M}((0,T)\times\bbr)$, Radon-Nikodym's theorem implies that there exist positive measures $v^k_a \in L^\infty(\bbr) +L^1(\bbr)$ and $dv^k_s$ (singular part of $v^k$) such that 
\[
d v^k (t,x) = v^k_a(t,x) dtdx + dv^k_s (t,x) .
\]
To truncate $ v^k_a$ by some big constant, we first use the fact that for any $\eps>0$, there exists $\xi>0$ with $\xi>\max(2 v_-, 2 v_+^{-1})$ such that for all $v>\xi$,
\beq \label{large-con}
(|Q'( {\bf{\bar v}})| +\eps) v \ge Q(v|{\bf{\bar v}}) \ge (|Q'({\bf{\bar v}})| -\eps) v ,
\eeq
where ${\bf{\bar v}}:=\bar v(x-X_\infty(t))$. Indeed, this is straightforwardly verified by the definition of the relative functional $Q(\cdot|\cdot)$, and $Q(v)\to 0$ as $v\to\infty$. \\
For such a constant $\xi$, we define
\[
v^k_\xi := \inf (v^k_a,\xi) ,
\]
and 
\[
Q_\xi (v):=\left\{ \begin{array}{ll}
       Q(v),\quad\mbox{if }  v\ge \xi^{-1},\\
        Q'(\xi^{-1}) (v-\xi^{-1}) + Q(\xi^{-1}),\quad\mbox{if } v\le\xi^{-1}.\end{array} \right.
\]
Note that $v\mapsto Q_\xi(v)$ is nonnegative and convex $C^1$-function on $[0,\infty)$, and $Q_\xi'({\bf{\bar v}})=Q'({\bf{\bar v}})$ (by $\xi^{-1}<v_+/2<{\bf{\bar v}}$). 
Then, we consider its relative functional: for any $v_1,v_2\ge 0$,
\[
Q_\xi(v_1|v_2) := Q_\xi(v_1) -Q_\xi(v_2)-Q_\xi'(v_2)(v_1-v_2) .
\]
Then, using \eqref{large-con}, we have
\beq\label{Q-cut}
d Q_\xi (v^k | {\bf{\bar v}}) \ge Q_\xi (v^k_\xi | {\bf{\bar v}}) dtdx + (|Q_\xi'(\overline V)| -\eps) (dv^k - v^k_\xi dtdx) - 2\eps d v^k ,
\eeq
which means that $d Q_\xi (v^k | {\bf{\bar v}}) -\big[ Q_\xi (v^k_\xi | {\bf{\bar v}}) dtdx + (|Q_\xi'(\overline V)| -\eps) (dv^k - v^k_\xi dtdx) - 2\eps d v^k\big]$ is nonnegative measure.
Indeed, this is verified as follows: If $v^k_a\le\xi$, then $v^k_\xi=v^k_a$, and so
\begin{align*}
\begin{aligned}
\mbox{LHS} &:=Q_\xi (v^k_a | {\bf{\bar v}}) dtdx+ |Q_\xi'(\overline V)| dv^k_s = Q(v^k_\xi| {\bf{\bar v}}) dtdx+ |Q_\xi'(\overline V)| dv^k_s \ge \mbox{RHS} ,
\end{aligned}
\end{align*}
where the last inequality follows from the facts that (by Radon-Nikodym's theorem) the measure $v^k - v^k_\xi$ is positive and
$v^k - v^k_\xi= (v^k_a - v^k_\xi) + v^k_s = v^k_s$.\\
If $v^k_a>\xi$, then $v^k_\xi=\xi$, and using \eqref{large-con}, $\mbox{LHS} \ge (|Q_\xi'({\bf{\bar v}})| -\eps) v^k_a dtdx + |Q_\xi'(\overline V)| dv^k_s $.
Since ${\bf{\bar v}}=\overline V$ $dtdx$-a.e. (by \eqref{def-Omv} and \eqref{def-vom}), we have
\begin{align*}
\begin{aligned}
\mbox{LHS} &\ge (|Q_\xi'(\overline V)| -\eps) dv^k = (|Q_\xi'(\overline V)| +\eps) \xi dtdx + (|Q_\xi'(\overline V)| -\eps) (d v^k- \xi dtdx) - 2\eps\xi dtdx\\
&= (|Q_\xi'({\bf{\bar v}})| +\eps) \xi dtdx + (|Q_\xi'(\overline V)| -\eps) (d v^k- v^k_\xi dtdx) - 2\eps\xi dtdx \\
&\ge  Q_\xi (\xi | {\bf{\bar v}}) dtdx + (|Q_\xi'(\overline V)| -\eps) (d v^k- v^k_\xi dtdx) - 2\eps\xi dtdx .
\end{aligned}
\end{align*}
Thus, using $\xi=v^k_\xi\le v^k_a \le v^k$, we have \eqref{Q-cut}.\\

Therefore, we use \eqref{Q-cut} to have
\begin{align*}
\begin{aligned} 
C_0 &\ge \limsup_{k\to\infty}\int_{(0,T)\times\bbr} \Phi(t,x)~d Q\left(v^k | \bar v(x-X_\infty(t))\right) (t,x) \\
&\ge \limsup_{k\to\infty}\int_{(0,T)\times\bbr} \Phi(t,x) \Big[ Q_\xi (v^k_\xi | \bar v(x-X_\infty(t)))dtdx \\
&\quad\quad \qquad\quad\qquad\qquad\qquad + (|Q'_\xi(\overline V)| -\eps) d(v^k - v^k_\xi) -2\eps d v^k \Big] .
\end{aligned}
\end{align*}
We set $\Omega_m:=(\overline{\Omega_M})^c$, and define  
\[
\Omega_m^{\delta} := \{ (t,x)\in \Omega_m~|~ d((t,x)|\Omega_m^c)>\delta \},\quad \forall \delta>0.
\]
Then we define a smooth function $\psi_1^{\delta}$ such that
\beq\label{defpsi1}
\psi_1^\delta (t,x) :=\left\{ \begin{array}{ll}
       1,\quad\mbox{on } (\Omega_m^\delta)^c ,\\
        0,\quad\mbox{on } \Omega_m^{2\delta} .\end{array} \right.
\eeq
Then, using this together with the facts that \eqref{def-Omv}, \eqref{def-vom} and $|Q'_\xi(v_-)|\le |Q_\xi'(v_+)|$ by $v_+<v_-$,
we have
\begin{align*}
\begin{aligned} 
C_0 &\ge \limsup_{k\to\infty} \Big[ \int_{\Omega_M} \Phi  Q_\xi (v^k_\xi | v_-)dtdx +\int_{\Omega_m} \Phi  Q_\xi (v^k_\xi | v_+)dtdx + (|Q_\xi'(v_-)| -\eps) \int \Phi \psi_1^\delta d(v^k - v^k_\xi) \\
&\quad  + (|Q_\xi'(v_+)| -\eps) \int \Phi (1-\psi_1^\delta) d(v^k - v^k_\xi) -2\eps \int \Phi d v^k  \Big].
\end{aligned}
\end{align*}
Note that since $|v^k_\xi|\le \xi$ for all $k$, there exists $v_*$ such that
\[
v^k_\xi  \rightharpoonup v_* \quad\mbox{in }~ L^\infty .
\]
Moreover, since the function $v\mapsto Q_\xi(v|c)$ with any constant $c$ is convex, the weak lower semi-continuity of convex functions (for example, see \cite{evans-w}) implies
\begin{align*}
\begin{aligned} 
& \liminf_{k\to\infty} \Big[ \int_{\Omega_M} \Phi  Q_\xi (v^k_\xi | v_-)dtdx +\int_{\Omega_m} \Phi  Q_\xi (v^k_\xi | v_+)dtdx  \Big] \\
&\quad \ge   \int_{\Omega_M} \Phi  Q_\xi (v_* | v_-)dtdx +\int_{\Omega_m} \Phi  Q_\xi (v_*  | v_+)dtdx .
\end{aligned}
\end{align*}
Also, since $v^k \rightharpoonup v_\infty$ in $\mathcal{M}_{\mathrm{loc}}(\bbr^+\times\bbr)$, and thus 
\beq\label{twoconv}
v^k -v^k_\xi \rightharpoonup v_\infty - v_* \quad\mbox{ in }~\mathcal{M}_{\mathrm{loc}}(\bbr^+\times\bbr)\quad\mbox{(by the uniqueness of the decomposition)}, 
\eeq
we have
\begin{align}
\begin{aligned} \label{cest1}
C_0 & \ge  \int_{\Omega_M} \Phi  Q_\xi (v_*  | v_-)dtdx +\int_{\Omega_m} \Phi  Q_\xi (v_*  | v_+)dtdx + (|Q_\xi'(v_-)| -\eps) \int \Phi \psi_1^\delta d(v_\infty - v_* ) \\
&\quad  + (|Q'(v_+)| -\eps) \int \Phi (1-\psi_1^\delta) d(v_\infty - v_* ) -2\eps \int \Phi d v_\infty =: \mathcal{R} .
\end{aligned}
\end{align}
By Radon-Nikodym's theorem, there exist positive measures $v_a \in L^\infty(\bbr) +L^1(\bbr)$ and $dv_s$ (singular part of $v_\infty$) such that
\beq\label{RNv}
d v_\infty (t,x) = v_a(t,x) dtdx + dv_s (t,x) .
\eeq
Note that since the measure $v^k -v^k_\xi $ is positive, it follows from \eqref{twoconv} and \eqref{RNv} that $v_\infty -v_* $, $v_a - v_* $ and $dv_s$ are all nonnegative.\\
Since $dv_\infty - v_*  dtdx= (v_a-v_* ) dtdx+ dv_s$ (by the uniqueness of the decomposition), we rewrite $\mathcal{R} $ in \eqref{cest1} as
\[
\mathcal{R} = \mathcal{R}_1 +\mathcal{R}_2+\mathcal{R}_3,
\]
where
\begin{align*}
\begin{aligned} 
 \mathcal{R}_1  & :=  |Q_\xi'(v_-)|  \int \Phi \psi_1^\delta dv_s +  |Q_\xi'(v_+)| \int \Phi (1-\psi_1^\delta) dv_s ,\\
 \mathcal{R}_2 &  := \int_{\Omega_M} \Phi  Q_\xi (v_*  | v_-) dtdx+ |Q'_\xi(v_-)| \int \Phi \psi_1^\delta (v_a - v_*  ) dtdx  \\
&\quad    +\int_{\Omega_m} \Phi   Q_\xi (v_*  | v_+) dtdx+ |Q'_\xi(v_+)|  \int \Phi (1-\psi_1^\delta)  (v_a - v_*  )  dtdx ,\\
\mathcal{R}_3 &  := - 3\eps \int \Phi  d v_\infty +\eps \int \Phi v_*  dtdx.
\end{aligned}
\end{align*}
Using $\overline{\Omega_M}\subset (\Omega_m^\delta)^c$ and \eqref{defpsi1}, we have
\begin{align*}
\begin{aligned} 
 \mathcal{R}_1  & \ge |Q'_\xi(v_-)|  \int_{\overline{\Omega_M}} \Phi  dv_s +  |Q'_\xi(v_+)| \int_{\Omega_m^{2\delta}} \Phi dv_s .
\end{aligned}
\end{align*}
Since $\Phi dv_s$ is a positive measure, and
\beq\label{conv-om}
\Omega_m^{2\delta} \nearrow \cup_{\delta>0} \Omega_m^{2\delta} = (\overline{\Omega_M})^c,
\eeq
we have
\[
\lim_{\delta\to 0} \int_{\Omega_m^{2\delta}} \Phi dv_s = \int_{ (\overline{\Omega_M})^c} \Phi dv_s .
\]
Thus,
\[
 \mathcal{R}_1  \ge \int   |Q'_\xi(\overline V)| \Phi dv_s .
\]

For $ \mathcal{R}_2$, we use \eqref{defpsi1} to have
\begin{align*}
\begin{aligned} 
 \mathcal{R}_2 &  \ge \int_{\Omega_M} \Phi \Big[ Q_\xi (v_*  | v_-) + |Q'_\xi(v_-)|  (v_a - v_*  ) \Big] dtdx  \\
&\quad    +\int_{\Omega_m^{2\delta}} \Phi \Big[   Q_\xi (v_*  | v_+) + |Q'_\xi(v_+)|  (v_a - v_*  ) \Big] dtdx .\\
\end{aligned}
\end{align*}
Then, we have
\begin{align*}
\begin{aligned} 
 \mathcal{R}_2 &  \ge \int_{\Omega_M} \Phi Q_\xi (v_a | v_-)  dtdx    +\int_{\Omega_m^{2\delta}} \Phi   Q_\xi (v_a | v_+)  dtdx ,
\end{aligned}
\end{align*}
where we used the equality that for any $w_1, w_2\ge 0$ and any $c>0$,
\[
Q_\xi (w_1+w_2 | c) \le Q_\xi (w_1 | c) + |Q'_\xi (c)|w_2.
\]
Indeed, it follows from $Q_\xi'\le 0$ and the definition of $Q_\xi(\cdot|\cdot)$ that
\[
Q_\xi (w_1+w_2 | c) -Q_\xi (w_1 | c) - |Q'_\xi (c)|w_2 = Q_\xi (w_1+w_2)-Q_\xi(w_1) \le 0 .
\]
Since \eqref{conv-om} imiplies
\[
\lim_{\delta\to 0} \int_{\Omega_m^{2\delta}} \Phi   Q_\xi (v_a | v_+)  dtdx= \int_{ (\overline{\Omega_M})^c} \Phi   Q_\xi (v_a | v_+)  dtdx ,
\]
we use \eqref{bdry} to have
\[
 \mathcal{R}_2 \ge \int \Phi   Q_\xi (v_a | \bar v(x-X_\infty(t)))  dtdx .
\]
Therefore, we have
\[
\mathcal{R} \ge \int \Phi   Q_\xi (v_a | \bar v(x-X_\infty(t)))  dtdx +  \int  \Phi  |Q'_\xi(\overline V)| dv_s - 3\eps \int \Phi  d v_\infty ,
\]
that is,
\[
\int \Phi   Q_\xi (v_a | \bar v(x-X_\infty(t)))  dtdx +  \int  \Phi  |Q'_\xi(\overline V)| dv_s \le \mathcal{R}  + 3\eps \int \Phi  d v_\infty.
\]
Therefore, taking $\xi\to\infty$ and using Fatou's lemma, we have
\[
\int \Phi   Q (v_a | \bar v(x-X_\infty(t)))  dtdx +  \int  \Phi  |Q'(\overline V)| dv_s \le \mathcal{R}  + 3\eps \int \Phi  d v_\infty.
\]
Then taking $\eps\to 0$, we have
\[
\int \Phi   Q (v_a |\bar v(x-X_\infty(t)))  dtdx +  \int  \Phi  |Q'(\overline V)| dv_s \le \mathcal{R} 
\]
This completes the proof.
\end{proof}

To apply Lemma \ref{lem:mlsc} to \eqref{last-1}, we define a smooth function $\psi_0^R$ such that for any $R>0$,
\[
\mathbf{1}_{|x|\le R} \le \psi_0^R (x) \le \mathbf{1}_{|x|\le 2R} .
\]
Then, it follows from \eqref{last-1} that
\begin{align}
\begin{aligned} \label{last-2}
&\iint_{(0,T)\times\bbr} \phi_\eps(s)   \frac{|h^\nu(t,x)-\bar u(x-X_{\infty}(s))|^2}{2}  ds dx \\
&\quad+ \iint_{(0,T)\times\bbr} \phi_\eps(s)  \psi_0^R (x)   Q(v^\nu(t,x) | \bar v (x-X_{\infty}(s))) ds dx \le |R_\nu- L_\nu| +C(\mathcal{E}_0 +\delta).
\end{aligned}
\end{align}
Thus, using Lemma \ref{lem:mlsc} together with the weak convergence \eqref{vwc}, we have
\begin{align*}
\begin{aligned} 
& \iint_{(0,T)\times\bbr} \phi_\eps(s)  \psi_0^R (x)  ~ d Q(v_\infty (t,x) | \bar v (x-X_{\infty}(s))) ds dx \\
&\quad \le \liminf_{\nu\to 0}\iint_{(0,T)\times\bbr} \phi_\eps(s)  \psi_0^R (x)   Q(v^\nu(t,x) | \bar v (x-X_{\infty}(s))) ds dx .
\end{aligned}
\end{align*}
Here, the measure $v_\infty$ has the decomposition \eqref{RNv}, and
\[
dQ\left(v_\infty |\bar v(x-X_\infty(t))\right)(t,x) = Q\left(v_a|\bar v(x-X_\infty(t))\right) dt dx +  |Q'(\overline V(t,x))| dv_s (t,x) ,
\]
where $\overline{V}(t,x)$ is defined by \eqref{def-vom} with \eqref{def-OM}.\\
Then, using $\bbr^+\times (-R,R)\nearrow \bbr^+\times\bbr$ as $R\to\infty$, we have
\begin{align*}
\begin{aligned} 
& \iint_{(0,T)\times\bbr} \phi_\eps(s)  ~ d Q(v_\infty (t,x) | \bar v (x-X_{\infty}(s))) ds dx \\
&\quad \le \liminf_{\nu\to 0}\iint_{(0,T)\times\bbr} \phi_\eps(s)  \psi_0^R (x)   Q(v^\nu(t,x) | \bar v (x-X_{\infty}(s))) ds dx .
\end{aligned}
\end{align*}
Therefore, this together with \eqref{last-2}, \eqref{conv-hu} and Lemma \ref{lem-LR} yields
\begin{align*}
\begin{aligned} 
&\iint_{(0,T)\times\bbr} \phi_\eps(s)  \frac{|u_\infty(t,x)-\bar u(x-X_{\infty}(s))|^2}{2}  ds dx \\
& \quad +\iint_{(0,T)\times\bbr} \phi_\eps(s)  ~ d Q(v_\infty (t,x) | \bar v (x-X_{\infty}(s))) ds dx  \le C(\mathcal{E}_0 +\delta) .
\end{aligned}
\end{align*}
Taking $\eps\to0$ (recall \eqref{def-time}), we obtain that
\[
d Q(v_\infty | \bar v (\cdot-X_{\infty}(\cdot))) \in L^\infty(0,T;\mathcal{M}(\bbr)),
\]
and, for a.e. $t\in(0,T)$,
\[
\int_{\bbr } \frac{|u_\infty(t,x)-\bar u(x-X_{\infty}(t))|^2}{2}  dx + \left(\int_{x\in \bbr } d Q(v_\infty | \bar v (x-X_{\infty}(\cdot))) \right)(t) \le C(\mathcal{E}_0 +\delta).
\]
Since $\delta>0$ is arbitrary, we obtain 
\[
\int_{\bbr } \frac{|u_\infty(t,x)-\bar u(x-X_{\infty}(t))|^2}{2}  dx + \left(\int_{x\in \bbr } d Q(v_\infty | \bar v (x-X_{\infty}(\cdot))) \right)(t) \le C \mathcal{E}_0 ,
\]
which gives \eqref{uni-est}.

\subsubsection{\bf Weak continuity of the limit $v_\infty$}
In order to prove \eqref{X-control}, 
we may first prove that $v_\infty$ is weakly continuous in time, and
\beq\label{v-conti}
\lim_{t\to 0+}\int_\bbr\varphi(x) v_\infty(t, dx) = \int_\bbr \varphi(x) v^0(x) dx,\qquad \forall \varphi\in C_0(\bbr) .
\eeq
We first claim that
\beq\label{prove-u} 
\{u^{\nu}\}_{\nu>0} \mbox{ is bounded in } L^2(0,T;L^1_{loc}(\bbr)).
\eeq
For that, recall from \eqref{effective} that $u^{\nu}=h^{\nu}-\nu \big(p(v^\nu)^{\frac{\alpha}{\gamma}}\big)_x$. First, we have \eqref{cpt-h}. To get a uniform boundedness of $\nu \big(p(v^\nu)^{\frac{\alpha}{\gamma}}\big)_x$, we use the same estimates as in Step 2 for the proof of \eqref{pfpr}. Indeed, since
\begin{align*}
\begin{aligned} 
\nu \Big|\big(p(v^\nu)^{\frac{\alpha}{\gamma}}\big)_x\Big| &=\nu \frac{\alpha}{\gamma} (v^\nu)^\beta \big|p(v^\nu)_x \big|\\
&\le \nu (v^\nu)^\beta \big|\big(p(v^\nu) -p(\tilde v^\nu (x-X_{\nu}(t))) \big)_x\big|+ \nu (v^\nu)^\beta \big| p(\tilde v^\nu (x-X_{\nu}(t)))_x \big| \\
&\le C\bigg(\sqrt\nu (v^\nu)^\beta \big|\big(p(v^\nu) -p(\tilde v^\nu (x-X_{\nu}(t))) \big)_x\big|^2 + \big(1+Q\left(v^\nu | \tilde v^\nu (x-X_{\nu}(t)) \right) \big) \\
&\qquad + \big|( \tilde v^\nu)' \big| \big( 1+ Q\left(v^\nu | \tilde v^\nu (x-X_{\nu}(t)) \right) \big) \bigg),
\end{aligned}
\end{align*}
using \eqref{ineq-m}, we have
\[
\nu \big(p(v^\nu)^{\frac{\alpha}{\gamma}}\big)_x ~\mbox{ is uniformly bounded in } L^2(0,T;L^1_{loc}(\bbr)).
\]
Therefore, we have \eqref{prove-u}.\\
Then, \eqref{prove-u} together with the equation $v_t^\nu - u_x^\nu =0$ in \eqref{inveq} implies
\[
v_t^\nu ~\mbox{ is uniformly bounded in } L^2(0,T;W^{-1,1}_{loc}(\bbr)).
\]
Hence, by Aubin-Lions lemma, this and \eqref{cpt-v} together with \eqref{vwc} imply that (up to a subsequence)
\[
v^{\nu} \to v_\infty \mbox{  in } C([0,T];W^{-s,1}_{loc}(\bbr)),\quad s>0,
\]
which together with \eqref{v-inicon} completes the proof of \eqref{v-conti}.

\subsubsection{\bf Proof of \eqref{X-control}}
First of all, since $X_\infty \in BV ((0,T))$, there exists a positive constant $r=r(T)$ such that $\|X_\infty\|_{L^{\infty}((0,T))}= r$. Then we consider a nonnegative smooth function $\psi: \bbr \to\bbr$ such that $\psi(x)=\psi(-x)$, and $\psi'(x)\le 0$ for all $x\ge 0$, and $|\psi'(x)|\le 2/r$ for all $x\in\bbr$, and
\[
\psi(x)=\left\{ \begin{array}{ll}
       1,\quad\mbox{if }  |x|\le r,\\
       0,\quad\mbox{if } |x|\ge 2r.\end{array} \right.
\]
On the other hand, let $\theta:\bbr\to\bbr$ be a nonnegative smooth function such that $\theta (s)=\theta(-s)$, $\int_{\bbr}\theta=1$ and $\mbox{supp } \theta = [-1,1]$, and let 
\[
\theta_\delta(s):=\frac{1}{\delta}\theta \big(\frac{s-\delta}{\delta}\big)\quad\mbox{for any }\delta>0.
\]
Then for a given $t\in(0,T)$, and any $\delta<t/2$, we define a nonnegative smooth function
\[
\varphi_{t, \delta} (s) := \int_0^s \Big(\theta_\delta (\tau) -\theta_\delta (\tau- t)  \Big) d\tau.
\]
Since $v^{\nu}_t - u^{\nu}_x =0$ by $\eqref{inveq}_1$, it follows from \eqref{wconv} that the limits $v_{\infty}$ and $u_{\infty}$ satisfy
\beq\label{limit-con}
\int_{[0,T]\times\bbr} \big(  \varphi_{t, \delta}'(s) \psi(x) d v_{\infty}(s,x) - \varphi_{t, \delta} (s) \psi'(x) u_{\infty}(s,x) dsdx \big)  =0.
\eeq
Since $\varphi_{t, \delta}'(s) = \theta_\delta (s) -\theta_\delta (s-t)$, we decompose the left-hand side above into three parts as
\[
I_1^\delta+I_2^\delta+I_3^\delta=0,
\]
where
\begin{align*}
\begin{aligned} 
&I_1^\delta:=\int_{[0,T]\times\bbr}  \theta_\delta (s) \psi(x) d v_{\infty}(s,x)  ,\\
&I_2^\delta:= -\int_{[0,T]\times\bbr}  \theta_\delta (s-t)   \psi(x) d v_{\infty}(s,x) ,\\
&I_3^\delta:= -\int_0^T \int_\bbr\varphi_{t, \delta} (s) \psi'(x) u_{\infty}(s,x)dxds.
\end{aligned}
\end{align*}
Using \eqref{v-conti} and the fact that $\int_\bbr  \psi(x) v_\infty(s, dx) $ is continuous in $s$, we find that as $\delta\to0$ :
\[
I_1^\delta\to \int_\bbr  \psi(x) v^0 (x) dx,\quad I_2^\delta \to -\int_\bbr  \psi(x) v_{\infty}(t,dx) ,
\]
and 
\[
I_3^\delta \to -\int_0^T \int_\bbr \psi'(x) u_{\infty}(s,x) dxds.
\]
Therefore, it follows from \eqref{limit-con} that
\[
\underbrace{\int_\bbr  \psi(x) \big(v_{\infty}(t,dx) -v^0(x)dx \big) }_{=:J_1} + \underbrace{\int_0^T \int_\bbr \psi'(x) u_{\infty}(s,x) dxds}_{=:J_2} =0.
\]
To show \eqref{X-control} from the above equation, we will use the stability estimate \eqref{uni-est} and the Rankine-Hugoniot condition. \\
For that, we decompose $J_1$ into three parts:
\[
J_1=J_{11}+J_{12}+J_{13},
\]
where 
\begin{align*}
\begin{aligned} 
J_{11} &=\int_\bbr  \psi(x) \big(v_{\infty}(t,dx) -\bar v(x-X_\infty(t))dx \big) , \\
J_{12} &= \int_\bbr  \psi(x) \big(\bar v(x-X_\infty(t)) -\bar v(x) \big)  dx,\\
J_{13} &=\int_\bbr  \psi(x) \big(\bar v(x) - v^0(x) \big)  dx.
\end{aligned}
\end{align*}
Likewise, we decompose $J_2$ into two parts:
\[
J_2=J_{21}+J_{22},
\]
where 
\begin{align*}
\begin{aligned} 
J_{21} &=\int_0^t \int_\bbr  \psi'(x) \big(u_{\infty}(s,x) -\bar u(x-X_\infty(s)) \big)  dxds, \\
J_{22} &= \int_0^t \int_\bbr  \psi'(x) \bar u(x-X_\infty(s)) dx ds.
\end{aligned}
\end{align*}
Since $|X_\infty(t)|\le r$ for all $t\in(0,T)$, we have
\[
J_{12} +J_{22} = (v_--v_+)X_\infty(t) + t (u_--u_+).
\]
Then using the Rankine-Hugoniot condition $\eqref{end-con}_1$, i.e., $\sigma= -\frac{u_--u_+}{v_--v_+}$, we have
\[
J_{12} +J_{22} =\big(X_\infty(t)-\sigma t \big) (v_--v_+).
\]
To control $J_{11}$ by the initial perturbation $\mathcal{E}_0=\int_{-\infty}^{\infty} \eta\big((v^0,u^0)| (\bar v, \bar u)\big) dx$, 
we recall the (unique) decomposition of the measure $v_\infty$ by
\[
d v_\infty (t,dx) = v_a(t,x) dx + v_s (t,dx) .
\]
Using \eqref{rel_Q}, we have
\begin{align*}
\begin{aligned} 
|J_{11}| &\le  \int_{-2r}^{2r} \big|v_a (t,x) -\bar v(x-X_\infty(t))\big| {\mathbf 1}_{\{v\le 3v_-\}} dx +\int_{-2r}^{2r} \big|v_a (t,x) -\bar v(x-X_\infty(t))\big| {\mathbf 1}_{\{v\ge 3v_-\}} dx \\
&\quad +\int_\bbr \psi(x) v_s (t,dx) \\
&\le  \frac{1}{\sqrt{c_1}}\int_{-2r}^{2r}  \sqrt{Q\big(v_a (t,x)|\bar v(x-X_\infty(t))\big)} dx + \frac{1}{c_2}\int_\bbr  Q\big(v_a(t,x)|\bar v(x-X_\infty(t))\big) dx\\
&\quad +\frac{1}{|Q'(v_-)|}\int_\bbr \psi(x)|Q'(\overline V)| v_s (t,dx) ,
\end{aligned}
\end{align*}
where note that $|Q'(\overline V)|\ge |Q'(v_-)|>0$ by  \eqref{def-vom}.\\
Thus, we use the stability estimate \eqref{uni-est} to have
\[
|J_{11}|  \le C\sqrt{r} \sqrt{\mathcal{E}_0} + C\mathcal{E}_0.
\]
Using the same estimates as above, and \eqref{ini_conv}, we have
\[
|J_{13}| \le  C\sqrt{r} \sqrt{\mathcal{E}_0} + C\mathcal{E}_0.
\]
Likewise, 
\[
|J_{21}| \le \frac{2}{r} \int_0^t \int_{[-2r,-r]\cup[r,2r]} \big|u_\infty(s,x) -\bar u(x-X_\infty(s))\big| dxds \le  \frac{C}{\sqrt{r}}t\sqrt{\mathcal{E}_0}.
\]
Hence we have
\[
(v_--v_+) | X_\infty(t) - \sigma t | \le C\Big( \mathcal{E}_0 + (1+t)\sqrt{\mathcal{E}_0} \Big),
\]
which completes the proof.\\

\begin{appendix}
\setcounter{equation}{0}

\section{Proof of Proposition \ref{prop:main3}} \label{app-exp}
We rewrite the functionals $Y_g, \mathcal{I}_1, \mathcal{I}_2, \mathcal{G}_2, \mathcal{D}$ with respect to the following variables 
\[
w:=p(v)-p(\tilde v_\eps),\quad y:=\frac{p(\tilde v_\eps(\xi))-p(v_-)}{p(v_+)-p(v_-)}.
\]
Since $p(\tilde v_\eps(\xi))$ is increasing in $\xi$, we use the change of variable $\xi\in\bbr\mapsto y\in[0,1]$. \\
Notice that $a=1-\lambda y$ and $|a-1|\leq \delta_3$ by \eqref{weight-a}, and
\beq\label{change-d}
\frac{dy}{d\xi}= \frac{p(\tilde v_\eps)'}{p(v_+)-p(v_-)},\quad\mbox{where }|p(v_+)-p(v_-)|=\eps.
\eeq

As in \cite[Proposition 3.4]{Kang-V-NS17}, we use the same notations:
\[
W:=\frac{\lambda}{\eps} w,\quad\quad \alpha_\gamma := \frac{\gamma \sqrt{-p'(v_-)} p(v_-)} {\gamma+1}>0.
\]
First of all, note that $Y_g, \mathcal{I}_1, \mathcal{I}_2, \mathcal{G}_2$ are respectively the same functionals  as $Y_g, \mathcal{B}_1, \mathcal{B}_2, \mathcal{G}_2$ in \cite[Proposition 3.4]{Kang-V-NS17} except for the term $\frac{1}{2}\int_\bbr a'' |p(v)-p(\tilde v_\eps)|^2 d\xi$ in $\mathcal{B}_2$, which is negligible by $\mathcal{I}_2$ because of $|a''|\le C\eps |a'|$ (see \cite[(2.29) and (3.31)]{Kang-V-NS17}). \\
Thus, it follows from \cite[(3.39), (3.34), (3.33), (3.35)]{Kang-V-NS17} that
\begin{align}
\begin{aligned}\label{all-func}
 & -2\alpha_\gamma \frac{\lambda^2}{\eps^3} \frac{|Y_g|^2}{\eps \delta_3}\leq -\frac{\alpha_\gamma}{\delta_3\sigma^4}\left| \int_0^1W^2\,dy+2\int_0^1 W\,dy\right|^2+C\delta_3\int_0^1W^2\,dy,\\
&{2\alpha_\gamma} \frac{\lambda^2}{\eps^3}|\mathcal{I}_1|\leq \left(1 +C(\eps_0+\delta_3)\right)\int_0^1 W^2\,dy,\\
&{2\alpha_\gamma} \frac{\lambda^2}{\eps^3}|\mathcal{I}_2|\leq \left(\frac{\alpha_\gamma}{\sigma}\left(\frac{\lambda}{\eps}\right)+C(\eps_0+\delta_3)\right)\int_0^1 W^2\,dy,\\
&-2\alpha_\gamma \frac{\lambda^2}{\eps^3}\mathcal{G}_2\leq  \left(-\frac{\alpha_\gamma}{\sigma}\left(\frac{\lambda}{\eps}\right) +C\delta_3     \right)\int_0^1W^2\,dy +\frac{2}{3}\int_0^1W^3\,dy+C\eps_0\int_0^1|W|^3\,dy.
 \end{aligned}
\end{align}
Therefore, it remains to estimate the diffusion $\mathcal{D}$ as follows:\\
First, by the change of variable, we have
\[
\mathcal{D}=\int_0^1 (1-\lambda y) |\partial_y w|^2 v^\beta \Big(\frac{dy}{d\xi}\Big) dy.
\]
Since it follows from \eqref{small_shock1} that
\[
\tilde v_\eps^\beta p(\tilde v_\eps)'=\sigma_\eps (\tilde v_{\eps}-v_-) + \frac{p(\tilde v_\eps)-p(v_-)}{\sigma_\eps},
\]
using \eqref{change-d}, we find
\beq\label{inst-vy}
\eps\, \tilde v_\eps^\beta \frac{dy}{d\xi}=\frac{1}{\sigma_\eps}\Big(\sigma_\eps^2 (\tilde v_{\eps}-v_-) + p(\tilde v_\eps)-p(v_-)\Big).
\eeq
Since the right-hand side of \eqref{inst-vy} is the same as the one in the proof of \cite[Lemma 3.1]{Kang-V-NS17}, we have
\[
\frac{\tilde v_\eps^\beta}{y(1-y)}\frac{dy}{d\xi}=\frac{\eps}{\sigma_\eps(v_--v_+)}\left( \frac{\tilde v_{\eps}-v_-}{p(\tilde v_\eps)-p(v_-)}+\frac{\tilde v_{\eps}-v_+}{p(v_+)-p(\tilde v_\eps)} \right).
\]
Thus, it follows from the proof of \cite[Lemma 3.1]{Kang-V-NS17} that
$$
\left|\frac{\tilde v_\eps^\beta}{y(1-y)}\frac{dy}{d\xi} -\frac{\eps}{2\alpha_\gamma}\right|\leq C\eps^2.
$$
Then, using $|(v^\beta/\tilde v_\eps^\beta)-1| \le C\delta_3$, we have
\begin{eqnarray*}
\mathcal{D}&\geq&(1-\lambda)\int_0^1  |\partial_y w|^2 v^\beta \Big(\frac{dy}{d\xi}\Big) dy = (1-\lambda)\int_0^1  |\partial_y w|^2 \frac{v^\beta}{\tilde v_\eps^\beta}\tilde v_\eps^\beta \Big(\frac{dy}{d\xi}\Big) dy\\
&\geq&(1-\lambda) \left(\frac{\eps}{2\alpha_\gamma}-C\eps^2 -C\delta_3 \right)   \int_0^1y(1-y)  |\partial_y w|^2  \, dy\\
&\geq&\frac{\eps}{2\alpha_\gamma}(1-C(\delta_3+\eps_0))  \int_0^1y(1-y)  |\partial_y w|^2 dy.
\end{eqnarray*}
After the normalization, we obtain
\begin{equation}\label{newD}
-2\alpha_\gamma \frac{\lambda^2}{\eps^3}\mathcal{D}\leq -(1-C(\eps_0+\delta_3))\int_0^1y(1-y) |\partial_y W|^2dy.
\end{equation}

To finish the proof, we first observe that
for any $\delta<\delta_3$, 
 \begin{eqnarray*}
 &&\mathcal{R}_{\eps,\delta}(v)\leq -\frac{1}{\eps\delta_3}|Y_g(v)|^2 +(1+\delta_3)|\mathcal{I}_1(v)|\\
 &&\qquad\qquad\qquad +\left(1+\delta_3\left(\frac{\eps}{\lambda}\right)\right)|\mathcal{I}_2(v)|-\left(1-\delta_3\left(\frac{\eps}{\lambda}\right)\right)\mathcal{G}_2(v)-(1-\delta_3)\mathcal{D}(v).
 \end{eqnarray*}
Then, \eqref{all-func} and \eqref{newD} together with $\eps_0\le\delta_3$ imply
\begin{align*}
\begin{aligned}
2\alpha_\gamma\left(\frac{\lambda^2}{\eps^3}\right)\mathcal{R}_{\eps,\delta}(v)
& \leq -\frac{1}{C_\gamma\delta_3}\left(\int_0^1W^2\,dy+2\int_0^1 W\,dy\right)^2+(1+C_* \delta_3)\int_0^1 W^2\,dy\\
&\quad+\frac{2}{3}\int_0^1 W^3\,dy +C_*\delta_3 \int_0^1 |W|^3\,dy -(1-C_* \delta_3)\int_0^1 y(1-y)|\partial_y W|^2\,dy.
 \end{aligned}
\end{align*}

To finish the proof, we use the nonlinear Poincar\'e type inequality \cite[Proposition 3.3]{Kang-V-NS17} as follow:
\begin{proposition}\label{prop:W}{\cite[Proposition 3.3]{Kang-V-NS17}}
For a given $C_1>0$, there exists $\deltat>0$, such that for any $\delta<\deltat$ the following is true.\\ For any $W\in L^2(0,1)$ such that 
$\sqrt{y(1-y)}\partial_yW\in L^2(0,1)$, if $\int_0^1 |W(y)|^2\,dy\leq C_1$, then
\begin{align}
\begin{aligned}\label{Winst}
&-\frac{1}{\delta}\left(\int_0^1W^2\,dy+2\int_0^1 W\,dy\right)^2+(1+\delta)\int_0^1 W^2\,dy\\
&\qquad\qquad+\frac{2}{3}\int_0^1 W^3\,dy +\delta \int_0^1 |W|^3\,dy  -(1-\delta)\int_0^1 y(1-y)|\partial_y W|^2\,dy \leq0.
 \end{aligned}
\end{align}
\end{proposition}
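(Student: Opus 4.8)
The plan is to prove \eqref{Winst} by a compactness-and-contradiction argument, reducing everything to a rigidity statement for a limiting inequality on the degenerate interval $(0,1)$. Suppose the claim fails: there are $C_1>0$, a sequence $\delta_n\to0^{+}$, and functions $W_n$ with $\sqrt{y(1-y)}\,\partial_yW_n\in L^2(0,1)$ and $\int_0^1W_n^2\,dy\le C_1$ for which the left side of \eqref{Winst} is strictly positive. Write $m_n:=\int_0^1W_n\,dy$, $x_n:=\int_0^1W_n^2\,dy$, and $D_n:=\int_0^1y(1-y)|\partial_yW_n|^2\,dy$. The negative penalization term only helps, so the failure of \eqref{Winst} gives $(1-\delta_n)D_n\le(1+\delta_n)x_n+\tfrac23\int_0^1W_n^3\,dy+\delta_n\int_0^1|W_n|^3\,dy$, and in particular $(x_n+2m_n)^2\le\delta_n\big[(1+\delta_n)x_n+\tfrac23\int W_n^3+\delta_n\int|W_n|^3\big]$, so the combination $x_n+2m_n$ is forced to be small.

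The first task is a uniform bound on $D_n$. For this I would record an auxiliary weighted interpolation inequality controlling $\int_0^1|W|^3\,dy$ by $\|W\|_{L^2}$ and $D(W)$: integrating $\partial_yW$ from an interior point and using Cauchy--Schwarz against the weight $1/(y(1-y))$ (whose integral only diverges logarithmically at $0$ and $1$) shows $W\in L^p(0,1)$ for all $p<\infty$, with $\int_0^1|W|^3\,dy\le C\big(\|W\|_{L^2}^3+\|W\|_{L^2}^{3/2}D(W)^{3/4}\big)$. Substituting this into the previous display, using $x_n\le C_1$ and Young's inequality (since $D^{3/4}=o(D)$), we get $D_n\le C(C_1)$. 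Hence $W_n$ is bounded in $H^1_{\mathrm{loc}}(0,1)$; extracting a subsequence, $W_n\to W_\infty$ in $L^2_{\mathrm{loc}}(0,1)$ and a.e., and the uniform $L^3$ bound upgrades this to $W_n\to W_\infty$ in $L^2(0,1)$ with $\int W_n^3\to\int W_\infty^3$, while $D(W_\infty)\le\liminf D_n$ by lower semicontinuity.

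Now split according to $\epsilon_n:=\|W_n\|_{L^2}$. If $\epsilon_n\to0$, rescale $\widehat W_n:=W_n/\epsilon_n$; the same interpolation argument bounds $D(\widehat W_n)$, so $\widehat W_n\to\widehat W_\infty$ in $L^2$ with $\|\widehat W_\infty\|_{L^2}=1$. The smallness of $x_n+2m_n$ forces $(\epsilon_n+2\overline{\widehat W_n})^2=O(\delta_n)\to0$, hence $\int_0^1\widehat W_\infty\,dy=0$; passing to the limit in the rescaled failed inequality (the cubic terms carry a factor $\epsilon_n$ and drop out) gives $1\ge D(\widehat W_\infty)$. But the operator $-\partial_y\!\big(y(1-y)\,\partial_y\,\cdot\,\big)$ on $(0,1)$ is the Legendre/Jacobi operator, with eigenvalues $n(n+1)$, $n\ge0$, and eigenfunctions the shifted Legendre polynomials; orthogonality of $\widehat W_\infty$ to the constants then forces $D(\widehat W_\infty)\ge2\|\widehat W_\infty\|_{L^2}^2=2$, a contradiction. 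If instead $\liminf\epsilon_n>0$, then $W_\infty\neq0$; the penalization bound forces the rigid constraint $\int_0^1W_\infty^2\,dy+2\int_0^1W_\infty\,dy=0$, and passing to the limit leaves
\[
\int_0^1W_\infty^2\,dy+\tfrac23\int_0^1W_\infty^3\,dy\ \ge\ \int_0^1y(1-y)|\partial_yW_\infty|^2\,dy.
\]

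The step I expect to be the main obstacle is the concluding \emph{rigidity lemma}: no nonzero $W$ with $\sqrt{y(1-y)}\partial_yW\in L^2$ and the constraint $\int_0^1W\,dy=-\tfrac12\int_0^1W^2\,dy$ can satisfy the last displayed inequality. The constraint already forces $\int_0^1W\,dy\le0$ and, by Cauchy--Schwarz, $\|W\|_{L^2}\le2$. Writing $W=c_0+g$ with $c_0=\int_0^1W\,dy\in[-2,0]$ and $\int_0^1g\,dy=0$, one has $D(W)=D(g)$, $\|g\|_{L^2}^2=-c_0(2+c_0)\le1$, and $\int_0^1W^3\,dy=c_0^3+3c_0\|g\|_{L^2}^2+\int_0^1g^3\,dy$, so the inequality becomes $D(g)\le\|g\|_{L^2}^2(1+2c_0)+c_0^2(1+\tfrac23c_0)+\tfrac23\int_0^1g^3\,dy$. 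When $D(g)$ is large this is impossible, since $\int g^3$ is controlled by $D(g)^{3/4}$ through the interpolation inequality while every other term is $O(1)$; when $D(g)$ stays bounded one uses the quantitative (not merely linearized) spectral gap $D(g)\ge2\|g\|_{L^2}^2$ together with the fact that the admissible range of $c_0$ is itself tied to $\|g\|_{L^2}$, plus a short expansion/convexity argument, to rule out equality unless $g\equiv0$ and $c_0=0$. This contradiction, combined with the $\epsilon_n\to0$ case, proves \eqref{Winst}; the resulting threshold $\deltat$ depends only on $C_1$.
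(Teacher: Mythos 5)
You should first note that the paper gives no proof of Proposition \ref{prop:W} at all: it is imported verbatim from \cite{Kang-V-NS17}, where it is established by a direct quantitative argument rather than by compactness. Your compactness-and-rigidity route is therefore a different strategy, and its skeleton is sound — the spectral gap $\int_0^1 y(1-y)|g'|^2\,dy\ge 2\int_0^1|g|^2\,dy$ for mean-zero $g$ (Legendre operator, first nonzero eigenvalue $2$) is correct, and the reduction to a rigidity statement on the constraint set $\int_0^1W^2\,dy+2\int_0^1W\,dy=0$ is the right limit problem. But two steps are genuinely incomplete as written.

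First, the weighted interpolation inequality $\int_0^1|W|^3\,dy\le C\bigl(\|W\|_{L^2}^3+\|W\|_{L^2}^{3/2}D(W)^{3/4}\bigr)$ carries the entire weight of Step 1: the uniform bound on $D_n$, hence all the compactness, requires the exponent on $D$ to be strictly below $1$. The argument you indicate (integrate $\partial_yW$ from an interior point and Cauchy--Schwarz against $1/(y(1-y))$) gives only the pointwise bound $|W(y)|\le C\|W\|_{L^2}+C\bigl(1+\log^{1/2}\frac{1}{y(1-y)}\bigr)D^{1/2}$; cubing and integrating this yields $\int_0^1|W|^3\,dy\le C(\|W\|_{L^2}^3+D^{3/2})$, and the resulting bound $(1-\delta_n)D_n\le C+CD_n^{3/2}$ does not control $D_n$. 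The inequality you state is in fact true, but it needs more than the one-line argument — for instance a layer-cake estimate using $|\{|W|>t\}|\le\min\bigl(e^{-ct^2/D},\|W\|_{L^2}^2/t^2\bigr)$, or a dyadic/Caffarelli--Kohn--Nirenberg localization. As written, the single step on which the whole proof hinges is not established.

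Second, the rigidity lemma does not close in the regime $c_0\to0^-$, which does occur: in your second case you only know $\liminf\|W_n\|_{L^2}>0$, so the limiting $|c_0|=\frac12\|W_\infty\|_{L^2}^2$ may be arbitrarily small, and the lemma must hold uniformly down to $c_0=0$. There $\|g\|_{L^2}^2=-c_0(2+c_0)\approx2|c_0|$ and the margin available after invoking the spectral gap is of order $|c_0|$, while the interpolation alone only gives $\int_0^1 g^3\,dy\le C\|g\|_{L^2}^{3/2}(1+D(g)^{3/4})\sim|c_0|^{3/4}\gg|c_0|$ when $D(g)=O(1)$; and when $D(g)\gg\|g\|_{L^2}^2$ but $D(g)=O(1)$ the spectral gap is far from saturated yet yields no contradiction. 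The missing ingredient is a bootstrap: the assumed (failed) inequality itself forces $D(g)\le C|c_0|+C|c_0|^{3/4}D(g)^{3/4}$, hence $D(g)\le C|c_0|$, and only then does the interpolation give $\int_0^1 g^3\,dy=O(|c_0|^{3/2})=o(|c_0|)$, which beats the margin. With that bootstrap inserted (and the interpolation inequality actually proved), your argument does go through.
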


First, using the same estimate as in \cite[(3.38)]{Kang-V-NS17}, we find the constant $C_1>0$ such that
\[
\int_0^1W^2\,dy\leq C_1.
\]
Then, let us fix the value of the $\delta_2$ of Proposition \ref{prop:W} corresponding to  the constant $C_1$.\\
We consider  $\bar{\delta}=\max(C_\gamma, C_*) \delta_3$, and  choose $\delta_3$ small enough, such that 
$\bar{\delta}$ is smaller than $\delta_2$. Then we have 
\begin{eqnarray*}
&& 2\alpha_\gamma\left(\frac{\lambda^2}{\eps^3}\right)\mathcal{R}_{\eps,\delta}(v) \leq - \frac{1}{\delta_2}\left(\int_0^1W^2\,dy+2\int_0^1 W\,dy\right)^2+(1+\delta_2)\int_0^1 W^2\,dy\\
&&\qquad\qquad\qquad \qquad +\frac{2}{3}\int_0^1 W^3\,dy +\delta_2\int_0^1 |W|^3\,dy  -(1- \delta_2)\int_0^1 y(1-y)|\partial_y W|^2\,dy. 
\end{eqnarray*}
Therefore, using Proposition  \ref{prop:W}, we have 

\[
2\alpha_\gamma\left(\frac{\lambda^2}{\eps^3}\right)\mathcal{R}_{\eps,\delta}(v) \le 0,
\]
which completes the proof.

\end{appendix}

\bibliography{Kang-Vasseur2015}

\begin{thebibliography}{10}

\bibitem{AFP}
L.~Ambrosio, N.~Fusco, and D.~Pallara.
\newblock Functions of bounded variation and free discontinuity problems,.
\newblock {\em Oxford Mathematical Monographs}, 2000.

\bibitem{BB}
S.~Bianchini and A.~Bressan.
\newblock {Vanishing viscosity solutions to nolinear hyperbolic systems}.
\newblock {\em Ann. of Math.}, 166:223--342, 2005.

\bibitem{BD_03}
D.~Bresch and B.~Desjardins.
\newblock Existence of global weak solutions for 2d viscous shallow water
  equations and convergence to the quasi-geostrophic model.
\newblock {\em Comm. Math. Phys.}, 238:211--223, 2003.

\bibitem{BD_06}
D.~Bresch and B.~Desjardins.
\newblock On the construction of approximate solutions for the 2d viscous
  shallow water model and for compressible navier-stokes models,.
\newblock {\em J. Math. Pures Appl.}, 86(9):362--368, 2006.

\bibitem{BDL}
D.~Bresch, B.~Desjardins, and C.K. Lin.
\newblock On some compressible fluid models: Korteweg, lubrication, and shallow
  water systems,.
\newblock {\em Comm. Partial Differential Equations,}, 28:843--868, 2003.

\bibitem{CC}
S.~Chapman and T.G. Cowling.
\newblock The mathematical theory of non-uniform gases.
\newblock {\em Cambridge University Press, London}, 3rd ed., 1970.

\bibitem{Chen1}
G.-Q. Chen, H.~Frid, and Y.~Li.
\newblock Uniqueness and stability of {R}iemann solutions with large
  oscillation in gas dynamics.
\newblock {\em Comm. Math. Phys.}, 228(2):201--217, 2002.

\bibitem{CP}
G.-Q. Chen and M.~Perepelitsa.
\newblock Vanishing viscosity limit of the {Navier-Stokes equations to the
  Euler equations} for compressible fluid flow,.
\newblock {\em Comm. Pure Appl. Math.}, 63(11):1469--1504, 2010.

\bibitem{Ch}
E.~Chiodaroli.
\newblock {A counterexample to well-posedness of entropy solutions to the
  compressible Euler system}.
\newblock {\em J. Hyperbolic Differ. Equ.}, 11(3):493--519, 2014.

\bibitem{CDK}
E.~Chiodaroli, C.~De~Lellis, and O.~Kreml.
\newblock {Global ill-posedness of the isentropic system of gas dynamics}.
\newblock {\em Comm. Pure Appl. Math.}, 68(7):1157--1190, 2015.

\bibitem{CFK}
E.~Chiodaroli, E.~Feireisl, and O.~Kreml.
\newblock {On the weak solutions to the equations of a compressible heat
  conducting gas}.
\newblock {\em Ann. Inst. H. Poincar\'e Anal. Non Linaire}, 32(1):225--243,
  2015.

\bibitem{CK}
E.~Chiodaroli and O.~Kreml.
\newblock {On the energy dissipation rate of solutions to the compressible
  isentropic Euler system}.
\newblock {\em Arch. Ration. Mech. Anal.}, 214(3):1019--1049, 2014.

\bibitem{CKKV}
K.~Choi, M.-J. Kang, Y.~Kwon, and A.~Vasseur.
\newblock Contraction for large perturbations of traveling waves in a
  hyperbolic-parabolic system arising from a chemotaxis model.
\newblock {\em https://arxiv.org/abs/1904.12169}.

\bibitem{CV}
K.~Choi and A.~Vasseur.
\newblock Short-time stability of scalar viscous shocks in the inviscid limit
  by the relative entropy method.
\newblock {\em SIAM J. Math. Anal.}, 47:1405--1418, 2015.

\bibitem{CDNP}
P.~Constantin, T.~D. Drivas, H.~Q. Nguyen, and F.~Pasqualotto.
\newblock Compressible fluids and active potentials,.
\newblock {\em Ann. Inst. H. Poincar\'e Anal. Non Lin\'eaire, To appear}, 2019.

\bibitem{Dafermos1}
C.~M. Dafermos.
\newblock The second law of thermodynamics and stability.
\newblock {\em Arch. Rational Mech. Anal.}, 70(2):167--179, 1979.

\bibitem{DS09}
C.~De~Lellis and L.~Sz\'ekelyhidi.
\newblock {The Euler equations as a differential inclusion}.
\newblock {\em Ann. of Math.}, 170(3):1417--1436, 2009.

\bibitem{DS10}
C.~De~Lellis and L.~Sz\'ekelyhidi.
\newblock {On admissibility criteria for weak solutions of the Euler
  equations}.
\newblock {\em Arch. Ration. Mech. Anal.}, 195(1):225--260, 2010.

\bibitem{DiPerna}
R.~J. DiPerna.
\newblock Uniqueness of solutions to hyperbolic conservation laws.
\newblock {\em Indiana Univ. Math. J.}, 28(1):137--188, 1979.

\bibitem{Di}
R.~J. DiPerna.
\newblock Convergence of approximate solutions to conservation laws.
\newblock {\em Arch. Rational Mech. Anal.}, 82(1):27--70, 1983.

\bibitem{Di_CMP}
R.~J. DiPerna.
\newblock Convergence of the viscosity method for isentropic gas dynamics.
\newblock {\em Comm. in Math. Phys.}, 91:1--30, 1983.

\bibitem{evans-w}
L.~C. Evans.
\newblock Weak convergence methods for nonlinear partial differential
  equations,.
\newblock {\em CBMS Regional Conference Series in Mathematics. Published for
  the Conference Board of the Mathematical Sciences, Washington, DC,}, 74,
  1990.

\bibitem{Fei}
E.~Feireisl.
\newblock {Vanishing dissipation limit for the Navier-Stokes-Fourier system}.
\newblock {\em Commun. Math. Sci.}, 14(6):1535--1551, 2016.

\bibitem{GP}
J.-F. Gerbeau and B.~Perthame.
\newblock Derivation of viscous saint-venant system for laminar shallow water;
  numerical validation,.
\newblock {\em Discrete Contin. Dyn. Syst. Ser. B}, 1(1)(2001):89--102, 2018.

\bibitem{GX}
J.~Goodman and Z.~Xin.
\newblock {Viscous limits for piecewise smooth solutions to systems of
  conservation laws}.
\newblock {\em Arch. Rational Mech. Anal.}, 121:235--265, 1992.

\bibitem{H2}
B.~Haspot.
\newblock New formulation of the compressible navier-stokes equations and
  parabolicity of the density,.
\newblock {\em arXiv:1411.5501}.

\bibitem{H1}
B.~Haspot.
\newblock Porous media, fast diffusion equations and the existence of global
  weak solution for the quasi-solution of compressible {Navier-Stokes}
  equations,.
\newblock {\em Hyperbolic Problems: Theory, Numerics, Applications-Methods and
  Applications of Analysis,}, 20(2):141--164, 2013.

\bibitem{Haspot}
B.~Haspot.
\newblock Existence of global strong solution for the compressible
  {Navier-Stokes equations with degenerate viscosity coefficients in 1D},.
\newblock {\em posted on arXiv,}, 2014.

\bibitem{H3}
B.~Haspot.
\newblock Weak-strong uniqueness for compressible {N}avier-{S}tokes system with
  degenerate viscosity coefficient and vacuum in one dimension.
\newblock {\em Commun. Math. Sci.}, 15(3):587--591, 2017.

\bibitem{HL}
D.~Hoff and T.~P. Liu.
\newblock {The inviscid limit for the Navier-Stokes equations of compressible
  isentropic flow with shock data}.
\newblock {\em Indiana Univ. Math. J.}, 38:861--915, 1989.

\bibitem{Kang19}
M.-J. Kang.
\newblock {$L^2$}-type contraction for shocks of scalar viscous conservation
  laws with strictly convex flux.
\newblock {\em https://arxiv.org/pdf/1901.02969.pdf}.

\bibitem{Kang}
M.-J. Kang.
\newblock Non-contraction of intermediate admissible discontinuities for {3-D}
  planar isentropic magnetohydrodynamics.
\newblock {\em Kinet. Relat. Models}, 11(1):107--118, 2018.

\bibitem{Kang-V-NS17}
M.-J. Kang and A.~Vasseur.
\newblock {Contraction property for large perturbations of shocks of the
  barotropic {Navier-Stokes} system}.
\newblock {\em J. Eur. Math. Soc. (JEMS), To appear.
  https://arxiv.org/pdf/1712.07348.pdf}.

\bibitem{KV_exist19}
M.-J. Kang and A.~Vasseur.
\newblock Global smooth solutions for {1D barotropic Navier-Stokes equations}
  with a large class of degenerate viscosities.
\newblock {\em https://arxiv.org/abs/1907.12938}.

\bibitem{KVARMA}
M.-J. Kang and A.~Vasseur.
\newblock Criteria on contractions for entropic discontinuities of systems of
  conservation laws.
\newblock {\em Arch. Ration. Mech. Anal.}, 222(1):343--391, 2016.

\bibitem{Kang-V-1}
M.-J. Kang and A.~Vasseur.
\newblock {$L^2$}-contraction for shock waves of scalar viscous conservation
  laws.
\newblock {\em Annales de l'Institut Henri Poincar\'e (C) : Analyse non
  lin\'eaire}, 34(1):139Ð156, 2017.

\bibitem{KVW}
M.-J. Kang, A.~Vasseur, and Y.~Wang.
\newblock {$L^2$}-contraction for planar shock waves of multi-dimensional
  scalar viscous conservation laws.
\newblock {\em http://arxiv.org/pdf/1609.01825.pdf}.

\bibitem{Leger}
N.~Leger.
\newblock {$L^2$} stability estimates for shock solutions of scalar
  conservation laws using the relative entropy method.
\newblock {\em Arch. Ration. Mech. Anal.}, 199(3):761--778, 2011.

\bibitem{LV}
N.~Leger and A.~Vasseur.
\newblock Relative entropy and the stability of shocks and contact
  discontinuities for systems of conservation laws with non-{BV} perturbations.
\newblock {\em Arch. Ration. Mech. Anal.}, 201(1):271--302, 2011.

\bibitem{MW}
A.~Matsumura and Y.~Wang.
\newblock Asymptotic stability of viscous shock wave for a one-dimensional
  isentropic model of viscous gas with density dependent viscosity,.
\newblock {\em Methods Appl. Anal.}, 17:279--290, 2010.

\bibitem{MV_sima}
A.~Mellet and A.~Vasseur.
\newblock Existence and uniqueness of global strong solutions for
  one-dimensional compressible {Navier-Stokes equations}.
\newblock {\em SIAM J. Math. Anal.}, 39(4):1344--1365, 2007/08.

\bibitem{Serre-Vasseur}
D.~Serre and A.~Vasseur.
\newblock {$L^2$}-type contraction for systems of conservation laws.
\newblock {\em J. \'Ec. polytech. Math.}, 1:1--28, 2014.

\bibitem{SV_16}
D.~Serre and A.~Vasseur.
\newblock About the relative entropy method for hyperbolic systems of
  conservation laws.
\newblock {\em Contemp. Math. AMS}, 658:237--248, 2016.

\bibitem{SV_16dcds}
D.~Serre and A.~Vasseur.
\newblock The relative entropy method for the stability of intermediate shock
  waves; the rich case.
\newblock {\em Discrete Contin. Dyn. Syst.}, 36(8):4569--4577, 2016.

\bibitem{Shel}
V.V. Shelukhin.
\newblock On the structure of generalized solutions of the one-dimensional
  equations of a polytropic viscous gas,.
\newblock {\em J. Appl. Math. Mech., 48(1984), 665--672; translated from Prikl.
  Mat. Mekh. 48(1984), no. 6, 912--920}.

\bibitem{Stokes}
G.G. Stokes.
\newblock On a difficulty in the theory of sound,.
\newblock {\em Philos. Mag.}, 33:349--356, 1848.

\bibitem{Vasseur_gamma3}
A.~Vasseur.
\newblock Time regularity for the system of isentropic gas dynamics with
  {$\gamma=3$}.
\newblock {\em Comm. Partial Differential Equations}, 24(11-12):1987--1997,
  1999.

\bibitem{Vasseur_Book}
A.~Vasseur.
\newblock Recent results on hydrodynamic limits.
\newblock In {\em Handbook of differential equations: evolutionary equations.
  {V}ol. {IV}}, Handb. Differ. Equ., pages 323--376. Elsevier/North-Holland,
  Amsterdam, 2008.

\bibitem{Vasseur-2013}
A.~Vasseur.
\newblock Relative entropy and contraction for extremal shocks of conservation
  laws up to a shift.
\newblock In {\em Recent advances in partial differential equations and
  applications}, volume 666 of {\em Contemp. Math.}, pages 385--404. Amer.
  Math. Soc., Providence, RI, 2016.

\bibitem{VW}
A.~Vasseur and Y.~Wang.
\newblock The inviscid limit to a contact discontinuity for the compressible
  navier-stokes-fourier system using the relative entropy method.
\newblock {\em SIAM J. Math. Anal.}, 47(6):4350--4359, 2015.

\bibitem{VY}
A.~Vasseur and L.~Yao.
\newblock Nonlinear stability of viscous shock wave to one-dimensional
  compressible isentropic {Navier-Stokes} equations with density dependent
  viscous coefficient,.
\newblock {\em Commun. Math. Sci.}, 14(8):2215--2228, 2016.

\bibitem{Yu}
S.~H. Yu.
\newblock {Zero-dissipation limit of solutions with shocks for systems of
  hyperbolic conservation laws}.
\newblock {\em Arch. Rational Mech. Anal.}, 146:275--370, 1999.

\end{thebibliography}
\end{document}